\numberwithin{equation}{section}
\newtheorem{theorem}{Theorem}[section]
\newtheorem{corollary}[theorem]{Corollary}
\newtheorem{lemma}[theorem]{Lemma}
\newtheorem{proposition}[theorem]{Proposition}
\theoremstyle{definition}
\newtheorem{definition}[theorem]{Definition}
\newtheorem{remark}[theorem]{Remark}
\newtheorem{example}[theorem]{Example}
\begin{document}
\title[Silting subcategories of extriangulated categories]{An assortment of properties of silting subcategories of extriangulated categories}

\author{Takahide Adachi}
\address{T.~Adachi: Faculty of Global and Science Studies, Yamaguchi University, 1677-1 Yoshida, Yamaguchi 753-8541, Japan}
\email{tadachi@yamaguchi-u.ac.jp}

\author{Mayu Tsukamoto}
\address{M.~Tsukamoto: Graduate school of Sciences and Technology for Innovation, Yamaguchi University, 1677-1 Yoshida, Yamaguchi 753-8512, Japan}
\email{tsukamot@yamaguchi-u.ac.jp}

\subjclass[2020]{18E10, 18G80}
\keywords{silting subcategories, silting mutation, extriangulated categories}

\begin{abstract}

Extriangulated categories give a simultaneous generalization of triangulated categories and exact categories.
In this paper, we study silting subcategories of an extriangulated category.
First, we show that a silting subcategory induces a basis of the Grothendieck group of an extriangulated category.
Secondly, we introduce the notion of silting mutation and investigate its basic properties.
Thirdly, we explore properties of silting subcategories of the subcategory consisting of objects with finite projective dimension.
As an application, we can recover Auslander--Reiten's result which gives a bijection between tilting modules and contravariantly finite resolving subcategories with finite projective dimension.

\end{abstract}
\maketitle

\section{Introduction}

The concept of silting objects was firstly introduced by Keller and Vossieck (\cite{KV88}) to study $t$-structures on the bounded derived category of representations of a Dynkin quiver. 
This is a generalization of tilting complexes and tilting modules.
Nowadays, silting objects play a crucial role in the study of triangulated categories. 
It is known that silting objects are closely related to several important objects in triangulated categories such as $t$-structures, co-$t$-structures, simple-minded collections, and so on (e.g., \cite{AMY19, B10, BY13, KY14, MSSS13, SY19}). 
As an analog of silting objects in a triangulated category, the authors (\cite{AT22}) introduced the notion of silting objects in an extriangulated category defined by Nakaoka and Palu (\cite{NP19}) as a simultaneous generalization of triangulated categories and exact categories.
Tilting modules are realized as silting objects in the subcategory of a module category consisting of modules with finite projective dimension, which forms an extriangulated category.
Namely, silting objects in an extriangulated category are a common generalization of silting objects in a triangulated category and tilting modules over a finite dimensional algebra. 

In this paper, we study properties of silting objects/subcategories in an extriangulated category. 
First, we describe the Grothendieck groups of an extriangulated categories with a silting subcategory.
The Grothendieck group is known as a basic invariant for a triangulated category and an exact category. 
Aihara and Iyama (\cite{AI12}) showed that non-isomorphic indecomposable objects in a silting subcategory form a basis of the Grothendieck group of a Krull--Schmidt  triangulated category.
Recently, the Grothendieck group of an extriangulated category is defined in \cite{H21, ZZ21}. 
As an extriangulated version of \cite[Theorem 2.27]{AI12}, we obtain the following result.

\begin{theorem}[{Theorem \ref{mainresult-Grgroup}, Corollary \ref{cor-Grgroup}}]\label{intro-thm1}
Let $\mathcal{C}$ be a Krull--Schmidt extriangulated category.
If $\mathcal{C}$ admits a silting subcategory $\mathcal{M}$, then the Grothendieck group $K_{0}(\mathcal{C})$ of $\mathcal{C}$ is a free abelian group and its basis consists of the set of isomorphism classes of indecomposable objects in $\mathcal{M}$. 
In particular, all silting objects have the same number of non-isomorphic indecomposable direct summands. 
\end{theorem}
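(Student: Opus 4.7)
The plan is to prove two standard claims: the classes $\{[M] : M \in \operatorname{ind}\mathcal{M}\}$ span $K_0(\mathcal{C})$, and they are $\mathbb{Z}$-linearly independent. Together with the Krull--Schmidt assumption these give that the classes form a free basis, and the ``in particular'' clause follows at once because a silting object $T$ yields the silting subcategory $\operatorname{add} T$, whose indecomposable objects then biject with a fixed basis of $K_0(\mathcal{C})$.

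For \emph{spanning}, I would invoke the defining property of a silting subcategory in the extriangulated setting, which should yield, for every $X\in\mathcal{C}$, a finite $\mathcal{M}$-resolution, i.e., a tower of conflations
\[
X_{i+1}\longrightarrow M_i\longrightarrow X_i\quad (0\le i\le n-1),\qquad X_0=X,\ X_n\in\operatorname{add}\mathcal{M},\ M_i\in\operatorname{add}\mathcal{M}.
\]
Iterating the defining relation $[M_i]=[X_{i+1}]+[X_i]$ in $K_0(\mathcal{C})$ gives $[X]=\sum_{i=0}^{n-1}(-1)^i[M_i]+(-1)^n[X_n]$, and splitting each $M_i$ into its Krull--Schmidt summands rewrites $[X]$ as a $\mathbb{Z}$-linear combination of classes of indecomposable objects of $\mathcal{M}$.

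For \emph{independence}, the strategy is to build a map $\rho$ from the objects of $\mathcal{C}$ to the free abelian group $F=\bigoplus_{M\in\operatorname{ind}\mathcal{M}}\mathbb{Z}\,e_M$ by choosing an $\mathcal{M}$-resolution as above and setting $\rho(X)=\sum_i(-1)^i\,v(M_i)$, where $v\colon\operatorname{add}\mathcal{M}\to F$ records Krull--Schmidt multiplicities. The heart of the argument is then to prove that $\rho$ is (i) independent of the chosen resolution and (ii) additive on conflations, so that it factors through a homomorphism $\bar{\rho}\colon K_0(\mathcal{C})\to F$ sending each indecomposable $M\in\mathcal{M}$ to $e_M$; this $\bar{\rho}$ is a left inverse to the spanning map constructed above, finishing the proof. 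The key leverage is the presilting condition $\operatorname{Ext}^{>0}(\mathcal{M},\mathcal{M})=0$: it lifts morphisms between $\mathcal{M}$-resolutions uniquely up to an appropriate notion of homotopy, and it drives a horseshoe-type construction that produces an $\mathcal{M}$-resolution of the middle term of a conflation $X\to Y\to Z$ from resolutions of $X$ and $Z$ with additivity of the resolution lengths and multiplicities.

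The main obstacle I anticipate lies in carrying out the horseshoe lemma and the uniqueness-up-to-homotopy of $\mathcal{M}$-resolutions inside an extriangulated category, since the axioms of \cite{NP19} provide weaker tools than either the octahedral axiom of a triangulated category or the kernel/cokernel calculus of an exact category; the technical work will be to show that the standard liftings go through using only conflations and $\operatorname{Ext}^1$-bifunctoriality, with presilting controlling the obstructions. Granting these facts, the freeness of $K_0(\mathcal{C})$ on $\operatorname{ind}\mathcal{M}$ follows, and the invariance of the number of indecomposable summands of any silting object is an immediate consequence.
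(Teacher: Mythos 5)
Your overall strategy for independence (construct an additive map $K_0(\mathcal{C})\to\mathbb{Z}^{\operatorname{\mathsf{ind}}\mathcal{M}}$ as a left inverse to the spanning map) is the same as the paper's, but the execution contains a genuine gap that breaks both halves of your argument. You assume that a silting subcategory $\mathcal{M}$ furnishes, for \emph{every} $X\in\mathcal{C}$, a finite right $\mathcal{M}$-resolution
\[
X_{i+1}\to M_i\to X_i\quad(0\le i\le n-1),\qquad X_0=X,\ X_n\in\operatorname{\mathsf{add}}\mathcal{M},\ M_i\in\operatorname{\mathsf{add}}\mathcal{M},
\]
i.e.\ that $\mathcal{C}=\mathcal{M}^{\wedge}$. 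The definition of silting only gives $\mathcal{C}=\operatorname{\mathsf{thick}}\mathcal{M}$, and by Lemma \ref{lem-psilt_wedge}(3) we have $\operatorname{\mathsf{thick}}\mathcal{M}=(\mathcal{M}^{\wedge})^{\vee}=(\mathcal{M}^{\vee})^{\wedge}$, which is in general strictly larger than $\mathcal{M}^{\wedge}$. Concretely, in a triangulated category $\mathcal{D}$ with silting object $T$ and $\mathcal{M}=\operatorname{\mathsf{add}}T$, the object $\Sigma^{-1}T$ lies in $\mathcal{D}$ but not in $\mathcal{M}^{\wedge}=\operatorname{\mathsf{thick}}\mathcal{M}\cap\mathcal{M}^{\perp}$: membership in $\mathcal{M}^{\perp}$ would force $\mathcal{D}(T,\Sigma^{k}\Sigma^{-1}T)=\mathcal{D}(T,\Sigma^{k-1}T)=0$ for all $k\ge 1$, contradicting $\mathcal{D}(T,T)\neq 0$. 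So neither your spanning argument nor your map $\rho$ can be evaluated on all of $\mathcal{C}$ as written.

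The paper's remedy is two-sided: by Proposition \ref{thm-AT22}, the pair $(\mathcal{M}^{\vee},\mathcal{M}^{\wedge})$ is a bounded hereditary cotorsion pair, so every $N\in\mathcal{C}$ sits in a conflation $U_N\to V_N\to N\dashrightarrow$ with $U_N\in\mathcal{M}^{\wedge}$ and $V_N\in\mathcal{M}^{\vee}$. One first defines $\gamma$ on $\mathcal{M}^{\wedge}$ via minimal right $\mathcal{M}$-approximations and on $\mathcal{M}^{\vee}$ via minimal left $\mathcal{M}$-approximations, then sets $\gamma(N):=\gamma(V_N)-\gamma(U_N)$; spanning follows from the same conflation. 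Well-definedness and additivity of $\gamma$ are then checked by a layered case analysis (first conflations in $\mathcal{M}^{\vee}$, then in $\mathcal{M}^{\wedge}$, then conflations with end term in $\mathcal{M}^{\vee}$, finally the general case) using $(\mathrm{ET4})$ and its dual together with the closure properties of $\mathcal{M}^{\vee}$, $\mathcal{M}^{\wedge}$, rather than a horseshoe-and-homotopy argument. If you replace your one-sided $\mathcal{M}$-resolutions with this cotorsion-pair decomposition, the rest of your plan can be carried through, but the additivity verification is where the real work lives and your sketch currently leaves it open.
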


Secondly, we introduce silting mutation in an extriangulated category as a common generalization of tilting mutation in the module category over a finite dimensional algebra introduced by Riedtmann and Schofield (\cite{RS91}), and silting mutation in a triangulated category introduced by Aihara and Iyama (\cite{AI12}). 
Let $\mathcal{C}$ be an extriangulated category and let $\mathcal{M}$ be a silting subcategory of $\mathcal{C}$.
When $\mathcal{D}$ is a good covariantly finite subcategory of $\mathcal{M}$, that is, each object in $\mathcal{M}$ admits a left $\mathcal{D}$-approximation which is an $\mathfrak{s}$-inflation (in $\mathcal{C}$), we define a left mutation $\mu^{L}(\mathcal{M};\mathcal{D})$ of $\mathcal{M}$ with respect to $\mathcal{D}$ (see Definition \ref{def-mut}).
The following theorem is one of main results of this paper. 
  
\begin{theorem}[{Theorem \ref{thm-main2-1845}(3)}]\label{intro-thm2}
Let $\mathcal{M}$ be a silting subcategory of $\mathcal{C}$ and let $\mathcal{D}$ be a good covariantly finite subcategory of $\mathcal{M}$. 
Then $\mu^{L}(\mathcal{M};\mathcal{D})$ is a silting subcategory of $\mathcal{C}$. 
\end{theorem}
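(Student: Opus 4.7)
Write $\mathcal{N}:=\mu^{L}(\mathcal{M};\mathcal{D})$, which by definition is the additive closure of $\mathcal{D}$ together with the cocones $N_{M}$ coming from conflations
\[
M \xrightarrow{\,f_{M}\,} D_{M} \to N_{M} \dashrightarrow,
\]
where $f_{M}$ is a left $\mathcal{D}$-approximation of $M\in\mathcal{M}$ that is an $\mathfrak{s}$-inflation. The plan is to verify the two axioms of silting for $\mathcal{N}$: (i) $\mathbb{E}^{i}(\mathcal{N},\mathcal{N})=0$ for all $i\geq 1$, and (ii) $\mathrm{thick}\,\mathcal{N}=\mathcal{C}$.

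For (i), the vanishing splits into four cases, which I would dispatch by plugging the mutation conflations into the long exact sequences of $\mathbb{E}^{\bullet}$ provided by the extriangulated structure. The case $\mathbb{E}^{i}(\mathcal{D},\mathcal{D})=0$ is immediate since $\mathcal{D}\subseteq\mathcal{M}$. For $\mathbb{E}^{i}(\mathcal{D},N_{M})=0$, apply $\mathbb{E}^{\bullet}(D',-)$ to the $M$-conflation and use $\mathbb{E}^{i}(D',M)=\mathbb{E}^{i}(D',D_{M})=0$. For $\mathbb{E}^{i}(N_{M},\mathcal{D})=0$, apply $\mathbb{E}^{\bullet}(-,D')$ to the $M$-conflation; the defining property of a left $\mathcal{D}$-approximation makes $\mathrm{Hom}(D_{M},D')\to\mathrm{Hom}(M,D')$ surjective, which together with the presilting vanishing in $\mathcal{M}$ forces all higher $\mathbb{E}^{i}(N_{M},D')$ to vanish.

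The main obstacle is the fourth case, $\mathbb{E}^{i}(N_{M},N_{M'})=0$. The strategy is a two-stage reduction. First establish the intermediate vanishing $\mathbb{E}^{i}(M,N_{M'})=0$ for $M\in\mathcal{M}$ and $i\geq 1$ by applying $\mathbb{E}^{\bullet}(M,-)$ to the $M'$-conflation: the ends $\mathbb{E}^{i}(M,M')$ and $\mathbb{E}^{i}(M,D_{M'})$ vanish, and at degree zero the map $\mathrm{Hom}(M,D_{M'})\to\mathrm{Hom}(M,N_{M'})$ is surjective because the connecting map lands in $\mathbb{E}^{1}(M,M')=0$. Then apply $\mathbb{E}^{\bullet}(-,N_{M'})$ to the $M$-conflation; the already known vanishings give $\mathbb{E}^{i}(N_{M},N_{M'})=0$ for $i\geq 2$, while the case $i=1$ requires surjectivity of $\mathrm{Hom}(D_{M},N_{M'})\to\mathrm{Hom}(M,N_{M'})$. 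To prove it, take $\varphi\colon M\to N_{M'}$; by the previous step it lifts to some $\widetilde\varphi\colon M\to D_{M'}$; since $D_{M'}\in\mathcal{D}$ and $f_{M}$ is a left $\mathcal{D}$-approximation, $\widetilde\varphi$ factors through $f_{M}$, and composing with $D_{M'}\to N_{M'}$ yields the desired lift of $\varphi$ through $f_{M}$.

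For (ii), both inclusions are direct. The defining conflation for $N_{M}$ has $M,D_{M}\in\mathcal{M}$, so $N_{M}\in\mathrm{thick}\,\mathcal{M}$, giving $\mathcal{N}\subseteq\mathrm{thick}\,\mathcal{M}$; conversely, the same conflation together with $D_{M},N_{M}\in\mathcal{N}$ shows $M\in\mathrm{thick}\,\mathcal{N}$, hence $\mathcal{M}\subseteq\mathrm{thick}\,\mathcal{N}$. Since $\mathcal{M}$ is silting, $\mathrm{thick}\,\mathcal{N}=\mathrm{thick}\,\mathcal{M}=\mathcal{C}$. The expected sticking points are purely technical: confirming that the long exact sequences in $\mathbb{E}^{\bullet}$ are available in the precise form needed (presumably established in an earlier section), and keeping track of the direction and degree shifts when transferring vanishing across the conflations.
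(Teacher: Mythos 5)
Your proof is correct and follows essentially the same route as the paper's. Two small observations on where you diverge in presentation. The paper packages the key vanishings into a preliminary lemma (Lemma~3.14) asserting that each cocone $N_{M}$ lies in $\mathcal{M}^{\perp}$, in $^{\perp_{>1}}\mathcal{M}$, and in $^{\perp}\mathcal{D}$; the theorem's proof then applies $\mathcal{C}(\mathcal{N},-)$ to the defining conflation $M\to D\to N\dashrightarrow$ and cites the lemma twice to kill both ends of $\mathbb{E}^{k}(\mathcal{N},D)\to\mathbb{E}^{k}(\mathcal{N},N)\to\mathbb{E}^{k+1}(\mathcal{N},M)$, uniformly in $k\ge 1$. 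You instead attack the hardest vanishing $\mathbb{E}^{i}(N_{M},N_{M'})=0$ by applying the contravariant functor $\mathcal{C}(-,N_{M'})$ to the $M$-conflation, which works but forces you to treat $i=1$ separately via a lifting-and-factoring argument through the left $\mathcal{D}$-approximation $f_{M}$; the paper's choice of the covariant functor $\mathcal{C}(N_{M},-)$ (applied to the $M'$-conflation) sidesteps that special case entirely because the needed vanishings $\mathbb{E}^{k}(N_{M},\mathcal{D})=0$ ($k\ge 1$) and $\mathbb{E}^{k+1}(N_{M},\mathcal{M})=0$ ($k\ge 1$) are uniform. Your $i=1$ argument is correct, just slightly more work than necessary. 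The thickness argument agrees with the paper's (which only needs the inclusion $\mathcal{M}\subseteq\mathcal{N}_{1}^{\vee}\subseteq\operatorname{\mathsf{thick}}\mathcal{N}$; the reverse inclusion you also verify is automatic).
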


Happel and Unger (\cite{HU05}) and Aihara and Iyama (\cite{AI12}) proved that tilting mutation and silting mutation are closely related to a partial order of tilting modules and silting subcategories respectively. 
As a counterpart of their results, we give a relationship between silting mutation and a partial order of silting subcategories in an extriangulated category (see Theorem \ref{thm-comb}).  

Using silting mutation, we study a Bongartz-type lemma for silting subcategories in an extriangulated category. 
Bongartz (\cite{B81}) proved that each pretilting module with projective dimension at most one is a direct summand of a tilting module with projective dimension at most one.
However, it is known that a Bongartz-type lemma for tilting modules does not necessarily holds.
Rickard and Schofield (\cite{RS89}) showed that a Bongartz-type lemma for tilting modules with finite projective dimension holds true for algebras of finite representation type. 
On the other hand, Aihara and Mizuno (\cite{A13,AM17}) investigated a Bongartz-type lemma for silting objects in a triangulated category and gave its sufficient conditions.
As an analog of their results, we provide some sufficient conditions for a Bongartz-type lemma to hold true (see Theorem \ref{thm-BC1}, Corollary \ref{cor-BC1} and Theorem \ref{thm-315}). 

Finally, we give a relationship between silting subcategories and contravariantly finite resolving subcategories, which generalizes Auslander--Reiten's result (\cite[Theorem 5.5]{AR91}). 
Let $\mathcal{P}^{\infty}$ denote the subcategory of $\mathcal{C}$ consisting of objects with finite projective dimension. 
Then we have the following result.
 
\begin{theorem}[{Theorem \ref{thm-AR2}}]\label{intro-thm3}
Let $\mathcal{C}$ be a weakly idempotent complete extriangulated category which has enough projective objects and enough injective objects.  
Assume that each silting subcategory of $\mathcal{P}^{\infty}$ with finite projective dimension is contravariantly finite. 
Then there exist mutually inverse bijections between the set of silting subcategories of $\mathcal{P}^{\infty}$ with finite projective dimension and the set of contravariantly finite resolving subcategories of $\mathcal{C}$ with finite projective dimension. 
\end{theorem}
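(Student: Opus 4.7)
The plan is to construct mutually inverse maps $\Phi$ and $\Psi$ between the two sets, mimicking the classical Auslander--Reiten construction. For a silting subcategory $\mathcal{M}\subseteq\mathcal{P}^{\infty}$ of finite projective dimension, set
\[
\Phi(\mathcal{M}):=\{X\in\mathcal{C}\mid X\text{ admits a finite }\mathcal{M}\text{-resolution in }\mathcal{C}\},
\]
the full subcategory of objects with a finite resolution (realised by successive $\mathfrak{s}$-triangles) whose terms lie in $\mathcal{M}$. For a contravariantly finite resolving subcategory $\mathcal{X}\subseteq\mathcal{C}$ of finite projective dimension, set
\[
\Psi(\mathcal{X}):=\{M\in\mathcal{X}\mid\operatorname{Ext}^{i}_{\mathcal{C}}(M,\mathcal{X})=0\text{ for all }i\geq 1\},
\]
the full subcategory of Ext-projective objects of $\mathcal{X}$.

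First I would check that $\Phi(\mathcal{M})$ is a contravariantly finite resolving subcategory of $\mathcal{C}$ with finite projective dimension. Closure under extensions and under kernels of $\mathfrak{s}$-deflations follows from standard horseshoe and dimension-shift arguments in an extriangulated category with enough projectives. The projectives of $\mathcal{C}$ lie in $\Phi(\mathcal{M})$ because $\mathcal{M}$ is silting inside $\mathcal{P}^{\infty}$ (so projectives acquire finite $\mathcal{M}$-resolutions, with weak idempotent completeness absorbing summands), and finite projective dimension is inherited from $\mathcal{M}\subseteq\mathcal{P}^{\infty}$. Contravariant finiteness of $\Phi(\mathcal{M})$ is obtained by induction on the length of an $\mathcal{M}$-resolution, splicing right $\mathcal{M}$-approximations (available by hypothesis) along $\mathfrak{s}$-triangles.

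Next I would verify that $\Psi(\mathcal{X})$ is a silting subcategory of $\mathcal{P}^{\infty}$ with finite projective dimension. The $\operatorname{Ext}$-vanishing between members of $\Psi(\mathcal{X})$ is built into the definition, and $\Psi(\mathcal{X})\subseteq\mathcal{X}\subseteq\mathcal{P}^{\infty}$ supplies finite projective dimension. The real content is that every object of $\mathcal{P}^{\infty}$ admits a finite $\Psi(\mathcal{X})$-resolution. Starting from a right $\mathcal{X}$-approximation of a projective and iterating on kernels (which remain in $\mathcal{X}$ by the resolving property), one produces a tentative resolution by $\mathcal{X}$; a Wakamatsu-type refinement together with the finite projective dimension of $\mathcal{X}$ will force the successive terms to lie in $\Psi(\mathcal{X})$ and force termination. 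A Bongartz-type passage (as in Theorem \ref{thm-BC1} and Corollary \ref{cor-BC1}) then promotes this to the genuine silting property of $\Psi(\mathcal{X})$ inside $\mathcal{P}^{\infty}$.

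Finally I would show $\Psi\Phi=\mathrm{id}$ and $\Phi\Psi=\mathrm{id}$. The inclusion $\mathcal{M}\subseteq\Psi\Phi(\mathcal{M})$ is immediate from silting; conversely, any Ext-projective of $\Phi(\mathcal{M})$ has an $\mathcal{M}$-resolution whose structural $\mathfrak{s}$-triangles all split, so it is a summand of an object of $\mathcal{M}$ and hence lies in $\mathcal{M}$ by weak idempotent completeness. The inclusion $\Phi\Psi(\mathcal{X})\subseteq\mathcal{X}$ is automatic from $\Psi(\mathcal{X})\subseteq\mathcal{X}$ together with the resolving property, while $\mathcal{X}\subseteq\Phi\Psi(\mathcal{X})$ is precisely the resolution produced in the previous step. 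The principal obstacle I anticipate lies in that previous step: producing a finite resolution whose terms already live in $\Psi(\mathcal{X})$, not merely in $\mathcal{X}$. This is the extriangulated analogue of the central step in Auslander--Reiten's original proof, and it is there that the contravariant-finiteness hypothesis on silting subcategories of $\mathcal{P}^{\infty}$ with finite projective dimension is genuinely needed, in order to feed the silting-mutation and Bongartz machinery built in the earlier sections.
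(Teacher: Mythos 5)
The central definition in your proposal is wrong, and it is wrong in a way that cannot be repaired by adjustment of the later steps. You set
\[
\Psi(\mathcal{X}):=\{M\in\mathcal{X}\mid \mathbb{E}^{i}(M,\mathcal{X})=0\ \text{for all}\ i\geq 1\},
\]
the Ext-\emph{projective} objects of $\mathcal{X}$. But for any resolving subcategory $\mathcal{X}$ of $\mathcal{C}$ (with $\mathcal{C}$ having enough projectives), this subcategory is exactly $\operatorname{\mathsf{proj}}\mathcal{C}$: given $M\in\mathcal{X}\cap{}^{\perp}\mathcal{X}$, choose an $\mathfrak{s}$-conflation $K\to P\to M\dashrightarrow$ with $P$ projective; since $\mathcal{X}$ is closed under cocones and contains $\operatorname{\mathsf{proj}}\mathcal{C}$ we have $K\in\mathcal{X}$, so $\mathbb{E}(M,K)=0$ forces the conflation to split and $M$ to be a summand of $P$. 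Hence $\Psi$ collapses to the constant map $\mathcal{X}\mapsto\operatorname{\mathsf{proj}}\mathcal{C}$ and there is no hope of $\Psi\Phi=\mathrm{id}$. The correct inverse must use the Ext-\emph{injective} objects of $\mathcal{X}$, namely $\mathcal{X}\cap\mathcal{X}^{\perp}=\{M\in\mathcal{X}\mid \mathbb{E}^{i}(\mathcal{X},M)=0\}$, which is what the paper's map $\psi$ does. There is a parallel ambiguity in your $\Phi$: you speak of a ``finite $\mathcal{M}$-resolution'' (which ordinarily means $\mathcal{M}^{\wedge}$), but $\mathcal{M}^{\wedge}$ is closed under cones, not cocones, and does not contain $\operatorname{\mathsf{proj}}\mathcal{C}$ in general (indeed $\mathcal{M}^{\wedge}\cap{}^{\perp}\mathcal{M}=\mathcal{M}$ by Lemma~\ref{lem-wedge_perp}(2)), so it is not resolving; the correct target is the coresolution category $\mathcal{M}^{\vee}$.

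Beyond the definitions, your plan also diverges from the paper's route in a way worth noting. You propose to establish $\Psi\Phi=\mathrm{id}$ and $\Phi\Psi=\mathrm{id}$ directly, with the hard step being the construction of finite $\Psi(\mathcal{X})$-(co)resolutions via Wakamatsu-type refinement and a Bongartz/mutation argument. The paper instead factors the bijection through hereditary cotorsion pairs: it proves that $\mathcal{M}\mapsto(\mathcal{M}^{\vee},\mathcal{M}^{\perp})$ and $(\mathcal{X},\mathcal{Y})\mapsto\mathcal{X}\cap\mathcal{Y}$ are mutually inverse (Theorem~\ref{thm-AR}, via Propositions~\ref{prop-Phi} and~\ref{prop-Psi}), and then composes with the already-established bijection between hereditary cotorsion pairs with $\mathrm{pd}\mathcal{X}<\infty$ and contravariantly finite resolving subcategories with $\mathrm{pd}\mathcal{X}<\infty$. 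In particular, the paper's heavy lifting is in Proposition~\ref{prop-Psi}, where contravariant finiteness of $\mathcal{M}$ is used to show $\mathcal{M}^{\perp}=\operatorname{\mathsf{cone}}(\mathcal{M}^{\perp},\mathcal{M})$ and hence that $(\mathcal{M}^{\vee},\mathcal{M}^{\perp})$ satisfies the cone/cocone axioms of a hereditary cotorsion pair; the silting-mutation and Bongartz machinery of Theorem~\ref{thm-BC1} and Corollary~\ref{cor-BC1} plays no role here. Once you correct $\Psi$ to use Ext-injectives, you should align the argument with this cotorsion-pair decomposition, since otherwise the ``Wakamatsu-type refinement'' step remains an unsubstantiated gap rather than a proof.
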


By Theorem \ref{intro-thm3}, we obtain a bijection between the set of isomorphism classes of basic tilting modules and the set of contravariantly finite resolving subcategories consisting of modules with finite projective dimension for a finite dimensional algebra (see Corollary \ref{cor_AR}).

\section{Extriangulated categories}

Throughout this paper, $R$ is a commutative unital ring, $\mathcal{C}$ is a small additive $R$-linear category and all subcategories are assumed to be strictly full.

In this section, we collect terminologies and basic properties of extriangulated categories which we need later. 
We omit the precise definition of extriangulated categories.
For details, we refer to \cite{INP, NP19}.
An extriangulated category $\mathcal{C}=(\mathcal{C},\mathbb{E},\mathfrak{s})$ consists of the following data which satisfy certain axioms (see \cite[Definition 2.12]{NP19}):
\begin{itemize}
\item $\mathcal{C}$ is a small additive $R$-linear category.
\item $\mathbb{E}:\mathcal{C}^{\mathrm{op}}\times \mathcal{C}\to \operatorname{\mathsf{Mod}}R$ is an $R$-bilinear functor.
\item $\mathfrak{s}$ is a correspondence which associates an equivalence class $[A\rightarrow B\rightarrow C]$ of complexes in $\mathcal{C}$ to each $\delta\in \mathbb{E}(C,A)$.
Here two complexes $A\xrightarrow{f}B\xrightarrow{g}C$ and $A\xrightarrow{f'}B'\xrightarrow{g'}C$ in $\mathcal{C}$ are \emph{equivalent} if there exists an isomorphism $b:B\to B'$ such that the diagram
\begin{align}
\xymatrix{
A\ar[r]^-{f}\ar@{=}[d]&B\ar[r]^-{g}\ar[d]_{b}^{\cong}&C\ar@{=}[d]\\
A\ar[r]^-{f'}&B'\ar[r]^-{g'}&C
}\notag
\end{align}
is commutative, and let $[A\xrightarrow{f}B\xrightarrow{g}C]$ denote the equivalence class of $A\xrightarrow{f}B\xrightarrow{g}C$.
\end{itemize}
A complex $A\xrightarrow{f}B\xrightarrow{g}C$ in $\mathcal{C}$ is called an \emph{$\mathfrak{s}$-conflation} if there exists $\delta\in \mathbb{E}(C,A)$ such that $\mathfrak{s}(\delta)=[A\xrightarrow{f}B\xrightarrow{g}C]$.
We write the $\mathfrak{s}$-conflation as $A\xrightarrow{f}B\xrightarrow{g}C\overset{\delta}{\dashrightarrow}$.
Then $f$ is called an \emph{$\mathfrak{s}$-inflation}, $g$ is called an \emph{$\mathfrak{s}$-deflation}, $C$ is denoted by $\mathrm{Cone}(f)$ and $A$ is denoted by $\mathrm{Cocone}(g)$. 

In the rest of this paper, the following subcategories play a crucial role. 

\begin{definition}
Let $\mathcal{X}, \mathcal{Y}$ be subcategories of $\mathcal{C}$. 
\begin{enumerate}[\upshape(1)]
\item Let $\mathcal{X} \ast \mathcal{Y}$ denote the subcategory of $\mathcal{C}$ consisting of $M\in\mathcal{C}$ which admits an $\mathfrak{s}$-conflation $X \rightarrow M \rightarrow Y \dashrightarrow$ in $\mathcal{C}$ with $X\in \mathcal{X}$ and $Y\in\mathcal{Y}$. We say that \emph{$\mathcal{X}$ is closed under extensions} if $\mathcal{X}\ast\mathcal{X}\subseteq \mathcal{X}$.
\item Let $\operatorname{\mathsf{cone}}(\mathcal{X},\mathcal{Y})$ denote the subcategory of $\mathcal{C}$ consisting of $M\in\mathcal{C}$ which admits an $\mathfrak{s}$-conflation $X \rightarrow Y \rightarrow M \dashrightarrow$ in $\mathcal{C}$ with $X\in \mathcal{X}$ and $Y\in\mathcal{Y}$.  We say that \emph{$\mathcal{X}$ is closed under cones} if $\operatorname{\mathsf{cone}}(\mathcal{X},\mathcal{X})\subseteq\mathcal{X}$. 
\item For each $n\geq 0$, we inductively define a subcategory $\mathcal{X}^{\wedge}_{n}$ of $\mathcal{C}$ as $\mathcal{X}^{\wedge}_{n}:=\operatorname{\mathsf{cone}}(\mathcal{X}^{\wedge}_{n-1},\mathcal{X})$, where $\mathcal{X}^{\wedge}_{-1}:=\{ 0 \}$.
Put $\displaystyle \mathcal{X}^{\wedge}:=\bigcup_{n\geq 0}\mathcal{X}^{\wedge}_{n}$.
\item Let $\operatorname{\mathsf{cocone}}(\mathcal{X}, \mathcal{Y})$ denote the subcategory of $\mathcal{C}$ consisting of $M\in\mathcal{C}$ which admits an $\mathfrak{s}$-conflation $M\rightarrow X \rightarrow Y\dashrightarrow$ in $\mathcal{C}$ with $X\in \mathcal{X}$ and $Y\in\mathcal{Y}$. We say that \emph{$\mathcal{X}$ is closed under cocones} if $\operatorname{\mathsf{cocone}}(\mathcal{X},\mathcal{X})\subseteq\mathcal{X}$.
\item For each $n\geq 0$, we inductively define a subcategory $\mathcal{X}^{\vee}_{n}$ of $\mathcal{C}$ as $\mathcal{X}^{\vee}_{n}:=\operatorname{\mathsf{cocone}}(\mathcal{X},\mathcal{X}^{\vee}_{n-1})$, where $\mathcal{X}^{\vee}_{-1}:=\{ 0 \}$.
Put $\displaystyle \mathcal{X}^{\vee}:=\bigcup_{n\geq 0}\mathcal{X}^{\vee}_{n}$. 
\item We call $\mathcal{X}$ a \emph{thick subcategory} of $\mathcal{C}$ if it is closed under extensions, cones, cocones and direct summands. Let $\operatorname{\mathsf{thick}}\mathcal{X}$ denote the smallest thick subcategory containing $\mathcal{X}$. 
\item We call $\mathcal{X}$ a \emph{resolving subcategory} of $\mathcal{C}$ if $\mathcal{C}=\operatorname{\mathsf{cone}}(\mathcal{C},\mathcal{X})$ and it is closed under extensions, cocones and direct summands.
\end{enumerate}
\end{definition}

An object $P\in\mathcal{C}$ is said to be \emph{projective} if $\mathbb{E}(P,\mathcal{C})=0$. 
Let $\operatorname{\mathsf{proj}}\mathcal{C}$ denote the subcategory of $\mathcal{C}$ consisting of all projective objects in $\mathcal{C}$.
Dually, we define injective objects and the subcategory $\operatorname{\mathsf{inj}}\mathcal{C}$.
We can check that if $\mathcal{C}$ has enough projective objects (i.e., $\mathcal{C}=\operatorname{\mathsf{cone}}(\mathcal{C},\operatorname{\mathsf{proj}}\mathcal{C})$), then $\mathcal{X}$ is a resolving subcategory if and only if $\operatorname{\mathsf{proj}}\mathcal{C}\subseteq\mathcal{X}$ and it is closed under extensions, cocones and direct summands. 

Gorsky, Nakaoka and Palu (\cite{GNP}) gave an $R$-bilinear functor $\mathbb{E}^{n}: \mathcal{C}^{\mathrm{op}} \times \mathcal{C} \to \operatorname{\mathsf{Mod}}R$ and proved that any $\mathfrak{s}$-conflation induces the following long exact sequences.

\begin{proposition}[{\cite[Theorem 3.5]{GNP}}]\label{prop_longex}
Let $A\rightarrow B\rightarrow C\dashrightarrow$ be an $\mathfrak{s}$-conflation. 
Then the following statements hold. 
\begin{enumerate}[\upshape(1)]
\item For each $X\in\mathcal{C}$, there exists a long exact sequence 
\begin{align}
&\mathcal{C}(X,A) \to \mathcal{C}(X,B) \to \mathcal{C}(X,C) \to \mathbb{E}(X,A) \to \cdots \notag  \\ 
&\cdots \to \mathbb{E}^{n-1}(X,C) \to \mathbb{E}^{n}(X,A) \to \mathbb{E}^{n}(X,B) \to \mathbb{E}^{n}(X,C) \to \cdots. \notag
\end{align}
\item For each $X\in \mathcal{C}$, there exists a long exact sequence 
\begin{align}
&\mathcal{C}(C, X) \to \mathcal{C}(B,X) \to \mathcal{C}(A,X) \to \mathbb{E}(C,X) \to \cdots \notag \\ 
&\cdots \to \mathbb{E}^{n-1}(A,X) \to \mathbb{E}^{n}(C,X) \to \mathbb{E}^{n}(B,X) \to \mathbb{E}^{n}(A,X) \to \cdots. \notag
\end{align}
\end{enumerate}
\end{proposition}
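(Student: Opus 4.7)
The plan is to first construct the higher extension functors $\mathbb{E}^n$ in a way that specializes to the given $\mathbb{E}=\mathbb{E}^1$, then construct connecting morphisms for an $\mathfrak{s}$-conflation, and finally verify exactness by reducing everything to the base case already built into the axioms. I would define $\mathbb{E}^n(C,A)$ via Yoneda-style $n$-fold extensions: equivalence classes of sequences of composable $\mathfrak{s}$-conflations
\[
A\to X_{n-1}\to Z_{n-2}\dashrightarrow,\ \ Z_{n-2}\to X_{n-2}\to Z_{n-3}\dashrightarrow,\ \ldots,\ \ Z_0\to X_0\to C\dashrightarrow,
\]
modulo the equivalence relation generated by chain maps which are the identity on the endpoints $A$ and $C$. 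Bilinearity (Baer sum on one side, functoriality on the other) comes from standard pullback/pushout manipulations of $\mathfrak{s}$-conflations, which are provided by the axioms (ET3, ET3$^{\mathrm{op}}$) of an extriangulated category.

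Next I would establish the base case $n=1$: the six-term exact sequence in $\mathcal{C}(X,-)$ and $\mathbb{E}(X,-)$ is already a consequence of the axioms of an extriangulated category (see \cite[Proposition 3.3]{NP19}), the connecting map $\mathcal{C}(X,C)\to\mathbb{E}(X,A)$ sending $f$ to the pullback $f^{\ast}\delta$ of the realization $\delta$ of the given conflation along $f$. The contravariant version follows dually.

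For the inductive step, given a class $\eta\in\mathbb{E}^n(X,C)$ represented by an $n$-fold sequence ending with $Z_0\to X_0\to C\dashrightarrow$, I would define the connecting map $\mathbb{E}^n(X,C)\to\mathbb{E}^{n+1}(X,A)$ by splicing $\eta$ with the given conflation $A\to B\to C\dashrightarrow$ at $C$, producing an $(n+1)$-fold extension. Exactness at $\mathbb{E}^n(X,A)$, $\mathbb{E}^n(X,B)$, $\mathbb{E}^n(X,C)$ and at the connecting position should follow from a diagram chase that splits the given sequence at the \emph{first} $\mathfrak{s}$-conflation of any chosen representative of $\eta$, producing a ``syzygy-like'' object, and then applying the base case ($n=1$) at that position. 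Functoriality in $X$ and independence of the conflation up to equivalence is checked by the standard pullback axioms.

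The main obstacle will be the careful verification that splicing is well-defined on equivalence classes of $n$-fold extensions and that the resulting sequence is genuinely exact at the connecting morphism: this requires repeated invocation of ET4 and ET4$^{\mathrm{op}}$ (the extriangulated analogs of the octahedral axiom) to replace one representative by another via pushout/pullback of $\mathfrak{s}$-conflations, and then to match the image of the splicing with the kernel of the next arrow. A secondary technical point is to verify $R$-bilinearity of $\mathbb{E}^n$ compatible with the $\mathbb{E}$ given in the data; this amounts to showing Baer sum descends to the equivalence relation, which is again standard once ET3 and ET4 are available.
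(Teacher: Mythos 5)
The paper does not actually prove this proposition: it is quoted directly as Theorem 3.5 of Gorsky--Nakaoka--Palu, where $\mathbb{E}^{n}$ denotes a specific $R$-bilinear functor constructed in that paper. Your proposal instead \emph{defines} $\mathbb{E}^{n}$ afresh via Yoneda-style equivalence classes of spliced $\mathfrak{s}$-conflations and then sets out to prove the long exact sequence for that functor. This is a genuine gap: the proposition concerns GNP's $\mathbb{E}^{n}$, and you never argue that your Yoneda groups coincide with theirs. The surrounding text flags that this identification is delicate --- the paper notes that GNP's $\mathbb{E}^{n}$ agrees with the constructions of \cite{HLN21} and \cite{LN19} only under the extra hypothesis of enough projectives and injectives, and that restricting to an extension-closed subcategory can change $\mathbb{E}^{i}$ for $i\geq 2$. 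Both remarks indicate that a naive Yoneda definition and GNP's functor need not coincide in general, so even if your argument were carried out in full, it would establish the long exact sequence for a possibly different functor than the one the statement is about.

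A second problem is the repeated assertion that bilinearity (Baer sum) and exactness at the middle terms follow from ``standard'' manipulations once (ET3), (ET3)$^{\mathrm{op}}$, (ET4), (ET4)$^{\mathrm{op}}$ are in hand. The standard arguments for Yoneda extensions rely either on kernels and cokernels of admissible morphisms (exact categories) or on a shift functor (triangulated categories); an arbitrary extriangulated category has neither, so the usual diagram chases that force exactness do not carry over. Verifying that splicing respects the equivalence relation on $n$-fold extensions, that Baer sum descends to equivalence classes and endows them with an $R$-module structure compatible with the given $\mathbb{E}=\mathbb{E}^{1}$, and that the connecting morphism has exactly the right kernel and image, is precisely the nontrivial content of the GNP paper. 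Your outline names the tools but does not identify how they overcome these specifically extriangulated obstructions, and the inductive step hinging on a ``syzygy-like'' object has no justification in the absence of kernels.
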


Recall typical examples of extriangulated categories (for detail, see \cite{GNP, NP19}).

\begin{example}\label{example-etcat}
\begin{enumerate}[\upshape(1)]
\item Let $\Lambda$ be a finite dimensional algebra and let $\operatorname{\mathsf{mod}}\Lambda$ denote the category of finitely generated right $\Lambda$-modules.
Then $\operatorname{\mathsf{mod}}\Lambda$ becomes an extriangulated category by the following data.
\begin{itemize}
\item[$\bullet$] $\mathbb{E}(C,A):=\mathrm{Ext}_{\Lambda}^{1}(C,A)$ for all $A,C \in \operatorname{\mathsf{mod}}\Lambda$.
\item[$\bullet$] $\mathfrak{s}$ is the identity. 
\end{itemize} 
In this case, we have $\mathbb{E}^{k}(C, A)=\mathrm{Ext}^{k}_{\Lambda}(C,A)$ for all $A, C \in \operatorname{\mathsf{mod}}\Lambda$ and $k \ge 1$. 
\item Let $\mathcal{D}$ be a triangulated category with shift functor $\Sigma$. 
Then $\mathcal{D}$ becomes an extriangulated category by the following data.
\begin{itemize}
\item[$\bullet$] $\mathbb{E}(C, A):=\mathcal{D}(C,\Sigma A)$ for all $A, C \in \mathcal{D}$. 
\item[$\bullet$] For $\delta\in \mathbb{E}(C,A)$, we take a triangle $A\xrightarrow{f}B\xrightarrow{g}C\xrightarrow{\delta}\Sigma A$.
Then we define $\mathfrak{s}(\delta):=[A\xrightarrow{f}B\xrightarrow{g}C]$.
\end{itemize} 
In this case, we have $\mathbb{E}^{k}(C, A)=\mathcal{D}(C,\Sigma^{k} A)$ for all $A, C \in \mathcal{D}$ and $k \ge 1$. 
\item Let $\mathcal{C}$ be an extriangulated category and let $\mathcal{X}$ be a subcategory of $\mathcal{C}$ which is closed under extensions.  
Then by restricting the extriangulated structure to $\mathcal{X}$, we can regard $\mathcal{X}$ as an extriangulated category (see \cite[Remark 2.18]{NP19}). 
\end{enumerate}
\end{example}

We give two remarks on the $R$-bilinear functor $\mathbb{E}^{n}$.
If $\mathcal{C}$ has enough projective objects and enough injective objects, then the bilinear functor $\mathbb{E}^{n}$ is isomorphic to that in \cite{HLN21} or \cite{LN19} (see \cite[Corollary 3.21]{GNP}).
Let $\mathcal{X}$ be a subcategory of $\mathcal{C}$ which is closed under extensions.  
For $M,N\in\mathcal{X}$, let $\mathbb{E}^{i}_{\mathcal{X}}(M,N)$ denote the $i$-th positive extension group of the extriangulated category $\mathcal{X}$. 
Although we have $\mathbb{E}_{\mathcal{X}}(M,N)=\mathbb{E}(M,N)$ for each $M,N\in\mathcal{X}$, it does not necessarily satisfy $\mathbb{E}^{i}_{\mathcal{X}}(M,N)=\mathbb{E}^{i}(M,N)$ for $i \geq 2$ (see \cite[Example 3.30]{GNP}). 

If $\mathcal{X}$ is resolving, then the positive extension group of $\mathcal{X}$ is isomorphic to that of $\mathcal{C}$. 

\begin{lemma}\label{lem-thickE}
Let $\mathcal{C}$ be an extriangulated category with enough projective objects and let $\mathcal{X}$ be a resolving subcategory of $\mathcal{C}$.  
Then we have an isomorphism $\mathbb{E}_{\mathcal{X}}^{i}(M,N)\cong \mathbb{E}^{i}(M,N)$ for each $M,N\in\mathcal{X}$ and $i \geq 1$. 
\end{lemma}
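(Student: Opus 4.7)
The plan is to induct on $i\geq 1$, using that the resolving hypothesis lets one construct a projective resolution of $M$ that lives inside $\mathcal{X}$ and then dimension-shift in parallel in $\mathcal{C}$ and in $\mathcal{X}$. Note that $\mathcal{X}$ is itself an extriangulated category by Example \ref{example-etcat}(3), since a resolving subcategory is in particular closed under extensions.

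The base case $i=1$ is immediate from Example \ref{example-etcat}(3): the equality $\mathbb{E}_{\mathcal{X}}(M,N)=\mathbb{E}(M,N)$ holds by construction of the restricted structure. For the inductive step, I would first observe that $\mathcal{X}$ inherits enough projectives from $\mathcal{C}$. Indeed, the characterization of resolving subcategories recorded before Proposition \ref{prop_longex} gives $\operatorname{\mathsf{proj}}\mathcal{C}\subseteq\mathcal{X}$, so for any $M\in\mathcal{X}$ one can choose an $\mathfrak{s}$-conflation $K\to P\to M\dashrightarrow$ in $\mathcal{C}$ with $P\in\operatorname{\mathsf{proj}}\mathcal{C}$, and closedness of $\mathcal{X}$ under cocones forces $K\in\mathcal{X}$. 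Moreover, each such $P$ is projective in the extriangulated category $\mathcal{X}$ as well, because $\mathbb{E}_{\mathcal{X}}(P,-)$ is a restriction of $\mathbb{E}(P,-)$; the standard vanishing $\mathbb{E}^j(P,-)=0=\mathbb{E}^j_{\mathcal{X}}(P,-)$ for all $j\geq 1$, valid in any extriangulated category with enough projectives (cf.\ \cite{GNP}), then follows.

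With this in hand, I would apply Proposition \ref{prop_longex}(2) to the conflation $K\to P\to M\dashrightarrow$ with second argument $N$, once inside $\mathcal{C}$ and once inside $\mathcal{X}$. For $i\geq 2$, the outer terms $\mathbb{E}^{i-1}(P,N)$ and $\mathbb{E}^{i}(P,N)$, respectively $\mathbb{E}^{i-1}_{\mathcal{X}}(P,N)$ and $\mathbb{E}^{i}_{\mathcal{X}}(P,N)$, vanish, and each long exact sequence collapses to an isomorphism
\begin{align}
\mathbb{E}^{i}(M,N)\;\cong\;\mathbb{E}^{i-1}(K,N),\qquad \mathbb{E}^{i}_{\mathcal{X}}(M,N)\;\cong\;\mathbb{E}^{i-1}_{\mathcal{X}}(K,N).\notag
\end{align}
Since $K\in\mathcal{X}$, the induction hypothesis identifies the two right-hand sides, and the isomorphism at level $i$ follows.

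The main obstacle is really only the preliminary verification that projectives of $\mathcal{C}$ lying in $\mathcal{X}$ have vanishing higher $\mathbb{E}^{j}_{\mathcal{X}}$: this is where the resolving hypothesis (which endows $\mathcal{X}$ with enough projectives of its own) is essential, and is precisely the point that can fail for a merely extension-closed subcategory—cf.\ the cautionary \cite[Example 3.30]{GNP}. Beyond this input, the argument is a routine dimension-shifting.
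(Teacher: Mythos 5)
Your proof is correct and takes essentially the same approach as the paper: both arguments build a projective resolution of $M$ inside $\mathcal{X}$ (using that $\operatorname{\mathsf{proj}}\mathcal{C}\subseteq\mathcal{X}$ and $\mathcal{X}$ is closed under cocones) and then dimension-shift in parallel in $\mathcal{C}$ and in $\mathcal{X}$, noting that the projectives of $\mathcal{C}$ remain projective in the extriangulated category $\mathcal{X}$. The paper writes this as a direct chain of isomorphisms down to degree one, while you phrase it as an induction on $i$; the underlying mechanism is identical.
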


\begin{proof}
Let $M,N\in\mathcal{X}$. 
Since $\mathcal{C}$ has enough projective objects, we have an $\mathfrak{s}$-conflation $K_{1}\rightarrow P_{0}\rightarrow M\dashrightarrow$ with $P_{0}\in\operatorname{\mathsf{proj}}\mathcal{C}$. 
Since $\mathcal{X}$ is closed under cocones and contains projective objects, we have $K_{1}\in\mathcal{X}$. 
Inductively, there exists an $\mathfrak{s}$-conflation $K_{i+1}\rightarrow P_{i}\rightarrow K_{i}\dashrightarrow$ such that $K_{i+1}, K_{i}\in\mathcal{X}$ and $P_{i}\in\operatorname{\mathsf{proj}}\mathcal{C}$. 
Applying $\mathcal{X}(-,N)$ to the $\mathfrak{s}$-conflation gives an isomorphism $\mathbb{E}_{\mathcal{X}}^{i}(M,N)\cong \mathbb{E}_{\mathcal{X}}(K_{i-1},N)=\mathbb{E}(K_{i-1},N)$ for each $i\geq 1$. 
On the other hand, applying $\mathcal{C}(-,N)$ to the $\mathfrak{s}$-conflation gives an isomorphism $\mathbb{E}^{i}(M,N)\cong \mathbb{E}(K_{i-1},N)$ for each $i\geq 1$. 
Thus we have the assertion. 
\end{proof}

\section{Silting subcategories and hereditary cotorsion pairs}

Let $\mathcal{C}$ be an extriangulated category.
In this section, we collect basic properties of silting subcategories and hereditary cotorsion pairs.

\subsection{Definition and basic properties of silting subcategories}
In this subsection, we recall the definition of silting subcategories and collect their properties. For detail, see \cite{AT22,LZZZ}.
For a class $\mathcal{X}$ of objects in $\mathcal{C}$, let $\operatorname{\mathsf{add}}\mathcal{X}$ denote the subcategory of $\mathcal{C}$ whose objects are direct summands of finite direct sums of objects in $\mathcal{X}$.

\begin{definition}
Let $\mathcal{M}$ be a subcategory of $\mathcal{C}$.
\begin{enumerate}[(1)]
\item $\mathcal{M}$ is said to be \emph{presilting} if $\mathcal{M}=\operatorname{\mathsf{add}}\mathcal{M}$ and $\mathbb{E}^{k}(\mathcal{M},\mathcal{M})=0$ for all $k\geq 1$.
\item A presilting subcategory $\mathcal{M}$ is called a \emph{silting subcategory} of $\mathcal{C}$ if $\mathcal{C}=\operatorname{\mathsf{thick}} \mathcal{M}$.
Let $\operatorname{\mathsf{silt}}\mathcal{C}$ denote the set of all silting subcategories of $\mathcal{C}$. 
\item A presilting subcategory $\mathcal{M}$ is called a \emph{partial silting subcategory} of $\mathcal{C}$ if there exists a silting subcategory $\mathcal{N}$ such that $\mathcal{M}\subseteq \mathcal{N}$.
\end{enumerate}
\end{definition}

An object $M\in\mathcal{C}$ is said to be \emph{presilting} (respectively, \emph{silting}, \emph{partial silting}) if $\operatorname{\mathsf{add}}M$ is presilting (respectively, silting, partial silting). 
If $\mathcal{C}$ is a Krull--Schmidt category, then $M\mapsto \operatorname{\mathsf{add}}M$ gives a one-to-one correspondence between the set of isomorphism classes of basic objects and the set of subcategories of $\mathcal{C}$ containing additive generators. 
If $\mathcal{C}$ admits a silting object, then each silting subcategory contains an additive generator (for example, see \cite[Proposition 5.4]{AT22}). 
Thus we can identify $\operatorname{\mathsf{silt}}\mathcal{C}$ with the set of isomorphism classes of basic silting objects.

For an integer $n\geq 0$ and a subcategory $\mathcal{X}$ of $\mathcal{C}$, we define a subcategory $\mathcal{X}^{\perp_{>n}}$ as 
\begin{align}
\mathcal{X}^{\perp_{>n}}:=\{M\in\mathcal{C}\mid \mathbb{E}^{k}(\mathcal{X},M)=0 \textnormal{\; for each } k \geq n+1\}.\notag
\end{align}
Let $\mathcal{X}^{\perp}:=\mathcal{X}^{\perp_{>0}}$.
Dually, we define subcategories ${}^{\perp_{>n}}\mathcal{X}$ and ${}^{\perp}\mathcal{X}$. 
For a presilting subcategory $\mathcal{M}$,  we collect properties of $\mathcal{M}^{\wedge}$ and $\mathcal{M}^{\vee}$ (see \cite[Proposition 4.7, Proposition 4.8, Lemma 4.9, Proposition 4.10 and Lemma 4.11]{AT22}).

\begin{lemma}\label{lem-psilt_wedge}
Let $\mathcal{M}$ be a presilting subcategory of $\mathcal{C}$ and let $n,m$ be non-negative integers. 
Then the  following statements hold. 
\begin{enumerate}[\upshape(1)]
\item $\mathcal{M}^{\wedge}_{n}$ and $\mathcal{M}^{\vee}_{n}$ are closed under extensions and direct summands.  
\item We have $(\mathcal{M}_{n}^{\wedge})_{m}^{\vee}=\operatorname{\mathsf{cocone}}(\mathcal{M}_{n}^{\wedge},\mathcal{M}_{m-1}^{\vee})=\operatorname{\mathsf{cone}}(\mathcal{M}_{n-1}^{\wedge},\mathcal{M}_{m}^{\vee})=(\mathcal{M}^{\vee}_{m})_{n}^{\wedge}$.  
\item We have $(\mathcal{M}^{\wedge})^{\vee}=(\mathcal{M}^{\vee})^{\wedge}=\operatorname{\mathsf{thick}}\mathcal{M}$.
\item $\mathcal{M}^{\wedge}=\operatorname{\mathsf{thick}}\mathcal{M}\cap\mathcal{M}^{\perp}$ holds. In particular, $\mathcal{M}^{\wedge}$ is closed under cones.
If $\mathcal{M}$ is closed under cocones, then we have $\mathcal{M}^{\wedge}=\operatorname{\mathsf{thick}}\mathcal{M}$. 
\item $\mathcal{M}^{\vee}=\operatorname{\mathsf{thick}}\mathcal{M}\cap{}^{\perp}\mathcal{M}$ holds. 
In particular, $\mathcal{M}^{\vee}$ is closed under cocones. 
If $\mathcal{M}$ is closed under cones, then we have $\mathcal{M}^{\vee}=\operatorname{\mathsf{thick}}\mathcal{M}$. 
\end{enumerate}
\end{lemma}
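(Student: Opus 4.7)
My plan is to prove the five items in the order given, since each builds on its predecessors. The central technical tools throughout are the extriangulated octahedral axiom ET4 of Nakaoka--Palu and the long exact sequences of Proposition \ref{prop_longex}.

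For (1), I would induct on $n$, treating $\mathcal{M}^{\wedge}_n$ only (the $\mathcal{M}^{\vee}_n$ case being dual). The base $\mathcal{M}^{\wedge}_0 = \mathcal{M}$ is closed under extensions because $\mathbb{E}(\mathcal{M},\mathcal{M})=0$ and under summands because $\mathcal{M} = \operatorname{\mathsf{add}}\mathcal{M}$. For the inductive step, given an $\mathfrak{s}$-conflation $A \to B \to C \dashrightarrow$ with $A, C \in \mathcal{M}^{\wedge}_n$, I would splice chosen presentations $A' \to M_A \to A \dashrightarrow$ and $C' \to M_C \to C \dashrightarrow$ ($M_A, M_C \in \mathcal{M}$, $A', C' \in \mathcal{M}^{\wedge}_{n-1}$) by ET4 to exhibit $B$ via a conflation with middle term in $\mathcal{M}$ and kernel term in $\mathcal{M}^{\wedge}_{n-1}$ (here the inductive closure of $\mathcal{M}^{\wedge}_{n-1}$ under extensions is used). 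Closure under summands is obtained by the same ET4 argument, peeling off a summand of a resolution.

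For (2), I would induct on $n+m$. The two interior equalities encode an octahedral rearrangement: given $M \to A \to V \dashrightarrow$ with $A \in \mathcal{M}^{\wedge}_n$ and $V \in \mathcal{M}^{\vee}_{m-1}$, I would splice with a resolution $A'' \to N \to A \dashrightarrow$ (with $N \in \mathcal{M}$ and $A'' \in \mathcal{M}^{\wedge}_{n-1}$) and re-read the resulting $3 \times 3$ diagram to present $M$ as coming from $\operatorname{\mathsf{cone}}(\mathcal{M}^{\wedge}_{n-1},\mathcal{M}^{\vee}_m)$; the reverse direction is dual. The outer identifications with $(\mathcal{M}^{\wedge}_n)^{\vee}_m$ and $(\mathcal{M}^{\vee}_m)^{\wedge}_n$ then follow by peeling one layer at a time and invoking the induction. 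Assertion (3) is immediate by taking unions over $n$ and $m$: the common value is closed under cones, cocones, extensions and summands by (1) and (2), hence is a thick subcategory containing $\mathcal{M}$, while conversely every object produced manifestly lies in $\operatorname{\mathsf{thick}}\mathcal{M}$.

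For (4), the inclusion $\mathcal{M}^{\wedge} \subseteq \operatorname{\mathsf{thick}}\mathcal{M} \cap \mathcal{M}^{\perp}$ is an induction on $n$: for a defining conflation $A' \to M \to A \dashrightarrow$, the presilting vanishing $\mathbb{E}^{k}(\mathcal{M},M)=0$ for $k \ge 1$ and the inductive vanishing $\mathbb{E}^{k}(\mathcal{M},A')=0$ combine, via the long exact sequence applied to $\mathcal{C}(\mathcal{M},-)$, to yield $\mathbb{E}^{k}(\mathcal{M},A)=0$. The reverse inclusion is the main obstacle and is where the presilting hypothesis really bites. Given $X \in \operatorname{\mathsf{thick}}\mathcal{M} \cap \mathcal{M}^{\perp}$, by (3) we have $X \in (\mathcal{M}^{\vee})^{\wedge}$; I would then induct on the $\wedge$-length. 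For a first-layer conflation $Y \to V \to X \dashrightarrow$ with $V \in \mathcal{M}^{\vee}$, I splice with a cocone presentation $V \to N \to V' \dashrightarrow$ ($N \in \mathcal{M}$, $V' \in \mathcal{M}^{\vee}$) via ET4 to produce $Y \to N \to Z \dashrightarrow$, and use $X \in \mathcal{M}^{\perp}$ together with long exact sequences to track that $Z \in \mathcal{M}^{\perp}$ and that the $\mathcal{M}^{\vee}$-tail is absorbed layer by layer, ultimately producing a pure $\mathcal{M}$-resolution of $X$. The \emph{in particular} clause then follows because $\operatorname{\mathsf{thick}}\mathcal{M}$ and $\mathcal{M}^{\perp}$ are each closed under the cone of a conflation between their members. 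Finally, if $\mathcal{M}$ is closed under cocones, then $\mathcal{M}^{\vee} = \mathcal{M}$ follows by induction on the definition of $\mathcal{M}^{\vee}_n$, and (3) gives $\operatorname{\mathsf{thick}}\mathcal{M} = (\mathcal{M}^{\vee})^{\wedge} = \mathcal{M}^{\wedge}$. Statement (5) is entirely dual. The hardest bookkeeping, and the main conceptual obstacle, lies in the reverse inclusion for (4), where one must coordinate the induction on $\wedge$-length with the propagation of $\mathbb{E}$-vanishing through successive $3 \times 3$ diagrams.
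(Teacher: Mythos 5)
Two steps in your plan have genuine gaps, and they compound.

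The direct-summand closure in (1) is circular as sketched. From $X\oplus Y\in\mathcal{M}^{\wedge}_{n}$ with defining conflation $Z\to M\to X\oplus Y\dashrightarrow$ ($Z\in\mathcal{M}^{\wedge}_{n-1}$, $M\in\mathcal{M}$), composing $M\to X\oplus Y$ with the split epi to $X$ and applying the dual of (ET4) gives a conflation $W\to M\to X\dashrightarrow$ in which $W$ sits in a conflation $Z\to W\to Y\dashrightarrow$. To conclude $W\in\mathcal{M}^{\wedge}_{n-1}$ you would need $Y\in\mathcal{M}^{\wedge}_{n-1}$, but $Y$ is exactly a summand whose membership in $\mathcal{M}^{\wedge}_{n}$ you are trying to establish; so ``peeling off a summand'' does not close the induction. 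One needs either a Schanuel-type comparison, or to first establish the orthogonal description $\mathcal{M}^{\wedge}_{n}=\mathcal{M}^{\wedge}\cap{}^{\perp_{>n}}\mathcal{M}$ of Lemma~\ref{lem-wedge_perp}(2) (whose proof is independent of Lemma~\ref{lem-psilt_wedge}) and reduce to summand-closure of $\mathcal{M}^{\wedge}$, and then handle that separately. Because your (3) uses (1) for summand-closure of the union and your (4) uses (3), this gap propagates.

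In the reverse inclusion of (4), the displayed claim ``$Z\in\mathcal{M}^{\perp}$'' is false. After your splice $Z$ sits in a conflation $X\to Z\to V'\dashrightarrow$ with $V'\in\mathcal{M}^{\vee}$, and the long exact sequence in $\mathcal{C}(\mathcal{M},-)$ only gives an injection $\mathbb{E}^{k}(\mathcal{M},Z)\hookrightarrow\mathbb{E}^{k}(\mathcal{M},V')$; but $\mathcal{M}^{\vee}\not\subseteq\mathcal{M}^{\perp}$ in general (for instance, in the triangulated case $\Sigma^{-1}T\in(\operatorname{\mathsf{add}}T)^{\vee}$ while $\mathbb{E}(T,\Sigma^{-1}T)=\mathcal{D}(T,T)\neq 0$). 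For the same reason your induction on $\wedge$-length along $(\mathcal{M}^{\vee})^{\wedge}$ does not close: from $Y\to V\to X\dashrightarrow$ with $X\in\mathcal{M}^{\perp}$ and $V\in\mathcal{M}^{\vee}$ one cannot deduce $Y\in\mathcal{M}^{\perp}$. The workable decomposition is the other one: write $\operatorname{\mathsf{thick}}\mathcal{M}=(\mathcal{M}^{\wedge})^{\vee}$ and take $X\to A\to B\dashrightarrow$ with $A\in\mathcal{M}^{\wedge}$, $B\in(\mathcal{M}^{\wedge})^{\vee}_{l-1}$. Here $A\in\mathcal{M}^{\perp}$ and $X\in\mathcal{M}^{\perp}$ force $B\in\mathcal{M}^{\perp}$ by the long exact sequence, so induction gives $B\in\mathcal{M}^{\wedge}$; then forming the weak pullback of $A\to B$ along an $\mathcal{M}$-deflation $M\to B$ gives a conflation $X\to E\to M\dashrightarrow$ which splits since $\mathbb{E}(M,X)=0$, while $E$ is an extension of $A\in\mathcal{M}^{\wedge}$ by the cocone of $M\to B$ in $\mathcal{M}^{\wedge}$, hence lies in $\mathcal{M}^{\wedge}$ by (1), so $X\in\mathcal{M}^{\wedge}$. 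The remaining parts of your plan (the horseshoe for extension-closure, the $3\times 3$ shuffle in (2), duality for (5)) are fine in spirit.
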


We give a relationship between ${}^{\perp_{>n}}\mathcal{M}$ and $\mathcal{M}^{\wedge}_{n}$. 

\begin{lemma}\label{lem-wedge_perp}
Let $\mathcal{M}$ be a presilting subcategory of $\mathcal{C}$ and let $n$ be a non-negative integer. 
Then the following statements hold. 
\begin{enumerate}[\upshape(1)]
\item ${}^{\perp_{>n}}\mathcal{M}={}^{\perp_{>n}}(\mathcal{M}^{\wedge})$ and $\mathcal{M}^{\perp_{>n}}=(\mathcal{M}^{\vee})^{\perp_{>n}}$.  
\item $\mathcal{M}^{\wedge}_{n}=\mathcal{M}^{\wedge}\cap{}^{\perp_{>n}}\mathcal{M}=\mathcal{M}^{\wedge}\cap{}^{\perp_{>n}}(\mathcal{M}^{\wedge})$. 
\item $\mathcal{M}^{\vee}_{n}=\mathcal{M}^{\vee}\cap\mathcal{M}^{\perp_{>n}}=\mathcal{M}^{\vee}\cap(\mathcal{M}^{\vee})^{\perp_{>n}}$.
\end{enumerate}
\end{lemma}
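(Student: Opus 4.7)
The plan is to prove (1) first, observe that the second equalities in (2) and (3) are immediate from (1), and then establish the first equality of (2) by a double induction; part (3) will follow from (2) by dualization. Throughout, the workhorse is the long exact sequence of extensions from Proposition \ref{prop_longex} applied to the $\mathfrak{s}$-conflation witnessing membership in $\mathcal{M}^{\wedge}_{m}$ (respectively $\mathcal{M}^{\vee}_{m}$). For (1), the inclusion $\supseteq$ is trivial from $\mathcal{M}\subseteq\mathcal{M}^{\wedge}$; for $\subseteq$, I would fix $X\in{}^{\perp_{>n}}\mathcal{M}$ and induct on $m$ to show $\mathbb{E}^{k}(X,Y)=0$ for every $Y\in\mathcal{M}^{\wedge}_{m}$ and every $k\geq n+1$. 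The inductive step uses a presentation $N\to M_{0}\to Y\dashrightarrow$ with $N\in\mathcal{M}^{\wedge}_{m-1}$ and $M_{0}\in\mathcal{M}$: the fragment $\mathbb{E}^{k}(X,M_{0})\to\mathbb{E}^{k}(X,Y)\to\mathbb{E}^{k+1}(X,N)$ has left end zero by hypothesis and right end zero by induction. The second equality of (1) is the exact dual via the $\mathcal{M}^{\vee}_{m}$-filtration and the other part of Proposition \ref{prop_longex}.

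For (2), the identity $\mathcal{M}^{\wedge}\cap{}^{\perp_{>n}}\mathcal{M}=\mathcal{M}^{\wedge}\cap{}^{\perp_{>n}}(\mathcal{M}^{\wedge})$ is immediate from (1). For $\mathcal{M}^{\wedge}_{n}\subseteq{}^{\perp_{>n}}\mathcal{M}$, I would induct on $n$: from $N\to M_{0}\to Y\dashrightarrow$ with $N\in\mathcal{M}^{\wedge}_{n-1}$, Proposition \ref{prop_longex}(2) yields $\mathbb{E}^{k-1}(N,M)\to\mathbb{E}^{k}(Y,M)\to\mathbb{E}^{k}(M_{0},M)$, whose outer terms vanish for $k\geq n+1$ by induction and by the presilting vanishing $\mathbb{E}^{i}(\mathcal{M},\mathcal{M})=0$ for $i\geq 1$. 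For the reverse $\mathcal{M}^{\wedge}\cap{}^{\perp_{>n}}\mathcal{M}\subseteq\mathcal{M}^{\wedge}_{n}$, I would induct on $m$ simultaneously in $n$ for $Y\in\mathcal{M}^{\wedge}_{m}\cap{}^{\perp_{>n}}\mathcal{M}$. If $m\leq n$ there is nothing to do; otherwise, when $n\geq 1$, the fragment $\mathbb{E}^{k}(M_{0},M)\to\mathbb{E}^{k}(N,M)\to\mathbb{E}^{k+1}(Y,M)$ sandwiches $\mathbb{E}^{k}(N,M)$ between zeros for $k\geq n$, giving $N\in{}^{\perp_{>n-1}}\mathcal{M}$, so the inductive hypothesis yields $N\in\mathcal{M}^{\wedge}_{n-1}$ and hence $Y\in\mathcal{M}^{\wedge}_{n}$.

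The main obstacle is the base case $n=0$ of this reverse inclusion, because the index-shift argument just described only descends to ${}^{\perp_{>-1}}\mathcal{M}$, which is not defined. To handle it I would exploit the presilting hypothesis directly: the sandwich still gives $N\in{}^{\perp_{>0}}\mathcal{M}$, and the inductive hypothesis with $n=0$ places $N\in\mathcal{M}^{\wedge}_{0}=\mathcal{M}$; the realizing class of the $\mathfrak{s}$-conflation $N\to M_{0}\to Y\dashrightarrow\delta$ then lies in $\mathbb{E}(Y,N)=0$ (because $Y\in{}^{\perp_{>0}}\mathcal{M}$ and $N\in\mathcal{M}$), so the conflation splits, $Y$ appears as a direct summand of $M_{0}\in\mathcal{M}$, and closure of $\mathcal{M}$ under direct summands (built into being presilting) forces $Y\in\mathcal{M}=\mathcal{M}^{\wedge}_{0}$. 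Statement (3) is the verbatim dualization of (2): cocones replace cones, $\mathcal{M}^{\vee}_{m}$ replaces $\mathcal{M}^{\wedge}_{m}$, and the two parts of Proposition \ref{prop_longex} exchange roles, with the analogous splitting argument handling the $n=0$ base case.
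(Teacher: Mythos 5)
Your proposal is correct and follows essentially the same route as the paper's own proof: the same long exact sequences from Proposition \ref{prop_longex}, the same inductions, and the same resolution of the $n=0$ base case via a split $\mathfrak{s}$-conflation. The only organizational difference is that the paper first upgrades ${}^{\perp_{>0}}\mathcal{M}$ to ${}^{\perp_{>0}}(\mathcal{M}^{\wedge})$ using (1) and then lets the realizing class of $N\rightarrow M_{0}\rightarrow Y\dashrightarrow$ die in $\mathbb{E}(Y,N)=0$ in a single step, whereas you run an inner induction on $m$ to first place $N\in\mathcal{M}$ before splitting; both reach the same conclusion by the same mechanism.
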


\begin{proof}
(1) Since the proof is similar, we only prove ${}^{\perp_{>n}}\mathcal{M}={}^{\perp_{>n}}(\mathcal{M}^{\wedge})$. 
By induction on $l$, we show ${}^{\perp_{>n}}\mathcal{M}\subseteq{}^{\perp_{>n}}(\mathcal{M}_{l}^{\wedge})$. 
If $l=0$, then this is clear. 
Assume $l\geq 1$.
Then the induction hypothesis implies that 
\begin{align}
\mathcal{M}^{\wedge}_{l}=\operatorname{\mathsf{cone}}(\mathcal{M}_{l-1}^{\wedge},\mathcal{M})\subseteq \operatorname{\mathsf{cone}}(({}^{\perp_{>n}}\mathcal{M})^{\perp_{>n}},({}^{\perp_{>n}}\mathcal{M})^{\perp_{>n}})\subseteq ({}^{\perp_{>n}}\mathcal{M})^{\perp_{>n}}.\notag
\end{align}
Hence we have ${}^{\perp_{>n}}\mathcal{M}\subseteq {}^{\perp_{>n}}(\mathcal{M}^{\wedge}_{l})$. 
Since the converse inclusion clearly holds, we obtain the assertion. 

(2) We show $\mathcal{M}^{\wedge}_{n}\subseteq{}^{\perp_{>n}}\mathcal{M}$ by induction on $n$. 
If $n=0$, then this is clear. 
Assume $n\geq 1$. 
Then the induction hypothesis implies
\begin{align}
\mathcal{M}^{\wedge}_{n}=\operatorname{\mathsf{cone}}(\mathcal{M}^{\wedge}_{n-1},\mathcal{M})\subseteq \operatorname{\mathsf{cone}}({}^{\perp_{>n-1}}\mathcal{M},{}^{\perp}\mathcal{M})\subseteq{}^{\perp_{>n}}\mathcal{M}. 
\notag 
\end{align} 
We prove the converse inclusion by induction on $n$. 
If $n=0$, then we have $\mathcal{M}^{\wedge}\cap{}^{\perp_{>0}}\mathcal{M}=\mathcal{M}^{\wedge}\cap{}^{\perp_{>0}}(\mathcal{M}^{\wedge})=\mathcal{M}$, where the first equality follows from (1).
Assume $n\geq1$. 
Then we have
\begin{align}
\mathcal{M}^{\wedge}\cap{}^{\perp_{>n}}\mathcal{M}=\operatorname{\mathsf{cone}}(\mathcal{M}^{\wedge},\mathcal{M})\cap{}^{\perp_{>n}}\mathcal{M}\subseteq\operatorname{\mathsf{cone}}(\mathcal{M}^{\wedge}\cap{}^{\perp_{>n-1}}\mathcal{M},\mathcal{M})\subseteq \operatorname{\mathsf{cone}}(\mathcal{M}^{\wedge}_{n-1},\mathcal{M}),\notag
\end{align} 
where the last inclusion follows from the induction hypothesis.
Thus the assertion holds. 

(3) By an argument similar to (2), we have the assertion. 
\end{proof}

Assume that $\mathcal{C}$ is a Krull--Schmidt category. 
Then $\mathcal{C}$ becomes a weakly idempotent complete category. 
Hence by \cite[Proposition 2.7]{K}, if $h=gf$ is an $\mathfrak{s}$-inflation, then so is $f$. 
Dually, if $h$ is an $\mathfrak{s}$-deflation, then so is $g$. 
Recall that any morphism $f: M\rightarrow N$ in $\mathcal{C}$ has two decompositions $f=\left[\begin{smallmatrix}f'\\0 \end{smallmatrix}\right]: M\to N'\oplus N''$ with $f'$ left minimal and $f=\left[\begin{smallmatrix}0& f'' \end{smallmatrix}\right]: M'\oplus M''\to N$ with $f''$ right minimal. 
Hence if $f$ is an $\mathfrak{s}$-inflation, then so is $f'$. 
Similarly, if $f$ is an $\mathfrak{s}$-deflation, then so is $f''$.
An object in $\mathcal{M}^{\wedge}_{l}$ admits the following $\mathfrak{s}$-conflations.  

\begin{lemma}\label{lem-sconfls}
Assume that $\mathcal{C}$ is a Krull--Schmidt category. 
Let $\mathcal{M}$ be a presilting subcategory of $\mathcal{C}$ and let $N$ be an object in $\mathcal{C}$. 
For a positive integer $l$, the following statements hold.
\begin{enumerate}[\upshape(1)]
\item $N\in \mathcal{M}^{\wedge}_{l}$ if and only if there exist $\mathfrak{s}$-conflations 
\begin{align}
\xymatrix@R=1mm{
N_{1}\ar[r]^-{g_{1}}&M_{0}\ar[r]^-{f_{0}}&N_{0}:=N\ar@{-->}[r]&\\
N_{2}\ar[r]^-{g_{2}}&M_{1}\ar[r]^-{f_{1}}&N_{1}\ar@{-->}[r]&\\
&\vdots&&\\
N_{l}\ar[r]^-{g_{l}}&M_{l-1}\ar[r]^-{f_{l-1}}&N_{l-1}\ar@{-->}[r]&\\
0\ar[r]^-{g_{l+1}}&M_{l}\ar[r]^-{f_{l}}&N_{l}\ar@{-->}[r]&
}\notag
\end{align}
such that $f_{i}$ is a minimal right $\mathcal{M}$-approximation and $g_{i+1}$ is in the Jacobson radical of $\mathcal{C}$ for each $i$. 
In this case, $N\notin \mathcal{M}^{\wedge}_{l-1}$ if and only if $M_{l}\neq 0$.
\item Assume that $N'$ is an object in $\mathcal{M}^{\wedge}$, that is, there exists an $\mathfrak{s}$-conflation $N'_{1}\xrightarrow{g'_{1}} M'_{0}\xrightarrow{f'_{0}} N'\dashrightarrow$ such that $N'_{1}\in\mathcal{M}^{\wedge}$, $f'_{0}$ is a minimal right $\mathcal{M}$-approximation and $g'_{1}$ is in the Jacobson radical of $\mathcal{C}$. 
If $\mathbb{E}^{l}(N,N')=0$, then $\operatorname{\mathsf{add}}M'_{0}\cap \operatorname{\mathsf{add}}M_{l}=\{0\}$.
\end{enumerate}
\end{lemma}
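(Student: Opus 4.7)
The plan is to handle (1) by constructing the required $\mathfrak{s}$-conflations iteratively via minimal right $\mathcal{M}$-approximations, and (2) by a dimension-shifting argument combined with a Jacobson-radical calculation in the Krull--Schmidt ambient. The ``if'' direction of (1) is a straightforward descending induction on $i$ showing $N_i\in\mathcal{M}^{\wedge}_{l-i}$: the base case $N_l\cong M_l\in\mathcal{M}=\mathcal{M}^{\wedge}_0$ is immediate from the final conflation $0\to M_l\to N_l\dashrightarrow$, and the inductive step uses $\mathcal{M}^{\wedge}_{l-i}=\operatorname{\mathsf{cone}}(\mathcal{M}^{\wedge}_{l-i-1},\mathcal{M})$ applied to $N_{i+1}\to M_i\to N_i\dashrightarrow$.

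For the ``only if'' direction of (1), I build the sequence one step at a time. Given $N\in\mathcal{M}^{\wedge}_l=\operatorname{\mathsf{cone}}(\mathcal{M}^{\wedge}_{l-1},\mathcal{M})$, choose some $\mathfrak{s}$-conflation $\tilde N_1\to\tilde M_0\to N\dashrightarrow$ with $\tilde M_0\in\mathcal{M}$ and $\tilde N_1\in\mathcal{M}^{\wedge}_{l-1}$. Since $\mathcal{M}^{\wedge}_{l-1}\subseteq\mathcal{M}^{\wedge}\subseteq\mathcal{M}^{\perp}$ by Lemma~\ref{lem-psilt_wedge}(4), Proposition~\ref{prop_longex}(1) identifies $\tilde M_0\to N$ as a right $\mathcal{M}$-approximation; the Krull--Schmidt decomposition recalled in the preamble then extracts a minimal right $\mathcal{M}$-approximation $f_0\colon M_0\to N$, producing an $\mathfrak{s}$-conflation $N_1\xrightarrow{g_1}M_0\to N\dashrightarrow$ with $g_1$ in the Jacobson radical (right minimality of $f_0$ is equivalent, in the Krull--Schmidt setting, to $g_1\in\operatorname{rad}\mathcal{C}$). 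The critical verification is $N_1\in\mathcal{M}^{\wedge}_{l-1}$, which I establish via Lemma~\ref{lem-wedge_perp}(2): $\mathcal{M}^{\wedge}_{l-1}=\mathcal{M}^{\wedge}\cap{}^{\perp_{>l-1}}\mathcal{M}$. Membership in $\operatorname{\mathsf{thick}}\mathcal{M}$ is automatic; $N_1\in\mathcal{M}^{\perp}$ follows from applying $\mathcal{C}(X,-)$ for $X\in\mathcal{M}$ and using $N\in\mathcal{M}^{\perp}$, $\mathbb{E}^{k}(X,M_0)=0$ for $k\geq 1$, together with the approximation property for the degree-one case; and $N_1\in{}^{\perp_{>l-1}}\mathcal{M}$ follows from applying $\mathcal{C}(-,X)$ and using $N\in{}^{\perp_{>l}}\mathcal{M}$. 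Iterating $l$ times terminates with $N_l\in\mathcal{M}^{\wedge}_0=\mathcal{M}$, so its minimal right $\mathcal{M}$-approximation $f_l$ is an isomorphism and the final conflation takes the form $0\to M_l\to N_l\dashrightarrow$. The equivalence $N\notin\mathcal{M}^{\wedge}_{l-1}\Leftrightarrow M_l\ne 0$ follows because, if $N\in\mathcal{M}^{\wedge}_{l-1}$, the same construction applied at level $l-1$ forces $N_{l-1}\in\mathcal{M}$ and hence $f_{l-1}$ iso and $M_l=N_l=0$, while conversely $M_l=0$ collapses the bottom conflations and places $N$ in $\mathcal{M}^{\wedge}_{l-1}$.

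For (2), I first dimension-shift on the sequence from (1). Applying $\mathcal{C}(-,N')$ to each $N_{i+1}\to M_i\to N_i$ and using $\mathbb{E}^{k}(M_i,N')=0$ for $k\geq 1$ (since $M_i\in\mathcal{M}$ and $N'\in\mathcal{M}^{\wedge}\subseteq\mathcal{M}^{\perp}$) yields a chain of isomorphisms $\mathbb{E}^1(N_{l-1},N')\cong\mathbb{E}^2(N_{l-2},N')\cong\cdots\cong\mathbb{E}^l(N,N')=0$. The bottom of the long exact sequence then shows that $g_l^{\ast}\colon\mathcal{C}(M_{l-1},N')\to\mathcal{C}(M_l,N')$ is surjective, so every morphism $M_l\to N'$ factors through $g_l$. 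Now suppose, for contradiction, that some nonzero (indecomposable, without loss of generality) $X\in\operatorname{\mathsf{add}} M'_0\cap\operatorname{\mathsf{add}} M_l$ exists, with associated split projection $\pi\colon M_l\twoheadrightarrow X$ and split inclusion $\iota\colon X\hookrightarrow M'_0$. Set $\alpha:=f'_0\iota\pi\colon M_l\to N'$. By the factorization property above, $\alpha=\beta g_l$ for some $\beta\colon M_{l-1}\to N'$; since $M_{l-1}\in\mathcal{M}$ and $f'_0$ is a right $\mathcal{M}$-approximation, $\beta=f'_0\gamma$ for some $\gamma\colon M_{l-1}\to M'_0$. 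Then $f'_0(\iota\pi-\gamma g_l)=0$, so the long exact sequence for $N'_1\to M'_0\to N'$ produces $\delta\colon M_l\to N'_1$ with $\iota\pi-\gamma g_l=g'_1\delta$. Both $g_l$ and $g'_1$ lie in $\operatorname{rad}\mathcal{C}$, hence so do $\gamma g_l$ and $g'_1\delta$, forcing $\iota\pi\in\operatorname{rad}(M_l,M'_0)$. But the composition $X\hookrightarrow M_l\xrightarrow{\iota\pi}M'_0\twoheadrightarrow X$ equals $\mathrm{id}_X$, a unit in the local ring $\operatorname{End}(X)$, which cannot happen for a radical morphism. This contradiction yields $\operatorname{\mathsf{add}} M'_0\cap\operatorname{\mathsf{add}} M_l=\{0\}$.

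The main technical obstacle I anticipate is the bookkeeping in (1) ``only if'': at every iteration one must simultaneously maintain that the newly produced syzygy lives in $\mathcal{M}^{\wedge}_{l-1}$, which requires joint use of both characterizations from Lemmas~\ref{lem-psilt_wedge} and~\ref{lem-wedge_perp}. The dimension-shift in (2) is routine once these lemmas are in place, but the final radical computation is only available because the minimality hypotheses of \emph{both} $f_l$ (inherited from (1)) and $f'_0$ (given by hypothesis) place $g_l$ and $g'_1$ in the Jacobson radical simultaneously.
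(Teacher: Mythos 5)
Your proof is correct and follows essentially the same route as the paper: in (1) you pass to a minimal right $\mathcal{M}$-approximation in the Krull--Schmidt setting and iterate, and in (2) you dimension-shift to get surjectivity of $\mathcal{C}(g_l,N')$ and then factor the relevant morphism through $g_l$ and $g'_1$, landing in the radical. The only real deviations are cosmetic. In (1), after minimizing $f_0$ you re-derive $N_1\in\mathcal{M}^{\wedge}_{l-1}$ from scratch via Lemma~\ref{lem-wedge_perp}(2); the paper's argument is shorter here, since the minimal cocone $N_1$ is automatically a direct summand of the originally chosen $\tilde N_1\in\mathcal{M}^{\wedge}_{l-1}$ and $\mathcal{M}^{\wedge}_{l-1}$ is closed under direct summands (Lemma~\ref{lem-psilt_wedge}(1)), so the re-derivation is unnecessary (though valid). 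In (2) you argue by contradiction assuming a shared indecomposable summand, whereas the paper proves directly that every morphism in $\mathcal{C}(M_l,M'_0)$ is radical; the two are equivalent, and the core factorization identity $a=b'g_l+g'_1 a'$ is the same in both.
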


Note that for an $\mathfrak{s}$-conflation $L\xrightarrow{g}M\xrightarrow{f}N\dashrightarrow$, the following statements hold.
\begin{enumerate}[\upshape(a)]
\item $f$ is a right minimal morphism if and only if $g$ belongs to the Jacobson radical of $\mathcal{C}$.
\item $g$ is a left minimal morphism if and only if $f$ belongs to the Jacobson radical of $\mathcal{C}$.
\end{enumerate}

\begin{proof}
(1) Since the ``if'' part is clear, it is enough to show the ``only if'' part. 
By $N\in\mathcal{M}^{\wedge}_{l}$, there exists an $\mathfrak{s}$-conflation $N_{1}\xrightarrow{g_{1}}M_{0}\xrightarrow{f_{0}}N\dashrightarrow$ such that $N_{1}\in \mathcal{M}^{\wedge}_{l-1}$ and $M_{0}\in\mathcal{M}$. 
Since $\mathcal{C}$ is a Krull--Schmidt category and $\mathcal{M}^{\wedge}_{l-1}$ is closed under direct summands, we can assume that $f_{0}$ is right minimal. 
Since $\mathcal{M}\subseteq {}^{\perp}\mathcal{M}={}^{\perp}(\mathcal{M}^{\wedge})$ holds by  Lemma \ref{lem-wedge_perp}(1), $f_{0}$ is a right $\mathcal{M}$-approximation of $N$. 
By repeating this process, we obtain the desired $\mathfrak{s}$-conflations. 

(2) It is enough to show $\mathcal{C}(M_{l},M'_{0})=\mathrm{rad}_{\mathcal{C}}(M_{l},M'_{0})$.
Let $a\in \mathcal{C}(M_{l},M'_{0})$.
We show that $\mathcal{C}(g_{l},N')$ is surjective.
Applying $\mathcal{C}(-,N')$ to the $\mathfrak{s}$-conflation $M_{l}\xrightarrow{g_{l}}M_{l-1}\xrightarrow{f_{l-1}}N_{l-1}\dashrightarrow$ in (1), we have an exact sequence
\begin{align}
\mathcal{C}(M_{l-1},N')\xrightarrow{\mathcal{C}(g_{l},N')} \mathcal{C}(M_{l},N')\rightarrow \mathbb{E}(N_{l-1},N').\notag
\end{align}
By Lemma \ref{lem-psilt_wedge}(4), we have $\mathbb{E}^{i}(\mathcal{M},N')=0$ for each $i\geq 1$. 
Applying $\mathcal{C}(-,N')$ to the $\mathfrak{s}$-conflations in (1) induces isomorphisms
\begin{align}
\mathbb{E}(N_{l-1},N')\cong \cdots \cong \mathbb{E}^{l}(N_{0},N')=0,\notag
\end{align}
where the last equality follows from our assumption. 
Thus there exists a morphism $b\in \mathcal{C}(M_{l-1},N')$ such that $f'_{0}a=bg_{l}$.
Since $f'_{0}$ is a right $\mathcal{M}$-approximation, we have a morphism $b'\in \mathcal{C}(M_{l-1},M'_{0})$ with $b=f'_{0}b'$. 
By $f'_{0}(a-b'g_{l})=f'_{0}a-f'_{0}b'g_{l}=f'_{0}a-bg_{l}=0$, there exists a morphism $a'\in \mathcal{C}(M_{l}, N'_{1})$ such that $a-b'g_{l}=g'_{1}a'$.
Since $g'_{1}$ and $g_{l}$ are in the Jacobson radical of $\mathcal{C}$, we obtain $a=g'_{1}a'+b'g_{l}\in \mathrm{rad}_{\mathcal{C}}(M_{l},M'_{0})$.
\begin{align}
\xymatrix{
&M_{l}\ar[r]^-{g_{l}}\ar[d]^-{a}\ar[dl]_-{a'}&M_{l-1}\ar[r]\ar[d]^-{b}\ar[dl]_-{b'}&N_{l-1}\ar@{..>}[r]&\\
N'_{1}\ar[r]^-{g'_{1}}&M'_{0}\ar[r]^-{f'_{0}}&N'\ar@{..>}[r]&.&&
}\notag
\end{align}
This completes the proof.
\end{proof}

Note that $\mathfrak{s}$-conflations in Lemma \ref{lem-sconfls}(1) are uniquely determined up to isomorphisms. 
Hence we write such an $\mathfrak{s}$-conflation as $N_{i+1}\xrightarrow{g^{N}_{i+1}}M^{N}_{i}\xrightarrow{f^{N}_{i}}N_{i}\dashrightarrow$.

\subsection{Relationship between silting subcategories and hereditary cotorsion pairs}

In this subsection, we recall a relationship between silting subcategories and hereditary cotorsion pairs.
Let us recall the definition of hereditary cotorsion pairs, which are a common generalization of co-$t$-structures in a triangulated category and (complete) hereditary cotorsion pairs in an exact category. 

\begin{definition}\label{def-hcotors}
Let $\mathcal{X},\mathcal{Y}$ be subcategories of $\mathcal{C}$.
We call a pair $(\mathcal{X},\mathcal{Y})$  a \emph{hereditary cotorsion pair} in $\mathcal{C}$ if it satisfies the following conditions.
\begin{enumerate}[\upshape(1)]
\item $\mathcal{X}$ and $\mathcal{Y}$ are closed under direct summands.
\item $\mathbb{E}^{k}(\mathcal{X}, \mathcal{Y})=0$ for all $k\geq 1$.
\item $\mathcal{C}=\operatorname{\mathsf{cone}} (\mathcal{Y}, \mathcal{X})$.
\item $\mathcal{C}=\operatorname{\mathsf{cocone}} (\mathcal{Y}, \mathcal{X})$.
\end{enumerate}
\end{definition}

Let $\operatorname{\mathsf{hcotors}}\mathcal{C}$ denote the set of hereditary cotorsion pairs in $\mathcal{C}$.  
We write $(\mathcal{X}_{1},\mathcal{Y}_{1})\geq (\mathcal{X}_{2},\mathcal{Y}_{2})$ if $\mathcal{Y}_{1}\supseteq\mathcal{Y}_{2}$.
Then $(\operatorname{\mathsf{hcotors}}\mathcal{C}, \geq)$ clearly becomes a partially ordered set.

The following lemma is an easy observation.
 
\begin{lemma}\label{lem_cotors}
Let $(\mathcal{X}, \mathcal{Y})$ be a hereditary cotorsion pair in $\mathcal{C}$.
For each $n\geq 0$, we have $\mathcal{X}^{\wedge}_{n}={}^{\perp_{>n}}\mathcal{Y}$ and $\mathcal{Y}^{\vee}_{n}=\mathcal{X}^{\perp_{>n}}$.
In particular, $\mathcal{X}$ is closed under extensions and cocones, and $\mathcal{Y}$ is closed under extensions and cones.
\end{lemma}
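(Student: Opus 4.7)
The plan is to prove $\mathcal{X}^{\wedge}_{n}={}^{\perp_{>n}}\mathcal{Y}$ by induction on $n\geq 0$; the dual equality $\mathcal{Y}^{\vee}_{n}=\mathcal{X}^{\perp_{>n}}$ then follows by a symmetric argument, and the closure assertions for $\mathcal{X}$ and $\mathcal{Y}$ fall out of the $n=0$ case.

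For the base case I first observe $\mathcal{X}^{\wedge}_{0}=\mathcal{X}$, since the trivial $\mathfrak{s}$-conflation $0\to X\to X\dashrightarrow$ shows $\mathcal{X}\subseteq\mathcal{X}^{\wedge}_{0}$, and conversely any conflation $0\to X\to M\dashrightarrow$ forces $M\cong X$. The inclusion $\mathcal{X}\subseteq{}^{\perp_{>0}}\mathcal{Y}$ is condition~(2) of Definition~\ref{def-hcotors}. For the converse, given $M\in{}^{\perp_{>0}}\mathcal{Y}$, I would use condition~(3) to produce an $\mathfrak{s}$-conflation $Y\to X\to M\dashrightarrow$ with $Y\in\mathcal{Y}$, $X\in\mathcal{X}$, then apply Proposition~\ref{prop_longex}(2) with target $Y$: the vanishing of $\mathbb{E}(M,Y)$ makes $\mathcal{C}(X,Y)\to\mathcal{C}(Y,Y)$ surjective, so $\operatorname{id}_{Y}$ lifts to a retraction of the inflation, the conflation splits, and $M$ becomes a direct summand of $X$, hence lies in $\mathcal{X}$ by condition~(1).

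The inductive step uses Proposition~\ref{prop_longex}(2) twice. For the inclusion $\subseteq$, any $M\in\mathcal{X}^{\wedge}_{n}$ sits in an $\mathfrak{s}$-conflation $A\to X\to M\dashrightarrow$ with $A\in\mathcal{X}^{\wedge}_{n-1}$ and $X\in\mathcal{X}$, and the long exact sequence sandwiches $\mathbb{E}^{k}(M,Y)$ between $\mathbb{E}^{k-1}(A,Y)$ and $\mathbb{E}^{k}(X,Y)$ for each $Y\in\mathcal{Y}$; both terms vanish for $k\geq n+1$ by the induction hypothesis on $A$ and condition~(2) on $X$. For $\supseteq$, given $M\in{}^{\perp_{>n}}\mathcal{Y}$, condition~(3) yields $Y'\to X'\to M\dashrightarrow$ with $Y'\in\mathcal{Y}$, $X'\in\mathcal{X}$; for any $Y\in\mathcal{Y}$ and $k\geq 1$, the vanishing of $\mathbb{E}^{k}(X',Y)$ and $\mathbb{E}^{k+1}(X',Y)$ yields isomorphisms $\mathbb{E}^{k}(Y',Y)\cong\mathbb{E}^{k+1}(M,Y)$, so the assumption on $M$ forces $Y'\in{}^{\perp_{>n-1}}\mathcal{Y}$, which equals $\mathcal{X}^{\wedge}_{n-1}$ by induction, and hence $M\in\operatorname{\mathsf{cone}}(\mathcal{X}^{\wedge}_{n-1},\mathcal{X})=\mathcal{X}^{\wedge}_{n}$.

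With these two equalities in hand, the closure properties are immediate: the description $\mathcal{X}=\mathcal{X}^{\wedge}_{0}={}^{\perp_{>0}}\mathcal{Y}$ is visibly closed under extensions and cocones (sandwich the middle \emph{resp.}\ left term between two zeros in the long exact sequence), and dually for $\mathcal{Y}=\mathcal{Y}^{\vee}_{0}=\mathcal{X}^{\perp_{>0}}$. I do not anticipate any real obstacle here; the only delicate point is the appeal in the base case to the splitting lemma for $\mathfrak{s}$-conflations whose inflation admits a retraction, which must be invoked as a standard fact from \cite{NP19} rather than taken for granted.
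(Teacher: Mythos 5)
Your proof is correct and uses essentially the same dimension-shifting argument as the paper: for the inclusion ${}^{\perp_{>n}}\mathcal{Y}\subseteq\mathcal{X}^{\wedge}_{n}$ the paper unrolls the $\mathcal{Y}$-$\mathcal{X}$ resolution of $M$ directly and splits the $\mathfrak{s}$-conflation at stage $n$, whereas you package the same shift as a genuine induction and apply the splitting lemma only at the base case, but the two are the same idea. The reverse inclusion and the closure properties match the paper's argument exactly.
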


\begin{proof}
Let $M\in{}^{\perp_{>n}}\mathcal{Y}$.
By $\mathcal{C}=\operatorname{\mathsf{cone}}(\mathcal{Y},\mathcal{X})$, there exists $Y^{1}\rightarrow X^{0}\rightarrow M\dashrightarrow$ such that $Y^{1}\in \mathcal{Y}$ and $X^{0}\in\mathcal{X}$.
Inductively, we have an $\mathfrak{s}$-conflation $Y^{i+1}\rightarrow X^{i}\rightarrow Y^{i}\dashrightarrow$ with $Y^{i},Y^{i+1}\in\mathcal{Y}$ and $X^{i}\in\mathcal{X}$.
Applying $\mathcal{C}(-,\mathcal{Y})$ to the $\mathfrak{s}$-conflations above induces isomorphisms 
\begin{align}
\mathbb{E}(Y^{n},\mathcal{Y})\cong\mathbb{E}^{2}(Y^{n-1},\mathcal{Y})\cong \cdots \cong \mathbb{E}^{n+1}(M,\mathcal{Y})=0, \notag
\end{align}
where the last equality follows from $M\in{}^{\perp_{>n}}\mathcal{Y}$.
Namely, the $\mathfrak{s}$-conflation $Y^{n+1}\rightarrow X^{n}\rightarrow Y^{n}\dashrightarrow$ splits. This implies $Y^{n}\in\mathcal{X}$.
Thus we obtain $M\in \mathcal{X}^{\wedge}_{n}$.
We show the converse inclusion by induction on $n$.
If $n=0$, then this is clear. Assume $n\geq 1$. By the induction hypothesis, we obtain 
\begin{align}
\mathcal{X}^{\wedge}_{n}=\operatorname{\mathsf{cone}}(\mathcal{X}^{\wedge}_{n-1},\mathcal{X})\subseteq \operatorname{\mathsf{cone}}({}^{\perp_{>n-1}}\mathcal{Y},{}^{\perp_{>0}}\mathcal{Y})\subseteq {}^{\perp_{>n}}\mathcal{Y}.\notag
\end{align}
Similarly, we obtain $\mathcal{Y}^{\vee}_{n}=\mathcal{X}^{\perp_{>n}}$.
\end{proof}

A hereditary cotorsion pair $(\mathcal{X}, \mathcal{Y})$ is said to be \emph{bounded} if $\mathcal{C}=\mathcal{X}^{\wedge}$ and $\mathcal{C}=\mathcal{Y}^{\vee}$.  
Let $\operatorname{\mathsf{bdd-hcotors}}\mathcal{C}$ denote the set of bounded hereditary cotorsion pairs in $\mathcal{C}$. 
Then bounded hereditary cotorsion pairs bijectively correspond silting subcategories as follows. 

\begin{proposition}[{\cite[Theorem 5.7]{AT22}}]\label{thm-AT22}
Let $\mathcal{C}$ be an extriangulated category.
Then there exist mutually inverse bijections
\begin{align}
\xymatrix{
\operatorname{\mathsf{silt}}\mathcal{C}\ar@<0.5ex>[r]^-{F} &\operatorname{\mathsf{bdd-hcotors}}\mathcal{C}\ar@<0.5ex>[l]^-{G}, 
}\notag\end{align}
where $F(\mathcal{M}):=(\mathcal{M}^{\vee},\mathcal{M}^{\wedge})$ and $G(\mathcal{X},\mathcal{Y}):=\mathcal{X}\cap\mathcal{Y}$.
\end{proposition}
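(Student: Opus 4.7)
The plan is to verify four items: (a) $F(\mathcal{M})$ is a bounded hereditary cotorsion pair; (b) $G\circ F=\operatorname{id}$; (c) $(\mathcal{X}\cap\mathcal{Y})^{\vee}=\mathcal{X}$ and $(\mathcal{X}\cap\mathcal{Y})^{\wedge}=\mathcal{Y}$ for any bounded hereditary cotorsion pair $(\mathcal{X},\mathcal{Y})$, which will simultaneously yield $F\circ G=\operatorname{id}$ and $G(\mathcal{X},\mathcal{Y})\in\operatorname{\mathsf{silt}}\mathcal{C}$. Items (a) and (b) are essentially a re-reading of Lemmas~\ref{lem-psilt_wedge} and~\ref{lem-wedge_perp}: closure of $\mathcal{M}^{\vee}$ and $\mathcal{M}^{\wedge}$ under direct summands is Lemma~\ref{lem-psilt_wedge}(1); the required Ext-vanishing $\mathbb{E}^{k}(\mathcal{M}^{\vee},\mathcal{M}^{\wedge})=0$ for $k\geq 1$ combines $\mathcal{M}^{\wedge}\subseteq\mathcal{M}^{\perp}$ from Lemma~\ref{lem-psilt_wedge}(4) with the upgrade $\mathcal{M}^{\perp_{>k}}=(\mathcal{M}^{\vee})^{\perp_{>k}}$ of Lemma~\ref{lem-wedge_perp}(1); and the boundedness $\mathcal{C}=(\mathcal{M}^{\vee})^{\wedge}=(\mathcal{M}^{\wedge})^{\vee}$ together with the cone/cocone decompositions $\mathcal{C}=\operatorname{\mathsf{cone}}(\mathcal{M}^{\wedge},\mathcal{M}^{\vee})=\operatorname{\mathsf{cocone}}(\mathcal{M}^{\wedge},\mathcal{M}^{\vee})$ fall out of $\mathcal{C}=\operatorname{\mathsf{thick}}\mathcal{M}$ after applying Lemma~\ref{lem-psilt_wedge}(3) and Lemma~\ref{lem-psilt_wedge}(2). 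For (b), the non-trivial inclusion $\mathcal{M}^{\vee}\cap\mathcal{M}^{\wedge}\subseteq\mathcal{M}$ is the intersection of $\mathcal{M}^{\vee}\subseteq{}^{\perp}\mathcal{M}$ (Lemma~\ref{lem-psilt_wedge}(5)) with $\mathcal{M}^{\wedge}\cap{}^{\perp}\mathcal{M}=\mathcal{M}$ (the case $n=0$ of Lemma~\ref{lem-wedge_perp}(2)).

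The content of the theorem lies in (c). Setting $\mathcal{M}:=\mathcal{X}\cap\mathcal{Y}$, the easy inclusion $\mathcal{M}^{\vee}\subseteq\mathcal{X}$ is a one-line induction on the cocone length of an object of $\mathcal{M}^{\vee}_{n}$, using the extension- and cocone-closure of $\mathcal{X}$ provided by Lemma~\ref{lem_cotors}. For the reverse inclusion $\mathcal{X}\subseteq\mathcal{M}^{\vee}$, I would induct on the smallest $n\geq 0$ with $X\in\mathcal{Y}^{\vee}_{n}$ (which exists because $\mathcal{C}=\mathcal{Y}^{\vee}$). Given such an $X\in\mathcal{X}$, I would use $\mathcal{C}=\operatorname{\mathsf{cocone}}(\mathcal{Y},\mathcal{X})$ to pick an $\mathfrak{s}$-conflation $X\rightarrow Y\rightarrow X'\dashrightarrow$ with $Y\in\mathcal{Y}$ and $X'\in\mathcal{X}$. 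Extension-closure of $\mathcal{X}$ (Lemma~\ref{lem_cotors}) then forces $Y\in\mathcal{X}$, so $Y\in\mathcal{M}$. Applying Proposition~\ref{prop_longex}(1) to the functor $\mathbb{E}^{\bullet}(\mathcal{X},-)$ on this $\mathfrak{s}$-conflation, together with $\mathbb{E}^{k}(\mathcal{X},\mathcal{Y})=0$ for $k\geq 1$, yields $\mathbb{E}^{k}(\mathcal{X},X')\cong\mathbb{E}^{k+1}(\mathcal{X},X)$ for $k\geq 1$, and the identification $\mathcal{Y}^{\vee}_{m}=\mathcal{X}^{\perp_{>m}}$ of Lemma~\ref{lem_cotors} then places $X'$ in $\mathcal{Y}^{\vee}_{n-1}$. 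The induction hypothesis gives $X'\in\mathcal{M}^{\vee}_{n-1}$, and hence $X\in\mathcal{M}^{\vee}_{n}$. The dual identity $\mathcal{M}^{\wedge}=\mathcal{Y}$ is obtained by a symmetric argument, after which $G(\mathcal{X},\mathcal{Y})$ is silting because $\operatorname{\mathsf{thick}}\mathcal{M}=(\mathcal{M}^{\vee})^{\wedge}=\mathcal{X}^{\wedge}=\mathcal{C}$ by Lemma~\ref{lem-psilt_wedge}(3) and boundedness.

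The main obstacle is precisely the inductive step in (c): one has to produce a single cocone $X\rightarrow Y\rightarrow X'\dashrightarrow$ whose middle term lies in $\mathcal{X}\cap\mathcal{Y}$ (not just in $\mathcal{Y}$) while the remainder $X'$ strictly descends the filtration $\mathcal{Y}^{\vee}_{0}\subseteq\mathcal{Y}^{\vee}_{1}\subseteq\cdots$. The first requirement rests on the extension-closure of $\mathcal{X}$ from Lemma~\ref{lem_cotors}; the second uses the long exact sequence of Proposition~\ref{prop_longex} paired with the Ext-theoretic description $\mathcal{Y}^{\vee}_{m}=\mathcal{X}^{\perp_{>m}}$. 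Everything else is formal bookkeeping with the $(-)^{\wedge}_{n}$ and $(-)^{\vee}_{n}$ operations.
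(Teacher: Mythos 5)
The paper does not actually reprove this statement: it is quoted verbatim from \cite[Theorem~5.7]{AT22}, so there is no internal argument to compare against. Taken on its own terms, your proof is mathematically sound. The verification of (a) and (b) via Lemmas~\ref{lem-psilt_wedge} and \ref{lem-wedge_perp} is routine and correct, and the inductive step in (c) — using $\mathcal{C}=\operatorname{\mathsf{cocone}}(\mathcal{Y},\mathcal{X})$ to peel off a middle term in $\mathcal{X}\cap\mathcal{Y}$, then descending the filtration $\mathcal{Y}^{\vee}_{n}=\mathcal{X}^{\perp_{>n}}$ via the degree-shifting isomorphism $\mathbb{E}^{k}(\mathcal{X},X')\cong\mathbb{E}^{k+1}(\mathcal{X},X)$ — is the right idea and does close correctly, since for $n\geq 1$ all the exponents involved are $\geq 1$.

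One small but genuine omission: before you may invoke Lemma~\ref{lem-psilt_wedge}(3) at the end of (c), and more generally before concluding $G(\mathcal{X},\mathcal{Y})\in\operatorname{\mathsf{silt}}\mathcal{C}$, you must first record that $\mathcal{M}:=\mathcal{X}\cap\mathcal{Y}$ is a \emph{presilting} subcategory. This is immediate from Definition~\ref{def-hcotors}: $\mathcal{X}$ and $\mathcal{Y}$ are additive and closed under direct summands so $\mathcal{M}=\operatorname{\mathsf{add}}\mathcal{M}$, and condition (2) gives $\mathbb{E}^{k}(\mathcal{M},\mathcal{M})\subseteq\mathbb{E}^{k}(\mathcal{X},\mathcal{Y})=0$ — but the lemma you appeal to is stated only for presilting subcategories, so the observation needs to be made explicitly rather than left implicit. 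With that one line added, the argument is complete.
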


For two subcategories $\mathcal{M},\mathcal{N}$ of $\mathcal{C}$, we write $\mathcal{M}\geq \mathcal{N}$ if $\mathbb{E}^{k}(\mathcal{M},\mathcal{N})=0$ for all $k\geq 1$. 

\begin{proposition}[{\cite[Proposition 5.12]{AT22}}]\label{prop-at512}
Let $\mathcal{M},\mathcal{N}\in\operatorname{\mathsf{silt}}\mathcal{C}$.
Then $\mathcal{M}\geq \mathcal{N}$ if and only if $\mathcal{M}^{\wedge}\supseteq\mathcal{N}^{\wedge}$.
In particular, $\geq$ gives a partial order on $\operatorname{\mathsf{silt}}\mathcal{C}$.
\end{proposition}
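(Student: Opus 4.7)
The plan is to prove the biconditional by leveraging the formula $\mathcal{M}^{\wedge}=\operatorname{\mathsf{thick}}\mathcal{M}\cap\mathcal{M}^{\perp}$ and the closure of $\mathcal{M}^{\wedge}$ under cones, both supplied by Lemma \ref{lem-psilt_wedge}(4). The partial-order claim will then follow by combining the resulting characterization with the cotorsion-pair bijection of Proposition \ref{thm-AT22}.

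For the ``only if'' direction, assume $\mathcal{M}\geq\mathcal{N}$, i.e., $\mathcal{N}\subseteq\mathcal{M}^{\perp}$. Since $\mathcal{C}=\operatorname{\mathsf{thick}}\mathcal{M}$, the identity in Lemma \ref{lem-psilt_wedge}(4) at once yields $\mathcal{N}\subseteq\operatorname{\mathsf{thick}}\mathcal{M}\cap\mathcal{M}^{\perp}=\mathcal{M}^{\wedge}$. I then propagate this inclusion to all of $\mathcal{N}^{\wedge}$ by induction on $n$: starting from $\mathcal{N}^{\wedge}_{-1}=\{0\}\subseteq\mathcal{M}^{\wedge}$, the inductive step reads
\[ \mathcal{N}^{\wedge}_{n}=\operatorname{\mathsf{cone}}(\mathcal{N}^{\wedge}_{n-1},\mathcal{N})\subseteq\operatorname{\mathsf{cone}}(\mathcal{M}^{\wedge},\mathcal{M}^{\wedge})\subseteq\mathcal{M}^{\wedge}, \]
where the last inclusion uses that $\mathcal{M}^{\wedge}$ is closed under cones (Lemma \ref{lem-psilt_wedge}(4)). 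Taking the union over $n$ gives $\mathcal{N}^{\wedge}\subseteq\mathcal{M}^{\wedge}$. The ``if'' direction is essentially immediate: the chain $\mathcal{N}\subseteq\mathcal{N}^{\wedge}\subseteq\mathcal{M}^{\wedge}\subseteq\mathcal{M}^{\perp}$, with the last inclusion again coming from Lemma \ref{lem-psilt_wedge}(4), says precisely that $\mathcal{M}\geq\mathcal{N}$.

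For the partial-order statement, reflexivity is the defining presilting condition on $\mathcal{M}$, and transitivity follows at once from the characterization, since $\mathcal{M}^{\wedge}\supseteq\mathcal{N}^{\wedge}\supseteq\mathcal{L}^{\wedge}$ implies $\mathcal{M}^{\wedge}\supseteq\mathcal{L}^{\wedge}$. The only mildly delicate step is antisymmetry, because knowing $\mathcal{M}^{\wedge}=\mathcal{N}^{\wedge}$ does not by itself force $\mathcal{M}=\mathcal{N}$. I plan to resolve this via the cotorsion-pair framework: the bounded hereditary cotorsion pairs $(\mathcal{M}^{\vee},\mathcal{M}^{\wedge})$ and $(\mathcal{N}^{\vee},\mathcal{N}^{\wedge})$ of Proposition \ref{thm-AT22} share their right component, so Lemma \ref{lem_cotors} (applied with $n=0$) gives $\mathcal{M}^{\vee}={}^{\perp}(\mathcal{M}^{\wedge})={}^{\perp}(\mathcal{N}^{\wedge})=\mathcal{N}^{\vee}$, and the inverse map $G$ in Proposition \ref{thm-AT22} then yields $\mathcal{M}=\mathcal{M}^{\vee}\cap\mathcal{M}^{\wedge}=\mathcal{N}^{\vee}\cap\mathcal{N}^{\wedge}=\mathcal{N}$.

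I do not anticipate a genuine obstacle: Lemma \ref{lem-psilt_wedge}(4) encapsulates the real structural content and the rest is bookkeeping with inclusions. The one place where a moment's care is required is antisymmetry, where one must remember that in a cotorsion pair the two components determine one another via orthogonality.
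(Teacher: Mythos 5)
Your proof is correct. A small remark first: the paper itself does not prove this proposition in the text at hand --- it is stated with the attribution \cite[Proposition 5.12]{AT22} and no proof is given --- so there is no ``in-house'' argument to compare against; one can only check your argument on its own terms, and it holds up.

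Both directions of the biconditional are sound. For ``only if,'' the step $\mathcal{N}\subseteq\operatorname{\mathsf{thick}}\mathcal{M}\cap\mathcal{M}^{\perp}=\mathcal{M}^{\wedge}$ uses that $\mathcal{M}$ is silting (so $\operatorname{\mathsf{thick}}\mathcal{M}=\mathcal{C}$) together with Lemma~\ref{lem-psilt_wedge}(4), and the induction $\mathcal{N}^{\wedge}_{n}=\operatorname{\mathsf{cone}}(\mathcal{N}^{\wedge}_{n-1},\mathcal{N})\subseteq\operatorname{\mathsf{cone}}(\mathcal{M}^{\wedge},\mathcal{M}^{\wedge})\subseteq\mathcal{M}^{\wedge}$ correctly invokes closure of $\mathcal{M}^{\wedge}$ under cones from the same lemma. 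The ``if'' direction via $\mathcal{N}\subseteq\mathcal{N}^{\wedge}\subseteq\mathcal{M}^{\wedge}\subseteq\mathcal{M}^{\perp}$ is fine (note $\mathcal{N}^{\wedge}_{0}=\mathcal{N}$). Your antisymmetry argument, recovering $\mathcal{M}^{\vee}={}^{\perp}(\mathcal{M}^{\wedge})$ from Lemma~\ref{lem_cotors} at $n=0$ and then applying the bijection of Proposition~\ref{thm-AT22}, is valid, though slightly longer than needed: Lemma~\ref{lem-wedge_perp}(2) at $n=0$ already gives $\mathcal{M}=\mathcal{M}^{\wedge}\cap{}^{\perp}(\mathcal{M}^{\wedge})$ directly, so $\mathcal{M}^{\wedge}=\mathcal{N}^{\wedge}$ forces $\mathcal{M}=\mathcal{N}$ without detouring through $\mathcal{M}^{\vee}$. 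In fact the equivalence you establish in passing ($\mathcal{M}\geq\mathcal{N}\iff\mathcal{N}\subseteq\mathcal{M}^{\wedge}$ for $\mathcal{M}$ silting) is exactly Lemma~\ref{lem-order_wedge}(1), stated a bit later in the paper, so you have essentially re-derived it as a lemma en route, which is perfectly reasonable given the ordering of the text.
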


We give a basic property of the binary relation $\geq$.

\begin{lemma}\label{lem-order_wedge} 
Let $\mathcal{M}$, $\mathcal{N}$ be subcategories of $\mathcal{C}$. 
Then the following statements hold. 
\begin{enumerate}[\upshape(1)]
\item Assume that $\mathcal{M}$ is a silting subcategory of $\mathcal{C}$. 
Then $\mathcal{M}\geq \mathcal{N}$ if and only if $\mathcal{N}\subseteq \mathcal{M}^{\wedge}$.  
\item Assume that $\mathcal{N}$ is a silting subcategory of $\mathcal{C}$. 
Then $\mathcal{M}\geq \mathcal{N}$ if and only if $\mathcal{M}\subseteq \mathcal{N}^{\vee}$.
\item Assume that $\mathcal{M},\mathcal{N}$ are silting subcategories of $\mathcal{C}$. 
For each $n\geq 0$, $\mathcal{N}\subseteq \mathcal{M}^{\wedge}_{n}$ if and only if $\mathcal{M}\subseteq \mathcal{N}^{\vee}_{n}$.
\end{enumerate}
\end{lemma}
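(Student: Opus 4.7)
The plan is to reduce all three statements to the two characterizations
\[
\mathcal{M}^{\wedge}=\operatorname{\mathsf{thick}}\mathcal{M}\cap\mathcal{M}^{\perp},\qquad
\mathcal{N}^{\vee}=\operatorname{\mathsf{thick}}\mathcal{N}\cap{}^{\perp}\mathcal{N}
\]
from Lemma \ref{lem-psilt_wedge}(4)(5), together with the perp-filtration formulas
\[
\mathcal{M}^{\wedge}_{n}=\mathcal{M}^{\wedge}\cap{}^{\perp_{>n}}\mathcal{M},\qquad
\mathcal{N}^{\vee}_{n}=\mathcal{N}^{\vee}\cap\mathcal{N}^{\perp_{>n}}
\]
from Lemma \ref{lem-wedge_perp}(2)(3); the key extra input is that a silting subcategory generates $\mathcal{C}$ as a thick subcategory, so the ``$\operatorname{\mathsf{thick}}$'' part of those intersections is automatic on the silting side.

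For (1), I unfold $\mathcal{M}\geq\mathcal{N}$ as $\mathbb{E}^{k}(\mathcal{M},\mathcal{N})=0$ for all $k\geq 1$, i.e.\ $\mathcal{N}\subseteq\mathcal{M}^{\perp}$. Because $\mathcal{M}$ is silting, $\operatorname{\mathsf{thick}}\mathcal{M}=\mathcal{C}$, so $\mathcal{N}\subseteq\operatorname{\mathsf{thick}}\mathcal{M}$ is free; combining this with $\mathcal{N}\subseteq\mathcal{M}^{\perp}$ gives $\mathcal{N}\subseteq\mathcal{M}^{\wedge}$ by Lemma \ref{lem-psilt_wedge}(4). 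Conversely, $\mathcal{M}^{\wedge}\subseteq\mathcal{M}^{\perp}$ from the same lemma yields $\mathcal{M}\geq\mathcal{N}$. Part (2) is exactly dual, using Lemma \ref{lem-psilt_wedge}(5) and the fact that $\mathcal{N}$ silting makes $\operatorname{\mathsf{thick}}\mathcal{N}=\mathcal{C}$.

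For (3), I would show that both inclusions $\mathcal{N}\subseteq\mathcal{M}^{\wedge}_{n}$ and $\mathcal{M}\subseteq\mathcal{N}^{\vee}_{n}$ are equivalent to the single statement
\[
\mathcal{M}\geq\mathcal{N}\ \text{ and }\ \mathbb{E}^{k}(\mathcal{N},\mathcal{M})=0\ \text{ for all } k\geq n+1.
\]
Indeed, by Lemma \ref{lem-wedge_perp}(2), $\mathcal{N}\subseteq\mathcal{M}^{\wedge}_{n}$ splits into $\mathcal{N}\subseteq\mathcal{M}^{\wedge}$ and $\mathcal{N}\subseteq{}^{\perp_{>n}}\mathcal{M}$; the first is $\mathcal{M}\geq\mathcal{N}$ by (1), and the second is the $\mathbb{E}^{k}$-vanishing condition. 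Dually, by Lemma \ref{lem-wedge_perp}(3), $\mathcal{M}\subseteq\mathcal{N}^{\vee}_{n}$ splits into $\mathcal{M}\subseteq\mathcal{N}^{\vee}$ (equivalent to $\mathcal{M}\geq\mathcal{N}$ by (2)) and $\mathcal{M}\subseteq\mathcal{N}^{\perp_{>n}}$ (the same vanishing condition). Matching the two decompositions closes the equivalence.

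The whole argument is essentially bookkeeping once the right library of identities is invoked; there is no serious obstacle. The one subtle point to watch is in part (3), where one must correctly identify ${}^{\perp_{>n}}\mathcal{M}$ on the $\mathcal{M}^{\wedge}_{n}$ side with $\mathcal{N}^{\perp_{>n}}$ on the $\mathcal{N}^{\vee}_{n}$ side — both encode the \emph{same} bilinear vanishing $\mathbb{E}^{k}(\mathcal{N},\mathcal{M})=0$ for $k\geq n+1$, but written from opposite arguments of $\mathbb{E}^{k}$, so one has to be careful that the variance lines up and that no circular appeal to Proposition \ref{prop-at512} is made.
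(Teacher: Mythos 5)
Your proposal is correct and matches the paper's own proof in all essentials: parts (1) and (2) reduce to $\mathcal{M}^{\wedge}=\operatorname{\mathsf{thick}}\mathcal{M}\cap\mathcal{M}^{\perp}$ (resp.\ its dual) plus the fact that a silting subcategory generates $\mathcal{C}$ as a thick subcategory, and part (3) is obtained by splitting the inclusions via Lemma \ref{lem-wedge_perp}(2),(3) into a ``$\mathcal{M}\geq\mathcal{N}$'' piece and a ``$\mathbb{E}^{k}(\mathcal{N},\mathcal{M})=0$ for $k>n$'' piece, exactly as the paper does. The worry you raise at the end about variance and circularity is well-founded caution but resolves cleanly, just as you say.
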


\begin{proof}
(1) Since $\mathcal{M}$ is silting, we have $\mathcal{C}=\operatorname{\mathsf{thick}}\mathcal{M}$.
By Lemma \ref{lem-psilt_wedge}(4), $\mathcal{M}^{\wedge}=\mathcal{M}^{\perp}$ holds.
Thus the assertion holds. 

(2) By an argument similar to (1), we have the assertion. 

(3) By (1) and (2), $\mathcal{N}\subseteq \mathcal{M}^{\wedge}$ if and only if $\mathcal{M}\subseteq \mathcal{N}^{\vee}$. 
Moreover, $\mathcal{N}\subseteq{}^{\perp_{>n}}\mathcal{M}$ if and only if $\mathcal{M}\subseteq\mathcal{N}^{\perp_{>n}}$ for each $n\geq 0$. 
Hence the assertion follows from Lemma \ref{lem-wedge_perp}(2) and (3). 
\end{proof}

\subsection{Silting subcategories of $\mathcal{P}^{\infty}$}

Let $\mathcal{P}^{\infty}:=(\operatorname{\mathsf{proj}}\mathcal{C})^{\wedge}$. 
Since $\mathcal{P}^{\infty}$ is closed under extensions, it becomes an extriangulated category by Example \ref{example-etcat}(3). 
In this subsection, we study silting subcategories of $\mathcal{P}^{\infty}$.
First, we recall the notion of projective dimension of subcategories of $\mathcal{C}$.
For a subcategory $\mathcal{X}$ of $\mathcal{C}$, we say that \emph{the projective dimension $\mathrm{pd}\mathcal{X}$ of $\mathcal{X}$ is at most $n$} if $\mathcal{X}\subseteq(\operatorname{\mathsf{proj}}\mathcal{C})^{\wedge}_{n}$. For an object $M$ in $\mathcal{C}$, $\mathrm{pd}M:=\mathrm{pd}(\operatorname{\mathsf{add}}M)$.
The projective dimension has a similar property to the projective dimension of modules.

\begin{lemma}[{\cite[Lemma 5.16]{AT22}}]\label{lem-516}
Let $\mathcal{X}$ be a subcategory of $\mathcal{C}$ and let $n$ be a non-negative integer. 
If $\mathrm{pd}\mathcal{X}\leq n$, then $\mathcal{C}=\mathcal{X}^{\perp_{>n}}$ holds. 
Moreover, if $\mathcal{C}$ has enough projective objects, then the converse also holds. 
\end{lemma}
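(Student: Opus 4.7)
The plan is to prove the two implications separately, using the long exact sequences from Proposition~\ref{prop_longex} in both cases.

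For the forward implication, I would argue by induction on $n$. The base case $n = 0$ is immediate: if $\mathcal{X} \subseteq \operatorname{\mathsf{proj}}\mathcal{C}$, then $\mathbb{E}^{k}(\mathcal{X},\mathcal{C}) = 0$ for all $k \geq 1$ by the definition of projective objects. For the inductive step, let $X \in (\operatorname{\mathsf{proj}}\mathcal{C})^{\wedge}_n$, so by definition there is an $\mathfrak{s}$-conflation $X' \rightarrow P \rightarrow X \dashrightarrow$ with $X' \in (\operatorname{\mathsf{proj}}\mathcal{C})^{\wedge}_{n-1}$ and $P \in \operatorname{\mathsf{proj}}\mathcal{C}$. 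For an arbitrary $C \in \mathcal{C}$, Proposition~\ref{prop_longex}(1) produces an exact piece
$$\mathbb{E}^{k-1}(X',C) \to \mathbb{E}^{k}(X,C) \to \mathbb{E}^{k}(P,C).$$
For $k \geq n+1$ the right term vanishes since $P$ is projective, and the left term vanishes by the induction hypothesis applied to $\operatorname{\mathsf{add}}X'$ (since $\mathrm{pd}X' \leq n-1$ and $k-1 \geq n$). Hence $\mathbb{E}^{k}(X,C) = 0$, so $X \in \mathcal{X}^{\perp_{>n}}$.

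For the converse, assume $\mathcal{C}$ has enough projectives and $\mathcal{C} = \mathcal{X}^{\perp_{>n}}$. Given $X \in \mathcal{X}$, I would use ``enough projectives'' iteratively to construct $\mathfrak{s}$-conflations $K_{i+1} \rightarrow P_{i} \rightarrow K_{i} \dashrightarrow$ for $0 \leq i \leq n$, starting with $K_0 := X$ and each $P_i \in \operatorname{\mathsf{proj}}\mathcal{C}$. Applying $\mathcal{C}(-,C)$ to each conflation and invoking $\mathbb{E}^k(P_i,C) = 0$ for $k \geq 1$ yields the dimension-shifting chain of isomorphisms
$$\mathbb{E}(K_n,C) \cong \mathbb{E}^{2}(K_{n-1},C) \cong \cdots \cong \mathbb{E}^{n+1}(X,C) = 0,$$
where the last equality is the hypothesis. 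Since $C$ was arbitrary, $K_n \in \operatorname{\mathsf{proj}}\mathcal{C}$. Reading the conflations from the top down then shows $K_{n-i} \in (\operatorname{\mathsf{proj}}\mathcal{C})^{\wedge}_{i}$, and in particular $X = K_0 \in (\operatorname{\mathsf{proj}}\mathcal{C})^{\wedge}_{n}$, giving $\mathrm{pd}X \leq n$ as required.

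Both arguments are essentially routine applications of long exact sequences. The sole delicate point — and the main bookkeeping hurdle — is keeping indices aligned, in particular verifying that $(\operatorname{\mathsf{proj}}\mathcal{C})^{\wedge}_{0} = \operatorname{\mathsf{proj}}\mathcal{C}$ so that projectivity of $K_n$ at the top of the tower in the converse direction really places $X$ at level $n$ of the filtration $(\operatorname{\mathsf{proj}}\mathcal{C})^{\wedge}_{\bullet}$.
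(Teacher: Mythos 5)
Your proof is correct and uses the expected dimension-shifting argument; the paper itself does not reprove this lemma but simply cites \cite[Lemma~5.16]{AT22}, and your two inductions are exactly the standard route. Three small points worth tidying. First, in the forward direction you are varying the \emph{first} argument of $\mathbb{E}$ along the conflation $X'\to P\to X\dashrightarrow$, so the long exact sequence you want is Proposition~\ref{prop_longex}(2), not (1). Second, the sentence ``so $X\in\mathcal{X}^{\perp_{>n}}$'' should read ``so $C\in\mathcal{X}^{\perp_{>n}}$''; you have shown $\mathbb{E}^{k}(X,C)=0$ for all $X\in\mathcal{X}$ and all $k\geq n+1$, which places the arbitrary object $C$ in $\mathcal{X}^{\perp_{>n}}$. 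Third, and slightly more substantive: the base case $n=0$ (and every place a $P\in\operatorname{\mathsf{proj}}\mathcal{C}$ kills an $\mathbb{E}^{k}$-term with $k\geq 2$) uses that $\mathbb{E}^{k}(P,-)=0$ for \emph{all} $k\geq 1$, not just $k=1$. The definition of a projective object in an extriangulated category gives only $\mathbb{E}(P,-)=0$; the higher vanishing is a property of the Gorsky--Nakaoka--Palu bifunctors $\mathbb{E}^{k}$ and should be cited rather than attributed to ``the definition of projective objects.'' The paper relies on the same fact silently (e.g.\ in the dimension-shift isomorphisms inside the proof of Lemma~\ref{lem-thickE}), so this is a citation issue rather than a genuine gap, but it is worth making explicit. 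With that fact in hand, both directions of your argument go through, and the observation that $(\operatorname{\mathsf{proj}}\mathcal{C})^{\wedge}_{0}=\operatorname{\mathsf{proj}}\mathcal{C}$ correctly anchors the climb back down the syzygy tower in the converse.
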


Under a certain condition, we compare the projective dimension of $\mathcal{X}$ and that of $\mathcal{X}^{\vee}$.

\begin{lemma}\label{lem-pdim_vee}
Let $\mathcal{M}$ be a presilting subcategory of $\mathcal{C}$ and let $\mathcal{X}$ be a subcategory of $\mathcal{C}$. 
Assume that $\mathcal{M}$ is closed under cocones.
For each $n\geq 0$, $\mathcal{X}\subseteq \mathcal{M}^{\wedge}_{n}$ if and only if $\mathcal{X}^{\vee}\subseteq \mathcal{M}^{\wedge}_{n}$. 
Moreover, $\mathrm{pd}\mathcal{X}=\mathrm{pd}\mathcal{X}^{\vee}$ holds. 
\end{lemma}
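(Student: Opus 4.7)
The plan is to reduce the lemma to a single closure property, namely that $\mathcal{M}^{\wedge}_{n}$ is closed under cocones, then to induct on the $\vee$-layer of $\mathcal{X}$ for the first assertion and specialize to $\mathcal{M}=\operatorname{\mathsf{proj}}\mathcal{C}$ for the projective-dimension assertion.

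One direction of the equivalence is free: every $X\in\mathcal{X}$ sits in the split $\mathfrak{s}$-conflation $X\to X\to 0\dashrightarrow$, so $\mathcal{X}\subseteq \mathcal{X}^{\vee}_{0}\subseteq \mathcal{X}^{\vee}$, and consequently $\mathcal{X}^{\vee}\subseteq\mathcal{M}^{\wedge}_{n}$ immediately forces $\mathcal{X}\subseteq\mathcal{M}^{\wedge}_{n}$. For the converse I would first establish the cocone closure of $\mathcal{M}^{\wedge}_{n}$. Lemma \ref{lem-wedge_perp}(2) decomposes $\mathcal{M}^{\wedge}_{n}=\mathcal{M}^{\wedge}\cap{}^{\perp_{>n}}\mathcal{M}$, so I would handle each factor separately. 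The hypothesis that $\mathcal{M}$ is closed under cocones is used precisely to invoke Lemma \ref{lem-psilt_wedge}(4) and upgrade $\mathcal{M}^{\wedge}$ from being merely closed under cones to being the thick subcategory $\operatorname{\mathsf{thick}}\mathcal{M}$, hence closed under cocones. For the second factor, given an $\mathfrak{s}$-conflation $M\to A\to B\dashrightarrow$ with $A,B\in{}^{\perp_{>n}}\mathcal{M}$, the long exact sequence of Proposition \ref{prop_longex}(2) applied against any $N\in\mathcal{M}$ contains the fragment
\begin{align*}
\mathbb{E}^{k}(A,N)\longrightarrow \mathbb{E}^{k}(M,N)\longrightarrow \mathbb{E}^{k+1}(B,N)
\end{align*}
for $k\geq n+1$, and both outer terms vanish by hypothesis; hence $M\in{}^{\perp_{>n}}\mathcal{M}$.

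With this closure in place, a short induction on $l$ gives $\mathcal{X}^{\vee}_{l}\subseteq\mathcal{M}^{\wedge}_{n}$: the base case $l=0$ is $\mathcal{X}^{\vee}_{0}\subseteq\mathcal{X}\subseteq\mathcal{M}^{\wedge}_{n}$, and for the inductive step, any $M\in\mathcal{X}^{\vee}_{l}$ carries an $\mathfrak{s}$-conflation $M\to X\to Y\dashrightarrow$ with $X\in\mathcal{X}\subseteq\mathcal{M}^{\wedge}_{n}$ and $Y\in\mathcal{X}^{\vee}_{l-1}\subseteq\mathcal{M}^{\wedge}_{n}$ (by the inductive hypothesis), so the cocone closure yields $M\in\mathcal{M}^{\wedge}_{n}$. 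For the projective-dimension equality I would then apply this equivalence to $\mathcal{M}=\operatorname{\mathsf{proj}}\mathcal{C}$. This $\mathcal{M}$ is presilting, since $\mathbb{E}^{k}(\operatorname{\mathsf{proj}}\mathcal{C},-)=0$ for all $k\geq 1$, and closed under cocones: any $\mathfrak{s}$-conflation $M\to P\to P'\dashrightarrow$ with $P,P'\in\operatorname{\mathsf{proj}}\mathcal{C}$ splits by Proposition \ref{prop_longex}(1) applied at $P'$ (whose extension term $\mathbb{E}(P',M)$ vanishes), exhibiting $M$ as a direct summand of $P$. The equivalence then reads $\mathrm{pd}\mathcal{X}\leq n \Leftrightarrow \mathcal{X}\subseteq(\operatorname{\mathsf{proj}}\mathcal{C})^{\wedge}_{n} \Leftrightarrow \mathcal{X}^{\vee}\subseteq(\operatorname{\mathsf{proj}}\mathcal{C})^{\wedge}_{n} \Leftrightarrow \mathrm{pd}\mathcal{X}^{\vee}\leq n$ for every $n$, whence $\mathrm{pd}\mathcal{X}=\mathrm{pd}\mathcal{X}^{\vee}$. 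I expect the cocone closure of $\mathcal{M}^{\wedge}_{n}$ to be the main point requiring care, since this is precisely where the hypothesis on $\mathcal{M}$ plays its role and is what drives the layer-wise induction through.
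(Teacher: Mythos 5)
Your proof is correct but follows a genuinely different route from the paper's. The paper appeals directly to the identity $(\mathcal{M}^{\wedge}_{n})^{\vee}_{m}=\operatorname{\mathsf{cone}}(\mathcal{M}^{\wedge}_{n-1},\mathcal{M}^{\vee}_{m})$ from Lemma~\ref{lem-psilt_wedge}(2), notes that the cocone hypothesis forces $\mathcal{M}^{\vee}_{m}=\mathcal{M}$, and thereby collapses the whole expression to $\mathcal{M}^{\wedge}_{n}$ in a single chain of equalities applied to $\mathcal{X}^{\vee}_{m}\subseteq(\mathcal{M}^{\wedge}_{n})^{\vee}_{m}$. You instead isolate the structural fact actually driving the statement---that $\mathcal{M}^{\wedge}_{n}$ is closed under cocones---and prove it by intersecting $\mathcal{M}^{\wedge}_{n}=\mathcal{M}^{\wedge}\cap{}^{\perp_{>n}}\mathcal{M}$ (Lemma~\ref{lem-wedge_perp}(2)), handling $\mathcal{M}^{\wedge}$ via Lemma~\ref{lem-psilt_wedge}(4) (the hypothesis upgrades it to $\operatorname{\mathsf{thick}}\mathcal{M}$) and ${}^{\perp_{>n}}\mathcal{M}$ directly via the long exact sequence of Proposition~\ref{prop_longex}(2); a layer-wise induction on $\mathcal{X}^{\vee}_{l}$ then finishes. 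Both arguments are sound. Yours is a bit longer but makes transparent exactly where the cocone hypothesis enters and why a single closure property suffices, whereas the paper's is a two-line computation that leaves this implicit by packing it into Lemma~\ref{lem-psilt_wedge}(2). Minor remark: your base case uses $\mathcal{X}^{\vee}_{0}\subseteq\mathcal{X}$; in fact equality holds, since an $\mathfrak{s}$-conflation with zero cone is an isomorphism, though the inclusion is all you need. Your treatment of the projective-dimension claim by specializing to $\mathcal{M}=\operatorname{\mathsf{proj}}\mathcal{C}$ is exactly what the paper does.
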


\begin{proof}
Assume $\mathcal{X}\subseteq \mathcal{M}^{\wedge}_{n}$. 
For each $m\geq 0$, we have
\begin{align}
\mathcal{X}^{\vee}_{m}
&\subseteq (\mathcal{M}^{\wedge}_{n})^{\vee}_{m}&\notag\\
&= \operatorname{\mathsf{cone}}(\mathcal{M}^{\wedge}_{n-1},\mathcal{M}^{\vee}_{m})&\textnormal{by Lemma \ref{lem-psilt_wedge}(2)}\notag\\
&= \operatorname{\mathsf{cone}}(\mathcal{M}^{\wedge}_{n-1},\mathcal{M})&\textnormal{since $\mathcal{M}$ is closed under cocones} \notag\\
&=\mathcal{M}^{\wedge}_{n}&\textnormal{by definition}.\notag
\end{align}
Hence the former assertion holds. 
Since $\operatorname{\mathsf{proj}}\mathcal{C}$ is presilting and closed under cocones, the former assertion induces the latter assertion. 
\end{proof}

The following proposition gives a basic property of $\mathcal{P}^{\infty}$.

\begin{proposition}\label{prop-propertypfin}
We have $\mathcal{P}^{\infty}=\operatorname{\mathsf{thick}}(\operatorname{\mathsf{proj}}\mathcal{C})$.
Moreover, if $\mathcal{C}$ has enough projective objects, then $\mathcal{P}^{\infty}$ is a resolving subcategory of $\mathcal{C}$.
\end{proposition}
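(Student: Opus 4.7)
The plan is to derive the first equality from Lemma \ref{lem-psilt_wedge}(4) applied to $\mathcal{M}=\operatorname{\mathsf{proj}}\mathcal{C}$; this requires verifying that $\operatorname{\mathsf{proj}}\mathcal{C}$ is presilting and closed under cocones. The identity $\operatorname{\mathsf{proj}}\mathcal{C}=\operatorname{\mathsf{add}}(\operatorname{\mathsf{proj}}\mathcal{C})$ is immediate, so the presilting condition reduces to showing $\mathbb{E}^{k}(P,Q)=0$ for all projective $P,Q$ and $k\geq 1$. Although the definition only records $\mathbb{E}(P,-)=0$, I would use the stronger statement $\mathbb{E}^{k}(P,X)=0$ for every $X\in\mathcal{C}$ and every $k\geq 1$, obtained from the $n$-extension description of $\mathbb{E}^{k}$ in \cite{GNP}: in a representing sequence $X\to Y_{1}\to Z_{1}\dashrightarrow,\ \ldots,\ Z_{k-1}\to Y_{k}\to P\dashrightarrow$, the final $\mathfrak{s}$-conflation splits by projectivity of $P$, and an inductive shortening reduces the length.

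With $\mathbb{E}^{k}(P,-)=0$ in hand, closure of $\operatorname{\mathsf{proj}}\mathcal{C}$ under cocones drops out of Proposition \ref{prop_longex}(2): given an $\mathfrak{s}$-conflation $M\to P\to Q\dashrightarrow$ with $P,Q\in\operatorname{\mathsf{proj}}\mathcal{C}$, the long exact sequence contains an exact fragment $\mathbb{E}(P,X)\to\mathbb{E}(M,X)\to\mathbb{E}^{2}(Q,X)$ whose outer terms vanish, forcing $\mathbb{E}(M,X)=0$ for every $X\in\mathcal{C}$, whence $M\in\operatorname{\mathsf{proj}}\mathcal{C}$. Both hypotheses of Lemma \ref{lem-psilt_wedge}(4) are then satisfied, yielding $\mathcal{P}^{\infty}=(\operatorname{\mathsf{proj}}\mathcal{C})^{\wedge}=\operatorname{\mathsf{thick}}(\operatorname{\mathsf{proj}}\mathcal{C})$.

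For the second assertion, assume $\mathcal{C}$ has enough projective objects. Then $\operatorname{\mathsf{proj}}\mathcal{C}\subseteq\mathcal{P}^{\infty}$ is clear, and by the first part $\mathcal{P}^{\infty}$ is thick, hence closed under extensions, cocones, and direct summands. The remark immediately following the definition of resolving subcategories (concerning the case of enough projectives) then gives that $\mathcal{P}^{\infty}$ is a resolving subcategory of $\mathcal{C}$.

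The main obstacle I anticipate is the $\mathbb{E}^{k}$-acyclicity of projectives for $k\geq 2$, which is not spelled out in the excerpt; the remaining steps are direct invocations of lemmas already established. If a cleaner citation is available, one may simply invoke it; otherwise, a short inductive argument along the lines sketched above suffices.
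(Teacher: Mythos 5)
Your proof takes the same route as the paper's: apply Lemma~\ref{lem-psilt_wedge}(4) with $\mathcal{M}=\operatorname{\mathsf{proj}}\mathcal{C}$, then for the second assertion observe that $\operatorname{\mathsf{proj}}\mathcal{C}\subseteq\mathcal{P}^{\infty}$ and invoke the remark after the definition of resolving subcategories. You correctly identify the two hypotheses of Lemma~\ref{lem-psilt_wedge}(4) that need checking --- that $\operatorname{\mathsf{proj}}\mathcal{C}$ is presilting and closed under cocones --- which the paper's terse proof leaves to the reader.

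Two remarks on your supplied details. For closure under cocones you do not need the higher extension groups at all: given an $\mathfrak{s}$-conflation $M\to P\to Q\dashrightarrow$ with $P,Q\in\operatorname{\mathsf{proj}}\mathcal{C}$, projectivity of $Q$ gives $\mathbb{E}(Q,M)=0$, so the conflation splits, $M$ is a direct summand of $P$, and hence $M\in\operatorname{\mathsf{proj}}\mathcal{C}$. This is shorter than the long-exact-sequence argument and avoids any appeal to $\mathbb{E}^{2}$. Second, the vanishing $\mathbb{E}^{k}(P,-)=0$ for projective $P$ and all $k\geq 1$ is needed for the presilting condition, and it is a stated result in \cite{GNP}; you should cite it directly rather than re-derive it, since the construction of $\mathbb{E}^{k}$ in \cite{GNP} is more involved than "chains of $\mathfrak{s}$-conflations up to an easy equivalence," and the inductive shortening you sketch would require care to justify against that definition. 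The conclusion is of course correct, and the paper relies on it implicitly in the same way.
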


\begin{proof}
Since $\operatorname{\mathsf{proj}}\mathcal{C}$ is closed under cocones, we have $\mathcal{P}^{\infty}:=(\operatorname{\mathsf{proj}}\mathcal{C})^{\wedge}=\operatorname{\mathsf{thick}}(\operatorname{\mathsf{proj}}\mathcal{C})$, where the last equality follows from Lemma \ref{lem-psilt_wedge}(4).
Assume that $\mathcal{C}$ has enough projective objects. Then $\operatorname{\mathsf{proj}}\mathcal{C}\subseteq \mathcal{P}^{\infty}$ clearly holds.
Hence $\mathcal{P}^{\infty}$ is a resolving subcategory of $\mathcal{C}$.
\end{proof}

We give an example of silting subcategories of $\mathcal{P}^{\infty}$ and a characterization of presilting subcategories to be silting.

\begin{proposition}\label{prop-siltpfin}
The following statements hold. 
\begin{enumerate}[\upshape(1)]
\item $\operatorname{\mathsf{proj}}\mathcal{C}$ is a silting subcategory of $\mathcal{P}^{\infty}$. 
\item Let $\mathcal{M}$ be a presilting subcategory of $\mathcal{P}^{\infty}$. Then $\mathcal{M}$ is silting if and only if $\operatorname{\mathsf{proj}}\mathcal{C}\subseteq\mathcal{M}^{\vee}$.  
\end{enumerate}
\end{proposition}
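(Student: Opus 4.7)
The plan is to prove both parts of Proposition \ref{prop-siltpfin} by reducing to the presilting machinery of Lemma \ref{lem-psilt_wedge}, Proposition \ref{prop-propertypfin}, and Proposition \ref{thm-AT22}.

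For (1), I first observe that $\operatorname{\mathsf{proj}}\mathcal{C}$ is obviously closed under finite direct sums and under direct summands. Thus it remains to check
\[
\mathbb{E}^{k}_{\mathcal{P}^{\infty}}(\operatorname{\mathsf{proj}}\mathcal{C},\operatorname{\mathsf{proj}}\mathcal{C})=0 \quad (k\geq 1)
\qquad \text{and} \qquad
\mathcal{P}^{\infty}=\operatorname{\mathsf{thick}}(\operatorname{\mathsf{proj}}\mathcal{C}),
\]
where the thick closure is taken inside $\mathcal{P}^{\infty}$. The case $k=1$ is immediate since $\mathbb{E}_{\mathcal{P}^{\infty}}$ is the restriction of $\mathbb{E}$ and $P$ is projective in $\mathcal{C}$. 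For $k\geq 2$, I would use the split $\mathfrak{s}$-conflation $0\to P\xrightarrow{\operatorname{id}} P\dashrightarrow$, whose realizing class lives in $\mathbb{E}(P,0)=0$; viewed inside $\mathcal{P}^{\infty}$ it exhibits $0$ as a syzygy of $P$, and dimension-shifting in the long exact sequence of Proposition \ref{prop_longex} yields $\mathbb{E}^{k}_{\mathcal{P}^{\infty}}(P,-)\cong\mathbb{E}^{k-1}_{\mathcal{P}^{\infty}}(0,-)=0$. For the thick-closure claim, Proposition \ref{prop-propertypfin} already gives $\mathcal{P}^{\infty}=\operatorname{\mathsf{thick}}(\operatorname{\mathsf{proj}}\mathcal{C})$ as computed in $\mathcal{C}$; since every cone and cocone used in building this thick closure remains inside $\mathcal{P}^{\infty}$, it agrees with the thick closure computed inside $\mathcal{P}^{\infty}$.

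For the forward direction of (2), I would apply Proposition \ref{thm-AT22} to $\mathcal{M}\in\operatorname{\mathsf{silt}}\mathcal{P}^{\infty}$ to obtain the bounded hereditary cotorsion pair $(\mathcal{M}^{\vee},\mathcal{M}^{\wedge})$ in $\mathcal{P}^{\infty}$. The condition $\mathcal{P}^{\infty}=\operatorname{\mathsf{cone}}(\mathcal{M}^{\wedge},\mathcal{M}^{\vee})$ then supplies, for any $P\in\operatorname{\mathsf{proj}}\mathcal{C}\subseteq\mathcal{P}^{\infty}$, an $\mathfrak{s}$-conflation $Y\to X\to P\dashrightarrow$ with $X\in\mathcal{M}^{\vee}$ and $Y\in\mathcal{M}^{\wedge}$. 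Projectivity of $P$ splits this conflation, making $P$ a direct summand of $X$, and summand-closure of $\mathcal{M}^{\vee}$ (Lemma \ref{lem-psilt_wedge}(1)) gives $P\in\mathcal{M}^{\vee}$.

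For the backward direction of (2), I note that $\mathcal{M}^{\vee}\subseteq\operatorname{\mathsf{thick}}\mathcal{M}$ is visible from the inductive construction of $\mathcal{M}^{\vee}$ via iterated cocones, and can also be read off from Lemma \ref{lem-psilt_wedge}(3). The hypothesis $\operatorname{\mathsf{proj}}\mathcal{C}\subseteq\mathcal{M}^{\vee}$ then yields $\operatorname{\mathsf{proj}}\mathcal{C}\subseteq\operatorname{\mathsf{thick}}\mathcal{M}$, so
\[
\mathcal{P}^{\infty}=\operatorname{\mathsf{thick}}(\operatorname{\mathsf{proj}}\mathcal{C})\subseteq\operatorname{\mathsf{thick}}\mathcal{M}\subseteq\mathcal{P}^{\infty}
\]
by part (1), and $\mathcal{M}$ is silting in $\mathcal{P}^{\infty}$. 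The main obstacle is the possible discrepancy between $\mathbb{E}^{k}_{\mathcal{P}^{\infty}}$ and $\mathbb{E}^{k}|_{\mathcal{P}^{\infty}\times\mathcal{P}^{\infty}}$ for $k\geq 2$ (noted just before Lemma \ref{lem-thickE}): since $\mathcal{C}$ is not assumed to have enough projectives, Lemma \ref{lem-thickE} is not directly applicable, and the split-resolution argument above is what I expect to sidestep the issue cleanly.
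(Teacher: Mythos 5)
Your proposal is mostly sound, but the dimension-shifting step in part (1) is vacuous and needs to be replaced. Applying $\mathcal{C}(-,X)$ (via Proposition \ref{prop_longex}(2)) to the split $\mathfrak{s}$-conflation $0\to P\xrightarrow{\mathrm{id}}P\dashrightarrow$ gives the long exact sequence whose relevant segments read $0\to\mathbb{E}^{k}_{\mathcal{P}^{\infty}}(P,X)\xrightarrow{\mathrm{id}}\mathbb{E}^{k}_{\mathcal{P}^{\infty}}(P,X)\to 0$, because the map $\mathbb{E}^{k}_{\mathcal{P}^{\infty}}(P,X)\to\mathbb{E}^{k}_{\mathcal{P}^{\infty}}(P,X)$ is induced by the identity $P\to P$ and hence is already the identity map. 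Exactness here is trivially satisfied and yields no information about the size of $\mathbb{E}^{k}_{\mathcal{P}^{\infty}}(P,X)$; the usual syzygy dimension-shift requires the middle term to be already known to have vanishing higher extensions, which is precisely what you are trying to prove. So this cannot substitute for the step you flagged as the main obstacle. The correct (and much shorter) resolution is the paper's: since $\mathbb{E}_{\mathcal{P}^{\infty}}$ is the restriction of $\mathbb{E}$, any $P\in\operatorname{\mathsf{proj}}\mathcal{C}$ is projective as an object of the extriangulated category $\mathcal{P}^{\infty}$, and for a projective object in any extriangulated category the higher bifunctors $\mathbb{E}^{k}$ of Gorsky--Nakaoka--Palu vanish for all $k\geq 1$; the worry about the gap between $\mathbb{E}^{k}_{\mathcal{P}^{\infty}}$ and $\mathbb{E}^{k}|_{\mathcal{P}^{\infty}}$ is a red herring here because both vanish on projectives of their respective ambient categories. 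Combined with Proposition \ref{prop-propertypfin} and your (correct) observation that the thick closure computed inside $\mathcal{P}^{\infty}$ agrees with the one computed in $\mathcal{C}$, this completes (1).

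For part (2), your backward direction matches the paper's proof up to rephrasing. Your forward direction is a genuinely different route: the paper argues directly that $\operatorname{\mathsf{proj}}\mathcal{C}\subseteq\mathcal{P}^{\infty}\cap{}^{\perp}\mathcal{M}=\mathcal{M}^{\vee}$ via Lemma \ref{lem-psilt_wedge}(5), while you invoke Proposition \ref{thm-AT22} to produce the bounded hereditary cotorsion pair $(\mathcal{M}^{\vee},\mathcal{M}^{\wedge})$, take an $\mathfrak{s}$-conflation $Y\to X\to P\dashrightarrow$ from $\mathcal{P}^{\infty}=\operatorname{\mathsf{cone}}(\mathcal{M}^{\wedge},\mathcal{M}^{\vee})$, and split it by projectivity of $P$ to realize $P$ as a summand of $X\in\mathcal{M}^{\vee}$. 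Both are correct; the paper's route is a one-liner given Lemma \ref{lem-psilt_wedge}(5), while yours is slightly longer but stays closer to the cotorsion-pair picture and avoids citing that lemma, which some readers may find more transparent.
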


\begin{proof}
(1) Since objects in $\operatorname{\mathsf{proj}}\mathcal{C}$ are also projective in $\mathcal{P}^{\infty}$, the subcategory $\operatorname{\mathsf{proj}}\mathcal{C}$ is presilting in $\mathcal{P}^{\infty}$.
Thus the assertion follows from Proposition \ref{prop-propertypfin}.

(2) We show that $\mathcal{P}^{\infty}=\operatorname{\mathsf{thick}}\mathcal{M}$ if and only if $\operatorname{\mathsf{proj}}\mathcal{C}\subseteq\mathcal{M}^{\vee}$. 
If $\operatorname{\mathsf{proj}}\mathcal{C}\subseteq\mathcal{M}^{\vee}$, then it follows from Lemma \ref{lem-psilt_wedge}(3) that $\mathcal{P}^{\infty}=(\operatorname{\mathsf{proj}}\mathcal{C})^{\wedge}\subseteq(\mathcal{M}^{\vee})^{\wedge}=\operatorname{\mathsf{thick}}\mathcal{M}$. 
Conversely, we assume $\mathcal{P}^{\infty}=\operatorname{\mathsf{thick}}\mathcal{M}$.  
Then we have $\operatorname{\mathsf{proj}}\mathcal{C}\subseteq\mathcal{P}^{\infty}\cap{}^{\perp}\mathcal{M}=\mathcal{M}^{\vee}$, where the last equality follows from Lemma \ref{lem-psilt_wedge}(5). 
\end{proof}

Using the notion of silting objects, we give a characterization of tilting modules over a finite dimensional algebra. 
Let $\Lambda$ be a finite dimensional algebra. By Example \ref{example-etcat}(1), $\operatorname{\mathsf{mod}}\Lambda$ forms an extriangulated category with enough projective objects.
Recall the definition of tilting modules. A $\Lambda$-module $T\in\operatorname{\mathsf{mod}}\Lambda$ is said to be \emph{tilting} if $\mathrm{Ext}_{\Lambda}^{i}(T,T)=0$ for all $i\geq 1$, $\mathrm{pd}T<\infty$ and $\Lambda\in (\operatorname{\mathsf{add}}T)^{\vee}$.

\begin{corollary}\label{ex-siltilt}
Let $\Lambda$ be a finite dimensional algebra and $\mathcal{P}^{\infty}:=(\operatorname{\mathsf{proj}}(\operatorname{\mathsf{mod}}\Lambda))^{\wedge}$.
Then silting objects of $\mathcal{P}^{\infty}$ coincide with tilting $\Lambda$-modules.
\end{corollary}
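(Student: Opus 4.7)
The strategy is to translate the tilting conditions one-by-one into the hypotheses of Proposition \ref{prop-siltpfin}(2), using Lemma \ref{lem-thickE} to identify extension groups computed in $\mathcal{P}^{\infty}$ with those in $\operatorname{\mathsf{mod}}\Lambda$. First I would observe that $T\in\mathcal{P}^{\infty}$ if and only if $\mathrm{pd}\,T<\infty$, directly from the definition $\mathcal{P}^{\infty}=(\operatorname{\mathsf{proj}}(\operatorname{\mathsf{mod}}\Lambda))^{\wedge}$. Next, since $\operatorname{\mathsf{mod}}\Lambda$ has enough projectives and $\mathcal{P}^{\infty}$ is a resolving subcategory (Proposition \ref{prop-propertypfin}), Lemma \ref{lem-thickE} yields the identification
\[
\mathbb{E}^{i}_{\mathcal{P}^{\infty}}(T,T)\;\cong\;\mathbb{E}^{i}(T,T)\;=\;\mathrm{Ext}^{i}_{\Lambda}(T,T)
\]
for all $i\geq 1$ and all $T\in\mathcal{P}^{\infty}$, so the vanishing of positive self-extensions in either setting is the same condition. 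Consequently, $\operatorname{\mathsf{add}} T$ is a presilting subcategory of $\mathcal{P}^{\infty}$ if and only if $\mathrm{pd}\,T<\infty$ and $\mathrm{Ext}^{i}_{\Lambda}(T,T)=0$ for all $i\geq 1$.

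The remaining ingredient, via Proposition \ref{prop-siltpfin}(2), is to compare the silting condition $\operatorname{\mathsf{proj}}(\operatorname{\mathsf{mod}}\Lambda)\subseteq (\operatorname{\mathsf{add}} T)^{\vee}$ with the tilting condition $\Lambda\in (\operatorname{\mathsf{add}} T)^{\vee}$. One direction is trivial since $\Lambda$ is projective. For the converse, I would note that $(\operatorname{\mathsf{add}} T)^{\vee}=\bigcup_{n\geq 0}(\operatorname{\mathsf{add}} T)^{\vee}_{n}$ is closed under finite direct sums (each layer $(\operatorname{\mathsf{add}} T)^{\vee}_{n}$ is, by a straightforward induction using that $\mathfrak{s}$-conflations are stable under finite direct sums and $\operatorname{\mathsf{add}} T$ is closed under finite direct sums) and under direct summands (Lemma \ref{lem-psilt_wedge}(1)). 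Hence from $\Lambda\in (\operatorname{\mathsf{add}} T)^{\vee}$ one gets $\Lambda^{n}\in (\operatorname{\mathsf{add}} T)^{\vee}$ for every $n\geq 1$, and every finitely generated projective $\Lambda$-module, being a direct summand of some $\Lambda^{n}$, lies in $(\operatorname{\mathsf{add}} T)^{\vee}$.

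Combining these observations, the three silting conditions for $T$ in $\mathcal{P}^{\infty}$ (finite projective dimension, vanishing self-extensions in $\mathcal{P}^{\infty}$, and $\operatorname{\mathsf{proj}}(\operatorname{\mathsf{mod}}\Lambda)\subseteq (\operatorname{\mathsf{add}} T)^{\vee}$) match exactly the three tilting conditions. The proof is essentially a bookkeeping exercise; the only genuine point to verify, which might be considered the main obstacle, is the closure of $(\operatorname{\mathsf{add}} T)^{\vee}$ under finite direct sums and direct summands, which upgrades $\Lambda\in (\operatorname{\mathsf{add}} T)^{\vee}$ to inclusion of all of $\operatorname{\mathsf{proj}}(\operatorname{\mathsf{mod}}\Lambda)$. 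Everything else is a direct invocation of Proposition \ref{prop-siltpfin}(2), Lemma \ref{lem-thickE}, Proposition \ref{prop-propertypfin}, and Lemma \ref{lem-psilt_wedge}(1).
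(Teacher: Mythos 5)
Your proof is correct and takes essentially the same route as the paper: invoke Proposition \ref{prop-propertypfin} to see $\mathcal{P}^{\infty}$ is resolving, use Lemma \ref{lem-thickE} to identify $\mathbb{E}^{i}_{\mathcal{P}^{\infty}}$ with $\mathrm{Ext}^{i}_{\Lambda}$, and conclude via Proposition \ref{prop-siltpfin}(2). The only difference is that you spell out the (easy) equivalence between $\Lambda\in(\operatorname{\mathsf{add}}T)^{\vee}$ and $\operatorname{\mathsf{proj}}(\operatorname{\mathsf{mod}}\Lambda)\subseteq(\operatorname{\mathsf{add}}T)^{\vee}$ via closure under finite direct sums and summands, a step the paper leaves implicit.
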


\begin{proof}
By Proposition \ref{prop-propertypfin}, $\mathcal{P}^{\infty}$ is a resolving subcategory of $\operatorname{\mathsf{mod}}\Lambda$.
Thus it follows from Lemma \ref{lem-thickE} that $T$ is a presilting object of $\mathcal{P}^{\infty}$ if and only if $T$ satisfies $\mathrm{Ext}_{\Lambda}^{i}(T,T)=0$ for all $i\geq 1$ and $\mathrm{pd}T<\infty$. 
Hence the assertion follows from Proposition \ref{prop-siltpfin}(2).
\end{proof}

Using Proposition \ref{thm-AT22}, we give a relationship between silting subcategories of $\mathcal{P}^{\infty}$ and hereditary cotorsion pairs in $\mathcal{P}^{\infty}$.

\begin{proposition}\label{prop-AT}
Assume that $\mathcal{C}$ has enough projective objects. 
Then there exist mutually inverse bijections
\begin{align}
\xymatrix{
\{\mathcal{M}\in\operatorname{\mathsf{silt}}\mathcal{P}^{\infty} \mid \mathrm{pd}\mathcal{M}<\infty\}\ar@<0.5ex>[r]^-{F} &\{(\mathcal{A},\mathcal{B})\in\operatorname{\mathsf{hcotors}}\mathcal{P}^{\infty}\mid \mathrm{pd}\mathcal{A}<\infty\}\ar@<0.5ex>[l]^-{G}, 
}\notag\end{align}
where $F(\mathcal{M}):=(\mathcal{M}^{\vee},\mathcal{M}^{\wedge})$ and $G(\mathcal{A},\mathcal{B}):=\mathcal{A}\cap\mathcal{B}$.
\end{proposition}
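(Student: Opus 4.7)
The plan is to apply Proposition~\ref{thm-AT22} to the extriangulated category $\mathcal{P}^{\infty}$ (which is extriangulated by Example~\ref{example-etcat}(3) and Proposition~\ref{prop-propertypfin}). This yields mutually inverse bijections between $\operatorname{\mathsf{silt}}\mathcal{P}^{\infty}$ and $\operatorname{\mathsf{bdd-hcotors}}\mathcal{P}^{\infty}$ via the very same assignments $F$ and $G$. The proposition then reduces to two verifications: (a) every hereditary cotorsion pair in $\mathcal{P}^{\infty}$ with $\mathrm{pd}\mathcal{A}<\infty$ is automatically bounded, and (b) the correspondences $F$ and $G$ preserve the finite-projective-dimension condition.

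For (a), let $(\mathcal{A},\mathcal{B})\in\operatorname{\mathsf{hcotors}}\mathcal{P}^{\infty}$ with $\mathrm{pd}\mathcal{A}\leq n$. Since $\mathcal{P}^{\infty}$ is resolving in $\mathcal{C}$ (Proposition~\ref{prop-propertypfin}), Lemma~\ref{lem-thickE} identifies $\mathbb{E}^{i}_{\mathcal{P}^{\infty}}$ with $\mathbb{E}^{i}$ on $\mathcal{P}^{\infty}$, and Lemma~\ref{lem_cotors} applied inside $\mathcal{P}^{\infty}$ identifies $\mathcal{A}^{\wedge}_{k}$ with ${}^{\perp_{>k}}\mathcal{B}$ and $\mathcal{B}^{\vee}_{k}$ with $\mathcal{A}^{\perp_{>k}}$ for every $k\geq 0$. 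To show $\mathcal{P}^{\infty}=\mathcal{B}^{\vee}$, I combine $\mathrm{pd}\mathcal{A}\leq n$ with Lemma~\ref{lem-516} to obtain $\mathbb{E}^{k}(\mathcal{A},M)=0$ for every $M\in\mathcal{P}^{\infty}$ and $k>n$, whence $M\in\mathcal{A}^{\perp_{>n}}=\mathcal{B}^{\vee}_{n}\subseteq\mathcal{B}^{\vee}$. To show $\mathcal{P}^{\infty}=\mathcal{A}^{\wedge}$, note that any $M\in\mathcal{P}^{\infty}$ has $\mathrm{pd}M\leq m$ for some object-dependent $m$, so Lemma~\ref{lem-516} gives $\mathbb{E}^{k}(M,\mathcal{B})=0$ for $k>m$, placing $M$ into ${}^{\perp_{>m}}\mathcal{B}=\mathcal{A}^{\wedge}_{m}\subseteq\mathcal{A}^{\wedge}$.

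For (b), the subcategory $\operatorname{\mathsf{proj}}\mathcal{C}$ is presilting and closed under cocones, so Lemma~\ref{lem-pdim_vee} applied with $\mathcal{M}:=\operatorname{\mathsf{proj}}\mathcal{C}$ and $\mathcal{X}:=\mathcal{M}$ yields $\mathrm{pd}\mathcal{M}^{\vee}=\mathrm{pd}\mathcal{M}$, so $F$ preserves finite projective dimension. The corresponding statement for $G$ is immediate from $\mathcal{A}\cap\mathcal{B}\subseteq\mathcal{A}$, which yields $\mathrm{pd}(\mathcal{A}\cap\mathcal{B})\leq\mathrm{pd}\mathcal{A}$. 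The main delicacy lies in (a), specifically in keeping track of whether the $\perp$-, wedge-, and vee-constructions are to be computed in $\mathcal{C}$ or in $\mathcal{P}^{\infty}$; but Lemma~\ref{lem-thickE}, together with the fact that $\mathcal{P}^{\infty}$ is thick (hence resolving) in $\mathcal{C}$, makes this translation routine.
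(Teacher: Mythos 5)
Your argument is correct and follows essentially the same route as the paper: apply Proposition~\ref{thm-AT22} to $\mathcal{P}^{\infty}$, then show that the hereditary cotorsion pairs with $\mathrm{pd}\mathcal{A}<\infty$ are exactly the bounded ones with $\mathrm{pd}(\mathcal{A}\cap\mathcal{B})<\infty$, using Lemmas~\ref{lem_cotors}, \ref{lem-516} and \ref{lem-pdim_vee}. The only minor variation is in the verification that $\mathcal{A}^{\wedge}=\mathcal{P}^{\infty}$: you deduce it object-by-object from $\mathrm{pd}M<\infty$ via Lemma~\ref{lem-516} and the identity $\mathcal{A}^{\wedge}_{m}={}^{\perp_{>m}}\mathcal{B}$, whereas the paper observes more directly that $\operatorname{\mathsf{proj}}\mathcal{C}\subseteq\mathcal{A}$ and hence $\mathcal{P}^{\infty}=(\operatorname{\mathsf{proj}}\mathcal{C})^{\wedge}\subseteq\mathcal{A}^{\wedge}$; both are valid.
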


\begin{proof}
By Proposition \ref{thm-AT22}, we have mutually inverse bijections
\begin{align}\label{seq-basicbij}
\xymatrix{
\{\mathcal{M}\in\operatorname{\mathsf{silt}}\mathcal{P}^{\infty} \mid \mathrm{pd}\mathcal{M}<\infty\}\ar@<0.5ex>[r]^-{F} &\{(\mathcal{A},\mathcal{B})\in\operatorname{\mathsf{bdd-hcotors}}\mathcal{P}^{\infty}\mid \mathrm{pd}(\mathcal{A}\cap\mathcal{B})<\infty\}\ar@<0.5ex>[l]^-{G}, 
}\end{align}
where $F(\mathcal{M}):=(\mathcal{M}^{\vee},\mathcal{M}^{\wedge})$ and $G(\mathcal{A},\mathcal{B}):=\mathcal{A}\cap\mathcal{B}$.
It is enough to show that $(\mathcal{A},\mathcal{B})$ is a hereditary cotorsion pair in $\mathcal{P}^{\infty}$ with $\mathrm{pd}\mathcal{A}<\infty$ if and only if $(\mathcal{A},\mathcal{B})$ is a bounded hereditary cotorsion pair in $\mathcal{P}^{\infty}$ with $\mathrm{pd}(\mathcal{A}\cap \mathcal{B})<\infty$.
We show the ``if'' part. 
By \eqref{seq-basicbij}, $\mathcal{A}=(\mathcal{A}\cap \mathcal{B})^{\vee}$. 
Thus it follows from Lemma \ref{lem-pdim_vee} that $\mathrm{pd}\mathcal{A}=\mathrm{pd}(\mathcal{A}\cap\mathcal{B})^{\vee}=\mathrm{pd}(\mathcal{A}\cap\mathcal{B})<\infty$. 
We show the ``only if'' part.
By definition, $\mathrm{pd}(\mathcal{A}\cap\mathcal{B})\leq \mathrm{pd}\mathcal{A}<\infty$.
Since $\operatorname{\mathsf{proj}}\mathcal{C}$ is a subcategory of $\mathcal{A}$, we obtain $\mathcal{A}^{\wedge}=\mathcal{P}^{\infty}$.
Let $n:=\mathrm{pd}\mathcal{A}$. 
By Lemmas \ref{lem_cotors} and \ref{lem-516}, we have $\mathcal{P}^{\infty}=\mathcal{A}^{\perp_{>n}}\cap\mathcal{P}^{\infty}=\mathcal{B}^{\vee}_{n}$.
Thus $(\mathcal{A},\mathcal{B})$ is bounded.
This completes the proof.
\end{proof}

\section{Main results}

In this section, we study properties of silting subcategories following \cite{AI12}.
Let $\mathcal{C}$ be an extriangulated category.
Throughout this section, we assume that $\mathcal{C}$ has a silting subcategory.

\subsection{Grothendieck groups}

Assume that $\mathcal{C}$ is a Krull--Schmidt category. 
In this subsection, we give a description of the Grothendieck group of $\mathcal{C}$. 
For a subcategory $\mathcal{X}$ of $\mathcal{C}$, let $\operatorname{\mathsf{iso}}\mathcal{X}$ denote the set of isomorphism classes of objects in $\mathcal{X}$ and let $\operatorname{\mathsf{ind}}\mathcal{X}$ denote the set of isomorphism classes of indecomposable objects in $\mathcal{X}$.
Let $\mathbb{Z}^{\operatorname{\mathsf{ind}}\mathcal{X}}$ denote a free abelian group with a basis $\operatorname{\mathsf{ind}}\mathcal{X}$ and let $\mathbb{Z}^{\operatorname{\mathsf{ind}}\mathcal{X}}_{\geq 0}$ denote a free monoid with a basis $\operatorname{\mathsf{ind}}\mathcal{X}$. 
The \emph{Grothendieck group} $K_{0}(\mathcal{C})$ of $\mathcal{C}$ is defined as the quotient group $\mathbb{Z}^{\operatorname{\mathsf{ind}}\mathcal{C}}/R(\mathcal{C})$, where $R(\mathcal{C})$ is the subgroup of $\mathbb{Z}^{\operatorname{\mathsf{ind}}\mathcal{C}}$ generated by 
\begin{align}
\{[X]-[Y]+[Z]\mid \textnormal{there exists an $\mathfrak{s}$-conflation\;}X\rightarrow Y\rightarrow Z \dashrightarrow\textnormal{in\;}\mathcal{C}\}. \notag
\end{align}

The aim of this subsection is to show the following theorem. 

\begin{theorem}\label{mainresult-Grgroup}
Let $\mathcal{C}$ be a Krull--Schmidt extriangulated category.
If $\mathcal{C}$ has a silting subcategory $\mathcal{M}$, then the Grothendieck group $K_{0}(\mathcal{C})$ of $\mathcal{C}$ is a free abelian group with a basis $\operatorname{\mathsf{ind}}\mathcal{M}$.
\end{theorem}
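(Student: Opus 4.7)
The plan is to prove separately that $\operatorname{\mathsf{ind}}\mathcal{M}$ generates $K_{0}(\mathcal{C})$ and that its classes are $\mathbb{Z}$-linearly independent in $K_{0}(\mathcal{C})$. Generation is a straightforward double induction: since $\mathcal{C}=\operatorname{\mathsf{thick}}\mathcal{M}=(\mathcal{M}^{\vee})^{\wedge}$ by Lemma \ref{lem-psilt_wedge}(3), each $X\in\mathcal{C}$ belongs to $(\mathcal{M}^{\vee})^{\wedge}_{n}$ for some $n$. The defining $\mathfrak{s}$-conflation $X'\to M\to X\dashrightarrow$ with $M\in\mathcal{M}^{\vee}$ and $X'\in (\mathcal{M}^{\vee})^{\wedge}_{n-1}$ yields $[X]=[M]-[X']$ in $K_{0}(\mathcal{C})$, reducing by induction to the case $X\in \mathcal{M}^{\vee}$; a second induction on the $\mathcal{M}^{\vee}$-level, using $\mathcal{M}^{\vee}_{m}=\operatorname{\mathsf{cocone}}(\mathcal{M},\mathcal{M}^{\vee}_{m-1})$, then reduces to the trivial case $X\in\mathcal{M}$.

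For linear independence, I aim to construct a group homomorphism $\phi: K_{0}(\mathcal{C})\to \mathbb{Z}^{\operatorname{\mathsf{ind}}\mathcal{M}}$ with $\phi([M])=[M]$ for every $M\in\operatorname{\mathsf{ind}}\mathcal{M}$; any such $\phi$ forces the $[M]$ to be $\mathbb{Z}$-linearly independent. I first build $\phi$ on $\mathcal{M}^{\wedge}$. By Lemma \ref{lem-wedge_perp}(1) we have $\mathbb{E}(\mathcal{M},\mathcal{M}^{\wedge})=0$, so the objects of $\mathcal{M}$ are projective in the extriangulated subcategory $\mathcal{M}^{\wedge}$, and conversely any projective in $\mathcal{M}^{\wedge}$ is a summand of some $M\in\mathcal{M}$ by applying Lemma \ref{lem-sconfls}(1) inside $\mathcal{M}^{\wedge}$; hence $\mathcal{M}$ is exactly the subcategory of projectives of $\mathcal{M}^{\wedge}$, which moreover has enough projectives. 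For each $X\in\mathcal{M}^{\wedge}$ use the minimal $\mathcal{M}$-resolution from Lemma \ref{lem-sconfls}(1) and set $\phi_{0}(X):=\sum_{i\geq 0}(-1)^{i}[M_{i}^{X}]$; dually, for each $X\in\mathcal{M}^{\vee}$ take the analogous minimal $\mathcal{M}$-coresolution and set $\phi_{1}(X):=\sum_{i\geq 0}(-1)^{i}[M_{X}^{i}]$.

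Next I show that $\phi_{0}$ is additive on $\mathfrak{s}$-conflations inside $\mathcal{M}^{\wedge}$, and dually for $\phi_{1}$ on $\mathcal{M}^{\vee}$. Given an $\mathfrak{s}$-conflation $X\to Y\to Z\dashrightarrow$ with all three terms in $\mathcal{M}^{\wedge}$, a Horseshoe-type argument (relying on the projectivity of the $M_{i}^{Z}$ in $\mathcal{M}^{\wedge}$) produces an $\mathcal{M}$-resolution of $Y$ whose $i$-th term is $M_{i}^{X}\oplus M_{i}^{Z}$; one then observes that any two $\mathcal{M}$-resolutions of the same object differ up to direct summands of contractible resolutions $M\xrightarrow{\mathrm{id}}M$, each of which contributes zero to the alternating sum, so $\phi_{0}(Y)=\phi_{0}(X)+\phi_{0}(Z)$. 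Lemma \ref{lem-sconfls}(2) is the key tool for controlling the minimal versus non-minimal summands.

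Finally, use the hereditary cotorsion pair $(\mathcal{M}^{\vee},\mathcal{M}^{\wedge})$ from Proposition \ref{thm-AT22} to glue $\phi_{0}$ and $\phi_{1}$ into a single $\phi$ on $\mathcal{C}$. For any $X\in\mathcal{C}$, Definition \ref{def-hcotors}(3) yields an $\mathfrak{s}$-conflation $Y\to A\to X\dashrightarrow$ with $A\in\mathcal{M}^{\vee}$ and $Y\in\mathcal{M}^{\wedge}$, and I set $\phi(X):=\phi_{1}(A)-\phi_{0}(Y)$. Well-definedness follows from $\mathbb{E}(\mathcal{M}^{\vee},\mathcal{M}^{\wedge})=0$, which allows two such $\mathfrak{s}$-conflations to be compared via a commutative diagram; additivity of $\phi$ on $\mathfrak{s}$-conflations in $\mathcal{C}$ then follows from the $3\times 3$ lemma combined with the already-proved additivity of $\phi_{0}$ and $\phi_{1}$. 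The main obstacle throughout is the Horseshoe-type construction together with the minimality bookkeeping, but once that is in hand the rest is formal, and since $\phi([M])=[M]$ by construction for $M\in\operatorname{\mathsf{ind}}\mathcal{M}$, this completes the proof.
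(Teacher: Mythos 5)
Your proposal is correct and takes essentially the same approach as the paper: both prove linear independence by constructing an additive map to $\mathbb{Z}^{\operatorname{\mathsf{ind}}\mathcal{M}}$ piecewise on $\mathcal{M}^{\wedge}$, on $\mathcal{M}^{\vee}$, and then on all of $\mathcal{C}$ via the cotorsion pair $(\mathcal{M}^{\vee},\mathcal{M}^{\wedge})$, with your alternating sum of minimal $\mathcal{M}$-(co)resolution terms being exactly the paper's inductively defined $\gamma$ unwound. Your Horseshoe- and $3\times 3$-lemma arguments are repackagings of the paper's explicit (ET4)/(ET4)$^{\mathrm{op}}$ diagram chases in Lemmas \ref{lemma-additive1532} and \ref{lemma-universal1556}, and the generation step is the same cotorsion-pair decomposition.
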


For an object $M\in \mathcal{C}$, let $|M|$ denote the number of non-isomorphic indecomposable direct summands of $M$.
As a direct consequence of Theorem \ref{mainresult-Grgroup}, we obtain the following result.

\begin{corollary}\label{cor-Grgroup}
Assume that $\mathcal{C}$ has a silting object $M$.
Then the following statements hold.
\begin{enumerate}[\upshape(1)]
\item If $N$ is a silting object of $\mathcal{C}$, then we have $|N|=|M|$.
\item Let $N$ be a partial silting object of $\mathcal{C}$.
Then $N$ is silting if and only if $|N|=|M|$.
\end{enumerate}
\end{corollary}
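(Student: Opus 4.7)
The plan is to derive both parts of the corollary as direct consequences of Theorem \ref{mainresult-Grgroup}, using only two further inputs: that the rank of a free abelian group is well-defined on bases, and the fact (recalled just before \cite[Proposition 5.4]{AT22} in the subsection on silting subcategories) that once $\mathcal{C}$ admits a silting object, every silting subcategory of $\mathcal{C}$ contains an additive generator.

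For (1), I would first note that since $M$ and $N$ are silting objects, the subcategories $\operatorname{\mathsf{add}}M$ and $\operatorname{\mathsf{add}}N$ are silting subcategories by the very definition of a silting object. Theorem \ref{mainresult-Grgroup} then produces two bases of the free abelian group $K_{0}(\mathcal{C})$, namely $\operatorname{\mathsf{ind}}(\operatorname{\mathsf{add}}M)$ and $\operatorname{\mathsf{ind}}(\operatorname{\mathsf{add}}N)$. Because $\mathcal{C}$ is Krull--Schmidt and $M$, $N$ have only finitely many isomorphism classes of indecomposable direct summands, these two bases are finite with cardinalities $|M|$ and $|N|$; well-definedness of rank for free abelian groups then forces $|M|=|N|$.

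For (2), the ``only if'' direction is exactly part (1) applied to the silting object $N$. For the ``if'' direction, suppose $N$ is partial silting with $|N|=|M|$. By definition there is a silting subcategory $\mathcal{N}$ of $\mathcal{C}$ with $\operatorname{\mathsf{add}}N\subseteq \mathcal{N}$, and the fact noted above allows us to write $\mathcal{N}=\operatorname{\mathsf{add}}L$ for some object $L$ of $\mathcal{C}$, which is therefore a silting object. Applying (1) to $L$ gives $|L|=|M|=|N|$, so the inclusion $\operatorname{\mathsf{ind}}(\operatorname{\mathsf{add}}N)\subseteq \operatorname{\mathsf{ind}}(\operatorname{\mathsf{add}}L)$ is an inclusion of finite sets of equal cardinality and is therefore an equality. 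Consequently $\operatorname{\mathsf{add}}N=\operatorname{\mathsf{add}}L$ is a silting subcategory, and $N$ is silting.

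The argument is essentially bookkeeping, and I anticipate no genuine obstacle. The one point that needs a moment's attention is recognizing, in the proof of (2), that the silting subcategory witnessing partial siltingness of $N$ can actually be taken of the form $\operatorname{\mathsf{add}}L$ for a bona fide object $L$; without this, the cardinality comparison afforded by (1) would not be directly applicable.
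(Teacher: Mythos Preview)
Your proof is correct and follows essentially the same approach as the paper: part (1) is an immediate consequence of Theorem~\ref{mainresult-Grgroup} via the rank of $K_0(\mathcal{C})$, and part (2) hinges on the fact that the silting subcategory containing $\operatorname{\mathsf{add}}N$ admits an additive generator. The paper phrases the ``if'' direction of (2) as a proof by contradiction (taking $X\oplus N$ silting with $X\notin\operatorname{\mathsf{add}}N$ and deriving $|M|=|N|<|X\oplus N|=|M|$) rather than your direct cardinality comparison, but the underlying content is identical.
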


\begin{proof}
(1) This follows from Theorem \ref{mainresult-Grgroup}.

(2) The ``only if'' part follows from (1).
We show the ``if'' part. Suppose that $N$ is not silting.
Since $N$ is a partial silting object, there exists an object $X$ such that $X\oplus N$ is silting and $X\notin \operatorname{\mathsf{add}}N$. Hence we have $|M|=|N|<|X|+|N|=|M|$, a contradiction.
\end{proof}

In the rest of this subsection, we give a proof of Theorem \ref{mainresult-Grgroup}.
Let $\mathcal{M}$ be a silting subcategory of $\mathcal{C}$. 
By Proposition \ref{thm-AT22}, we have $\mathcal{C}=\operatorname{\mathsf{cone}}(\mathcal{M}^{\wedge},\mathcal{M}^{\vee})=\operatorname{\mathsf{cocone}}(\mathcal{M}^{\wedge},\mathcal{M}^{\vee})$ and $\mathbb{E}^{i}(\mathcal{M}^{\vee}, \mathcal{M}^{\wedge})=0$ for all $i\geq 1$. 
To show the elements of $\operatorname{\mathsf{ind}}\mathcal{M}$ are linear independent, we define a map
\begin{align}
\gamma: \operatorname{\mathsf{iso}}\mathcal{C}\rightarrow\mathbb{Z}^{\operatorname{\mathsf{ind}}\mathcal{M}}\notag
\end{align}
as follows:
\begin{enumerate}[\upshape(i)]
\item $\gamma$ naturally identifies $\operatorname{\mathsf{iso}}\mathcal{M}$ with $\mathbb{Z}^{\operatorname{\mathsf{ind}}\mathcal{M}}_{\geq 0}$.
\item For each $N\in \mathcal{M}^{\wedge}_{l}$, there exists an $\mathfrak{s}$-conflation $N'\rightarrow M\xrightarrow{f}N\dashrightarrow$ such that $N'\in \mathcal{M}^{\wedge}_{l-1}$ and $f$ is a minimal right $\mathcal{M}$-approximation by Lemma \ref{lem-sconfls}(1). 
Then we inductively define 
\begin{align}
\displaystyle \gamma(N):=\gamma(M)-\gamma(N'). \notag
\end{align} 
\item For each $N\in \mathcal{M}^{\vee}_{l}$, there exists an $\mathfrak{s}$-conflation $N\xrightarrow{g}M\rightarrow N'\dashrightarrow$ such that $N'\in \mathcal{M}^{\vee}_{l-1}$ and $g$ is a minimal left $\mathcal{M}$-approximation by a dual statement of Lemma \ref{lem-sconfls}(1). 
Then we inductively define 
\begin{align}
\displaystyle \gamma(N):=\gamma(M)-\gamma(N'). \notag
\end{align} 
\item Let $N\in \mathcal{C}=\operatorname{\mathsf{cone}}(\mathcal{M}^{\wedge},\mathcal{M}^{\vee})$. 
Since $\mathcal{C}$ is a Krull--Schmidt category and $\mathcal{M}^{\wedge}$, $\mathcal{M}^{\vee}$ are closed under direct summands, it follows from $\mathbb{E}(\mathcal{M}^{\vee}, \mathcal{M}^{\wedge})=0$ that there exists an $\mathfrak{s}$-conflation $U_{N}\rightarrow V_{N}\xrightarrow{f} N\dashrightarrow$ such that $U_{N}\in \mathcal{M}^{\wedge}$, $V_{N}\in \mathcal{M}^{\vee}$ and $f$ is a minimal right $\mathcal{M}^{\vee}$-approximation. 
Then we put 
\begin{align}
\gamma(N):=\gamma(V_{N})-\gamma(U_{N}). \notag
\end{align}
\end{enumerate}

In the following, we show that the map $\gamma$ is additive, that is, $\gamma(X)-\gamma(Y)+\gamma(Z)=0$ holds for each $\mathfrak{s}$-conflation $X\rightarrow Y\rightarrow Z\dashrightarrow$. 

\begin{lemma}\label{lem-gamma_dirsummand}
Let $X_{1},X_{2}\in\mathcal{C}$. Then we have $\gamma(X_{1}\oplus X_{2})=\gamma(X_{1})+\gamma(X_{2})$.
\end{lemma}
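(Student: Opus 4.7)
The plan is to verify additivity by following the four-layered definition of $\gamma$, pushing through each layer in turn. The uniform tool throughout is that in a Krull--Schmidt extriangulated category the direct sum of two minimal right approximations (with respect to any fixed subcategory closed under direct summands) is again a minimal right approximation, and the direct sum of two $\mathfrak{s}$-conflations is again an $\mathfrak{s}$-conflation. These facts let me combine the distinguished $\mathfrak{s}$-conflations chosen to compute $\gamma(X_1)$ and $\gamma(X_2)$ into one that computes $\gamma(X_1 \oplus X_2)$.

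The base case (clause (i)) is immediate: on $\operatorname{\mathsf{iso}}\mathcal{M}$, the map $\gamma$ is the Krull--Schmidt identification with $\mathbb{Z}^{\operatorname{\mathsf{ind}}\mathcal{M}}_{\geq 0}$, which sends an object to the multiset of its indecomposable summands and is patently additive. For clause (ii), I would induct on $l$ with $X_1, X_2 \in \mathcal{M}^{\wedge}_{l}$, taking the conflations $N'_i \to M_i \xrightarrow{f_i} X_i \dashrightarrow$ from Lemma \ref{lem-sconfls}(1) with $f_i$ a minimal right $\mathcal{M}$-approximation and $N'_i \in \mathcal{M}^{\wedge}_{l-1}$. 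The direct sum is an $\mathfrak{s}$-conflation $N'_1 \oplus N'_2 \to M_1 \oplus M_2 \xrightarrow{f_1 \oplus f_2} X_1 \oplus X_2 \dashrightarrow$ in which $f_1 \oplus f_2$ is again a minimal right $\mathcal{M}$-approximation, so by the definition of $\gamma$, the base case, and the inductive hypothesis:
\[
\gamma(X_1 \oplus X_2) = \gamma(M_1 \oplus M_2) - \gamma(N'_1 \oplus N'_2) = \bigl(\gamma(M_1) - \gamma(N'_1)\bigr) + \bigl(\gamma(M_2) - \gamma(N'_2)\bigr) = \gamma(X_1) + \gamma(X_2).
\]
Clause (iii) is treated dually.

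Finally, for clause (iv), given arbitrary $X_1, X_2 \in \mathcal{C}$, take the defining $\mathfrak{s}$-conflations $U_i \to V_i \xrightarrow{f_i} X_i \dashrightarrow$ with $U_i \in \mathcal{M}^{\wedge}$, $V_i \in \mathcal{M}^{\vee}$, and $f_i$ a minimal right $\mathcal{M}^{\vee}$-approximation. Again the direct sum is an $\mathfrak{s}$-conflation $U_1 \oplus U_2 \to V_1 \oplus V_2 \xrightarrow{f_1 \oplus f_2} X_1 \oplus X_2 \dashrightarrow$ with $U_1 \oplus U_2 \in \mathcal{M}^{\wedge}$, $V_1 \oplus V_2 \in \mathcal{M}^{\vee}$, and $f_1 \oplus f_2$ a minimal right $\mathcal{M}^{\vee}$-approximation, whence $\gamma(X_1 \oplus X_2) = \gamma(V_1 \oplus V_2) - \gamma(U_1 \oplus U_2)$; applying the already-verified additivity of $\gamma$ on $\mathcal{M}^{\vee}$ and $\mathcal{M}^{\wedge}$ finishes the proof.

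The only point requiring mild care is the stability of minimal right approximations under direct sums: if $f_i: M_i \to X_i$ is right minimal, then any endomorphism $h$ of $M_1 \oplus M_2$ satisfying $(f_1 \oplus f_2)h = f_1 \oplus f_2$ is, by the Krull--Schmidt structure and the fact that $X_1$ and $X_2$ share no non-trivial summand interaction through $f_1 \oplus f_2$, forced to have isomorphic diagonal blocks, hence is itself an isomorphism. This is the only routine verification that should be mentioned but not belabored; once it is in hand, every step above is a direct consequence of the definition of $\gamma$.
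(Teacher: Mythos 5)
Your proposal is correct and follows the same route as the paper's proof: induct on $l$ for $\mathcal{M}^{\wedge}_{l}$ (and dually $\mathcal{M}^{\vee}_{l}$) using that direct sums of the distinguished $\mathfrak{s}$-conflations with minimal right approximations are again such, then handle the general case via the defining conflations $U_i \to V_i \to X_i \dashrightarrow$. The paper simply asserts that a direct sum of minimal right approximations is minimal; your closing sketch of that fact is a little loose (invertibility of the diagonal blocks alone does not immediately give invertibility of the whole matrix — one should reduce to a block-triangular form), but this is a routine Krull--Schmidt verification and does not affect the correctness of the argument.
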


\begin{proof}
First, we show by induction on $l$ that if $X_{1},X_{2}\in\mathcal{M}^{\wedge}_{l}$, then $\gamma(X_{1}\oplus X_{2})=\gamma(X_{1})+\gamma(X_{2})$ holds. 
If $l=0$, then the assertion follows from (i). 
Assume $l\geq 1$. 
By Lemma \ref{lem-sconfls}(1), there exists an $\mathfrak{s}$-conflation $K_{i}\rightarrow M_{i}\xrightarrow{\alpha_{i}}X_{i}\dashrightarrow$ such that $K_{i}\in\mathcal{M}^{\wedge}_{l-1}$ and $\alpha_{i}$ is a minimal right $\mathcal{M}$-approximation for $i=1,2$. 
Since $K_{1}\oplus K_{2}\in\mathcal{M}^{\wedge}_{l-1}$ and $\left[\begin{smallmatrix}\alpha_{1}&0\\0&\alpha_{2} \end{smallmatrix}\right]$ is also a minimal right $\mathcal{M}$-approximation, we  have $\gamma(X_{1}\oplus X_{2})=\gamma(M_{1}\oplus M_{2})-\gamma(K_{1}\oplus K_{2})$. 
Hence the induction hypothesis implies $\gamma(X_{1}\oplus X_{2})=\gamma(M_{1})+\gamma(M_{2})-\gamma(K_{1})-\gamma(K_{2})=\gamma(X_{1})+\gamma(X_{2})$. 
Similarly, if $X_{1},X_{2}\in\mathcal{M}^{\vee}_{l}$, then $\gamma(X_{1}\oplus X_{2})=\gamma(X_{1})+\gamma(X_{2})$ holds.
Let $X_{1},X_{2}\in\mathcal{C}$. 
There exists an $\mathfrak{s}$-conflation $U_{i}\rightarrow V_{i}\xrightarrow{f_{i}}X_{i}\dashrightarrow$ such that $U_{i}\in\mathcal{M}^{\wedge}$ and $f_{i}$ is a minimal right $\mathcal{M}^{\vee}$-approximation. Since $U_{1}\oplus U_{2}\in\mathcal{M}^{\wedge}$ and $\left[\begin{smallmatrix}f_{1}&0\\0&f_{2} \end{smallmatrix}\right]$ is also a minimal right $\mathcal{M}^{\vee}$-approximation, we  have $\gamma(X_{1}\oplus X_{2})=\gamma(V_{1}\oplus V_{2})-\gamma(U_{1}\oplus U_{2})$. 
By the arguments above, $\gamma(X_{1}\oplus X_{2})=\gamma(V_{1})+\gamma(V_{2})-\gamma(U_{1})-\gamma(U_{2})=\gamma(X_{1})+\gamma(X_{2})$ holds.
\end{proof}

We frequently use the following observation.

\begin{remark}
Let $N\in \mathcal{C}$.
\begin{enumerate}[\upshape(1)]
\item The map $\gamma$ does not depend on the minimality of morphisms. 
For example, if $U\rightarrow V\xrightarrow{f} N\dashrightarrow$ is an $\mathfrak{s}$-conflation with $U\in \mathcal{M}^{\wedge}_{l-1}$ and $V\in\mathcal{M}$, then we have $\gamma(N)=\gamma(V)-\gamma(U)$. Indeed, since $\mathcal{C}$ is a Krull--Schmidt category, we have an isomorphism of $\mathfrak{s}$-conflations
\begin{align}
\xymatrix{
U\ar[r]\ar[d]^{\cong}&V\ar[r]^{f}\ar[d]^{\cong}&N\ar@{-->}[r]\ar@{=}[d]&\\
N'\oplus W\ar[r]&M\oplus W\ar[r]^-{\left[\begin{smallmatrix}f'& 0\end{smallmatrix}\right]}&N\ar@{-->}[r]&,
}\notag
\end{align}
where $f'$ is right minimal. By Lemma \ref{lem-gamma_dirsummand}, we have
\begin{align}
\gamma(V)-\gamma(U)=\gamma(M\oplus W)-\gamma(N'\oplus W)=\gamma(M)+\gamma(W)-(\gamma(N')+\gamma(W))=\gamma(N).\notag
\end{align}
We have a similar observation for (iii) and (iv). 
\item By $\mathcal{C}=\operatorname{\mathsf{cocone}}(\mathcal{M}^{\wedge},\mathcal{M}^{\vee})$, there exists an $\mathfrak{s}$-conflation $N\xrightarrow{g} U'_{N}\rightarrow V'_{N}\dashrightarrow$ such that $U'_{N}\in \mathcal{M}^{\wedge}$,  $V'_{N}\in \mathcal{M}^{\vee}$ and $g$ is a minimal left $\mathcal{M}^{\wedge}$-approximation.
Then we may define $\gamma'(N)$ as $\gamma'(N):=\gamma(U'_{N})-\gamma(V'_{N})$.
We show $\gamma'(N)=\gamma(N)$.
By $U'_{N}\in \mathcal{M}^{\wedge}$, we have an $\mathfrak{s}$-conflation $U\rightarrow M\rightarrow U'_{N}\dashrightarrow$ with $M\in\mathcal{M}$ and $U\in \mathcal{M}^{\wedge}$.
Applying (ET4)$^{\mathrm{op}}$ to the $\mathfrak{s}$-conflations above induces a commutative diagram of $\mathfrak{s}$-conflations
\begin{align}
\xymatrix{
U\ar@{=}[r]\ar[d]&U\ar[d]&&\\
V\ar[r]\ar[d]&M\ar[r]\ar[d]&V'_{N}\ar@{-->}[r]\ar@{=}[d]&\\
N\ar[r]\ar@{-->}[d]&U'_{N}\ar[r]\ar@{-->}[d]&V'_{N}\ar@{-->}[r]&\\
&&&.
}\notag
\end{align}
Since $\mathcal{M}^{\vee}$ is closed under cocones, $V\in\mathcal{M}^{\vee}$ holds.
Thus we obtain 
\begin{align}
\gamma'(N)=\gamma(U'_{N})-\gamma(V'_{N})=(\gamma(M)-\gamma(U))-(\gamma(M)-\gamma(V))=\gamma(V)-\gamma(U)=\gamma(N).\notag
\end{align}
\end{enumerate}
\end{remark}

The map $\gamma$ satisfies the following properties.

\begin{lemma}\label{lemma-additive1532}
Let $X\rightarrow Y\rightarrow Z\dashrightarrow$ be an $\mathfrak{s}$-conflation.
Then the following statements hold.
\begin{enumerate}[\upshape(1)]
\item If $X,Y,Z\in \mathcal{M}^{\vee}$, then we have $\gamma(X)-\gamma(Y)+\gamma(Z)=0$.
\item If $X,Y,Z\in \mathcal{M}^{\wedge}$, then we have $\gamma(X)-\gamma(Y)+\gamma(Z)=0$.
\item If $Z\in \mathcal{M}^{\vee}$, then we have $\gamma(X)-\gamma(Y)+\gamma(Z)=0$.
\end{enumerate}
\end{lemma}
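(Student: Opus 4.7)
The plan is to prove (1) by induction, deduce (2) by the dual argument, and derive (3) from (1) via an $(\mathrm{ET4})^{\mathrm{op}}$ decomposition.

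For (1), I will induct on $n := l_{X} + l_{Y} + l_{Z}$, where $l_{N}$ denotes the smallest non-negative integer with $N \in \mathcal{M}^{\vee}_{l_{N}}$. In the base case $l_{X} = 0$, i.e.\ $X \in \mathcal{M}$, Lemma \ref{lem-psilt_wedge}(5) gives $Z \in \mathcal{M}^{\vee} \subseteq {}^{\perp}\mathcal{M}$, so $\mathbb{E}(Z, X) = 0$; the $\mathfrak{s}$-conflation therefore splits, $Y \cong X \oplus Z$, and Lemma \ref{lem-gamma_dirsummand} concludes. For the inductive step with $l_{X} \geq 1$, I would take the $\mathfrak{s}$-conflation $X \xrightarrow{g_{X}} M_{X} \to K_{X} \dashrightarrow$ with $M_{X} \in \mathcal{M}$ and $K_{X} \in \mathcal{M}^{\vee}_{l_{X}-1}$ supplied by the dual of Lemma \ref{lem-sconfls}(1), and push $X \to Y \to Z \dashrightarrow$ forward along $g_{X}$. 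The standard extriangulated pushout yields an object $P$ fitting into $\mathfrak{s}$-conflations $M_{X} \to P \to Z \dashrightarrow$ and $Y \to P \to K_{X} \dashrightarrow$. The first splits because $\mathbb{E}(Z, M_{X}) = 0$, so $P \cong M_{X} \oplus Z$; hence $l_{P} = l_{Z}$ and $\gamma(P) = \gamma(M_{X}) + \gamma(Z)$ by Lemma \ref{lem-gamma_dirsummand}. The second $\mathfrak{s}$-conflation has $l$-sum $l_{Y} + l_{Z} + (l_{X}-1) = n-1$, so by induction $\gamma(Y) = \gamma(P) - \gamma(K_{X}) = (\gamma(M_{X}) - \gamma(K_{X})) + \gamma(Z) = \gamma(X) + \gamma(Z)$.

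Part (2) is dual: I would induct on the sum of $\mathcal{M}^{\wedge}$-levels, with base case $l_{Z} = 0$, where $X \in \mathcal{M}^{\wedge} \subseteq \mathcal{M}^{\perp}$ by Lemma \ref{lem-psilt_wedge}(4) forces $\mathbb{E}(Z, X) = 0$ and splits the conflation; the inductive step uses the extriangulated pullback of $X \to Y \to Z$ along a right $\mathcal{M}$-approximation $M_{Z} \to Z$, producing a $Q$ with $\mathfrak{s}$-conflations $X \to Q \to M_{Z} \dashrightarrow$ (which splits) and $K_{Z} \to Q \to Y \dashrightarrow$ on which the induction hypothesis applies.

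For (3), fix the $\mathfrak{s}$-conflation $U_{Y} \to V_{Y} \to Y \dashrightarrow$ from (iv), with $U_{Y} \in \mathcal{M}^{\wedge}$ and $V_{Y} \in \mathcal{M}^{\vee}$, and apply $(\mathrm{ET4})^{\mathrm{op}}$ to the composition of deflations $V_{Y} \to Y \to Z$. This supplies an object $W \in \mathcal{C}$ together with $\mathfrak{s}$-conflations $W \to V_{Y} \to Z \dashrightarrow$ and $U_{Y} \to W \to X \dashrightarrow$. Since $\mathcal{M}^{\vee}$ is closed under cocones by Lemma \ref{lem-psilt_wedge}(5), $W \in \mathcal{M}^{\vee}$, and applying (1) to the first conflation yields $\gamma(W) = \gamma(V_{Y}) - \gamma(Z)$. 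The second conflation has the shape prescribed by (iv), so the remark preceding this lemma (independence of $\gamma$ from minimality) gives $\gamma(X) = \gamma(W) - \gamma(U_{Y}) = \gamma(V_{Y}) - \gamma(U_{Y}) - \gamma(Z) = \gamma(Y) - \gamma(Z)$, which rearranges to $\gamma(X) - \gamma(Y) + \gamma(Z) = 0$.

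The main obstacle will be extracting both required $\mathfrak{s}$-conflations simultaneously from a single pushout/pullback/$(\mathrm{ET4})^{\mathrm{op}}$ datum: in (1) one needs the pushout along $g_{X}$ to produce not merely the row $M_{X} \to P \to Z$ but also the column $Y \to P \to K_{X}$, and the analogous 3x3-type issue appears in (2) and (3). These are standard consequences of the Nakaoka--Palu axioms but must be invoked with care to keep the middle term simultaneously in $\mathcal{M}^{\vee}$ (respectively $\mathcal{M}^{\wedge}$) so that the induction hypothesis and Lemma \ref{lem-gamma_dirsummand} can be applied.
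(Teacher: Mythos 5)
Your proof is correct and follows essentially the same strategy as the paper's: part (1) uses the pushout diagram (dual of \cite[Proposition 3.15]{NP19}) along an $\mathcal{M}$-approximation inflation of $X$, with the row $M_X\to P\to Z$ splitting because $\mathbb{E}(\mathcal{M}^\vee,\mathcal{M})=0$, and part (3) applies $(\mathrm{ET4})^{\mathrm{op}}$ to $V_Y\to Y\to Z$ exactly as in the paper. The only difference is the induction bookkeeping in (1) and (2): the paper cycles through three graded statements $(\mathrm{i})_l\Rightarrow(\mathrm{ii})_l\Rightarrow(\mathrm{iii})_l\Rightarrow(\mathrm{i})_{l+1}$, whereas you induct on $l_X+l_Y+l_Z$, always pushing out along $X$; this is a clean simplification, though note that one only gets $l_{K_X}\leq l_X-1$, so the new sum is $\leq n-1$ rather than exactly $n-1$, which is all the induction needs.
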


\begin{proof}
(1) For each $l\geq 0$, we consider the following statements.
\begin{enumerate}[\upshape(i)$_{l}$]
\item The statement holds true if $X,Y,Z\in \mathcal{M}^{\vee}_{l}$.
\item The statement holds true if $X\in \mathcal{M}^{\vee}_{l+1}$ and $Y,Z\in \mathcal{M}^{\vee}_{l}$.
\item The statement holds true if $X,Y\in \mathcal{M}^{\vee}_{l+1}$ and $Z\in \mathcal{M}^{\vee}_{l}$.
\end{enumerate}
First, we show that the statement (i)$_{0}$ holds true. 
Let $X\rightarrow Y\rightarrow Z\dashrightarrow$ be an $\mathfrak{s}$-conflation with $X,Y,Z\in \mathcal{M}$.
Then the $\mathfrak{s}$-conflation clearly splits. 
Hence we obtain $\gamma(Y)=\gamma(X)+\gamma(Z)$ by Lemma \ref{lem-gamma_dirsummand}.
 Next, we show that (i)$_{l}$ implies (ii)$_{l}$. Let $X\rightarrow Y\rightarrow Z\dashrightarrow$ be an $\mathfrak{s}$-conflation with $X\in \mathcal{M}^{\vee}_{l+1}$ and $Y,Z\in\mathcal{M}_{l}^{\vee}$.
By $X\in \mathcal{M}^{\vee}_{l+1}$, there exists an $\mathfrak{s}$-conflation $X\rightarrow M\rightarrow X'\dashrightarrow$ such that $M\in\mathcal{M}$ and $X'\in\mathcal{M}^{\vee}_{l}$. Thus we have $\gamma(X)=\gamma(M)-\gamma(X')$.
By a dual statement of \cite[Proposition 3.15]{NP19}, we obtain a commutative diagram
\begin{align}
\xymatrix{
X\ar[r]\ar[d]&Y\ar[r]\ar[d]&Z\ar@{-->}[r]\ar@{=}[d]&\\
M\ar[r]\ar[d]&E\ar[r]\ar[d]&Z\ar@{-->}[r]^{\delta}&\\
X'\ar@{=}[r]\ar@{-->}[d]&X'\ar@{-->}[d]&&\\
&&&.
}\notag
\end{align}
Since $\mathcal{M}^{\vee}_{l}$ is closed under extensions, we have $E\in \mathcal{M}^{\vee}_{l}$.
Hence the assumption (i)$_{l}$ induces $\gamma(Y)-\gamma(E)+\gamma(X')=0$.
On the other hand, Lemma \ref{lem-wedge_perp}(1) implies $\delta=0$, and hence $\gamma(E)=\gamma(M)+\gamma(Z)$ holds by Lemma \ref{lem-gamma_dirsummand}.
Therefore we obtain 
\begin{align}
\gamma(X)-\gamma(Y)+\gamma(Z)=(\gamma(M)-\gamma(X'))-(\gamma(E)-\gamma(X'))+(\gamma(E)-\gamma(M))=0.\notag
\end{align}
Similarly, we can show (ii)$_{l}\Rightarrow$(iii)$_{l}$ and (iii)$_{l}\Rightarrow$(i)$_{l+1}$.
Hence we have the assertion.

(2) By an argument similar to (1), we have the assertion.

(3) Let $X\rightarrow Y\rightarrow Z\dashrightarrow$ be an $\mathfrak{s}$-conflation with $Z\in \mathcal{M}^{\vee}$.
By $Y\in \mathcal{C}=\operatorname{\mathsf{cone}}(\mathcal{M}^{\wedge},\mathcal{M}^{\vee})$, there exists an $\mathfrak{s}$-conflation $U\rightarrow V\rightarrow Y\dashrightarrow$ such that $U\in\mathcal{M}^{\wedge}$ and $V\in \mathcal{M}^{\vee}$.
Thus $\gamma(Y)=\gamma(V)-\gamma(U)$.
By (ET4)$^{\mathrm{op}}$, we have a commutative diagram
\begin{align}
\xymatrix{
U\ar@{=}[r]\ar[d]&U\ar[d]&&\\
K\ar[r]\ar[d]&V\ar[r]\ar[d]&Z\ar@{-->}[r]\ar@{=}[d]&\\
X\ar[r]\ar@{-->}[d]&Y\ar[r]\ar@{-->}[d]&Z\ar@{-->}[r]&\\
&&&.
}\notag
\end{align}
Since $\mathcal{M}^{\vee}$ is closed under cocones, we have $K\in\mathcal{M}^{\vee}$.
Thus it follows from (1) that $\gamma(K)-\gamma(V)+\gamma(Z)=0$.
On the other hand, we have $\gamma(X)=\gamma(K)-\gamma(U)$.
Hence we obtain
\begin{align}
\gamma(X)-\gamma(Y)+\gamma(Z)=(\gamma(K)-\gamma(U))-(\gamma(V)-\gamma(U))+(\gamma(V)-\gamma(K))=0.\notag
\end{align}
This completes the proof.
\end{proof}

We obtain the desired property. 

\begin{lemma}\label{lemma-universal1556}
For each $\mathfrak{s}$-conflation $X\rightarrow Y\rightarrow Z\dashrightarrow$, we have $\gamma(X)-\gamma(Y)+\gamma(Z)=0$.
\end{lemma}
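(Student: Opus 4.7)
The plan is to reduce an arbitrary $\mathfrak{s}$-conflation to the special cases already treated in Lemma \ref{lemma-additive1532}. The reduction has two stages: first I would establish a ``dual'' of Lemma \ref{lemma-additive1532}(3), and then combine it with (3) via a pullback $3\times 3$ diagram to handle the general case.

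The claim for the first stage is: if $X \in \mathcal{M}^{\wedge}$, then $\gamma(X) - \gamma(Y) + \gamma(Z) = 0$. To prove this, I would take a cone decomposition $U \to V \to Y \dashrightarrow$ with $U \in \mathcal{M}^{\wedge}$ and $V \in \mathcal{M}^{\vee}$ (so $\gamma(Y) = \gamma(V) - \gamma(U)$), and apply (ET4)$^{\mathrm{op}}$ to the composable $\mathfrak{s}$-deflations $V \to Y$ and $Y \to Z$. This yields the same type of $3 \times 3$ diagram as in the proof of Lemma \ref{lemma-additive1532}(3), containing new $\mathfrak{s}$-conflations $U \to K \to X \dashrightarrow$ and $K \to V \to Z \dashrightarrow$. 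Since $U, X \in \mathcal{M}^{\wedge}$ and $\mathcal{M}^{\wedge}$ is closed under extensions (Lemma \ref{lem-psilt_wedge}(1)), we have $K \in \mathcal{M}^{\wedge}$. Lemma \ref{lemma-additive1532}(2) applied to $U \to K \to X$ then gives $\gamma(K) = \gamma(U) + \gamma(X)$, while $K \to V \to Z$ is itself a decomposition of $Z$ of the form required by (iv), so $\gamma(Z) = \gamma(V) - \gamma(K)$ directly from the definition of $\gamma$ (by the remark after Lemma \ref{lem-gamma_dirsummand}, minimality is unnecessary). Eliminating $\gamma(K)$ yields the claim.

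For the general case, I would take a cone decomposition $U \to V \to Z \dashrightarrow$ of $Z$ with $U \in \mathcal{M}^{\wedge}$ and $V \in \mathcal{M}^{\vee}$, so that $\gamma(Z) = \gamma(V) - \gamma(U)$. Pulling back $X \to Y \to Z \dashrightarrow$ along the deflation $V \to Z$ (via a dual of \cite[Proposition 3.15]{NP19}) produces a commutative $3 \times 3$ diagram containing $\mathfrak{s}$-conflations $X \to W \to V \dashrightarrow$ and $U \to W \to Y \dashrightarrow$. Since $V \in \mathcal{M}^{\vee}$, Lemma \ref{lemma-additive1532}(3) applied to the middle row gives $\gamma(W) = \gamma(X) + \gamma(V)$. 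Since $U \in \mathcal{M}^{\wedge}$, the dual established in the first stage, applied to the middle column, gives $\gamma(Y) = \gamma(W) - \gamma(U)$. Eliminating $\gamma(W)$ and substituting $\gamma(Z) = \gamma(V) - \gamma(U)$ yields $\gamma(X) - \gamma(Y) + \gamma(Z) = 0$.

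The main obstacle is simply to invoke the correct $3 \times 3$ constructions from the axioms of an extriangulated category and to verify the closure properties that place $K$ in $\mathcal{M}^{\wedge}$; both are routine given Lemma \ref{lem-psilt_wedge}(1) together with (ET4)$^{\mathrm{op}}$ and its pullback variant.
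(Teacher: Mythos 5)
Your proof is correct and follows the same chain of reductions as the paper's: decompose $Z$ via the cotorsion pair, pull back $X\to Y\to Z\dashrightarrow$ along $V\to Z$ to obtain $\mathfrak{s}$-conflations $X\to W\to V\dashrightarrow$ and $U\to W\to Y\dashrightarrow$, apply Lemma \ref{lemma-additive1532}(3) to the first, and dispose of the second via a further $(\mathrm{ET4})^{\mathrm{op}}$ square that lands in Lemma \ref{lemma-additive1532}(2). The only difference is organizational: you factor out the ``dual of (3)'' (the case $X\in\mathcal{M}^{\wedge}$) as a self-contained preliminary step, whereas the paper performs that same decomposition of $E=W$ and $(\mathrm{ET4})^{\mathrm{op}}$ computation inline; the auxiliary objects, the closure-under-extensions argument giving $K\in\mathcal{M}^{\wedge}$, and the final cancellation are identical.
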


\begin{proof}
By $Z\in \mathcal{C}=\operatorname{\mathsf{cone}}(\mathcal{M}^{\wedge},\mathcal{M}^{\vee})$, there exists an $\mathfrak{s}$-conflation $U\rightarrow V\rightarrow Z\dashrightarrow$ such that $U\in\mathcal{M}^{\wedge}$ and $V\in\mathcal{M}^{\vee}$.
By \cite[Proposition 3.15]{NP19}, we have a commutative diagram
\begin{align}
\xymatrix{
&U\ar@{=}[r]\ar[d]&U\ar[d]&\\
X\ar[r]\ar@{=}[d]&E\ar[r]\ar[d]&V\ar@{-->}[r]\ar[d]&\\
X\ar[r]&Y\ar[r]\ar@{-->}[d]&Z\ar@{-->}[r]\ar@{-->}[d]&\\
&&&.
}\notag
\end{align}
Thus it follows from Lemma \ref{lemma-additive1532}(3) that $\gamma(X)-\gamma(E)+\gamma(V)=0$.
By $E\in\mathcal{C}=\operatorname{\mathsf{cone}}(\mathcal{M}^{\wedge},\mathcal{M}^{\vee})$, there exists an $\mathfrak{s}$-conflation $U'\rightarrow V'\rightarrow E\dashrightarrow$ such that $U'\in\mathcal{M}^{\wedge}$ and $V'\in\mathcal{M}^{\vee}$.
By (ET4)$^{\mathrm{op}}$, we obtain a commutative diagram
\begin{align}
\xymatrix{
U'\ar@{=}[r]\ar[d]&U'\ar[d]&&\\
K\ar[r]\ar[d]&V'\ar[r]\ar[d]&Y\ar@{-->}[r]\ar@{=}[d]&\\
U\ar[r]\ar@{-->}[d]&E\ar[r]\ar@{-->}[d]&Y\ar@{-->}[r]&\\
&&&.
}\notag
\end{align}
Since $\mathcal{M}^{\wedge}$ is closed under extensions, we have $K\in\mathcal{M}^{\wedge}$.
By Lemma \ref{lemma-additive1532}(2), we have $\gamma(U')-\gamma(K)+\gamma(U)=0$. 
Moreover, $\gamma(Y)=\gamma(V')-\gamma(K)$ holds.
Hence we obtain
\begin{align}
\gamma(X)-\gamma(Y)+\gamma(Z)
&=(\gamma(E)-\gamma(V))-(\gamma(V')-\gamma(K))+(\gamma(V)-\gamma(U))\notag\\
&=(\gamma(E)-\gamma(V'))+(\gamma(K)-\gamma(U))=-\gamma(U')+\gamma(U')=0\notag
\end{align}
This completes the proof.
\end{proof}

Now we are ready to prove Theorem \ref{mainresult-Grgroup}.

\begin{proof}[Proof of Theorem \ref{mainresult-Grgroup}]
First, we show that $\operatorname{\mathsf{ind}}\mathcal{M}$ generates $K_{0}(\mathcal{C})$.
For each object $N\in \mathcal{C}=\operatorname{\mathsf{cone}}(\mathcal{M}^{\wedge},\mathcal{M}^{\vee})$, there exists an $\mathfrak{s}$-conflation $U\rightarrow V\rightarrow N\dashrightarrow$ such that $U\in\mathcal{M}^{\wedge}$ and $V\in\mathcal{M}^{\vee}$.
By the definitions of $\mathcal{M}^{\wedge}$ and $\mathcal{M}^{\vee}$, the elements $[U], [V]\in K_{0}(\mathcal{C})$ are generated by $\operatorname{\mathsf{ind}}\mathcal{M}$.
Hence $[N]\in K_{0}(\mathcal{C})$ is also generated by $\operatorname{\mathsf{ind}}\mathcal{M}$.
Next, we show that each finite subset of $\operatorname{\mathsf{ind}}\mathcal{M}$ is linear independent. 
By Lemma \ref{lemma-universal1556}, $\gamma$ is additive. 
By the universality of Grothendieck groups, there exists a unique group homomorphism $K_{0}(\mathcal{C})\to \mathbb{Z}^{\operatorname{\mathsf{ind}}\mathcal{M}}$.
Hence $\operatorname{\mathsf{ind}}\mathcal{M}$ forms a basis of $K_{0}(\mathcal{C})$.
\end{proof}

\subsection{Silting mutation}

In this subsection, we introduce the notion of silting mutation in extriangulated categories and study its basic properties.
Recall the definition of covariantly finite subcategories of $\mathcal{C}$.

\begin{definition}
Let $\mathcal{M}$ be a subcategory of $\mathcal{C}$ and let $\mathcal{D}$ be a subcategory of $\mathcal{M}$.
\begin{enumerate}[\upshape(1)]
\item We call $\mathcal{D}$ a \emph{covariantly finite subcategory} of $\mathcal{M}$ if each object in $\mathcal{M}$ has a left $\mathcal{D}$-approximation. Dually, we define a \emph{contravariantly finite subcategory}.
\item A covariantly finite subcategory $\mathcal{D}$ is said to be \emph{good} if each object in $\mathcal{M}$ has a left $\mathcal{D}$-approximation which is an $\mathfrak{s}$-inflation (in $\mathcal{C}$).
Dually, we define a \emph{good contravariantly finite subcategory}.
\item We call $\mathcal{D}$  a \emph{functorially finite subcategory} of $\mathcal{M}$ if it is covariantly finite and contravariantly finite.
\end{enumerate}
\end{definition}

We give a sufficient condition for covariantly finite subcategories to be good.

\begin{lemma}\label{lem-good}
Assume that $\mathcal{C}$ is weakly idempotent complete.
Let $\mathcal{M}$ be a subcategory of $\mathcal{C}$ and let $\mathcal{D}$ be a subcategory of $\mathcal{M}$.
Then the following statements hold.
\begin{enumerate}[\upshape(1)]
\item Assume that $\mathcal{D}$ is a covariantly finite subcategory of $\mathcal{M}$.
If $\mathcal{M}\subseteq \operatorname{\mathsf{cocone}}(\mathcal{D},\mathcal{C})$, then $\mathcal{D}$ is good covariantly finite.
\item Assume that $\mathcal{D}$ is a contravariantly finite subcategory of $\mathcal{M}$. 
If $\mathcal{M}\subseteq\operatorname{\mathsf{cone}}(\mathcal{C},\mathcal{D})$, then $\mathcal{D}$ is good contravariantly finite.
\end{enumerate}
\end{lemma}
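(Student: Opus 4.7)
The plan is to prove (1); statement (2) follows by the dual argument (working in $\mathcal{C}^{\mathrm{op}}$). Fix $M\in\mathcal{M}$. Since $\mathcal{D}$ is covariantly finite in $\mathcal{M}$, there is a left $\mathcal{D}$-approximation $f\colon M\to D'$, which a priori need not be an $\mathfrak{s}$-inflation. By the hypothesis $\mathcal{M}\subseteq\operatorname{\mathsf{cocone}}(\mathcal{D},\mathcal{C})$, there is an $\mathfrak{s}$-conflation $M\xrightarrow{g}D\to C\dashrightarrow$ with $D\in\mathcal{D}$, so $g$ is an $\mathfrak{s}$-inflation but a priori need not be a left $\mathcal{D}$-approximation. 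The strategy is to merge these two morphisms into a single map that enjoys both properties simultaneously.

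Consider $h:=\left[\begin{smallmatrix}f\\ g\end{smallmatrix}\right]\colon M\to D'\oplus D$, whose target lies in $\mathcal{D}$ by additivity. It is a left $\mathcal{D}$-approximation of $M$: given any morphism $h'\colon M\to D''$ with $D''\in\mathcal{D}$, write $h'=kf$ using the approximation property of $f$, and then $h'=\left[\begin{smallmatrix}k & 0\end{smallmatrix}\right]h$.

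The main obstacle is showing that $h$ is an $\mathfrak{s}$-inflation. I plan to use the factorization
\[
M \xrightarrow{\;\left[\begin{smallmatrix}0\\ \mathrm{id}_M\end{smallmatrix}\right]\;} D'\oplus M \xrightarrow{\;A\;} D'\oplus D,\qquad A:=\left[\begin{smallmatrix}\mathrm{id}_{D'} & f\\ 0 & g\end{smallmatrix}\right].
\]
The first arrow is a split $\mathfrak{s}$-inflation, and $A=(\mathrm{id}_{D'}\oplus g)\circ\left[\begin{smallmatrix}\mathrm{id}_{D'} & f\\ 0 & \mathrm{id}_M\end{smallmatrix}\right]$ decomposes as the automorphism $\left[\begin{smallmatrix}\mathrm{id}_{D'} & f\\ 0 & \mathrm{id}_M\end{smallmatrix}\right]$ (inverse obtained by replacing $f$ with $-f$) followed by the direct sum $\mathrm{id}_{D'}\oplus g$ of $\mathfrak{s}$-inflations. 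Since direct sums and compositions of $\mathfrak{s}$-inflations are again $\mathfrak{s}$-inflations, it follows that $h$ is an $\mathfrak{s}$-inflation, proving (1).

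For (2), carry out the dual construction: combine a right $\mathcal{D}$-approximation $f\colon D'\to M$ with an $\mathfrak{s}$-deflation $g\colon D\to M$ supplied by $\mathcal{M}\subseteq\operatorname{\mathsf{cone}}(\mathcal{C},\mathcal{D})$ into $\left[\begin{smallmatrix}f & g\end{smallmatrix}\right]\colon D'\oplus D\to M$, and factor it analogously through $\mathrm{id}_{D'}\oplus g$ and an automorphism to exhibit it as a composition of $\mathfrak{s}$-deflations (hence an $\mathfrak{s}$-deflation) that is also a right $\mathcal{D}$-approximation.
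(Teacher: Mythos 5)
Your proof is correct, but it takes a genuinely different route from the paper's. The paper's proof is much shorter: it takes the $\mathfrak{s}$-inflation $f\colon M\to D$ supplied by $\mathcal{M}\subseteq\operatorname{\mathsf{cocone}}(\mathcal{D},\mathcal{C})$ and a left $\mathcal{D}$-approximation $f'\colon M\to D'$, factors $f=gf'$ using the approximation property (since $D\in\mathcal{D}$), and then invokes the fact, valid in weakly idempotent complete extriangulated categories (recalled after Lemma~\ref{lem-wedge_perp} with reference to \cite{K}), that if $gf'$ is an $\mathfrak{s}$-inflation then so is $f'$. So the paper concludes that $f'$ itself --- indeed any left $\mathcal{D}$-approximation of $M$ --- is already an $\mathfrak{s}$-inflation.

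You instead build a single new morphism $h=\left[\begin{smallmatrix}f\\ g\end{smallmatrix}\right]\colon M\to D'\oplus D$ that is visibly a left $\mathcal{D}$-approximation (it factors through $f$ by the projection) and then verify it is an $\mathfrak{s}$-inflation by writing it as a composition of a split monomorphism, an automorphism, and $\mathrm{id}_{D'}\oplus g$. This is correct: split monos, isomorphisms, direct sums of $\mathfrak{s}$-inflations, and compositions of $\mathfrak{s}$-inflations are all $\mathfrak{s}$-inflations by the extriangulated axioms. Notably your argument does not use the weakly idempotent complete hypothesis at all, so in that respect it is more elementary and slightly more general. The trade-off is that your approximation has target $D'\oplus D$, so you implicitly require $\mathcal{D}$ to be closed under finite direct sums; that holds for every $\mathcal{D}$ occurring in the paper (and can always be arranged via $\operatorname{\mathsf{add}}\mathcal{D}$ as in Remark~\ref{remark-basic1845}(2)), but it is not literally in the lemma's hypotheses. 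The paper's proof sidesteps this by showing the original approximation $f'$, whose target $D'$ lies in $\mathcal{D}$ by assumption, is itself an $\mathfrak{s}$-inflation.
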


\begin{proof}
Since the proof is similar, we only show (1).
Let $M\in\mathcal{M}$.
By $\mathcal{M}\subseteq \operatorname{\mathsf{cocone}}(\mathcal{D},\mathcal{C})$, there exists an $\mathfrak{s}$-inflation $f:M\rightarrow D$ such that $D\in \mathcal{D}$.
Since $\mathcal{D}$ is a covariantly finite subcategory of $\mathcal{M}$, we obtain a left $\mathcal{D}$-approximation $f':M\rightarrow D'$.
Thus we have a morphism $g:D'\rightarrow D$ with $f=gf'$.
Since $\mathcal{C}$ is weakly idempotent complete, $f'$ is an $\mathfrak{s}$-inflation.
\end{proof}

We collect basic properties of good covariantly finite subcategories of a presilting subcategory.

\begin{lemma}\label{lemma-basicresult1628}
Let $\mathcal{M}$ be a presilting subcategory of $\mathcal{C}$ and let $\mathcal{D}$ be a good covariantly finite subcategory of $\mathcal{M}$.
Then for each $M\in\mathcal{M}$, there exists an $\mathfrak{s}$-conflation 
\begin{align}\label{basic-resolution}
M\xrightarrow{f}D\xrightarrow{g}N\dashrightarrow
\end{align}
such that $f$ is a left $\mathcal{D}$-approximation. Moreover, the following statements hold.
\begin{enumerate}[\upshape(1)]
\item $N\in \mathcal{M}^{\perp}$.
\item $N\in {}^{\perp_{>1}}\mathcal{M}$.
\item $N\in {}^{\perp}\mathcal{D}$.
\item $g$ is a right $\mathcal{D}$-approximation.
\end{enumerate}
\end{lemma}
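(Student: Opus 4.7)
The plan is first to produce the $\mathfrak{s}$-conflation from the goodness hypothesis and then verify each of the four properties by feeding the conflation into the long exact sequences of Proposition \ref{prop_longex}. Concretely, since $\mathcal{D}$ is good covariantly finite in $\mathcal{M}$, every $M\in\mathcal{M}$ admits a left $\mathcal{D}$-approximation $f\colon M\to D$ which is an $\mathfrak{s}$-inflation, and I will take $N:=\mathrm{Cone}(f)$ to obtain the required conflation.

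For (1), I will apply Proposition \ref{prop_longex}(1) with $X=M'\in\mathcal{M}$: the group $\mathbb{E}^{k}(M',N)$ sits between $\mathbb{E}^{k}(M',D)$ and $\mathbb{E}^{k+1}(M',M)$, both of which vanish for $k\ge 1$ by the presilting hypothesis on $\mathcal{M}$. For (2), I will symmetrically apply Proposition \ref{prop_longex}(2) with $X=M'\in\mathcal{M}$, placing $\mathbb{E}^{k}(N,M')$ between $\mathbb{E}^{k-1}(M,M')$ and $\mathbb{E}^{k}(D,M')$, both of which vanish for $k\ge 2$. Notably, neither argument uses the approximation property of $f$; only presilting is needed.

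Item (3) is the one place where the approximation hypothesis genuinely enters. For $k\ge 2$, the vanishing $\mathbb{E}^{k}(N,\mathcal{D})=0$ is immediate from (2) because $\mathcal{D}\subseteq\mathcal{M}$. For $k=1$, I will apply Proposition \ref{prop_longex}(2) with $X=D'\in\mathcal{D}$ to obtain the exact piece
\[
\mathcal{C}(D,D')\to\mathcal{C}(M,D')\to\mathbb{E}(N,D')\to\mathbb{E}(D,D')=0,
\]
where the last vanishing is presilting. The leftmost map is $\mathcal{C}(f,D')$, which is surjective precisely because $f$ is a left $\mathcal{D}$-approximation, forcing $\mathbb{E}(N,D')=0$. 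Finally, for (4), Proposition \ref{prop_longex}(1) with $X=D''\in\mathcal{D}$ yields the segment
\[
\mathcal{C}(D'',D)\xrightarrow{\mathcal{C}(D'',g)}\mathcal{C}(D'',N)\to\mathbb{E}(D'',M),
\]
and $\mathbb{E}(D'',M)=0$ by presilting, so $\mathcal{C}(D'',g)$ is surjective and $g$ is a right $\mathcal{D}$-approximation.

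I do not anticipate a serious obstacle: the whole argument is a bookkeeping exercise of choosing the correct variance of Proposition \ref{prop_longex} and reading off vanishing from the presilting hypothesis. The only conceptual point worth flagging is that the left $\mathcal{D}$-approximation property of $f$ is invoked in exactly one spot, namely (3), where it manifests as surjectivity of $\mathcal{C}(f,D')$ for $D'\in\mathcal{D}$; the remaining items are driven purely by the presilting vanishing $\mathbb{E}^{k}(\mathcal{M},\mathcal{M})=0$.
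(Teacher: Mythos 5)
Your proof is correct and takes essentially the same route as the paper: both construct the conflation from the goodness hypothesis and then read off the four vanishing/surjectivity statements from the long exact sequences of Proposition \ref{prop_longex}. The only cosmetic difference is that the paper obtains (1) and (2) by citing Lemmas \ref{lem-psilt_wedge}(4) and \ref{lem-wedge_perp}(2) via $N\in\mathcal{M}^{\wedge}_{1}$, whereas you inline those arguments directly; your observation that the approximation property of $f$ is only invoked for (3) is accurate and matches what is implicit in the paper's proof.
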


\begin{proof}
Let $M\in\mathcal{M}$.
Since $\mathcal{D}$ is a good covariantly finite subcategory of $\mathcal{M}$, there exists a left $\mathcal{D}$-approximation $f:M\rightarrow D$ which is an $\mathfrak{s}$-inflation. Hence we have the $\mathfrak{s}$-conflation \eqref{basic-resolution}.
By \eqref{basic-resolution}, we obtain $N\in \mathcal{M}^{\wedge}_{1}$.
Thus the statements (1) and (2) follows from Lemmas \ref{lem-psilt_wedge}(4) and \ref{lem-wedge_perp}(2) respectively.
By $\mathcal{D}\subseteq \mathcal{M}$, we have $N\in {}^{\perp_{>1}}\mathcal{D}$.
Applying $\mathcal{C}(-,\mathcal{D})$ to \eqref{basic-resolution} induces an exact sequence 
\begin{align}
\mathcal{C}(D,\mathcal{D})\xrightarrow{\mathcal{C}(f,\mathcal{D})}\mathcal{C}(M,\mathcal{D})\rightarrow \mathbb{E}(N,\mathcal{D})\rightarrow \mathbb{E}(D,\mathcal{D})=0. \notag
\end{align}
Since $f$ is a left $\mathcal{D}$-approximation, we have $\mathbb{E}(N,\mathcal{D})=0$.
Hence the statement (3) holds true.
We show the statement (4). 
Applying $\mathcal{C}(\mathcal{D},-)$ to \eqref{basic-resolution}, we obtain an exact sequence 
\begin{align}
\mathcal{C}(\mathcal{D},D)\xrightarrow{\mathcal{C}(\mathcal{D},g)}\mathcal{C}(\mathcal{D},N)\rightarrow\mathbb{E}(\mathcal{D},M)=0.\notag
\end{align}
Hence $g$ is a right $\mathcal{D}$-approximation.
\end{proof}

Now, we introduce mutation of presilting subcategories of extriangulated categories.

\begin{definition}\label{def-mut}
Let $\mathcal{M}$ be a presilting subcategory of $\mathcal{C}$ and let $\mathcal{D}$ be a good covariantly finite subcategory of $\mathcal{M}$.
For each $M\in\mathcal{M}$, we take a left $\mathcal{D}$-approximation $f:M\to D$ and an $\mathfrak{s}$-conflation
\begin{align}
M\xrightarrow{f}D\rightarrow N_{M} \dashrightarrow . \notag
\end{align}
We define a subcategory $\mu^{L}(\mathcal{M};\mathcal{D})$ of $\mathcal{C}$ as 
\begin{align}
\mu^{L}(\mathcal{M};\mathcal{D}):=\operatorname{\mathsf{add}}(\mathcal{D}\cup \{ N_{M}\mid M\in\mathcal{M}\}) \notag
\end{align}
and call it a \emph{left mutation} of $\mathcal{M}$ with respect to $\mathcal{D}$.
Dually, we define a \emph{right mutation} $\mu^{R}(\mathcal{M};\mathcal{D})$ of $\mathcal{M}$ with respect to a good contravariantly finite subcategory $\mathcal{D}$.
\end{definition}

We give some remark on mutation.

\begin{remark}\label{remark-basic1845}
Let $\mathcal{M}$ be a presilting subcategory of $\mathcal{C}$ and let $\mathcal{D}$ be a good covariantly finite subcategory of $\mathcal{M}$.
\begin{enumerate}[\upshape(1)]
\item The left mutation $\mu^{L}(\mathcal{M};\mathcal{D})$ does not depend on a choice of a left $\mathcal{D}$-approximation.
Indeed, if there are two $\mathfrak{s}$-conflations $M\xrightarrow{f}D\rightarrow N\dashrightarrow$ and $M\xrightarrow{f'}D'\rightarrow N'\dashrightarrow$ such that $f, f'$ are left $\mathcal{D}$-approximations, then we have a commutative diagram
\begin{align}
\xymatrix{M\ar[r]^{f}\ar[d]_{f'}&D\ar[r]\ar[d]&N\ar@{-->}[r]\ar@{=}[d]&\\D'\ar[r]\ar[d]&E\ar[r]\ar[d]&N\ar@{-->}[r]&\\N'\ar@{=}[r]\ar@{-->}[d]&N'\ar@{-->}[d]&&\\&&&}\notag
\end{align}
by a dual statement of \cite[Proposition 3.15]{NP19}.
Hence Lemma \ref{lemma-basicresult1628}(3) implies $N\oplus D' \cong E\cong N'\oplus D$.
\item We have $\mu^{L}(\mathcal{M};\mathcal{D})=\mu^{L}(\mathcal{M};\operatorname{\mathsf{add}}\mathcal{D})$. Indeed, this follows from (1) and a property that each left $\mathcal{D}$-approximation is a left $\operatorname{\mathsf{add}}\mathcal{D}$-approximation. Thus we may assume $\mathcal{D}=\operatorname{\mathsf{add}}\mathcal{D}$ whenever $\mathcal{D}$ is a good covariantly finite subcategory.
\end{enumerate}
\end{remark}

We give basic properties of mutation of presilting subcategories.

\begin{theorem}\label{thm-main2-1845}
Let $\mathcal{M}$ be a presilting subcategory of $\mathcal{C}$ and let $\mathcal{D}$ be a good covariantly finite subcategory of $\mathcal{M}$.
Then the following statements hold.
\begin{enumerate}[\upshape(1)]
\item $\mu^{L}(\mathcal{M};\mathcal{D})$ is a presilting subcategory of $\mathcal{C}$.
\item $\mathcal{M}\ge \mu^{L}(\mathcal{M};\mathcal{D})$, where the equality holds if and only if $\mathcal{M}=\operatorname{\mathsf{add}}\mathcal{D}$.
\item If $\mathcal{M}$ is a silting subcategory of $\mathcal{C}$, then so is $\mu^{L}(\mathcal{M};\mathcal{D})$.
\end{enumerate}
\end{theorem}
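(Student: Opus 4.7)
The plan is to deduce all three parts from a systematic use of the defining $\mathfrak{s}$-conflation $M\xrightarrow{f}D_M\to N_M\dashrightarrow$ (with $f$ a left $\mathcal{D}$-approximation and an $\mathfrak{s}$-inflation) together with the long exact sequences from Proposition \ref{prop_longex} and the preparatory facts in Lemma \ref{lemma-basicresult1628}. By Remark \ref{remark-basic1845}(2) I may further assume $\mathcal{D}=\operatorname{\mathsf{add}}\mathcal{D}$.

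For part (1), closure under direct summands is built into the definition of $\mu^{L}(\mathcal{M};\mathcal{D})$, so the work is to verify $\mathbb{E}^{k}(\mu^{L}(\mathcal{M};\mathcal{D}),\mu^{L}(\mathcal{M};\mathcal{D}))=0$ for $k\geq 1$. I split this into four pairings. The pair $(\mathcal{D},\mathcal{D})$ is immediate from $\mathcal{D}\subseteq\mathcal{M}$. For $(\mathcal{D},N_M)$, applying $\mathcal{C}(\mathcal{D},-)$ to the defining conflation sandwiches $\mathbb{E}^{k}(\mathcal{D},N_M)$ between $\mathbb{E}^{k}(\mathcal{D},D_M)$ and $\mathbb{E}^{k+1}(\mathcal{D},M)$, both zero by presilting of $\mathcal{M}$. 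For $(N_{M'},\mathcal{D})$, the case $k=1$ is precisely Lemma \ref{lemma-basicresult1628}(3), and the case $k\geq 2$ follows from Lemma \ref{lemma-basicresult1628}(2) together with $\mathcal{D}\subseteq\mathcal{M}$. Finally, for $(N_{M'},N_M)$, applying $\mathcal{C}(N_{M'},-)$ to the defining conflation for $M$ and using the two previous pairings gives the desired vanishing.

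For part (2), the relation $\mathcal{M}\geq\mu^{L}(\mathcal{M};\mathcal{D})$ is proved analogously: $\mathbb{E}^{k}(\mathcal{M},\mathcal{D})=0$ by presilting, and applying $\mathcal{C}(\mathcal{M},-)$ to the defining conflation sandwiches $\mathbb{E}^{k}(\mathcal{M},N_M)$ between the zero groups $\mathbb{E}^{k}(\mathcal{M},D_M)$ and $\mathbb{E}^{k+1}(\mathcal{M},M)$. For the equality criterion, if $\mathcal{M}=\operatorname{\mathsf{add}}\mathcal{D}$, then $\operatorname{id}_M$ is itself a left $\mathcal{D}$-approximation of every $M\in\mathcal{M}$, so each $N_M=0$ and $\mu^{L}(\mathcal{M};\mathcal{D})=\operatorname{\mathsf{add}}\mathcal{D}=\mathcal{M}$. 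Conversely, if $\mathcal{M}=\mu^{L}(\mathcal{M};\mathcal{D})$, then each $N_M\in\mathcal{M}$, so $\mathbb{E}(N_M,M)=0$ by presilting, the defining conflation splits, and $M$ is a direct summand of $D_M\in\mathcal{D}$; this forces $\mathcal{M}\subseteq\operatorname{\mathsf{add}}\mathcal{D}$, and the reverse inclusion $\operatorname{\mathsf{add}}\mathcal{D}\subseteq\mathcal{M}$ is automatic from $\mathcal{D}\subseteq\mathcal{M}=\operatorname{\mathsf{add}}\mathcal{M}$.

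For part (3), by (1) it suffices to show $\operatorname{\mathsf{thick}}\mu^{L}(\mathcal{M};\mathcal{D})=\mathcal{C}$. Since thick subcategories are closed under cocones and both $D_M$ and $N_M$ lie in $\mu^{L}(\mathcal{M};\mathcal{D})$, the defining conflation places each $M\in\mathcal{M}$ into $\operatorname{\mathsf{thick}}\mu^{L}(\mathcal{M};\mathcal{D})$. Hence $\mathcal{M}\subseteq\operatorname{\mathsf{thick}}\mu^{L}(\mathcal{M};\mathcal{D})$, and the siltingness of $\mathcal{M}$ yields $\mathcal{C}=\operatorname{\mathsf{thick}}\mathcal{M}\subseteq\operatorname{\mathsf{thick}}\mu^{L}(\mathcal{M};\mathcal{D})$. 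The main obstacle is the bookkeeping for the four pairings in part (1)—in particular, the pair $(N_{M'},\mathcal{D})$ at $k=1$ genuinely requires that $f$ be an \emph{approximation} rather than merely an inflation, which is why $\mathcal{D}$ is assumed to be good covariantly finite; once these vanishings are in hand, parts (2) and (3) follow quickly.
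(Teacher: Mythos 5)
Your proposal is correct and follows essentially the same route as the paper: the paper likewise deduces $\mathbb{E}^{k}(\mathcal{N},\mathcal{N})=0$ by applying $\mathcal{C}(\mathcal{N},-)$ to the defining conflation and invoking Lemma \ref{lemma-basicresult1628}(2)--(3) (your four-pairing bookkeeping just unfolds what the paper's single sentence bundles), proves $\mathcal{M}\geq\mathcal{N}$ and the equality criterion via the vanishing/non-vanishing of the defining extension class $\delta$, and obtains (3) from $\mathcal{M}\subseteq\operatorname{\mathsf{thick}}\mathcal{N}$ using closure under cocones. Your closing remark correctly identifies that the left $\mathcal{D}$-approximation property (not merely the inflation property) is what kills $\mathbb{E}(N_{M'},\mathcal{D})$.
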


\begin{proof}
Let $\mathcal{N}:=\mu^{L}(\mathcal{M};\mathcal{D})$.

(1) By definition, $\mathcal{N}=\operatorname{\mathsf{add}}\mathcal{N}$ holds. We show $\mathbb{E}^{k}(\mathcal{N},\mathcal{N})=0$ for all $k\ge 1$. 
Let $N\in\mathcal{N}$. Then there exists an $\mathfrak{s}$-conflation $M\xrightarrow{f}D\rightarrow N\dashrightarrow$ such that $M\in\mathcal{M}$ and $f$ is a left $\mathcal{D}$-approximation.
Applying $\mathcal{C}(\mathcal{N},-)$ to the $\mathfrak{s}$-conflation above induces an exact sequence
\begin{align}
\mathbb{E}^{k}(\mathcal{N},D)\rightarrow\mathbb{E}^{k}(\mathcal{N},N)\rightarrow\mathbb{E}^{k+1}(\mathcal{N},M).\notag
\end{align}
By Lemma \ref{lemma-basicresult1628}(2) and (3), both end sides vanish, and hence $\mathbb{E}^{k}(\mathcal{N},N)=0$ for all $k\geq1$.

(2) By Lemma \ref{lemma-basicresult1628}(1), $\mathcal{M}\ge \mathcal{N}$ holds.
We show that $\mathcal{M}=\mathcal{N}$ if and only if $\mathcal{M}=\operatorname{\mathsf{add}}\mathcal{D}$.
Assume that there exists an $\mathfrak{s}$-conflation $M\rightarrow D\rightarrow N\overset{\delta}{\dashrightarrow}$ such that $M\in\mathcal{M}$, $D\in\mathcal{D}$ and $N\in\mathcal{N}$.
If $M\notin\operatorname{\mathsf{add}}\mathcal{D}$ holds, then we obtain $\delta\neq 0$, and hence $\mathbb{E}(N,M)\neq 0$. 
This implies $N\notin \mathcal{M}$.
On the other hand, we assume $M\in \operatorname{\mathsf{add}}\mathcal{D}$. By Lemma \ref{lemma-basicresult1628}(3), we have $\delta=0$, and hence $N\in\operatorname{\mathsf{add}}\mathcal{D}$. 

(3) By (1), $\mathcal{N}$ is a presilting subcategory of $\mathcal{C}$. 
By the definition of left mutation, we obtain $\mathcal{M}\subseteq \mathcal{N}^{\vee}_{1}\subseteq \operatorname{\mathsf{thick}}\mathcal{N}$. 
Thus it follows from $\mathcal{C}=\operatorname{\mathsf{thick}}\mathcal{M}\subseteq \operatorname{\mathsf{thick}}\mathcal{N}$ that $\mathcal{N}$ is a silting subcategory of $\mathcal{C}$.
\end{proof}

Dually, if $\mathcal{D}$ is a good contravariantly finite subcategory of a silting subcategory $\mathcal{M}$, then we can show that $\mu^{R}(\mathcal{M};\mathcal{D})$ is a silting subcategory of $\mathcal{M}$ with $\mu^{R}(\mathcal{M};\mathcal{D})\geq \mathcal{M}$.
The following proposition is an easy observation.

\begin{proposition}
Let $\mathcal{M}$ be a presilting subcategory of $\mathcal{C}$ and let $\mathcal{D}$ be a good covariantly finite subcategory of $\mathcal{M}$.
Then the following statements hold.
\begin{enumerate}[\upshape(1)]
\item $\mathcal{D}$ is a good contravariantly finite subcategory of $\mu^{L}(\mathcal{M};\mathcal{D})$.
\item $\mu^{R}(\mu^{L}(\mathcal{M};\mathcal{D});\mathcal{D})=\mathcal{M}$.
\end{enumerate}
\end{proposition}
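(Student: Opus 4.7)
My plan is to prove (1) first, then apply it together with the dual of the uniqueness statement in Remark \ref{remark-basic1845}(1) to derive (2). Throughout, write $\mathcal{N} := \mu^{L}(\mathcal{M};\mathcal{D})$.

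For part (1), let $N \in \mathcal{N}$. By the definition of $\operatorname{\mathsf{add}}$ there exist $N' \in \mathcal{N}$, $D_{0} \in \mathcal{D}$, and $M_{1},\dots,M_{k} \in \mathcal{M}$ with $N \oplus N' \cong D_{0} \oplus \bigoplus_{i=1}^{k} N_{M_{i}}$. I will assemble the direct sum of the defining $\mathfrak{s}$-conflations $M_{i} \to D_{i} \to N_{M_{i}} \dashrightarrow$ (together with the split one on $D_{0}$) to obtain an $\mathfrak{s}$-conflation
\[
\textstyle\bigoplus_{i=1}^{k} M_{i} \longrightarrow D_{0} \oplus \bigoplus_{i=1}^{k} D_{i} \xrightarrow{\ \bar g\ } N \oplus N' \dashrightarrow,
\]
whose middle map $\bar g$ is a right $\mathcal{D}$-approximation by Lemma \ref{lemma-basicresult1628}(4). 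Post-composing with the split $\mathfrak{s}$-deflation $\pi : N \oplus N' \to N$ yields an $\mathfrak{s}$-deflation $\pi \bar g$ onto $N$ (since a composition of $\mathfrak{s}$-deflations is again an $\mathfrak{s}$-deflation), and a routine factorisation through the split inclusion $\iota : N \to N \oplus N'$ with $\pi \iota = 1_{N}$ shows that $\pi \bar g$ is still a right $\mathcal{D}$-approximation. This establishes (1).

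For part (2), set $\mathcal{M}' := \mu^{R}(\mathcal{N};\mathcal{D})$. To prove $\mathcal{M} \subseteq \mathcal{M}'$: for each $M \in \mathcal{M}$ the defining $\mathfrak{s}$-conflation $M \to D \to N_{M} \dashrightarrow$ already presents $D \to N_{M}$ as a right $\mathcal{D}$-approximation (Lemma \ref{lemma-basicresult1628}(4)) that is an $\mathfrak{s}$-deflation, so $M$ is a legitimate cocone $M_{N_{M}}$; together with $\mathcal{D} \subseteq \mathcal{M}'$ this gives the inclusion. For the converse $\mathcal{M}' \subseteq \mathcal{M}$, take $N \in \mathcal{N}$ and decompose $N \oplus N' \cong D_{0} \oplus \bigoplus_{i} N_{M_{i}}$ as above. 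The direct-sum right $\mathcal{D}$-approximation of $N \oplus N'$ has cocone $\bigoplus_{i} M_{i} \in \mathcal{M}$, while the direct sum of chosen right $\mathcal{D}$-approximations of $N$ and $N'$ has cocone $M_{N} \oplus M_{N'}$; the dual of Remark \ref{remark-basic1845}(1) forces
\[
M_{N} \oplus M_{N'} \oplus D \cong \textstyle\bigoplus_{i} M_{i} \oplus D'
\]
for some $D, D' \in \mathcal{D}$. The right-hand side lies in $\mathcal{M}$, and since $\mathcal{M}$ is closed under direct summands, $M_{N} \in \mathcal{M}$; combined with $\mathcal{D} \subseteq \mathcal{M}$, this yields $\mathcal{M}' \subseteq \mathcal{M}$.

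The main obstacle is the direct-summand subtlety in (1): a generic $N \in \mathcal{N}$ is only a summand of an explicit direct sum of $D_{0}$ and objects $N_{M_{i}}$, so the $\mathfrak{s}$-deflation from $\mathcal{D}$ onto $N$ has to be manufactured indirectly, via a split projection and the closure of $\mathfrak{s}$-deflations under composition, rather than read off from the generators. Once (1) is in place, (2) is essentially bookkeeping: closure of $\mathcal{M}$ under direct summands and the inclusion $\mathcal{D} \subseteq \mathcal{M}$ jointly absorb the indeterminate $\mathcal{D}$-summands arising from the uniqueness of the mutation construction.
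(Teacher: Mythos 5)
Your proof is correct and follows the same essential route as the paper's, relying on Lemma~\ref{lemma-basicresult1628}(4) for part~(1) and the dual of Remark~\ref{remark-basic1845}(1) for part~(2). The difference is purely one of explicitness: the paper's proof of~(1) is a one-line citation of Lemma~\ref{lemma-basicresult1628}(4), and its proof of~(2) writes down the two $\mathfrak{s}$-conflations $L\to D\to N\dashrightarrow$ and $M\to D'\to N\dashrightarrow$ for an ostensibly arbitrary $N\in\mu^{L}(\mathcal{M};\mathcal{D})$ without commenting on the fact that a general $N$ is only a direct summand of $D_{0}\oplus\bigoplus_{i}N_{M_{i}}$, so the second $\mathfrak{s}$-conflation with cocone in $\mathcal{M}$ is not immediately available; the paper is implicitly verifying the claim on the generators of the $\operatorname{\mathsf{add}}$-closure and then absorbing summands. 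You surface exactly this point: in~(1) you manufacture a right $\mathcal{D}$-approximation of $N$ itself by composing the direct-sum approximation with the split projection (using closure of $\mathfrak{s}$-deflations under composition), and in~(2) you run the dual of Remark~\ref{remark-basic1845}(1) on the two approximations of $N\oplus N'$ and then invoke closure of $\mathcal{M}$ under direct summands. This makes your argument a more careful version of the same proof rather than a genuinely different one, and it buys you a proof that visibly works for every $N\in\mu^{L}(\mathcal{M};\mathcal{D})$, not just the generators. No gaps: the composition $\pi\bar g$ is indeed an $\mathfrak{s}$-deflation, and your factorisation through the split inclusion correctly shows it remains a right $\mathcal{D}$-approximation.
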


\begin{proof}
(1) This follows from Lemma \ref{lemma-basicresult1628}(4).

(2) Let $N\in \mu^{L}(\mathcal{M};\mathcal{D})$.
Then there exist two $\mathfrak{s}$-conflations $L\rightarrow D\xrightarrow{g}N\dashrightarrow$ and $M\rightarrow D'\xrightarrow{g'}N\rightarrow$ such that $L\in\mu^{R}(\mu^{L}(\mathcal{M};\mathcal{D});\mathcal{D})$, $M\in\mathcal{M}$, and $g,g'$ are right $\mathcal{D}$-approximations.
By a dual argument of Remark \ref{remark-basic1845}(1), we obtain $M\oplus D'\cong L\oplus D$.
Hence we have the assertion.
\end{proof}

Since a covariantly finite subcategory is not necessarily good, silting mutation in extriangulated categories is often impossible.
We give a sufficient condition for silting mutation to be possible. 
In the rest of this subsection, we assume that $R$ is a field and $\mathcal{C}$ is a Hom-finite Krull--Schmidt category. 
The following lemma plays an important role, which is an analog of \cite{AS80, IZ19}.

\begin{lemma}\label{lem-311} 
Assume that there exists an $\mathfrak{s}$-conflation $A\xrightarrow{f} A' \rightarrow C\dashrightarrow$ such that $f$ is in the Jacobson radical of $\mathcal{C}$. 
Then, for each indecomposable direct summand $X$ of $A$, there exists an $\mathfrak{s}$-conflation $X\rightarrow A_{X}\rightarrow C_{X}\dashrightarrow$ such that $X\notin\operatorname{\mathsf{add}}A_{X}$ and $A_{X}\in\operatorname{\mathsf{add}}A'$. 
\end{lemma}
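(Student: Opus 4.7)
First, I would decompose $A = X \oplus Y$ and $A' = X^n \oplus B$ so that $B \in \operatorname{\mathsf{add}} A'$ contains no indecomposable summand isomorphic to $X$, and write $f$ in the block form
\[
f = \begin{pmatrix} \alpha & \beta \\ \gamma & \delta \end{pmatrix} \colon X \oplus Y \longrightarrow X^n \oplus B.
\]
The hypothesis that $f$ lies in the Jacobson radical of $\mathcal{C}$ forces every entry of this matrix to be a radical morphism; in particular, each component of $\alpha \colon X \to X^n$ lies in $\mathrm{rad}(\mathrm{End}(X))$, which is nilpotent since $\mathrm{End}(X)$ is a finite-dimensional local $R$-algebra (using the standing Hom-finite Krull--Schmidt assumption over the field $R$).

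The plan is then to construct an automorphism of the target of upper-unipotent shape
\[
\phi = \begin{pmatrix} 1_{X^n} & V \\ 0 & 1_B \end{pmatrix} \colon X^n \oplus B \longrightarrow X^n \oplus B
\]
for some morphism $V \colon B \to X^n$, chosen so that $\phi \circ f$ has vanishing top row; equivalently, so that $(\alpha,\beta) \colon A \to X^n$ factors as $-V \circ (\gamma,\delta)$. Assuming such a $V$ exists, the modified $\mathfrak{s}$-conflation $A \xrightarrow{\phi f} A' \to C \dashrightarrow$ would decompose as a direct sum of the trivial conflation $0 \to X^n \xrightarrow{=} X^n \dashrightarrow$ and an $\mathfrak{s}$-conflation $A \xrightarrow{(\gamma,\delta)} B \to C' \dashrightarrow$, whose middle term contains no summand isomorphic to $X$. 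I would then precompose the latter with the split inflation $i_X \colon X \to A$ and invoke (ET4) to obtain an $\mathfrak{s}$-inflation $\gamma \colon X \to B$, pass to its left-minimal form $X \to A_X$ (which remains an $\mathfrak{s}$-inflation by the remark preceding Lemma \ref{lem-sconfls}), and complete it to the required $\mathfrak{s}$-conflation $X \to A_X \to C_X \dashrightarrow$. By construction $A_X$ is a direct summand of $B$, so $A_X \in \operatorname{\mathsf{add}} A'$ and $X \notin \operatorname{\mathsf{add}} A_X$.

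The main obstacle will be the construction of $V$: one must verify that $(\alpha,\beta) \colon A \to X^n$ genuinely factors through $(\gamma,\delta) \colon A \to B$. My approach is to translate this condition, via the long exact sequence of Proposition \ref{prop_longex} applied to $\mathcal{C}(-,X^n)$ on the $\mathfrak{s}$-conflation $A \to A' \to C$ together with the splitting $A' = X^n \oplus B$, into the vanishing of a specific class in an $\mathbb{E}$-group built from $\alpha$ and $\beta$. The nilpotence of the radical endomorphisms between copies of $X$, combined with the fact that $f$ is an $\mathfrak{s}$-inflation, should then allow $V$ to be produced explicitly; this is the extriangulated analogue of the classical Auslander--Smal{\o} trick \cite{AS80}, as adapted in \cite{IZ19}.
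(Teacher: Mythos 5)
The central step of your plan has a genuine gap: the factorization $(\alpha,\beta)=-V\circ(\gamma,\delta)$ need not exist, and in general it does not. Take $\mathcal{C}=D^{b}(\operatorname{\mathsf{mod}} k[x]/(x^{2}))$, which is Hom-finite, Krull--Schmidt and $k$-linear, and the triangle $\Lambda\xrightarrow{\,x\,}\Lambda\rightarrow\mathrm{Cone}(x)\dashrightarrow$ with $\Lambda:=k[x]/(x^{2})$. Here $A=A'=\Lambda$ and $f=x$ lies in the Jacobson radical (since $\operatorname{End}(\Lambda)=\Lambda$ is local with radical $(x)$). For $X=\Lambda$ your decomposition gives $n=1$, $Y=0$, $B=0$, $\alpha=x$ and $(\gamma,\delta)=0$, so the required identity $\alpha=-V(\gamma,\delta)$ would force $x=0$, which is false. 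The underlying issue is structural rather than peculiar to this example: a unipotent change of basis on the target can only cancel the $X^{n}$-component of $f$ when $(\alpha,\beta)$ is already in the image of precomposition with $(\gamma,\delta)$, and nilpotence of $\mathrm{rad}(\operatorname{End} X)$ by itself gives no control on that. The long exact sequence of Proposition \ref{prop_longex} also does not rescue this, because it governs the conflation $A\to A'\to C$, not the splitting of $A'$.

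The paper avoids this entirely by iterating rather than cleaning up $f$ in one step. Write $A'=X^{\oplus l}\oplus U$ with $X\notin\operatorname{\mathsf{add}} U$ and set $g:=f\iota\colon X\to X^{\oplus l}\oplus U$, which is an $\mathfrak{s}$-inflation because $\mathcal{C}$ is weakly idempotent complete. One then composes $g$ with $g^{\oplus l}\oplus\mathrm{id}$ repeatedly, $m$ times where $\mathrm{rad}_{\mathcal{C}}^{m}(X,X)=0$; the composite $X\to X^{\oplus l^{m}}\oplus U^{\oplus(1+l+\cdots+l^{m-1})}$ is still an $\mathfrak{s}$-inflation, and its $X$-block vanishes because each of its entries is a product of $m$ radical endomorphisms of $X$. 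Weak idempotent completeness then makes the residual component $h\colon X\to U^{\oplus(1+\cdots+l^{m-1})}$ an $\mathfrak{s}$-inflation, and $X\to U^{\oplus(1+\cdots+l^{m-1})}\to\mathrm{Cone}(h)\dashrightarrow$ is the desired $\mathfrak{s}$-conflation. This is the actual Auslander--Smal\o/Iyama--Zhang mechanism: it produces the correct map by forcing the $X$-part to die under iteration, not by finding a $V$. Note also that your plan, even granting the factorization, does not produce the degenerate answer $A_{X}=0$ that is forced when $\operatorname{\mathsf{add}} A'=\operatorname{\mathsf{add}} X$ (as in the example above); the iteration handles this uniformly because $0\colon X\to 0$ is still an $\mathfrak{s}$-inflation in that situation.
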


\begin{proof}
Let $X$ be an indecomposable direct summand of $A$ and let $\iota: X\rightarrow A$ be the natural injection. 
Then $g:=f\iota$ is an $\mathfrak{s}$-inflation and in the Jacobson radical of $\mathcal{C}$. 
Decompose $A'=X^{\oplus l}\oplus U$ with $X\notin \operatorname{\mathsf{add}}U$. 
If $l=0$, then $X\xrightarrow{g}A'\rightarrow\mathrm{Cone}(g)\dashrightarrow$ is a desired $\mathfrak{s}$-conflation. 
Assume $l\geq 1$. 
Let $Y_{0}:=X$ and $g_{0}:=g$. 
For each $i \ge 1$, we inductively define
\begin{align}Y_{i}:=X^{\oplus l^{i}}\oplus U^{\oplus (1+l +\cdots +l^{i-1})}\notag\end{align}
 and 
 \begin{align}g_{i}:=g^{\oplus l^{i}}\oplus \mathrm{id}_{U}^{\oplus (1+l+\cdots +l^{i-1})}:Y_{i}\rightarrow Y_{i+1}.\notag \end{align}
 Since $\mathcal{C}$ is Hom-finite, we can take $m>0$ such that $\mathrm{rad}_{\mathcal{C}}^{m}(X,X)=0$. 
 Hence the composition morphism 
 \begin{align}g_{m-1}\cdots g_{1}g_{0}:X \rightarrow Y_{m}=X^{\oplus l^{m}}\oplus U^{\oplus(1+l+\cdots+ l^{m-1})}\notag \end{align}
is a direct sum of two morphisms $0: X\rightarrow X^{\oplus l^{m}}$ and $h: X\rightarrow U^{\oplus (1+l+\cdots +l^{m-1})}$. 
Since each $g_{i}$ is an $\mathfrak{s}$-inflation, so is $\left[\begin{smallmatrix}0\\ h\end{smallmatrix}\right]$. 
Since $\mathcal{C}$ is a Krull--Schmidt category, $h$ is an $\mathfrak{s}$-inflation. 
Thus we obtain a desired $\mathfrak{s}$-conflation $X\xrightarrow{h} U^{\oplus (1+l +\cdots+ l^{m-1})}\rightarrow \mathrm{Cone}(h) \dashrightarrow$. 
\end{proof}

Let $\mathcal{M}$ be a presilting subcategory of $\mathcal{C}$ and let $X\in\mathcal{M}$.
Define a subcategory $\mathcal{M}_{X}$ of $\mathcal{M}$ as $\mathcal{M}_{X}:=\operatorname{\mathsf{add}}(\operatorname{\mathsf{ind}}\mathcal{M}\setminus\operatorname{\mathsf{ind}}(\operatorname{\mathsf{add}}X))$.
If $\mathcal{M}_{X}$ is good covariantly finite, then we put $\mu_{X}(\mathcal{M}):=\mu^{L}(\mathcal{M};\mathcal{M}_{X})$.
We call $\mu_{X}(\mathcal{M})$ an \emph{irreducible left mutation} if $X$ is indecomposable.
In the following, we assume the following condition.
\begin{itemize}
\item[(F)] For each silting subcategory $\mathcal{M}$ and $X\in\operatorname{\mathsf{ind}}\mathcal{M}$, the subcategory $\mathcal{M}_{X}$ is a functorially finite subcategory of $\mathcal{M}$. 
\end{itemize}
If $\mathcal{C}$ has a silting object, then the condition (F) is satisfied.

\begin{proposition}\label{prop-mut}
Let $\mathcal{M}$ be a silting subcategory of $\mathcal{C}$ and let $\mathcal{N}$ be a presilting subcategory satisfying $\mathcal{M}>\mathcal{N}$ and $\mathcal{M}\not\supseteq\mathcal{N}$.
Then the following statements hold.
\begin{enumerate}[\upshape(1)]
\item There exists $N\in\mathcal{N}$ such that $N\in \mathcal{M}^{\wedge}_{l}\setminus \mathcal{M}^{\wedge}_{l-1}$ for some $l\geq 1$.
\item If $X$ is an indecomposable direct summand of $M^{N}_{l}$, then $\mathcal{M}_{X}$ is a good covariantly finite subcategory of $\mathcal{M}$.
\item We have $\mathcal{M}>\mu_{X}(\mathcal{M})\geq\mathcal{N}$.
\end{enumerate}
\end{proposition}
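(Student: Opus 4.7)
The plan is to establish the three parts sequentially, leveraging Lemmas \ref{lem-sconfls} and \ref{lem-311} together with Theorem \ref{thm-main2-1845}.

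For (1), I argue by pigeonhole: since $\mathcal{M}$ is silting with $\mathcal{M}\geq\mathcal{N}$, Lemma \ref{lem-order_wedge}(1) gives $\mathcal{N}\subseteq\mathcal{M}^{\wedge}=\bigcup_{m\geq 0}\mathcal{M}^{\wedge}_{m}$. Because $\mathcal{N}\not\subseteq\mathcal{M}=\mathcal{M}^{\wedge}_{0}$, some $N\in\mathcal{N}$ must have minimal level $l\geq 1$, so $N\in\mathcal{M}^{\wedge}_{l}\setminus\mathcal{M}^{\wedge}_{l-1}$.

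For (2), I fix $N$ as in (1) and invoke Lemma \ref{lem-sconfls}(1). Its last clause gives $M^{N}_{l}\neq 0$, and the lowest non-trivial link $M^{N}_{l}\xrightarrow{g^{N}_{l}}M^{N}_{l-1}\xrightarrow{f^{N}_{l-1}}N_{l-1}\dashrightarrow$ has $g^{N}_{l}$ in the Jacobson radical. Applying Lemma \ref{lem-311} with $A:=M^{N}_{l}$ and $A':=M^{N}_{l-1}$, each indecomposable summand $X$ of $M^{N}_{l}$ sits in an $\mathfrak{s}$-conflation $X\to A_{X}\to C_{X}\dashrightarrow$ with $A_{X}\in\operatorname{\mathsf{add}}M^{N}_{l-1}\subseteq\mathcal{M}$ and $X\notin\operatorname{\mathsf{add}}A_{X}$, which forces $A_{X}\in\mathcal{M}_{X}$. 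Every other indecomposable of $\mathcal{M}$ already lies in $\mathcal{M}_{X}$ and trivially $\mathfrak{s}$-inflates to itself, hence $\mathcal{M}\subseteq\operatorname{\mathsf{cocone}}(\mathcal{M}_{X},\mathcal{C})$. Combined with covariant finiteness of $\mathcal{M}_{X}$ in $\mathcal{M}$ supplied by condition (F), Lemma \ref{lem-good}(1) delivers goodness.

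For (3), set $\mathcal{M}':=\mu_{X}(\mathcal{M})$. Theorem \ref{thm-main2-1845}(3) makes $\mathcal{M}'$ a silting subcategory, and part (2) of the same theorem gives $\mathcal{M}\geq\mathcal{M}'$; equality would force $\mathcal{M}=\operatorname{\mathsf{add}}\mathcal{M}_{X}$, which fails since the indecomposable $X$ is not in $\mathcal{M}_{X}$, so $\mathcal{M}>\mathcal{M}'$. The remaining claim $\mathcal{M}'\geq\mathcal{N}$ reduces, because $\mathcal{M}'=\operatorname{\mathsf{add}}(\mathcal{M}_{X}\cup\{N_{X}\})$ with $N_{X}=\mathrm{Cone}(X\xrightarrow{f}D_{X})$ for a left $\mathcal{M}_{X}$-approximation $f$, to showing $\mathbb{E}^{k}(N_{X},N')=0$ for every $N'\in\mathcal{N}$ and $k\geq 1$ (the $\mathcal{M}_{X}$-part is immediate from $\mathcal{M}\geq\mathcal{N}$). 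Applying $\mathcal{C}(-,N')$ to $X\to D_{X}\to N_{X}\dashrightarrow$ and using Proposition \ref{prop_longex}, the case $k\geq 2$ follows at once from $\mathbb{E}^{k}(X,N')=\mathbb{E}^{k}(D_{X},N')=0$.

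The main obstacle is the case $k=1$, where $\mathbb{E}(N_{X},N')$ is the cokernel of $\mathcal{C}(D_{X},N')\to\mathcal{C}(X,N')$ and I must show every $\phi:X\to N'$ factors through $f$. The plan is to lift $\phi$ through the minimal right $\mathcal{M}$-approximation $M'_{0}\to N'$ supplied by Lemma \ref{lem-sconfls}(1) applied to $N'$; then Lemma \ref{lem-sconfls}(2) applied to the pair $(N,N')$, whose hypothesis $\mathbb{E}^{l}(N,N')=0$ holds since $\mathcal{N}$ is presilting and $l\geq 1$, yields $\operatorname{\mathsf{add}}M'_{0}\cap\operatorname{\mathsf{add}}M^{N}_{l}=\{0\}$, so $X\notin\operatorname{\mathsf{add}}M'_{0}$ and consequently $M'_{0}\in\mathcal{M}_{X}$. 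The $\mathcal{M}_{X}$-approximation property of $f$ then factors the lift $X\to M'_{0}$ through $f$, producing the desired factorization of $\phi$ and closing the argument.
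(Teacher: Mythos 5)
Your proof is correct and follows essentially the same route as the paper: pigeonhole via Lemma \ref{lem-order_wedge}(1) for (1), Lemma \ref{lem-311} plus condition (F) and Lemma \ref{lem-good}(1) for (2), and the lift/factorization argument through the minimal right $\mathcal{M}$-approximation of $N'$ combined with Lemma \ref{lem-sconfls}(2) for (3). The only quibble is a harmless index slip in the $k\geq 2$ case: the long exact sequence sandwiches $\mathbb{E}^{k}(N_{X},N')$ between $\mathbb{E}^{k-1}(X,N')$ and $\mathbb{E}^{k}(D_{X},N')$, not $\mathbb{E}^{k}(X,N')$ — though both vanish for $k\geq 2$, so the conclusion stands.
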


\begin{proof}
(1) By Lemma \ref{lem-order_wedge}(1), $\mathcal{N}\subseteq\mathcal{M}^{\wedge}$. 
Since $\mathcal{M}\neq\mathcal{N}$, there exists $N\in\mathcal{N}$ such that $N\in\mathcal{M}^{\wedge}_{l}\setminus \mathcal{M}^{\wedge}_{l-1}$ for some $l\geq1$. 

(2) By Lemma \ref{lem-sconfls}(1), there exists an $\mathfrak{s}$-conflaion $M^{N}_{l}\xrightarrow{g^{N}_{l}}M^{N}_{l-1}\rightarrow N_{l-1}\dashrightarrow$ such that $g^{N}_{l}$ is in the Jacobson radical of $\mathcal{C}$.
Let $X$ be an indecomposable direct summand of $M^{N}_{l}$.
By Lemma \ref{lem-311}, we obtain an $\mathfrak{s}$-conflation $X\xrightarrow{\iota} M_{X} \rightarrow C_{X} \dashrightarrow$ with $M_{X}\in \mathcal{M}_{X}$. Thus we obtain $\mathcal{M}\subseteq \operatorname{\mathsf{cocone}}(\mathcal{M}_{X},\mathcal{C})$.
By Lemma \ref{lem-good}(1), $\mathcal{M}_{X}$ is a good covariantly finite subcategory.

(3) Since $\mathcal{M}_{X}$ is a good covariantly finite subcategory, there exists an $\mathfrak{s}$-conflation $X\xrightarrow{\alpha} M'\rightarrow C \dashrightarrow$ such that $\alpha$ is a left $\mathcal{M}_{X}$-approximation. 
Let $\mathcal{L}:=\mu_{X}(\mathcal{M})=\operatorname{\mathsf{add}}(\mathcal{M}_{X}\cup \{C\})$.   
By Theorem \ref{thm-main2-1845}, $\mathcal{L}$ is a silting subcategory of $\mathcal{C}$ and $\mathcal{M}> \mathcal{L}$. 
We show $\mathcal{L}\geq \mathcal{N}$, or equivalently $\mathbb{E}^{i}(C,\mathcal{N})=0$ for each $i\geq 1$.
Applying $\mathcal{C}(-,\mathcal{N})$ to the $\mathfrak{s}$-conflation $X\xrightarrow{\alpha}M'\rightarrow C\dashrightarrow$ induces an exact sequence 
\begin{align}
\mathbb{E}^{i-1}(M',\mathcal{N})\rightarrow \mathbb{E}^{i-1}(X, \mathcal{N})\rightarrow \mathbb{E}^{i}(C, \mathcal{N})\rightarrow \mathbb{E}^{i}(M', \mathcal{N}).\notag 
\end{align}
If $i \geq 2$, then the exact sequence above implies $\mathbb{E}^{i}(C, \mathcal{N})=0$. 
Thus it is enough to show $\mathbb{E}(C, N')=0$ for each $N'\in\mathcal{N}$.
Let $a \in \mathcal{C}(X, N')$. 
By Lemma \ref{lem-sconfls}(1), there exists an $\mathfrak{s}$-conflation $N'_{1}\rightarrow M^{N'}_{0}\xrightarrow{f^{N'}_{0}} N'\dashrightarrow$ such that $f^{N'}_{0}$ is a right $\mathcal{M}$-approximation. 
Thus we have a morphism $b\in\mathcal{C}(X, M^{N'}_{0})$ with $a=f^{N'}_{0}b$.
By Lemma \ref{lem-sconfls}(2), $M^{N'}_{0} \in \mathcal{M}_{X}$. 
Since $\alpha$ is a left $\mathcal{M}_{X}$-approximation of $X$, we have $c\in\mathcal{C}(M', M^{N'}_{0})$ with $b=c\alpha$. 
By $a=f^{N'}_{0}b=f^{N'}_{0}c\alpha$, the map $\mathcal{C}(\alpha,N'): \mathcal{C}(M', N')\rightarrow\mathcal{C}(X,N')$ is surjective. 
Hence we have $\mathbb{E}(C,N')=0$. This completes the proof.
\end{proof}

Define a silting quiver $Q(\operatorname{\mathsf{silt}}\mathcal{C})=(Q_{0},Q_{1})$ as follows: 
\begin{align}
&Q_{0}:=\operatorname{\mathsf{silt}}\mathcal{C}, \notag\\
&Q_{1}:=\{ \mathcal{M}\rightarrow \mathcal{N}\mid \text{$\mathcal{N}$ is an irreducible left mutation of $\mathcal{M}$}\}. \notag
\end{align}
We compare the silting quiver $Q(\operatorname{\mathsf{silt}}\mathcal{C})$ and the Hasse quiver of $(\operatorname{\mathsf{silt}}\mathcal{C}, \geq)$.

\begin{theorem}\label{thm-comb}
Let $\mathcal{C}$ be a Hom-finite Krull--Schmidt extriangulated category satisfying the condition \textnormal{(F)}.  
Let $\mathcal{M},\mathcal{N}$ be silting subcategories of $\mathcal{C}$. 
Then the following statements are equivalent. 
\begin{itemize}
\item[(1)] $\mathcal{N}$ is an irreducible left mutation of $\mathcal{M}$.
\item[(2)] $\mathcal{M}$ is an irreducible right mutation of $\mathcal{N}$.
\item[(3)] $\mathcal{M}>\mathcal{N}$ and there is no silting subcategory $\mathcal{L}$ such that $\mathcal{M}>\mathcal{L}>\mathcal{N}$. 
\end{itemize}
In particular, $Q(\operatorname{\mathsf{silt}}\mathcal{C})$ coincides with the Hasse quiver of $(\operatorname{\mathsf{silt}}\mathcal{C},\geq)$.
\end{theorem}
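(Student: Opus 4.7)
My plan is to establish the three equivalences cyclically: (1) $\Leftrightarrow$ (2), then (3) $\Rightarrow$ (1), and finally (1) $\Rightarrow$ (3). The equivalence (1) $\Leftrightarrow$ (2) is immediate from the proposition stated just before this theorem, applied with $\mathcal{D}=\mathcal{M}_X$: it gives that $\mathcal{M}_X$ is a good contravariantly finite subcategory of $\mu_{X}(\mathcal{M})$ and that $\mu^{R}(\mu_{X}(\mathcal{M});\mathcal{M}_X)=\mathcal{M}$, which is precisely the statement that irreducible left and right mutations are mutual inverses.

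For (3) $\Rightarrow$ (1), I would first verify $\mathcal{M}\not\supseteq\mathcal{N}$: by Theorem \ref{mainresult-Grgroup}, both $\operatorname{\mathsf{ind}}\mathcal{M}$ and $\operatorname{\mathsf{ind}}\mathcal{N}$ are bases of the free abelian group $K_{0}(\mathcal{C})$, and a basis cannot properly contain another basis (any element of the larger basis lying outside the smaller one would yield a nontrivial linear relation, contradicting linear independence). With $\mathcal{M}\not\supseteq\mathcal{N}$ established, Proposition \ref{prop-mut} produces an indecomposable $X\in\operatorname{\mathsf{ind}}\mathcal{M}$ for which $\mathcal{M}_{X}$ is good covariantly finite (functorial finiteness coming from condition (F)) and $\mathcal{M}>\mu_{X}(\mathcal{M})\geq\mathcal{N}$. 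Since $\mu_{X}(\mathcal{M})$ is silting by Theorem \ref{thm-main2-1845}(3), the hypothesis in (3) forces $\mu_{X}(\mathcal{M})=\mathcal{N}$, giving (1).

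For (1) $\Rightarrow$ (3), the strict inequality $\mathcal{M}>\mathcal{N}$ follows from Theorem \ref{thm-main2-1845}(2) since $X\notin\operatorname{\mathsf{add}}\mathcal{M}_X$. Given a silting subcategory $\mathcal{L}$ with $\mathcal{M}>\mathcal{L}\geq\mathcal{N}=\mu_{X}(\mathcal{M})$, the goal is to deduce $\mathcal{L}=\mathcal{N}$. Write the defining $\mathfrak{s}$-conflation as $X\to M'\to C\dashrightarrow$ with $M'\in\mathcal{M}_X$. My first step is to upgrade $\mathcal{L}\subseteq\mathcal{M}^{\wedge}$ to $\mathcal{L}\subseteq\mathcal{M}^{\wedge}_{1}$: by Lemma \ref{lem-order_wedge}(3), $\mathcal{M}\subseteq\mathcal{N}^{\vee}_{1}$, so each $M\in\mathcal{M}$ admits an $\mathfrak{s}$-conflation $M\to N_{0}\to N_{1}\dashrightarrow$ with $N_{i}\in\mathcal{N}$; feeding this into the long exact sequence of Proposition \ref{prop_longex}(1) together with $\mathbb{E}^{k}(\mathcal{L},\mathcal{N})=0$ gives $\mathbb{E}^{k}(\mathcal{L},\mathcal{M})=0$ for $k\geq 2$, whence $\mathcal{L}\subseteq\mathcal{M}^{\wedge}\cap{}^{\perp_{>1}}\mathcal{M}=\mathcal{M}^{\wedge}_{1}$ by Lemma \ref{lem-wedge_perp}(2). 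Second, for an indecomposable $Z\in\operatorname{\mathsf{ind}}\mathcal{L}\setminus\mathcal{M}$ with minimal $\mathcal{M}$-resolution $M_{1}\to M_{0}\to Z\dashrightarrow$, Lemma \ref{lem-sconfls}(2), applied with $N'$ ranging over the indecomposable summands of elements of $\operatorname{\mathsf{ind}}\mathcal{N}=\operatorname{\mathsf{ind}}\mathcal{M}_X\cup\operatorname{\mathsf{ind}}(\operatorname{\mathsf{add}}C)$ and with the minimal $\mathcal{M}$-resolutions of these objects (trivial for those in $\mathcal{M}_X$; $X\to M'\to C\dashrightarrow$ for $C$), forces $M_{1}\in\operatorname{\mathsf{add}}X$. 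A comparison of the resulting $\mathfrak{s}$-conflation $X^{\oplus k}\to M_{0}\to Z\dashrightarrow$ with the direct sum $X^{\oplus k}\to (M')^{\oplus k}\to C^{\oplus k}\dashrightarrow$, using that $\mathbb{E}(Z,\mathcal{M}_X)=0$ endows the former first map with the left $\mathcal{M}_X$-approximation property of the latter, then identifies $Z$ as an indecomposable summand of $C$. Consequently $\operatorname{\mathsf{ind}}\mathcal{L}\subseteq\operatorname{\mathsf{ind}}\mathcal{M}\cup\operatorname{\mathsf{ind}}\mathcal{N}$, and the relation $[X]+[C]=[M']$ in $K_{0}(\mathcal{C})$ prevents the simultaneous presence of $X$ and any summand of $C$ in the basis $\operatorname{\mathsf{ind}}\mathcal{L}$; the basis-inclusion argument used in (3) $\Rightarrow$ (1) then forces $\operatorname{\mathsf{ind}}\mathcal{L}=\operatorname{\mathsf{ind}}\mathcal{N}$ and hence $\mathcal{L}=\mathcal{N}$.

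The principal obstacle is the comparison step that pins down $Z\in\operatorname{\mathsf{add}}C$: matching the $\mathfrak{s}$-conflation arising from the minimal $\mathcal{M}$-resolution of $Z$ against direct sums of the defining mutation sequence, while controlling spurious summands in $\mathcal{M}_X$ and tracking minimality of approximations, is the delicate point. All remaining pieces reduce to routine applications of Proposition \ref{prop_longex}, Lemma \ref{lem-sconfls}, and the Grothendieck group description in Theorem \ref{mainresult-Grgroup}; the concluding identification of $Q(\operatorname{\mathsf{silt}}\mathcal{C})$ with the Hasse quiver of $(\operatorname{\mathsf{silt}}\mathcal{C},\geq)$ is then a formal consequence of the three-way equivalence.
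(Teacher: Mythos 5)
Your $(3)\Rightarrow(1)$ step and the final ``in particular'' clause agree with the paper, and your direct route to $(1)\Leftrightarrow(2)$ via the mutual-inverse proposition is a legitimate alternative to the paper's plan (which instead proves $(2)\Leftrightarrow(3)$ by a dual argument); note, though, that you still have to check that the inverse right mutation is \emph{irreducible}, i.e.\ that $\mathcal{M}_X$ has the form $\mathcal{N}_Y$ for a single indecomposable $Y\in\mathcal{N}$, which again comes down to the $K_0$-basis argument. The real divergence is $(1)\Rightarrow(3)$, where the paper's proof is very short and conceptually different: given $\mathcal{M}>\mathcal{L}>\mathcal{N}$, Proposition~\ref{prop-mut} produces $\mathcal{K}=\mu_Y(\mathcal{M})$ with $\mathcal{M}>\mathcal{K}\geq\mathcal{L}>\mathcal{N}$; then $\mathcal{M}_X=\mathcal{M}\cap\mathcal{N}\subseteq\mathcal{M}^{\vee}\cap\mathcal{N}^{\wedge}\subseteq\mathcal{K}^{\vee}\cap\mathcal{K}^{\wedge}=\mathcal{K}$ forces $\mathcal{M}_X\subseteq\mathcal{M}\cap\mathcal{K}=\mathcal{M}_Y$, hence $X\cong Y$ and $\mathcal{K}=\mathcal{N}$, a contradiction. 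No classification of $\operatorname{\mathsf{ind}}\mathcal{L}$ and no Grothendieck-group relation are needed.

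Your $(1)\Rightarrow(3)$ also has a genuine gap at the last step. After establishing $\operatorname{\mathsf{ind}}\mathcal{L}\subseteq\operatorname{\mathsf{ind}}\mathcal{M}\cup\operatorname{\mathsf{ind}}(\operatorname{\mathsf{add}} C)$, you assert that the relation $[X]+[C]=[M']$ in $K_0(\mathcal{C})$ forbids $X$ and the new indecomposable summand $C_0$ of $C$ from lying simultaneously in the basis $\operatorname{\mathsf{ind}}\mathcal{L}$. But this relation also involves $[M']$ and the $\mathcal{M}_X$-part of $[C]$, whose indecomposable constituents lie in $\operatorname{\mathsf{ind}}\mathcal{M}_X$ and are not, at that point, known to belong to $\operatorname{\mathsf{ind}}\mathcal{L}$; so the relation does not produce a linear dependence among elements of $\operatorname{\mathsf{ind}}\mathcal{L}$. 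Indeed one can exhibit $\mathbb{Z}$-bases of the ambient lattice inside $\operatorname{\mathsf{ind}}\mathcal{M}_X\cup\{X,C_0\}$ containing both $X$ and $C_0$, obtained by discarding a suitable $P_0\in\operatorname{\mathsf{ind}}\mathcal{M}_X$. To repair the argument you must first show $\mathcal{M}_X\subseteq\mathcal{L}$, which (as in the paper) follows from $\mathcal{M}_X\subseteq\mathcal{M}^{\vee}\cap\mathcal{N}^{\wedge}\subseteq\mathcal{L}^{\vee}\cap\mathcal{L}^{\wedge}=\mathcal{L}$ — but once you have this, the classification of $\operatorname{\mathsf{ind}}\mathcal{L}$ becomes redundant and you are back to the paper's proof. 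Alternatively, avoid $K_0$ entirely: if $X,C_0\in\mathcal{L}$ then $\mathbb{E}(C_0,X)=0$ since $\mathcal{L}$ is presilting, and Lemma~\ref{lem-sconfls}(2) applied with $N=C_0$ and $N'=X$ yields $\operatorname{\mathsf{add}} X\cap\operatorname{\mathsf{add}} M^{C_0}_1=\{0\}$, contradicting your own conclusion that $M^{C_0}_1\in\operatorname{\mathsf{add}} X\setminus\{0\}$.
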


\begin{proof}
(3)$\Rightarrow$(1): By Proposition \ref{prop-mut}, we obtain an irreducible left mutation $\mathcal{L}$ of $\mathcal{M}$ such that $\mathcal{M}>\mathcal{L}\geq\mathcal{N}$. Hence we have $\mathcal{L}=\mathcal{N}$.

(1)$\Rightarrow$(3): Let $\mathcal{N}:=\mu_{X}(\mathcal{M})$ for some indecomposable object $X\in\mathcal{M}$.
Then we obtain $\mathcal{M}\cap\mathcal{N}=\mathcal{M}_{X}$.
Suppose that there exists a silting subcategory $\mathcal{L}$ such that $\mathcal{M}>\mathcal{L}>\mathcal{N}$.
By Proposition \ref{prop-mut}, there exists an indecomposable object $Y\in\mathcal{M}$ such that $\mathcal{K}:=\mu_{Y}(\mathcal{M})$ and $\mathcal{M}>\mathcal{K}\geq\mathcal{L}>\mathcal{N}$. 
It follows from Proposition \ref{thm-AT22} that 
\begin{align}
\mathcal{M}_{X}=\mathcal{M}\cap\mathcal{N}\subseteq\mathcal{M}^{\vee}\cap\mathcal{N}^{\wedge}\subseteq\mathcal{K}^{\vee}\cap\mathcal{K}^{\wedge}=\mathcal{K}.\notag
\end{align}
Thus we have $X\cong Y$, and hence $\mathcal{N}=\mathcal{K}$, a contradiction.

Similarly, we can show (2)$\Leftrightarrow$(3). This completes the proof.
\end{proof}

As an application, we have the following results.

\begin{corollary}\label{cor-finsilt}
Let $\mathcal{M}, \mathcal{N}$ be silting subcategories satisfying $\mathcal{M}>\mathcal{N}$.
If there exist only finitely many silting subcategories $\mathcal{L}$ with $\mathcal{M}\geq \mathcal{L}\geq\mathcal{N}$, then $\mathcal{N}$ is obtained from $\mathcal{M}$ by iterated irreducible left mutation.
\end{corollary}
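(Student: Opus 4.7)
The plan is to reduce the corollary to an iterated application of Proposition \ref{prop-mut} by inducting on the cardinality of the (finite) interval $[\mathcal{N},\mathcal{M}]_{\operatorname{\mathsf{silt}}}:=\{\mathcal{L}\in\operatorname{\mathsf{silt}}\mathcal{C}\mid \mathcal{M}\geq\mathcal{L}\geq\mathcal{N}\}$. The key preliminary observation is that whenever $\mathcal{M},\mathcal{N}\in\operatorname{\mathsf{silt}}\mathcal{C}$ satisfy $\mathcal{M}>\mathcal{N}$, one automatically has $\mathcal{M}\not\supseteq\mathcal{N}$. Indeed, $\mathcal{N}\subseteq\mathcal{M}$ would give $\mathbb{E}^{k}(\mathcal{N},\mathcal{M})\subseteq\mathbb{E}^{k}(\mathcal{M},\mathcal{M})=0$ for all $k\geq 1$, hence $\mathcal{N}\geq\mathcal{M}$; combining with $\mathcal{M}\geq\mathcal{N}$ and the antisymmetry of $\geq$ on $\operatorname{\mathsf{silt}}\mathcal{C}$ (Proposition \ref{prop-at512}) would force $\mathcal{M}=\mathcal{N}$, contradicting $\mathcal{M}>\mathcal{N}$. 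Thus the hypotheses of Proposition \ref{prop-mut} are satisfied.

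Having secured $\mathcal{M}\not\supseteq\mathcal{N}$, I would apply Proposition \ref{prop-mut} to obtain an indecomposable $X\in\operatorname{\mathsf{ind}}\mathcal{M}$ for which $\mathcal{M}_{X}$ is good covariantly finite and the resulting irreducible left mutation $\mathcal{M}_{1}:=\mu_{X}(\mathcal{M})$ is a silting subcategory (by Theorem \ref{thm-main2-1845}(3)) satisfying $\mathcal{M}>\mathcal{M}_{1}\geq\mathcal{N}$. If $\mathcal{M}_{1}=\mathcal{N}$, a single mutation already achieves the claim. Otherwise $\mathcal{M}_{1}>\mathcal{N}$ strictly, and the interval $[\mathcal{N},\mathcal{M}_{1}]_{\operatorname{\mathsf{silt}}}$ is a proper subset of $[\mathcal{N},\mathcal{M}]_{\operatorname{\mathsf{silt}}}$, since $\mathcal{M}\notin[\mathcal{N},\mathcal{M}_{1}]_{\operatorname{\mathsf{silt}}}$; in particular it is still finite and of strictly smaller cardinality. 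The inductive hypothesis applied to the pair $\mathcal{M}_{1}>\mathcal{N}$ then expresses $\mathcal{N}$ as an iterated irreducible left mutation of $\mathcal{M}_{1}$, and prepending the mutation step from $\mathcal{M}$ to $\mathcal{M}_{1}$ yields the desired sequence.

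I do not foresee a genuine obstacle here: Proposition \ref{prop-mut} supplies the mutation step and Theorem \ref{thm-main2-1845}(3) ensures the output is silting, while the finiteness assumption guarantees termination. The only point requiring a moment of thought is verifying that $\mathcal{M}\not\supseteq\mathcal{N}$ (so that Proposition \ref{prop-mut} applies at every stage), which the antisymmetry argument above handles uniformly. Alternatively, and perhaps more conceptually, one could choose a maximal chain $\mathcal{M}=\mathcal{L}_{0}>\mathcal{L}_{1}>\cdots>\mathcal{L}_{k}=\mathcal{N}$ inside the finite poset $[\mathcal{N},\mathcal{M}]_{\operatorname{\mathsf{silt}}}$ and invoke Theorem \ref{thm-comb} to identify each covering relation $\mathcal{L}_{i}>\mathcal{L}_{i+1}$ as an irreducible left mutation.
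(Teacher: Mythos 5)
Your argument is correct and follows essentially the same route as the paper: repeatedly apply Proposition \ref{prop-mut} to produce a strictly decreasing chain of irreducible left mutations dominating $\mathcal{N}$, and use the finiteness of the interval to guarantee termination. Your explicit check that $\mathcal{M}>\mathcal{N}$ forces $\mathcal{M}\not\supseteq\mathcal{N}$ (via antisymmetry of $\geq$ on $\operatorname{\mathsf{silt}}\mathcal{C}$) is a small but welcome piece of rigor the paper leaves implicit, and the alternative you sketch at the end --- taking a maximal chain in the finite interval and invoking Theorem \ref{thm-comb} to recognize each covering relation as an irreducible mutation --- is also valid, though the paper does not use it.
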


\begin{proof}
By Proposition \ref{prop-mut}, there exists an irreducible left mutation $\mathcal{L}_{1}$ of $\mathcal{M}$ such that $\mathcal{M}>\mathcal{L}_{1}\geq\mathcal{N}$.
If $\mathcal{L}_{1}=\mathcal{N}$, then the assertion holds.
We assume $\mathcal{L}_{1}\neq \mathcal{N}$.
By Proposition \ref{prop-mut}, we have an irreducible left mutation $\mathcal{L}_{2}$ of $\mathcal{L}_{1}$ satisfying $\mathcal{L}_{1}>\mathcal{L}_{2}\geq \mathcal{N}$.
Repeating this process, we obtain a sequence $\mathcal{M}>\mathcal{L}_{1}>\mathcal{L}_{2}>\cdots (\geq \mathcal{N})$ of irreducible left mutations. 
By our assumption, there exists an integer $l\geq 1$ such that $\mathcal{L}_{l}=\mathcal{N}$. 
This completes the proof.
\end{proof}

We give an analog of \cite[Corollary 2.2]{HU05}.

\begin{corollary}\label{cor-mutend}
Assume that $\operatorname{\mathsf{silt}}\mathcal{C}$ has a greatest element $\mathcal{A}$.
If $Q(\operatorname{\mathsf{silt}}\mathcal{C})$ has a finite component $\mathfrak{C}$, then $\mathfrak{C}$ is connected and $\mathfrak{C}=Q(\operatorname{\mathsf{silt}}\mathcal{C})$.
\end{corollary}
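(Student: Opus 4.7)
The approach is to prove the stronger statement that $\mathfrak{C}$ coincides with $Q(\operatorname{\mathsf{silt}}\mathcal{C})$; the claim that $\mathfrak{C}$ is ``connected'' is then tautological since a connected component of a quiver is connected by definition. The plan has two stages: first show the greatest element $\mathcal{A}$ lies in $\mathfrak{C}$, then show every silting subcategory is reachable from $\mathcal{A}$ by a path lying entirely inside $\mathfrak{C}$.

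For the first stage, I pick any $\mathcal{M}_{0}\in \mathfrak{C}$ and iteratively perform irreducible right mutations going upwards in the order $\geq$. Suppose I have constructed a strictly increasing chain $\mathcal{M}_{0}<\mathcal{M}_{1}<\cdots <\mathcal{M}_{i}$ with each $\mathcal{M}_{j}\in \mathfrak{C}$ and $\mathcal{M}_{i}\neq \mathcal{A}$. Then $\mathcal{A}>\mathcal{M}_{i}$, and the dual of Proposition \ref{prop-mut} supplies an irreducible right mutation $\mathcal{M}_{i+1}:=\mu^{R}_{X}(\mathcal{M}_{i})$ satisfying $\mathcal{M}_{i}<\mathcal{M}_{i+1}\leq \mathcal{A}$. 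By Theorem \ref{thm-comb} this yields an arrow $\mathcal{M}_{i+1}\to \mathcal{M}_{i}$ in $Q(\operatorname{\mathsf{silt}}\mathcal{C})$, and hence $\mathcal{M}_{i+1}\in \mathfrak{C}$. Because $\mathfrak{C}$ is finite and the chain is strictly increasing, the process terminates after finitely many steps, which can only happen when $\mathcal{M}_{k}=\mathcal{A}$. Thus $\mathcal{A}\in \mathfrak{C}$.

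For the second stage, let $\mathcal{N}$ be an arbitrary silting subcategory. Since $\mathcal{A}\geq \mathcal{N}$, I build a strictly decreasing chain $\mathcal{A}=\mathcal{L}_{0}>\mathcal{L}_{1}>\cdots$ of silting subcategories, all bounded below by $\mathcal{N}$, by iterated irreducible left mutation via Proposition \ref{prop-mut}. Each step $\mathcal{L}_{j}\to \mathcal{L}_{j+1}$ is an arrow of $Q(\operatorname{\mathsf{silt}}\mathcal{C})$ by Theorem \ref{thm-comb}, so every $\mathcal{L}_{j}$ lies in $\mathfrak{C}$. Finiteness of $\mathfrak{C}$ forces termination at some $\mathcal{L}_{k}$; if one had $\mathcal{L}_{k}>\mathcal{N}$ strictly, Proposition \ref{prop-mut} would yield a further irreducible left mutation, contradicting termination. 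Hence $\mathcal{L}_{k}=\mathcal{N}\in \mathfrak{C}$. Combining the two stages gives $\mathfrak{C}=Q(\operatorname{\mathsf{silt}}\mathcal{C})$ as required.

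The main technical obstacle is the hypothesis ``$\mathcal{M}\not\supseteq \mathcal{N}$'' in Proposition \ref{prop-mut}: to invoke mutation at each step I must rule out the possibility that the larger (respectively smaller) silting subcategory in the comparison is a proper superset of the other. The resolution uses Corollary \ref{cor-Grgroup}, which asserts that in the Hom-finite Krull--Schmidt setting any two silting objects have the same number of nonisomorphic indecomposable summands. If one silting subcategory were to properly contain another, the corresponding basic silting objects would have different summand counts, a contradiction. Thus whenever a strict inequality $\mathcal{M}>\mathcal{N}$ holds between silting subcategories, automatically $\mathcal{M}\not\supseteq \mathcal{N}$, and Proposition \ref{prop-mut} (and its dual) applies freely in both stages above.
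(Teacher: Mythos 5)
Your proposal is correct and follows essentially the same two-stage scheme as the paper's own proof: starting from an arbitrary $\mathcal{M}_{0}\in\mathfrak{C}$, climb up to $\mathcal{A}$ by iterated irreducible right mutation via the dual of Proposition \ref{prop-mut}, then descend from $\mathcal{A}$ to any given $\mathcal{N}$ by iterated irreducible left mutation via Proposition \ref{prop-mut}, using finiteness of $\mathfrak{C}$ to guarantee termination in both stages.

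The one place where you go beyond what the paper records is your explicit check of the side hypothesis $\mathcal{M}\not\supseteq\mathcal{N}$ in Proposition \ref{prop-mut}. It is a fair point to raise, but the justification you give is not quite the right citation: Corollary \ref{cor-Grgroup} is stated for silting \emph{objects}, i.e.\ it presupposes additive generators, whereas the standing hypotheses of Theorem \ref{thm-comb} and Corollary \ref{cor-mutend} only assume condition (F), which can hold without a silting object. Two repairs are available. The shortest is purely order-theoretic and needs no Krull--Schmidt or Grothendieck-group input: if $\mathcal{M},\mathcal{N}$ are both silting with $\mathcal{N}\subseteq\mathcal{M}$, then $\mathbb{E}^{k}(\mathcal{M},\mathcal{N})=0=\mathbb{E}^{k}(\mathcal{N},\mathcal{M})$ for all $k\geq 1$ because $\mathcal{M}$ is presilting, hence $\mathcal{M}\geq\mathcal{N}\geq\mathcal{M}$, and antisymmetry of $\geq$ (Proposition \ref{prop-at512}) forces $\mathcal{M}=\mathcal{N}$; so strict inequality between silting subcategories already rules out containment. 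Alternatively, if you do want a Grothendieck-group argument you should cite Theorem \ref{mainresult-Grgroup} (not Corollary \ref{cor-Grgroup}): $\operatorname{\mathsf{ind}}\mathcal{N}$ and $\operatorname{\mathsf{ind}}\mathcal{M}$ would both be bases of $K_{0}(\mathcal{C})$, and a proper subset of a basis of a free abelian group cannot itself be a basis. With either repair your argument is complete.
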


\begin{proof}
Let $\mathcal{M}\in \mathfrak{C}$.
We show $\mathcal{A}\in\mathfrak{C}$.
We may assume $\mathcal{M}\neq\mathcal{A}$.
Since $\mathcal{A}$ is the greatest element, we obtain $\mathcal{A}> \mathcal{M}$.
By a dual statement of Proposition \ref{prop-mut}, there exists an irreducible right mutation $\mathcal{M}_{1}$ of $\mathcal{M}$ such that $\mathcal{A}\geq \mathcal{M}_{1}>\mathcal{M}$. 
If $\mathcal{A}=\mathcal{M}_{1}$, then there is nothing to prove.
Thus we assume $\mathcal{A}\neq\mathcal{M}_{1}$.
Repeating this process, we have a sequence $(\mathcal{A}\geq)\cdots >\mathcal{M}_{2}>\mathcal{M}_{1}>\mathcal{M}_{0}:=\mathcal{M}$ of irreducible right mutations in $\mathfrak{C}$. 
Since $\mathfrak{C}$ is finite, there exists an integer $l\geq 1$ such that $\mathcal{A}=\mathcal{M}_{l}$.
Thus we obtain $\mathcal{A}\in \mathfrak{C}$.
Let $\mathcal{N}$ be an arbitrary silting subcategory satisfying $\mathcal{N}\neq\mathcal{A}$.
Since $\mathcal{A}$ is the greatest element, we have $\mathcal{A}>\mathcal{N}$.
Thus it follows from Proposition \ref{prop-mut} that there exists a sequence $\mathcal{A}=:\mathcal{A}_{0}> \mathcal{A}_{1}>\mathcal{A}_{2}>\cdots (\geq\mathcal{N})$ of irreducible left mutations in $\mathfrak{C}$. 
Since $\mathfrak{C}$ is finite, we have $\mathcal{N}=\mathcal{A}_{k}$ for some $k\geq 1$. 
Thus $\mathcal{N}$ belongs to $\mathfrak{C}$.
This completes the proof.
\end{proof}

\subsection{Bongartz completion}

Assume that $R$ is a field. Let $\mathcal{C}$ be an $R$-linear extriangulated category.
The aim of this subsection is to give sufficient conditions for presilting subcategories to be partial silting subcategories. 
For a silting subcategory $\mathcal{M}$ and a presilting subcategory $\mathcal{N}$, a subset $\nabla_{\mathcal{M}}(\mathcal{N})$ of $\operatorname{\mathsf{silt}}\mathcal{C}$ is defined as $\nabla_{\mathcal{M}}(\mathcal{N}):=\{\mathcal{L} \in \operatorname{\mathsf{silt}}\mathcal{C}\mid \mathcal{L}\geq \mathcal{N}, \mathcal{L} \subseteq \mathcal{M}^{\wedge}_{1}\}$. 

The following theorem is one of main results of this section, which is an extriangulated version of \cite[Proposition 2.14]{AM17}. 

\begin{theorem}\label{thm-BC1}
Let $\mathcal{C}$ be a Hom-finite Krull--Schmidt extriangulated category satisfying the condition \textnormal{(F)}. 
Let $\mathcal{M}$ be a silting subcategory and let $\mathcal{N}$ be a presilting subcategory with $\mathcal{N}\subseteq\mathcal{M}^{\wedge}_{l}$ for some $l\geq0$. 
Assume that for each silting subcategory $\mathcal{L}$ with $\mathcal{M}\geq \mathcal{L} \geq \mathcal{N}$, there exists a minimal element in $\nabla_{\mathcal{L}}(\mathcal{N})$. 
Then $\mathcal{N}$ is a partial silting subcategory. 
\end{theorem}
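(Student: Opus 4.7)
The plan is to proceed by induction on $l$.

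For the base case $l=0$, we have $\mathcal{N}\subseteq\mathcal{M}^{\wedge}_{0}=\operatorname{\mathsf{add}}\mathcal{M}=\mathcal{M}$, so $\mathcal{N}$ is contained in the silting subcategory $\mathcal{M}$ and is therefore partial silting.

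For the inductive step, I would apply the hypothesis to $\mathcal{L}=\mathcal{M}$ itself to extract a minimal element $\mathcal{L}$ of $\nabla_{\mathcal{M}}(\mathcal{N})$. The crux of the argument is the claim that $\mathcal{N}\subseteq\mathcal{L}^{\wedge}_{l-1}$; granting this, one checks that the hypothesis transports to the pair $(\mathcal{L},\mathcal{N})$ in depth $l-1$ (for any silting $\mathcal{K}$ with $\mathcal{L}\geq\mathcal{K}\geq\mathcal{N}$ the chain $\mathcal{M}\geq\mathcal{L}\geq\mathcal{K}\geq\mathcal{N}$ lets the original hypothesis supply a minimum of $\nabla_{\mathcal{K}}(\mathcal{N})$), and the inductive hypothesis then yields that $\mathcal{N}$ is partial silting.

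To establish the claim I would argue by contradiction. Suppose there is $N\in\mathcal{N}$ with $N\in\mathcal{L}^{\wedge}_{k}\setminus\mathcal{L}^{\wedge}_{k-1}$ for some $k\geq l$. Lemma \ref{lem-sconfls}(1) supplies the standard chain with $M^{N}_{k}\neq 0$, and Proposition \ref{prop-mut} produces, for any indecomposable direct summand $X$ of $M^{N}_{k}$, an irreducible left mutation $\mu_{X}(\mathcal{L})$ with $\mathcal{L}>\mu_{X}(\mathcal{L})\geq\mathcal{N}$. A contradiction with the minimality of $\mathcal{L}$ then follows once we show $\mu_{X}(\mathcal{L})\in\nabla_{\mathcal{M}}(\mathcal{N})$, i.e., $\mu_{X}(\mathcal{L})\subseteq\mathcal{M}^{\wedge}_{1}$. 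Since $\mathcal{L}_{X}\subseteq\mathcal{L}\subseteq\mathcal{M}^{\wedge}_{1}$, this reduces via Lemma \ref{lem-wedge_perp}(2) to verifying that the mutation cone $C_{X}$, coming from the left $\mathcal{L}_{X}$-approximation conflation $X\to L'\to C_{X}$, lies in $\mathcal{M}^{\wedge}\cap {}^{\perp_{>1}}\mathcal{M}$; membership in $\mathcal{M}^{\wedge}$ is immediate from Lemma \ref{lem-psilt_wedge}(4).

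The main obstacle is checking $C_{X}\in {}^{\perp_{>1}}\mathcal{M}$. The long exact sequence attached to $X\to L'\to C_{X}$ combined with $X,L'\in\mathcal{L}\subseteq\mathcal{M}^{\wedge}_{1}$ yields $\mathbb{E}^{j}(C_{X},\mathcal{M})=0$ for all $j\geq 3$ without effort; the case $j=2$ reduces to showing $\mathbb{E}^{1}(X,\mathcal{M})=0$, which does \emph{not} follow formally from the bare membership $X\in\mathcal{M}^{\wedge}_{1}$. The expected route is to exploit the precise construction of $X$ as a summand of $M^{N}_{k}$ in the minimal right $\mathcal{L}$-approximation chain for $N\in\mathcal{N}\subseteq\mathcal{M}^{\wedge}_{l}$, combined with the bound $k\geq l$, to propagate the Ext-vanishing of $N$ against $\mathcal{M}$ down the chain to $X$; Lemma \ref{lem-sconfls}(2) and Lemma \ref{lem-wedge_perp} should be the key tools in closing this gap, and this depth-matching between the $\mathcal{L}$-resolution length of $N$ and the assumed $l$-th bound on $\mathcal{N}$ is the technical heart of the argument.
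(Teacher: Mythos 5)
Your overall skeleton is the same as the paper's: induct on $l$, extract a minimal element $\mathcal{L}$ of $\nabla_{\mathcal{M}}(\mathcal{N})$, reduce the depth by one, and transport the hypothesis down the chain $\mathcal{M}\geq\mathcal{L}\geq\cdots$. Your claim that $\mathcal{N}\subseteq\mathcal{L}^{\wedge}_{l-1}$ is exactly the paper's Lemma \ref{lem-316}, and your sketch of its proof by contradiction via Proposition \ref{prop-mut} is headed in the right direction. The transport step is fine (since $\mathcal{L}\subseteq\mathcal{M}^{\wedge}_1\subseteq\mathcal{M}^{\wedge}$ gives $\mathcal{M}\geq\mathcal{L}$ by Lemma \ref{lem-order_wedge}(1)). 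One small omission: you should first establish $\mathcal{N}\subseteq\mathcal{L}^{\wedge}_{l}$ (Lemmas \ref{lem-wedge_perp}(2), \ref{lem-order_wedge}(1)) to pin down $k=l$ rather than allowing $k\geq l$; the exact value $k=l$ is later needed to match $\mathbb{E}^{l+1}(N,\mathcal{M})=0$ with the resolution length.

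The genuine gap is in your treatment of the $j=2$ case. You reduce it to showing $\mathbb{E}(X,\mathcal{M})=0$, but that is not the correct reduction and is in fact almost certainly false: from $X\in\mathcal{L}\subseteq\mathcal{M}^{\wedge}_1$ one only gets $\mathbb{E}^{j}(X,\mathcal{M})=0$ for $j\geq 2$, and since $\mathcal{M}>\mathcal{L}$ strictly, $\mathbb{E}(\mathcal{L},\mathcal{M})\neq 0$ in general. The long exact sequence
\begin{align}
\mathbb{E}(Q',M)\xrightarrow{\mathbb{E}(a,M)}\mathbb{E}(X,M)\longrightarrow\mathbb{E}^{2}(C_{X},M)\longrightarrow\mathbb{E}^{2}(Q',M)=0\notag
\end{align}
shows that what is actually needed is surjectivity of $\mathbb{E}(a,M)$, where $a:X\to Q'$ is the left $\mathcal{L}_{X}$-approximation; vanishing of $\mathbb{E}(X,M)$ is a sufficient but unattainable condition. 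The paper achieves the surjectivity by a factorization argument: use Lemma \ref{lem-order_wedge}(3) to produce a co-resolution $M\to Q_0\to Q_1\overset{\delta}{\dashrightarrow}$ with $Q_0,Q_1\in\mathcal{L}$, realize any $\eta\in\mathbb{E}(X,M)$ as $b^{\ast}\delta$ with $b\in\mathcal{C}(X,Q_1)$ (using that $\mathcal{L}$ is silting so $\mathbb{E}(X,Q_0)=0$), then prove $Q_1\in\mathcal{L}_X$ by a radical computation $\mathcal{C}(Q^{N}_l,Q_1)=\mathrm{rad}_{\mathcal{C}}(Q^{N}_l,Q_1)$ — this is where the depth-$l$ resolution and $\mathbb{E}^{l+1}(N,\mathcal{M})=0$ enter — and finally factor $b=ca$ through the approximation to get $\eta=a^{\ast}(c^{\ast}\delta)$. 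This is not a matter of "propagating Ext-vanishing down the chain"; the propagation argument you suggest gives only surjectivity of $\mathbb{E}(Q^{N}_{l-1},M)\to\mathbb{E}(Q^{N}_l,M)$, not vanishing of the target, and cannot close your intended reduction.
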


To show Theorem \ref{thm-BC1}, we need the following lemma. 

\begin{lemma}\label{lem-316}
Let $\mathcal{M}$ be a silting subcategory of $\mathcal{C}$ and let $\mathcal{N}$ be a presilting subcategory of $\mathcal{C}$ with $\mathcal{N}\subseteq \mathcal{M}^{\wedge}_{l}$ for some $l\geq 1$. 
Assume that $\nabla_{\mathcal{M}}(\mathcal{N})$ has a minimal element $\mathcal{Q}$.
Then $\mathcal{N}\subseteq \mathcal{Q}^{\wedge}_{l-1}$.
\end{lemma}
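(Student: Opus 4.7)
The plan is to prove the lemma by contradiction, using Proposition \ref{prop-mut} to manufacture a silting subcategory inside $\nabla_{\mathcal{M}}(\mathcal{N})$ that is strictly smaller than $\mathcal{Q}$.

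\textbf{Setup.} Since $\mathcal{Q}\geq\mathcal{N}$, Lemma \ref{lem-order_wedge}(1) gives $\mathcal{N}\subseteq\mathcal{Q}^{\wedge}$. I will suppose for contradiction that $\mathcal{N}\not\subseteq\mathcal{Q}^{\wedge}_{l-1}$. Then I can pick $N\in\mathcal{N}$ and an integer $k\geq l\geq 1$ with $N\in\mathcal{Q}^{\wedge}_{k}\setminus\mathcal{Q}^{\wedge}_{k-1}$. In particular $\mathcal{Q}\not\supseteq\mathcal{N}$, so the hypotheses of Proposition \ref{prop-mut} are satisfied with respect to the pair $(\mathcal{Q},\mathcal{N})$.

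\textbf{Construction of the mutation.} Applying Lemma \ref{lem-sconfls}(1) to $N$ and $\mathcal{Q}$, I get a minimal resolution by right $\mathcal{Q}$-approximations with $M^{N}_{k}\neq 0$. Choose any indecomposable summand $X$ of $M^{N}_{k}$. By Proposition \ref{prop-mut}(2), $\mathcal{Q}_{X}$ is a good covariantly finite subcategory of $\mathcal{Q}$, so the irreducible left mutation $\mathcal{Q}':=\mu_{X}(\mathcal{Q})$ is defined, and by Proposition \ref{prop-mut}(3) satisfies
\[
\mathcal{Q}\;>\;\mathcal{Q}'\;\geq\;\mathcal{N}.
\]
By Definition \ref{def-mut}, $\mathcal{Q}'=\operatorname{\mathsf{add}}(\mathcal{Q}_{X}\cup\{C\})$ where $C$ sits in an $\mathfrak{s}$-conflation $X\xrightarrow{\alpha}M'\to C\dashrightarrow$ with $\alpha$ a left $\mathcal{Q}_{X}$-approximation and $M'\in\mathcal{Q}_{X}$.

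\textbf{The key step: $\mathcal{Q}'\subseteq\mathcal{M}^{\wedge}_{1}$.} This is where the assumption $\mathcal{Q}\subseteq\mathcal{M}^{\wedge}_{1}$ is used. Since $\mathcal{Q}_{X}\subseteq\mathcal{Q}\subseteq\mathcal{M}^{\wedge}_{1}$, the only thing to check is $C\in\mathcal{M}^{\wedge}_{1}$. By Lemma \ref{lem-wedge_perp}(2), $\mathcal{M}^{\wedge}_{1}=\mathcal{M}^{\wedge}\cap{}^{\perp_{>1}}\mathcal{M}$, so I verify both parts. Since $X,M'\in\mathcal{M}^{\wedge}$ and $\mathcal{M}^{\wedge}$ is closed under cones (Lemma \ref{lem-psilt_wedge}(4)), $C\in\mathcal{M}^{\wedge}$. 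For the vanishing part, apply $\mathcal{C}(-,\mathcal{M})$ to $X\to M'\to C\dashrightarrow$ and use Proposition \ref{prop_longex}(2) to obtain, for each $i\geq 2$, an exact sequence
\[
\mathbb{E}^{i-1}(X,\mathcal{M})\to\mathbb{E}^{i}(C,\mathcal{M})\to\mathbb{E}^{i}(M',\mathcal{M}),
\]
whose outer terms vanish because $X,M'\in\mathcal{M}^{\wedge}_{1}\subseteq{}^{\perp_{>1}}\mathcal{M}$. Hence $C\in{}^{\perp_{>1}}\mathcal{M}$, giving $C\in\mathcal{M}^{\wedge}_{1}$.

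\textbf{Contradiction.} Combining the above, $\mathcal{Q}'\in\nabla_{\mathcal{M}}(\mathcal{N})$ and $\mathcal{Q}'<\mathcal{Q}$, contradicting minimality of $\mathcal{Q}$. The main (and essentially only) obstacle is the verification $C\in\mathcal{M}^{\wedge}_{1}$; everything else is routine assembly of the results already proved, and Lemma \ref{lem-wedge_perp}(2) is exactly what reduces this verification to a cone-closure statement plus a one-step long exact sequence.
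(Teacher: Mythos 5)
Your overall strategy — argue by contradiction, use Proposition \ref{prop-mut} to mutate $\mathcal{Q}$ at an indecomposable summand $X$ of the last term $Q^{N}_{l}$ of the minimal $\mathcal{Q}$-resolution of the offending $N\in\mathcal{N}$, and then show the resulting silting subcategory lies in $\nabla_{\mathcal{M}}(\mathcal{N})$ — is the same as the paper's, and the reduction via Lemma \ref{lem-wedge_perp}(2) of the inclusion $\mathcal{Q}'\subseteq\mathcal{M}^{\wedge}_{1}$ to the vanishing condition $\mathcal{Q}'\subseteq{}^{\perp_{>1}}\mathcal{M}$ is also correct. However, your ``key step'' contains a genuine error precisely at the case $i=2$.

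You write that in the exact sequence
\begin{align}
\mathbb{E}^{i-1}(X,\mathcal{M})\rightarrow\mathbb{E}^{i}(C,\mathcal{M})\rightarrow\mathbb{E}^{i}(M',\mathcal{M})\notag
\end{align}
both outer terms vanish because $X,M'\in{}^{\perp_{>1}}\mathcal{M}$. For $i\geq 3$ this is fine, since then $i-1\geq 2$. But for $i=2$ the left-hand term is $\mathbb{E}^{1}(X,\mathcal{M})=\mathbb{E}(X,\mathcal{M})$, and membership in ${}^{\perp_{>1}}\mathcal{M}$ only controls $\mathbb{E}^{k}$ for $k\geq 2$ — it says nothing about $\mathbb{E}^{1}$. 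In fact $\mathbb{E}(X,\mathcal{M})$ is typically nonzero: by Lemma \ref{lem-wedge_perp}(2), $X\in\mathcal{M}^{\wedge}$ with $\mathbb{E}(X,\mathcal{M})=0$ would force $X\in\mathcal{M}$, which certainly need not hold for an indecomposable object of $\mathcal{Q}$. So the displayed sequence does not give $\mathbb{E}^{2}(C,\mathcal{M})=0$.

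This missing case is exactly where the paper does the hard work. To show $\mathbb{E}^{2}(C,\mathcal{M})=0$, it suffices (by the long exact sequence) to prove $\mathbb{E}(\alpha,M)\colon\mathbb{E}(M',M)\to\mathbb{E}(X,M)$ is surjective for each $M\in\mathcal{M}$. The paper accomplishes this by a nontrivial chase: from $\mathcal{Q}\subseteq\mathcal{M}^{\wedge}_{1}$ one gets $\mathcal{M}\subseteq\mathcal{Q}^{\vee}_{1}$ (Lemma \ref{lem-order_wedge}(3)), giving an $\mathfrak{s}$-conflation $M\to Q_{0}\to Q_{1}\overset{\delta}{\dashrightarrow}$; a given $\eta\in\mathbb{E}(X,M)$ lifts to $b\in\mathcal{C}(X,Q_{1})$ with $b^{\ast}\delta=\eta$; then one shows $Q_{1}\in\mathcal{Q}_{X}$ by proving $\mathcal{C}(Q^{N}_{l},Q_{1})$ is contained in the Jacobson radical (this uses the minimal $\mathcal{Q}$-resolution of $N$ and the vanishing $\mathbb{E}^{l+1}(N,M)=0$, which comes from $\mathcal{N}\subseteq\mathcal{M}^{\wedge}_{l}$); finally $b$ factors through the left $\mathcal{Q}_{X}$-approximation $\alpha$, lifting $\eta$. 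None of this appears in your proposal, so as written your argument does not establish $\mathcal{Q}'\subseteq\mathcal{M}^{\wedge}_{1}$ and the claimed contradiction is not obtained.
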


\begin{proof}
By Lemmas \ref{lem-wedge_perp}(2) and \ref{lem-order_wedge}(1), we obtain 
\begin{align}
\mathcal{N}\subseteq \mathcal{Q}^{\wedge}\cap {}^{\perp_{>l}}\mathcal{Q}=\mathcal{Q}^{\wedge}_{l}.\notag
\end{align}
Suppose to the contrary that $\mathcal{N}\not\subseteq \mathcal{Q}^{\wedge}_{l-1}$.
Then there exists an object $N\in\mathcal{N}$ such that it is in $\mathcal{Q}^{\wedge}_{l}$ but not in $\mathcal{Q}^{\wedge}_{l-1}$.
By Lemma \ref{lem-sconfls}(1), there exists an $\mathfrak{s}$-conflation 
\begin{align}\label{seq-2015}
Q^{N}_{l}\xrightarrow{g_{l}^{N}} Q^{N}_{l-1}\xrightarrow{f_{l-1}^{N}} N_{l-1}\dashrightarrow
\end{align}
such that $Q_{l}^{N}\neq0$ and $g_{l}^{N}$ is in the Jacobson radical of $\mathcal{C}$.
Let $X$ be an indecomposable direct summand of $Q^{N}_{l}$.
By Proposition \ref{prop-mut}, there exists an $\mathfrak{s}$-conflation
\begin{align}\label{seq-2207}
X\xrightarrow{a}Q'\rightarrow Y\dashrightarrow 
\end{align}
such that $a$ is a left $\mathcal{Q}_{X}$-approximation of $X$ and $\mathcal{R}:=\mu_{X}(\mathcal{Q})=\operatorname{\mathsf{add}}(\mathcal{Q}_{X}\cup\{Y\})$ is a silting subcategory of $\mathcal{C}$ with $\mathcal{Q}>\mathcal{R}\geq \mathcal{N}$.
We show $\mathcal{R}\in\nabla_{\mathcal{M}}(\mathcal{N})$, or equivalently $\mathcal{R}\subseteq \mathcal{M}^{\wedge}_{1}$.
By Lemma \ref{lem-wedge_perp}(2), it is enough to prove $\mathcal{R}\subseteq {}^{\perp_{>1}}\mathcal{M}$.
Let $i\geq 2$.
Applying $\mathcal{C}(-,\mathcal{M})$ to \eqref{seq-2207} induces an exact sequence
\begin{align}
\mathbb{E}^{i-1}(Q',\mathcal{M})\xrightarrow{\mathbb{E}^{i-1}(a,\mathcal{M})}\mathbb{E}^{i-1}(X,\mathcal{M})\rightarrow\mathbb{E}^{i}(Y,\mathcal{M})\rightarrow \mathbb{E}^{i}(Q',\mathcal{M})=0,\notag
\end{align}
where the last equality follows from Lemma \ref{lem-wedge_perp}(2).
If $i\geq 3$, then $\mathbb{E}^{i-1}(X,\mathcal{M})$ also vanishes by Lemma \ref{lem-wedge_perp}(2).
Thus we obtain $\mathbb{E}^{i}(Y,\mathcal{M})=0$ for all $i\geq 3$.
Assume $i=2$. 
We show that $\mathbb{E}(a,M)$ is surjective for each $M\in\mathcal{M}$.
Let $\eta\in \mathbb{E}(X,M)$.
By Lemma \ref{lem-order_wedge}(3), $\mathcal{Q}\subseteq\mathcal{M}^{\wedge}_{1}$ implies $\mathcal{M}\subseteq\mathcal{Q}^{\vee}_{1}$.
Thus there exists an $\mathfrak{s}$-conflation 
\begin{align}\label{seq-2232}
M\xrightarrow{f}Q_{0}\xrightarrow{g}Q_{1}\overset{\delta}{\dashrightarrow}
\end{align}
such that $Q_{0}, Q_{1}\in\mathcal{Q}$ and $g$ is in the Jacobson radical of $\mathcal{C}$.
Applying $\mathcal{C}(X,-)$ to \eqref{seq-2232} induces an exact sequence
\begin{align}
\mathcal{C}(X,Q_{1}) \rightarrow \mathbb{E}(X,M)\rightarrow \mathbb{E}(X,Q_{0})=0, \notag
\end{align}
where the last equality follows from the assumption that $\mathcal{Q}$ is silting.
Thus we have a morphism $b\in \mathcal{C}(X,Q_{1})$ such that $b^{\ast}\delta=\eta$.
We claim $Q_{1}\in \mathcal{Q}_{X}$. By $X\in \operatorname{\mathsf{add}}Q^{N}_{l}$, it is enough to show $\operatorname{\mathsf{add}}Q_{1}\cap \operatorname{\mathsf{add}}Q^{N}_{l}=\{0\}$.
Let $\alpha\in \mathcal{C}(Q^{N}_{l},Q_{1})$.
Applying $\mathcal{C}(-,M)$ to \eqref{seq-2015}, we have an exact sequence
\begin{align}
\mathbb{E}(Q^{N}_{l-1},M)\xrightarrow{\mathbb{E}(g^{N}_{l},M)}\mathbb{E}(Q^{N}_{l},M)\rightarrow\mathbb{E}^{2}(N_{l-1},M).\notag
\end{align}
By Lemma \ref{lem-wedge_perp}(2), we have isomorphisms
\begin{align}
\mathbb{E}^{2}(N_{l-1},M)\cong \cdots \cong \mathbb{E}^{l+1}(N,M)=0. \notag 
\end{align}
Thus there exists $\theta \in \mathbb{E}(Q^{N}_{l-1}, M)$ such that $\alpha^{\ast}\delta=\mathbb{E}(g^{N}_{l},M)(\theta)=(g^{N}_{l})^{\ast}\theta$.
Applying $\mathcal{C}(Q^{N}_{l-1},-)$ to \eqref{seq-2232} induces an exact sequence
\begin{align}
\mathcal{C}(Q^{N}_{l-1},Q_{1})\rightarrow \mathbb{E}(Q^{N}_{l-1},M)\rightarrow\mathbb{E}(Q^{N}_{l-1},Q_{0})=0, \notag
\end{align}
where the last equality follows from the assumption that $\mathcal{Q}$ is silting.
Therefore we have $\beta\in \mathcal{C}(Q^{N}_{l-1},Q_{1})$ with $\beta^{\ast}\delta=\theta$.
By $(\alpha-\beta g^{N}_{l})^{\ast}\delta=\alpha^{\ast}\delta-(g^{N}_{l})^{\ast}(\beta^{\ast}\delta)=\alpha^{\ast}\delta-(g^{N}_{l})^{\ast}\theta=0$, there exists $\beta'\in \mathcal{C}(Q^{N}_{l},Q_{0})$ such that $\alpha-\beta g^{N}_{l}=g\beta'$.
Since $g$ and $g^{N}_{l}$ are in the Jacobson radical of $\mathcal{C}$, we have $\alpha=\beta g^{N}_{l}+g\beta'\in \mathrm{rad}_{\mathcal{C}}(Q^{N}_{l},Q_{1})$, and hence $\mathcal{C}(Q^{N}_{l},Q_{1})=\mathrm{rad}_{\mathcal{C}}(Q^{N}_{l},Q_{1})$.
This implies $\operatorname{\mathsf{add}}Q^{N}_{l}\cap \operatorname{\mathsf{add}}Q_{1}=\{ 0\}$.
Hence $Q_{1}\in \mathcal{Q}_{X}$.
Since $a$ is a left $\mathcal{Q}_{X}$-approximation, there exists $c\in \mathcal{C}(Q',Q_{1})$ such that $b=ca$.
Thus we have $c^{\ast}\delta\in \mathbb{E}(Q',M)$ and $\mathbb{E}(a,M)(c^{\ast}\delta)=a^{\ast}(c^{\ast}\delta)=(ca)^{\ast}\delta=b^{\ast}\delta=\eta$. This implies that $\mathbb{E}(a,M)$ is surjective.
Hence $\mathcal{R}\in \nabla_{\mathcal{M}}(\mathcal{N})$.
This contradicts that $\mathcal{Q}$ is a minimal element in $\nabla_\mathcal{M}(\mathcal{N})$.
Therefore we obtain $\mathcal{N}\subseteq\mathcal{Q}^{\wedge}_{l-1}$.
\end{proof}

We are ready to prove Theorem \ref{thm-BC1}. 

\begin{proof}[Proof of Theorem \ref{thm-BC1}]
If $l=0$, then this is clear. 
Assume $l\neq 0$.  
Let $\mathcal{M}_{1}$ be a minimal element in $\nabla_{\mathcal{M}}(\mathcal{N})$. 
By Lemma \ref{lem-316}, we have $\mathcal{N}\subseteq(\mathcal{M}_{1})_{l-1}^{\wedge}$. 
By repeating this process, we obtain silting subcategories $\mathcal{M}\geq \mathcal{M}_{1}\geq \cdots \geq \mathcal{M}_{l}$ with $\mathcal{N}\subseteq(\mathcal{M}_{i})^{\wedge}_{l-i}$.
Hence $\mathcal{N}\subseteq(\mathcal{M}_{l})^{\wedge}_{0}=\mathcal{M}_{l}$. 
This completes the proof.
\end{proof}

By Theorem \ref{thm-BC1}, we have the following result. 

\begin{corollary}\label{cor-BC1}
Let $\mathcal{M}$ be a silting subcategory of $\mathcal{C}$ and let $\mathcal{N}$ be a presilting subcategory of $\mathcal{C}$ with $\mathcal{M}> \mathcal{N}$. 
If there exist only finitely many silting subcategories $\mathcal{L}$ with $\mathcal{M}\geq \mathcal{L} \geq \mathcal{N}$, then $\mathcal{N}$ is a partial silting subcategory. 
\end{corollary}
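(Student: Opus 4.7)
The plan is to derive the corollary as a direct consequence of Theorem~\ref{thm-BC1}, so I need to verify its two hypotheses for the pair $(\mathcal{M},\mathcal{N})$: (i) $\mathcal{N}\subseteq\mathcal{M}^{\wedge}_{l}$ for some $l\geq 0$, and (ii) for every silting subcategory $\mathcal{L}$ with $\mathcal{M}\geq\mathcal{L}\geq\mathcal{N}$, the set $\nabla_{\mathcal{L}}(\mathcal{N})$ admits a minimal element.

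Hypothesis (ii) should follow almost immediately from the finiteness assumption. For such an $\mathcal{L}$, any $\mathcal{K}\in\nabla_{\mathcal{L}}(\mathcal{N})$ lies in $\mathcal{L}^{\wedge}_{1}\subseteq\mathcal{L}^{\wedge}$, and since $\mathcal{M}\geq\mathcal{L}$ gives $\mathcal{L}^{\wedge}\subseteq\mathcal{M}^{\wedge}$ by Proposition~\ref{prop-at512}, Lemma~\ref{lem-order_wedge}(1) yields $\mathcal{M}\geq\mathcal{K}$. Thus $\nabla_{\mathcal{L}}(\mathcal{N})$ is a subset of the assumed finite set $\{\mathcal{L}'\in\operatorname{\mathsf{silt}}\mathcal{C}\mid \mathcal{M}\geq\mathcal{L}'\geq\mathcal{N}\}$, and since $\mathcal{L}$ itself is a member it is nonempty; any nonempty finite poset has a minimal element.

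For hypothesis (i), the idea is to iterate irreducible left mutation. Starting from $\mathcal{M}_{0}:=\mathcal{M}$, whenever $\mathcal{M}_{i}\not\supseteq\mathcal{N}$ (so $\mathcal{M}_{i}>\mathcal{N}$), Proposition~\ref{prop-mut} produces an irreducible left mutation $\mathcal{M}_{i+1}:=\mu_{X_{i}}(\mathcal{M}_{i})$ with $\mathcal{M}_{i}>\mathcal{M}_{i+1}\geq\mathcal{N}$; by the construction of left mutation, $\mathcal{M}_{i+1}\subseteq(\mathcal{M}_{i})^{\wedge}_{1}$. Combining this with the general inclusion $\operatorname{\mathsf{cone}}(\mathcal{M}^{\wedge}_{i},\mathcal{M}^{\wedge}_{i})\subseteq\mathcal{M}^{\wedge}_{i+1}$, which comes from Lemma~\ref{lem-psilt_wedge}(4), Lemma~\ref{lem-wedge_perp}(2) and the long exact sequence of Proposition~\ref{prop_longex}, an induction gives $\mathcal{M}_{i}\subseteq\mathcal{M}^{\wedge}_{i}$. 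Every $\mathcal{M}_{i}$ belongs to the finite set above, so the strictly decreasing chain $\mathcal{M}_{0}>\mathcal{M}_{1}>\cdots$ must stop at some $\mathcal{M}_{l}$ with $\mathcal{N}\subseteq\mathcal{M}_{l}\subseteq\mathcal{M}^{\wedge}_{l}$, which is exactly (i). Applying Theorem~\ref{thm-BC1} then yields that $\mathcal{N}$ is partial silting.

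The main obstacle is the bookkeeping in (i): one has to argue that each round of irreducible left mutation increases the $\mathcal{M}^{\wedge}$-depth by at most one, so that the finiteness of the poset of intermediate silting subcategories really forces a \emph{uniform} bound $l$ that works simultaneously for every object of $\mathcal{N}$, rather than only an object-by-object bound $l_{N}$ (which would be insufficient to invoke Theorem~\ref{thm-BC1}).
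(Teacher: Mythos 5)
Your proposal is correct, and it is actually more careful than the paper's own proof, which only verifies one of the two hypotheses of Theorem~\ref{thm-BC1}. The paper observes that for each silting $\mathcal{L}$ with $\mathcal{M}\geq\mathcal{L}\geq\mathcal{N}$, the set $\nabla_{\mathcal{L}}(\mathcal{N})$ is nonempty and contained in the finite set $\{\mathcal{L}'\in\operatorname{\mathsf{silt}}\mathcal{C}\mid\mathcal{M}\geq\mathcal{L}'\geq\mathcal{N}\}$, hence has a minimal element, and then cites Theorem~\ref{thm-BC1}; it does not explicitly address the hypothesis $\mathcal{N}\subseteq\mathcal{M}^{\wedge}_{l}$ for a \emph{uniform} $l$, which is genuinely needed for the inductive proof of Theorem~\ref{thm-BC1} and is not automatic from $\mathcal{M}>\mathcal{N}$ alone (that only gives $\mathcal{N}\subseteq\mathcal{M}^{\wedge}=\bigcup_{n}\mathcal{M}^{\wedge}_{n}$, i.e.\ an object-by-object bound). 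You correctly identify this as the nontrivial point and supply a clean argument via iterated irreducible left mutation, essentially running the argument of Corollary~\ref{cor-finsilt} with $\mathcal{N}$ only presilting; the bookkeeping $\mathcal{M}_{i+1}\subseteq(\mathcal{M}_{i})^{\wedge}_{1}$ and $\operatorname{\mathsf{cone}}(\mathcal{M}^{\wedge}_{i},\mathcal{M}^{\wedge}_{i})\subseteq\mathcal{M}^{\wedge}_{i+1}$ (via Lemma~\ref{lem-wedge_perp}(2) and the long exact sequences) is sound, and termination of the strictly decreasing chain does follow from the finiteness hypothesis. One remark: once your chain terminates at some $\mathcal{M}_{l}$ with $\mathcal{N}\subseteq\mathcal{M}_{l}$ and $\mathcal{M}_{l}$ silting, you are already done---$\mathcal{N}$ is contained in a silting subcategory, which is the definition of partial silting---so the final invocation of Theorem~\ref{thm-BC1} is superfluous. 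In other words, your verification of hypothesis~(i) actually bypasses Theorem~\ref{thm-BC1} entirely, whereas the paper's intent was evidently to leverage it directly; your route is longer but self-contained and arguably closes a small gap that the paper glosses over.
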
 

\begin{proof}
For each silting subcategory $\mathcal{L}$ satisfying $\mathcal{M}\geq \mathcal{L}\geq\mathcal{N}$, the set $\nabla_{\mathcal{L}}(\mathcal{N})$ is finite by our assumption. Hence $\nabla_{\mathcal{L}}(\mathcal{N})$ has a minimal element. By Theorem \ref{thm-BC1}, we obtain the assertion. 
\end{proof}

Taking universal coextensions, we provide another sufficient condition for presilting objects to be partial silting objects.
Recall the definition of universal coextensions.
Let $M,N$ be objects in $\mathcal{C}$.
Assume that the vector space $\mathbb{E}(N,M)\neq 0$ is finite dimensional and let $d:=\dim\mathbb{E}(N,M)$. 
Take a basis $\delta_{1},\delta_{2}, \ldots ,\delta_{d}$ of $\mathbb{E}(N,M)$.
Then we have an $\mathfrak{s}$-conflation $M\xrightarrow{f_{i}}F_{i}\xrightarrow{g_{i}}N\overset{\delta_{i}}{\dashrightarrow}$ for each $i \in [1, d]$.
It follows from \cite[Corollary 3.16]{NP19} that $\small\left[\begin{smallmatrix}\mathrm{id}&\cdots&\mathrm{id}\end{smallmatrix}\right]: M^{\oplus d}\rightarrow M$ is an $\mathfrak{s}$-deflation.
Thus, by (ET4), we have a commutative diagram of $\mathfrak{s}$-conflations
\begin{align}
\xymatrix{
M^{\oplus d}\ar[r]^-{\oplus f_{i}}\ar[d]_-{\small\left[\begin{smallmatrix}\mathrm{id}&\cdots&\mathrm{id}\end{smallmatrix}\right]}&\oplus_{i=1}^{d}F_{i}\ar[r]^-{\oplus g_{i}}\ar[d]&N^{\oplus d}\ar@{=}[d]\ar@{-->}[r]^{\oplus \delta_{i}}&\;\\
M\ar[r]^-{f}&F\ar[r]^-{g}&N^{\oplus d}\ar@{-->}[r]^{\delta}&.
}\notag
\end{align}
Then we call the $\mathfrak{s}$-conflation $M\xrightarrow{f} F\xrightarrow{g} N^{\oplus d}\overset{\delta}\dashrightarrow$ a \emph{universal coextension of $M$ by $N$}.
By a construction of universal coextensions, $(\delta_{\sharp})_{N}: \mathcal{C}(N,N^{\oplus d})\to \mathbb{E}(N,M)$ is surjective.
The following theorem is an analog of Bongartz's lemma (\cite{B81}). 

\begin{theorem}\label{thm-315}
Let $M$ be a silting object of $\mathcal{C}$ and $N$ a presilting object of $\mathcal{C}$.
Assume that the vector space $\mathbb{E}(N,M)$ is finite dimensional. 
If $N\in (\operatorname{\mathsf{add}}M)^{\wedge}_{1}$, then $N$ is partial silting.
\end{theorem}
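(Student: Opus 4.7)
The plan is to build a silting object having $N$ as a direct summand, following the Bongartz strategy via a universal coextension. Put $d:=\dim\mathbb{E}(N,M)$ and form the universal coextension $M\xrightarrow{f}F\xrightarrow{g}N^{\oplus d}\overset{\delta}{\dashrightarrow}$ defined just before the statement of the theorem. I claim that $S:=N\oplus F$ is a silting object of $\mathcal{C}$; since $N\in\operatorname{\mathsf{add}} S$, this immediately yields the theorem.

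The argument rests on three preliminary observations. First, by Lemma \ref{lem-psilt_wedge}(4) the hypothesis $N\in(\operatorname{\mathsf{add}} M)^{\wedge}$ yields $N\in(\operatorname{\mathsf{add}} M)^{\perp}$, so $\mathbb{E}^{k}(M,N)=0$ for every $k\geq 1$. Secondly, picking an $\mathfrak{s}$-conflation $M_{1}\to M_{0}\to N\dashrightarrow$ witnessing $N\in(\operatorname{\mathsf{add}} M)^{\wedge}_{1}$, applying $\mathcal{C}(-,M)$, and using $\mathbb{E}^{k}(M,M)=0$ for all $k\geq 1$ (since $M$ is silting) gives $\mathbb{E}^{k}(N,M)=0$ for $k\geq 2$. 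Thirdly, by the very construction of the universal coextension the connecting map $\mathcal{C}(N,N^{\oplus d})\to\mathbb{E}(N,M)$ is surjective.

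I would then verify $\mathbb{E}^{k}(S,S)=0$ for every $k\geq 1$ by handling the four summands of $\mathbb{E}^{k}(N\oplus F,N\oplus F)$ via the long exact sequences of Proposition \ref{prop_longex} applied to the universal coextension. The term $\mathbb{E}^{k}(N,N)$ vanishes since $N$ is presilting. For $\mathbb{E}^{k}(N,F)$, the long exact sequence of $\mathcal{C}(N,-)$ handles the $k=1$ case using surjectivity of the connecting map together with $\mathbb{E}(N,N)=0$, and the $k\geq 2$ case using $\mathbb{E}^{k}(N,M)=0$. The term $\mathbb{E}^{k}(F,N)$ is treated dually via $\mathcal{C}(-,N)$, reducing through $\mathbb{E}^{k}(N,N)=0$ to the vanishing of $\mathbb{E}^{k}(M,N)$ for all $k\geq 1$. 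For $\mathbb{E}^{k}(F,F)$, applying $\mathcal{C}(-,F)$ together with the just-proven vanishing of $\mathbb{E}^{k}(N,F)$ yields $\mathbb{E}^{k}(F,F)\cong\mathbb{E}^{k}(M,F)$, and a further application of $\mathcal{C}(M,-)$ to the universal coextension squeezes $\mathbb{E}^{k}(M,F)$ between $\mathbb{E}^{k}(M,M)=0$ and $\mathbb{E}^{k}(M,N^{\oplus d})=0$.

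Finally, the thick-generation condition $\mathcal{C}=\operatorname{\mathsf{thick}} S$ is immediate: since $M$ is silting we have $\mathcal{C}=\operatorname{\mathsf{thick}}(\operatorname{\mathsf{add}} M)$, while the universal coextension exhibits $M$ as $\mathrm{Cocone}(g)$, so $M\in\operatorname{\mathsf{thick}} S$ by closedness of thick subcategories under cocones. The main technical load lies in the bookkeeping across the four long exact sequences; the key inputs making everything fit are the dual pair of facts $\mathbb{E}^{k}(M,N)=0$ for all $k\geq 1$ (from $N\in(\operatorname{\mathsf{add}} M)^{\wedge}$) and the surjectivity of $\mathcal{C}(N,N^{\oplus d})\to\mathbb{E}(N,M)$ built into the universal coextension.
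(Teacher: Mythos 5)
Your argument is correct and matches the paper's proof essentially step for step: form the universal coextension, set $T=F\oplus N$, verify the four Ext-vanishing conditions using the same long exact sequences, and deduce thick generation from $M=\operatorname{\mathsf{Cocone}}(g)$. The only detail the paper treats separately that you omit is the degenerate case $d=0$, where the universal coextension construction is undefined; there the conflation witnessing $N\in(\operatorname{\mathsf{add}} M)^{\wedge}_{1}$ splits and $N\in\operatorname{\mathsf{add}} M$ directly.
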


\begin{proof}
Let $d:=\dim\mathbb{E}(N,M)$. 
If $d=0$, then $N\in(\operatorname{\mathsf{add}}M)^{\wedge}_{1}$ induces a split $\mathfrak{s}$-conflation $M_{1}\rightarrow M_{0}\rightarrow N\dashrightarrow$ with $M_{1},M_{0}\in \operatorname{\mathsf{add}}M$. Hence $N\in \operatorname{\mathsf{add}}M$. 
Assume $d\neq 0$.
Taking a universal coextension, we have an $\mathfrak{s}$-conflation 
\begin{align}\label{seq-univext2212}
M\rightarrow F\rightarrow N^{\oplus d}\dashrightarrow.
\end{align}
Then we prove that $T:=F\oplus N$ is a silting object of $\mathcal{C}$.
Since $\operatorname{\mathsf{thick}}T$ is closed under cocones, $F,N\in \operatorname{\mathsf{thick}}T$ implies $M\in \operatorname{\mathsf{thick}}T$.
Hence $\mathcal{C}=\operatorname{\mathsf{thick}}M\subseteq \operatorname{\mathsf{thick}}T$.
We show $\mathbb{E}^{k}(T,T)=0$ for all $k\geq 1$.

(i) Applying $\mathcal{C}(-,N)$ to \eqref{seq-univext2212}, we have an exact sequence
\begin{align}
0=\mathbb{E}^{k}(N^{\oplus d},N)\rightarrow \mathbb{E}^{k}(F,N)\rightarrow \mathbb{E}^{k}(M,N). \notag
\end{align}
Since it follows from Lemma \ref{lem-wedge_perp}(1) that $\mathbb{E}^{k}(M,N)=0$, we have $\mathbb{E}^{k}(F,N)=0$ for all $k\geq 1$.

(ii) Applying $\mathcal{C}(N,-)$ to \eqref{seq-univext2212} induces an exact sequence
\begin{align}
\mathbb{E}^{k-1}(N,N^{\oplus d})\rightarrow \mathbb{E}^{k}(N,M)\rightarrow \mathbb{E}^{k}(N,F)\rightarrow \mathbb{E}^{k}(N,N^{\oplus d})=0.\notag
\end{align}
By Lemma \ref{lem-wedge_perp}(2), we have $\mathbb{E}^{k}(N,M)=0$ for all $k\geq2$.
Moreover, by a property of the universal coextension, the map $\mathcal{C}(N,N^{\oplus d})\rightarrow \mathbb{E}(N,M)$ is a surjective. Hence $\mathbb{E}^{k}(N,F)=0$ for all $k\geq1$.

(iii) By applying $\mathcal{C}(M,-)$ to \eqref{seq-univext2212}, we obtain an exact sequence
\begin{align}
0=\mathbb{E}^{k}(M,M)\rightarrow \mathbb{E}^{k}(M,F)\rightarrow \mathbb{E}^{k}(M,N^{\oplus d}). \notag
\end{align}
By Lemma \ref{lem-wedge_perp}(1), $\mathbb{E}^{k}(M,N^{\oplus d})=0$, and hence $\mathbb{E}^{k}(M,F)=0$ for all $k\geq 1$.
Moreover, applying $\mathcal{C}(-,F)$ to \eqref{seq-univext2212} induces an exact sequence
\begin{align}
\mathbb{E}^{k}(N^{\oplus d}, F) \rightarrow \mathbb{E}^{k}(F,F) \rightarrow \mathbb{E}^{k}(M,F)=0.\notag
\end{align}
Since the left-hand side vanishes by (ii), we obtain $\mathbb{E}^{k}(F,F)=0$ for all $k\geq1$.

By (i), (ii) and (iii), $T$ is a silting object in $\mathcal{C}$.
Hence $N$ is partial silting.
\end{proof}

\subsection{Recover Auslander--Reiten's result}

Our aim of this subsection is to prove the following theorem, which is an analog of Auslander--Reiten's result (\cite[Theorem 5.5]{AR91}).

\begin{theorem}\label{thm-AR2}
Let $\mathcal{C}$ be a weakly idempotent complete extriangulated category which has enough projective objects and enough injective objects. 
Assume that each silting subcategory of $\mathcal{P}^{\infty}$ with finite projective dimension is contravariantly finite in $\mathcal{C}$. 
Then there exist mutually inverse bijections
\begin{align}
\xymatrix{
\{\mathcal{M}\in\operatorname{\mathsf{silt}}\mathcal{P}^{\infty}\mid \mathrm{pd}\mathcal{M}<\infty\}\ar@<0.5ex>[d]^-{\varphi} \\
\{\mathcal{X}:\textnormal{contravariantly finite resolving subcategory of }\mathcal{C}\mid\mathrm{pd} \mathcal{X}<\infty\}\ar@<0.5ex>[u]^-{\psi}, 
}\notag\end{align}
where $\varphi(\mathcal{M}):=\mathcal{M}^{\vee}$ and $\psi(\mathcal{X}):=\mathcal{X}\cap \mathcal{X}^{\perp}$.
\end{theorem}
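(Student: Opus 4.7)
The plan is to verify well-definedness of $\varphi$ and $\psi$ separately and then establish $\psi\circ\varphi=\mathrm{id}$ and $\varphi\circ\psi=\mathrm{id}$. Throughout I exploit the bijection between silting subcategories of $\mathcal{P}^{\infty}$ with finite projective dimension and hereditary cotorsion pairs $(\mathcal{A},\mathcal{B})$ in $\mathcal{P}^{\infty}$ with $\mathrm{pd}\,\mathcal{A}<\infty$ supplied by Proposition \ref{prop-AT}, so that $\varphi(\mathcal{M})=\mathcal{M}^{\vee}$ is the ``left half'' of this cotorsion pair.

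For well-definedness of $\varphi$, let $\mathcal{M}\in\operatorname{\mathsf{silt}}\mathcal{P}^{\infty}$ with $\mathrm{pd}\mathcal{M}<\infty$. Closure of $\mathcal{M}^{\vee}$ under extensions, cocones, and direct summands comes from Lemma \ref{lem-psilt_wedge}(1)(5); the inclusion $\operatorname{\mathsf{proj}}\mathcal{C}\subseteq\mathcal{M}^{\vee}$ from Proposition \ref{prop-siltpfin}(2); and $\mathrm{pd}\mathcal{M}^{\vee}=\mathrm{pd}\mathcal{M}<\infty$ from Lemma \ref{lem-pdim_vee} applied with $\operatorname{\mathsf{proj}}\mathcal{C}$ in the role of the presilting subcategory. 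The delicate point is contravariant finiteness of $\mathcal{M}^{\vee}$ in all of $\mathcal{C}$: starting from the hypothesized contravariant finiteness of $\mathcal{M}$, I augment right $\mathcal{M}$-approximations by projective covers (using enough projectives and weakly idempotent completeness) to realize them as $\mathfrak{s}$-deflations, iterate the construction producing syzygy-like cocones $K_{1},K_{2},\ldots$, and use the long exact sequence of Proposition \ref{prop_longex} together with $\mathrm{pd}\mathcal{M}\leq n$ to see that after $n$ steps the cocone lies in $\mathcal{M}^{\perp}$; combining this with the cotorsion pair $(\mathcal{M}^{\vee},\mathcal{M}^{\wedge})$ on $\mathcal{P}^{\infty}$ then assembles a right $\mathcal{M}^{\vee}$-approximation of the original $C\in\mathcal{C}$.

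For well-definedness of $\psi$, fix a contravariantly finite resolving $\mathcal{X}\subseteq\mathcal{C}$ with $\mathrm{pd}\mathcal{X}=n<\infty$ and set $\mathcal{Y}:=\mathcal{X}\cap\mathcal{X}^{\perp}$. Then $\mathbb{E}^{k}(\mathcal{Y},\mathcal{Y})=0$ for $k\geq 1$, $\mathcal{Y}=\operatorname{\mathsf{add}}\mathcal{Y}$, and Lemma \ref{lem-516} places $\mathcal{Y}\subseteq\mathcal{X}\subseteq\mathcal{P}^{\infty}$; by Proposition \ref{prop-siltpfin}(2) it remains to prove $\operatorname{\mathsf{proj}}\mathcal{C}\subseteq\mathcal{Y}^{\vee}$, which I deduce from the stronger assertion $\mathcal{X}\subseteq\mathcal{Y}^{\vee}_{n}$. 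I prove this by an Auslander--Buchweitz--Wakamatsu induction inside $\mathcal{C}$: for $X\in\mathcal{X}$, enough injectives give $X\to I\to X'\dashrightarrow$ with $I\in\operatorname{\mathsf{inj}}\mathcal{C}\subseteq\mathcal{X}^{\perp}$; a right $\mathcal{X}$-approximation $X''\to X'$ and a pullback produce an $\mathfrak{s}$-conflation $X\to Y\to X''\dashrightarrow$ with $Y\in\mathcal{X}$, and the Wakamatsu-type argument applied to the approximation (together with resolving-ness and $\mathbb{E}$-vanishing inherited from $I$) forces $Y\in\mathcal{X}^{\perp}$, hence $Y\in\mathcal{Y}$; iterating the construction on $X''$ and invoking $\mathrm{pd}\mathcal{X}=n$ to force termination at level $n$ yields the desired finite $\mathcal{Y}$-coresolution.

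Bijectivity is then immediate: $\psi(\varphi(\mathcal{M}))=\mathcal{M}^{\vee}\cap(\mathcal{M}^{\vee})^{\perp}=\mathcal{M}^{\vee}\cap\mathcal{M}^{\perp}=\mathcal{M}^{\vee}_{0}=\mathcal{M}$ by Lemma \ref{lem-wedge_perp}(1) and (3); conversely $\mathcal{Y}^{\vee}\subseteq\mathcal{X}$ follows by an easy induction on level using $\mathcal{Y}\subseteq\mathcal{X}$ and the closure of $\mathcal{X}$ under extensions and cocones, while $\mathcal{X}\subseteq\mathcal{Y}^{\vee}$ is precisely the Wakamatsu iteration above. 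The main obstacle is that Wakamatsu construction: in the extriangulated framework each intermediate step must genuinely return to $\mathcal{X}$ so that resolving-ness, contravariant finiteness, and the inductive hypothesis remain applicable at the next step, and unlike in the abelian setting the pullback/pushout bookkeeping must be carried out entirely with $\mathfrak{s}$-conflations and the (ET3), (ET4) axioms, forcing repeated use of Proposition \ref{prop_longex} to track $\mathbb{E}^{k}$-vanishing simultaneously at every degree across the iteration.
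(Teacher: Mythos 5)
Your global structure (check well-definedness of $\varphi$ and $\psi$ and then verify the two compositions are identities) is fine, and your handling of $\psi\circ\varphi=\mathrm{id}$ and of $\mathcal{Y}^{\vee}\subseteq\mathcal{X}$ is correct. But the decisive step in well-definedness of $\varphi$, namely contravariant finiteness of $\mathcal{M}^{\vee}$ in $\mathcal{C}$, has a genuine gap. Your plan is to augment right $\mathcal{M}$-approximations with projective $\mathfrak{s}$-deflations and iterate, using $\mathrm{pd}\mathcal{M}\leq n$ and dimension shifting to conclude that the $n$-th cocone $K_{n}$ lies in $\mathcal{M}^{\perp}$. This dimension shift fails: applying $\mathcal{C}(\mathcal{M},-)$ to $K_{i}\to M_{i-1}\oplus P_{i-1}\to K_{i-1}\dashrightarrow$ needs $\mathbb{E}^{k}(\mathcal{M},P_{i-1})=0$ for the long exact sequence to collapse, but for a silting $\mathcal{M}\subseteq\mathcal{P}^{\infty}$ one has $\mathbb{E}^{k}(\mathcal{M},\operatorname{\mathsf{proj}}\mathcal{C})\neq 0$ in general (this vanishing would force $\mathcal{M}\geq\operatorname{\mathsf{proj}}\mathcal{C}$, i.e.\ $\mathcal{M}=\operatorname{\mathsf{proj}}\mathcal{C}$, since $\operatorname{\mathsf{proj}}\mathcal{C}$ is the greatest element of $\operatorname{\mathsf{silt}}\mathcal{P}^{\infty}$). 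The paper's Proposition \ref{prop-Psi} runs the syzygy-style shift in the opposite direction, via an injective coresolution of $C$ (using enough injectives, so that the middle terms satisfy $\mathbb{E}^{k}(\mathcal{M},I^{i})=0$), to obtain $\mathcal{C}=(\mathcal{M}^{\perp})^{\vee}_{d}$; it then proves $\mathcal{M}^{\perp}=\operatorname{\mathsf{cone}}(\mathcal{M}^{\perp},\mathcal{M})$ by a careful $\mathfrak{s}$-conflation argument that uses weak idempotent completeness, contravariant finiteness of $\mathcal{M}$ itself, and the cotorsion pair structure in $\mathcal{P}^{\infty}$; and it closes with an (ET4)-based induction to reach $\mathcal{C}=\operatorname{\mathsf{cone}}(\mathcal{M}^{\perp},\mathcal{M}^{\vee})$. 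Even if your cocones did land in $\mathcal{M}^{\perp}$, the chain of cones you produce does not assemble directly into a single $\mathfrak{s}$-conflation $Y\to V\to C\dashrightarrow$ with $Y\in\mathcal{M}^{\perp}$, $V\in\mathcal{M}^{\vee}$, since $\mathcal{M}^{\vee}$ is not closed under cones; that reassembly is exactly the content of the paper's final induction, which your sketch omits.

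Two further remarks. First, the paper does not prove the translation from contravariantly finite resolving subcategories to hereditary cotorsion pairs here at all: it routes Theorem \ref{thm-AR2} through Theorem \ref{thm-AR} and the external bijection \cite[Proposition 5.15]{AT22}, so your Wakamatsu-iteration argument for well-definedness of $\psi$ is a genuinely different, more self-contained route. It is plausible, but your sketch glosses the key technical point: after forming the pullback $X\to Y\to X''\dashrightarrow$, concluding $Y\in\mathcal{X}^{\perp}$ (in all degrees, not merely $\mathbb{E}^{1}$-vanishing) requires either a special right $\mathcal{X}$-approximation with cocone already in $\mathcal{X}^{\perp}$ (which is the content of \cite[Lemma 5.14]{AT22}), or the separate observation that for a resolving subcategory $\mathcal{X}$ of a category with enough projectives, $\mathbb{E}(\mathcal{X},Y)=0$ already forces $\mathbb{E}^{k}(\mathcal{X},Y)=0$ for all $k\geq 1$ by projective dimension shifting inside $\mathcal{X}$. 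Without making one of these explicit, your Wakamatsu step only yields degree-one vanishing of the cocone, which is insufficient to keep the iteration inside $\mathcal{Y}=\mathcal{X}\cap\mathcal{X}^{\perp}$.
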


To show Theorem \ref{thm-AR2}, the following theorem plays an important role.

\begin{theorem}\label{thm-AR}
Let $\mathcal{C}$ be a weakly idempotent complete extriangulated category which has enough projective objects and enough injective objects.  
Assume that each silting subcategory of $\mathcal{P}^{\infty}$ with finite projective dimension is contravariantly finite in $\mathcal{C}$. 
Then there exist mutually inverse bijections
\begin{align}
\xymatrix{
\{\mathcal{M}\in\operatorname{\mathsf{silt}}\mathcal{P}^{\infty}\mid \mathrm{pd}\mathcal{M}<\infty\}\ar@<0.5ex>[r]^-{\Phi} &\{(\mathcal{X},\mathcal{Y})\in\operatorname{\mathsf{hcotors}}\mathcal{C}\mid\mathrm{pd}\mathcal{X}<\infty\}\ar@<0.5ex>[l]^-{\Psi}, 
}\notag\end{align}
where $\Phi(\mathcal{M}):=(\mathcal{M}^{\vee},\mathcal{M}^{\perp})$ and $\Psi(\mathcal{X},\mathcal{Y}):=\mathcal{X}\cap\mathcal{Y}$.
\end{theorem}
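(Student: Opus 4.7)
The plan is to verify that $\Phi$ and $\Psi$ are mutually inverse bijections by checking four tasks: (i) $\Phi$ is well-defined, (ii) $\Psi$ is well-defined, (iii) $\Phi\circ\Psi=\mathrm{id}$, and (iv) $\Psi\circ\Phi=\mathrm{id}$. The existing bijection inside $\mathcal{P}^{\infty}$ from Proposition \ref{prop-AT}, the coincidence of extension groups on resolving subcategories (Lemma \ref{lem-thickE}), and the identity $\mathrm{pd}\mathcal{X}=\mathrm{pd}\mathcal{X}^{\vee}$ (Lemma \ref{lem-pdim_vee}) will carry most of the bookkeeping.

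For (i), fix $\mathcal{M}\in\operatorname{\mathsf{silt}}\mathcal{P}^{\infty}$ with $\mathrm{pd}\mathcal{M}\le n<\infty$. Summand closure of $\mathcal{M}^{\vee}$ follows from Lemma \ref{lem-psilt_wedge}(1), and $\mathrm{pd}\mathcal{M}^{\vee}=\mathrm{pd}\mathcal{M}<\infty$ from Lemma \ref{lem-pdim_vee} applied to the presilting-and-cocone-closed subcategory $\operatorname{\mathsf{proj}}\mathcal{C}$ (cocone-closed since if $A\to P\to P'\dashrightarrow$ with $P,P'$ projective, the long exact sequence forces $\mathbb{E}^{k}(A,-)=0$ for $k\ge 1$). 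The Ext-vanishing $\mathbb{E}^{k}(\mathcal{M}^{\vee},\mathcal{M}^{\perp})=0$ is proved by induction on $l$ with $X\in\mathcal{M}^{\vee}_{l}$: for $N\in\mathcal{M}^{\perp}$, the defining $\mathfrak{s}$-conflation $X\to M\to X'\dashrightarrow$ with $M\in\mathcal{M}$ and $X'\in\mathcal{M}^{\vee}_{l-1}$ gives a long exact sequence that, together with $\mathbb{E}^{i}(\mathcal{M},N)=0$, dimension-shifts $\mathbb{E}^{k}(X,N)\cong\mathbb{E}^{k+1}(X',N)$ and closes the induction.

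The main obstacle is constructing the approximation $\mathfrak{s}$-conflations realising $\mathcal{C}=\operatorname{\mathsf{cone}}(\mathcal{M}^{\perp},\mathcal{M}^{\vee})=\operatorname{\mathsf{cocone}}(\mathcal{M}^{\perp},\mathcal{M}^{\vee})$. For the cone form, I will build a right $\mathcal{M}$-resolution of $C$ augmented by projective deflations: set $Y_{0}:=C$, at each step combine a right $\mathcal{M}$-approximation $M_{i}\to Y_{i}$ (existing since $\mathcal{M}$ is contravariantly finite by hypothesis) with a deflation $P_{i}\to Y_{i}$ from a projective to obtain an $\mathfrak{s}$-deflation $X_{i}:=M_{i}\oplus P_{i}\to Y_{i}$ with $X_{i}\in\mathcal{M}^{\vee}$, and let $Y_{i+1}$ be its cocone. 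Iterated applications of (ET4) then assemble the tower after $n+1$ steps into a single $\mathfrak{s}$-conflation $Y_{n+1}\to\widetilde{X}\to C\dashrightarrow$ with $\widetilde{X}\in\mathcal{M}^{\vee}$ (an iterated extension of objects in $\mathcal{M}^{\vee}$, which is closed under extensions). The delicate part is showing $Y_{n+1}\in\mathcal{M}^{\perp}$; the plan is to combine the surjectivity of $\mathcal{C}(M,X_{i})\to\mathcal{C}(M,Y_{i})$ from the approximation property with $\mathrm{pd}\mathcal{M}\le n$ (via Lemma \ref{lem-516}, which forces $\mathbb{E}^{>n}(\mathcal{M},-)=0$) inside the long exact sequences at each stage to propagate vanishing through the tower. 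Given the cone form, the cocone form $C\to Y\to X\dashrightarrow$ is extracted by embedding $C\hookrightarrow I$ into an injective (which lies in $\mathcal{M}^{\perp}$ since $\mathbb{E}^{\ge1}(-,I)=0$), applying the cone form to $D:=\mathrm{Cone}(C\to I)$ to produce $Y_{D}\to X_{D}\to D\dashrightarrow$, and pulling back $I\to D\leftarrow X_{D}$ to yield $C\to P\to X_{D}\dashrightarrow$ with $P$ fitting in an $\mathfrak{s}$-conflation $Y_{D}\to P\to I$, hence $P\in\mathcal{M}^{\perp}$.

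For (ii), set $\mathcal{M}:=\mathcal{X}\cap\mathcal{Y}$; presilting is immediate from $\mathbb{E}^{k}(\mathcal{X},\mathcal{Y})=0$, and $\mathcal{M}\subseteq\mathcal{X}\subseteq\mathcal{P}^{\infty}$ from $\mathrm{pd}\mathcal{X}<\infty$. By Proposition \ref{prop-siltpfin}(2) it suffices to verify $\operatorname{\mathsf{proj}}\mathcal{C}\subseteq\mathcal{M}^{\vee}$: starting from $P\in\operatorname{\mathsf{proj}}\mathcal{C}\subseteq\mathcal{X}$ (projectives are in $\mathcal{X}={}^{\perp}\mathcal{Y}$) and iterating the cocone form of the cotorsion pair gives $X_{i}\to Y_{i}\to X_{i+1}\dashrightarrow$ with $Y_{i}\in\mathcal{Y}$ and $X_{i+1}\in\mathcal{X}$; closure of $\mathcal{X}$ under extensions (Lemma \ref{lem_cotors}) forces each $Y_{i}\in\mathcal{X}\cap\mathcal{Y}=\mathcal{M}$, while the iterated dimension-shift $\mathbb{E}^{k}(\mathcal{X},X_{n})\cong\mathbb{E}^{k+n}(\mathcal{X},P)=0$ for $k\ge 1$ (using $\mathrm{pd}\mathcal{X}\le n$ via Lemma \ref{lem-516}) forces $X_{n}\in\mathcal{Y}\cap\mathcal{X}=\mathcal{M}$, exhibiting $P\in\mathcal{M}^{\vee}_{n}$. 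The inverse identities then close the argument: for (iii) we need $\mathcal{M}^{\vee}=\mathcal{X}$ (both inclusions follow from the same iteration and closure properties of $\mathcal{X}$ under cocones and extensions) and $\mathcal{M}^{\perp}=(\mathcal{M}^{\vee})^{\perp}=\mathcal{X}^{\perp}=\mathcal{Y}$ via Lemma \ref{lem-wedge_perp}(1) and Lemma \ref{lem_cotors}; for (iv) we need $\mathcal{M}^{\vee}\cap\mathcal{M}^{\perp}=\mathcal{M}$, and any $X$ in the intersection fits into a defining $\mathfrak{s}$-conflation $X\to M_{0}\to X_{1}\dashrightarrow$ whose class in $\mathbb{E}(X_{1},X)$ vanishes by the Ext-vanishing from paragraph two, so the sequence splits and $X$ is a direct summand of $M_{0}\in\mathcal{M}$.
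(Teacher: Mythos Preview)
Your argument has a genuine gap in part (i), specifically in showing $Y_{n+1}\in\mathcal{M}^{\perp}$. The two ingredients you invoke---surjectivity of $\mathcal{C}(M,X_i)\to\mathcal{C}(M,Y_i)$ from the approximation property, and $\mathbb{E}^{>n}(\mathcal{M},-)=0$ from $\mathrm{pd}\mathcal{M}\le n$---do not control $\mathbb{E}^k(\mathcal{M},Y_{n+1})$ for $1\le k\le n$. From $Y_{i+1}\to X_i\to Y_i$ the Hom-level surjectivity only yields an injection $\mathbb{E}^1(M,Y_{i+1})\hookrightarrow\mathbb{E}^1(M,X_i)=\mathbb{E}^1(M,M_i)\oplus\mathbb{E}^1(M,P_i)=\mathbb{E}^1(M,P_i)$, and there is no reason for $\mathbb{E}^1(M,P_i)$ to vanish: projectives are not in general acyclic for $\mathbb{E}^{\ge 1}(\mathcal{M},-)$ (already in $\operatorname{\mathsf{mod}}\Lambda$, a non-projective tilting module $T$ has $\mathrm{Ext}^1_\Lambda(T,\Lambda)\ne 0$). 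For $k\ge 2$ you have no handle on the surjectivity of $\mathbb{E}^{k-1}(M,X_i)\to\mathbb{E}^{k-1}(M,Y_i)$ either, so nothing stops nonzero classes from appearing in $\mathbb{E}^k(M,Y_{i+1})$. The projective padding that makes your map a deflation is precisely what breaks the propagation.

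The paper gets around this by a different decomposition. It first uses enough injectives to prove $\mathcal{C}=(\mathcal{M}^\perp)^\vee_d$ (dimension-shift along an injective coresolution), and then proves the key step $\mathcal{M}^\perp=\operatorname{\mathsf{cone}}(\mathcal{M}^\perp,\mathcal{M})$: for $X\in\mathcal{M}^\perp$ the right $\mathcal{M}$-approximation $f\colon M\to X$ is \emph{itself} an $\mathfrak{s}$-deflation, with no projective needed. This is shown via the chain $\mathcal{M}^\perp\subseteq\operatorname{\mathsf{cone}}(\mathcal{C},\operatorname{\mathsf{proj}}\mathcal{C})\subseteq\operatorname{\mathsf{cone}}(\mathcal{C},\mathcal{M}^\vee)\subseteq\operatorname{\mathsf{cocone}}(\operatorname{\mathsf{cone}}(\mathcal{C},\mathcal{M}),\mathcal{M}^\vee)$ together with $\mathbb{E}(\mathcal{M}^\vee,\mathcal{M}^\perp)=0$, which exhibits an $\mathfrak{s}$-deflation from $\mathcal{M}$ onto $X$ that $f$ must factor; weak idempotent completeness finishes. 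Once $f$ is a deflation, $\mathrm{Cocone}(f)\in\mathcal{M}^\perp$ does follow from the long exact sequence and the Hom-level approximation---so your idea works, but only after restricting the tower to $\mathcal{M}^\perp$. An induction then assembles both $\operatorname{\mathsf{cone}}(\mathcal{M}^\perp,\mathcal{M}^\vee)$ and $\operatorname{\mathsf{cocone}}(\mathcal{M}^\perp,\mathcal{M}^\vee)$ from these two facts. Your arguments for (ii), (iii), (iv) are correct and in places more direct than the paper's (which routes (ii) through Proposition~\ref{prop-AT}); only the cone-form construction in (i) needs to be replaced.
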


Before showing Theorem \ref{thm-AR}, we give a proof of Theorem \ref{thm-AR2}.

\begin{proof}[Proof of Theorem \ref{thm-AR2}]
By \cite[Proposition 5.15]{AT22}, there exist mutually inverse bijections \begin{align}
\xymatrix{
\{(\mathcal{X},\mathcal{Y})\in\operatorname{\mathsf{hcotors}}\mathcal{C}\mid\mathrm{pd}\mathcal{X}<\infty\}\ar@<0.5ex>[d]^-{\varphi'} \\
\{\mathcal{X}:\textnormal{contravariantly finite resolving subcategory of }\mathcal{C}\mid \mathrm{pd}\mathcal{X}<\infty\},\ar@<0.5ex>[u]^-{\psi'}
}\notag\end{align}
where $\varphi'(\mathcal{X},\mathcal{Y}):=\mathcal{X}$ and $\psi'(\mathcal{X}):=(\mathcal{X},\mathcal{X}^{\perp})$.
Thus the assertion follows from Theorem \ref{thm-AR}.
\end{proof}

In the following, we prove Theorem \ref{thm-AR}. 
First, we show that $\Psi$ is well-defined. 

\begin{proposition}\label{prop-Phi}
Assume that $\mathcal{C}$ has enough projective objects. 
If $(\mathcal{X},\mathcal{Y})$ is a hereditary cotorsion pair in $\mathcal{C}$ with $\mathrm{pd}
\mathcal{X}<\infty$, then $\mathcal{X}\cap\mathcal{Y}$ is a silting subcategory of $\mathcal{P}^{\infty}$ with $\mathrm{pd} (\mathcal{X}\cap\mathcal{Y})<\infty$.
\end{proposition}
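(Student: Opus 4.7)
The plan is to reduce the statement to Proposition~\ref{prop-AT} by constructing an associated hereditary cotorsion pair inside $\mathcal{P}^{\infty}$. Since $\mathrm{pd}\mathcal{X}<\infty$, we immediately have $\mathcal{X}\subseteq\mathcal{P}^{\infty}$, and in particular $\mathrm{pd}(\mathcal{X}\cap\mathcal{Y})\leq\mathrm{pd}\mathcal{X}<\infty$. The key claim to be proved is that the pair $(\mathcal{X},\mathcal{Y}\cap\mathcal{P}^{\infty})$ is a hereditary cotorsion pair in the extriangulated category $\mathcal{P}^{\infty}$; applying Proposition~\ref{prop-AT} to this pair then yields that $\mathcal{X}\cap(\mathcal{Y}\cap\mathcal{P}^{\infty})=\mathcal{X}\cap\mathcal{Y}$ is a silting subcategory of $\mathcal{P}^{\infty}$ with finite projective dimension.

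To verify the axioms of Definition~\ref{def-hcotors} for $(\mathcal{X},\mathcal{Y}\cap\mathcal{P}^{\infty})$ in $\mathcal{P}^{\infty}$, closure under direct summands is inherited from $\mathcal{C}$, and the vanishing of positive extensions is immediate from $\mathbb{E}^{k}(\mathcal{X},\mathcal{Y})=0$ together with the identification $\mathbb{E}_{\mathcal{P}^{\infty}}^{k}\cong\mathbb{E}^{k}$ on $\mathcal{P}^{\infty}$ provided by Lemma~\ref{lem-thickE} (since $\mathcal{P}^{\infty}$ is resolving by Proposition~\ref{prop-propertypfin}). For axiom (3), given $M\in\mathcal{P}^{\infty}$, the hereditary cotorsion pair $(\mathcal{X},\mathcal{Y})$ in $\mathcal{C}$ supplies an $\mathfrak{s}$-conflation $Y\to X\to M\dashrightarrow$ with $Y\in\mathcal{Y}$ and $X\in\mathcal{X}\subseteq\mathcal{P}^{\infty}$; since $\mathcal{P}^{\infty}$ is closed under cocones, $Y\in\mathcal{P}^{\infty}$, hence $Y\in\mathcal{Y}\cap\mathcal{P}^{\infty}$. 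Symmetrically, for axiom (4) we use an $\mathfrak{s}$-conflation $M\to Y\to X\dashrightarrow$ from $\mathcal{C}=\operatorname{\mathsf{cocone}}(\mathcal{Y},\mathcal{X})$; closure of $\mathcal{P}^{\infty}$ under extensions gives $Y\in\mathcal{P}^{\infty}$.

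With the hereditary cotorsion pair $(\mathcal{X},\mathcal{Y}\cap\mathcal{P}^{\infty})$ in $\mathcal{P}^{\infty}$ established, and $\mathrm{pd}\mathcal{X}<\infty$ still holding, Proposition~\ref{prop-AT} gives that $G(\mathcal{X},\mathcal{Y}\cap\mathcal{P}^{\infty})=\mathcal{X}\cap\mathcal{Y}$ is a silting subcategory of $\mathcal{P}^{\infty}$ with finite projective dimension, which is the desired conclusion. The main (and really the only) conceptual step is recognizing that $\mathcal{P}^{\infty}$ is closed under both extensions and cocones inside $\mathcal{C}$ (i.e.\ resolving), so that the cotorsion-pair approximation sequences from $\mathcal{C}$ restrict to $\mathcal{P}^{\infty}$; everything else is a straightforward reorganization of the hypotheses.
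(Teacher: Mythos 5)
Your proof is correct and follows essentially the same route as the paper: reduce to Proposition~\ref{prop-AT} by verifying that $(\mathcal{X},\mathcal{Y}\cap\mathcal{P}^{\infty})$ is a hereditary cotorsion pair in $\mathcal{P}^{\infty}$, using Lemma~\ref{lem-thickE} for the agreement of positive extension groups and the fact that $\mathcal{P}^{\infty}$ is resolving (closed under extensions and cocones) to restrict the approximation sequences from $\mathcal{C}$. The paper's argument is word-for-word the same reorganization, so there is nothing substantive to add or correct.
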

 
\begin{proof}
Let $(\mathcal{X},\mathcal{Y})\in \operatorname{\mathsf{hcotors}}\mathcal{C}$ with $\mathrm{pd}\mathcal{X}<\infty$. 
Then $\mathcal{X}\subseteq\mathcal{P}^{\infty}$. 
By Proposition \ref{prop-AT}, it is enough to show that $(\mathcal{X},\mathcal{Y}\cap\mathcal{P}^{\infty})$ is a hereditary cotorsion pair in $\mathcal{P}^{\infty}$. 
The subcategories $\mathcal{X}$ and $\mathcal{Y}\cap\mathcal{P}^{\infty}$ are clearly closed under direct summands.
By Proposition \ref{prop-propertypfin}, $\mathcal{P}^{\infty}$ is a thick subcategory and a resolving subcategory.
Thus it follows from Lemma \ref{lem-thickE} that 
\begin{align}
\mathbb{E}_{\mathcal{P}^{\infty}}^{i}(X,Y)\cong \mathbb{E}^{i}(X,Y)=0\notag
\end{align}
for each $X\in\mathcal{X}, Y\in\mathcal{Y}\cap\mathcal{P}^{\infty}$ and $i \geq 1$. 
Since $\mathcal{P}^{\infty}$ is a thick subcategory of $\mathcal{C}$, we have $\mathcal{P}^{\infty}\supseteq \operatorname{\mathsf{cone}}(\mathcal{Y}\cap\mathcal{P}^{\infty},\mathcal{X})$ and $\mathcal{P}^{\infty}\supseteq\operatorname{\mathsf{cocone}}(\mathcal{Y}\cap\mathcal{P}^{\infty},\mathcal{X})$.
We show the converse inclusions. 
Let $M\in\mathcal{P}^{\infty}$. 
By $(\mathcal{X},\mathcal{Y})\in\operatorname{\mathsf{hcotors}}\mathcal{C}$, there exist $\mathfrak{s}$-conflations $Y_{M}\rightarrow X_{M}\rightarrow M \dashrightarrow$ and $M\rightarrow Y^{M}\rightarrow X^{M}\dashrightarrow$ such that $Y_{M}, Y^{M}\in\mathcal{Y}$ and $X_{M}, X^{M}\in\mathcal{X}$. 
Since $\mathcal{X}\subseteq \mathcal{P}^{\infty}$ and $\mathcal{P}^{\infty}$ is a resolving subcategory, we have $Y_{M}, Y^{M}\in\mathcal{Y}\cap\mathcal{P}^{\infty}$.
Thus $\mathcal{X}\cap\mathcal{Y}$ is a silting subcategory of $\mathcal{P}^{\infty}$ with $\mathrm{pd} (\mathcal{X}\cap\mathcal{Y})<\infty$. 
\end{proof}

Next, we show that $\Phi$ is well-defined. 

\begin{proposition}\label{prop-Psi}
Assume that $\mathcal{C}$ is weakly idempotent complete and has enough projective objects and enough injective objects. 
Let $\mathcal{M}$ be a silting subcategory of $\mathcal{P}^{\infty}$ with $\mathrm{pd}\mathcal{M}<\infty$.
Assume that $\mathcal{M}$ is a contravariantly finite subcategory of $\mathcal{C}$.
Then $(\mathcal{M}^{\vee},\mathcal{M}^{\perp})$ is a hereditary cotorsion pair in $\mathcal{C}$ with $\mathrm{pd}\mathcal{M}^{\vee}<\infty$.
\end{proposition}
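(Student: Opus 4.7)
The plan is to verify the four axioms of Definition~\ref{def-hcotors} together with the bound $\mathrm{pd}\mathcal{M}^{\vee}<\infty$. The structural parts are routine: $\mathcal{M}^{\vee}$ is closed under direct summands by Lemma~\ref{lem-psilt_wedge}(1), and $\mathcal{M}^{\perp}$ is so by additivity of $\mathbb{E}^{k}(\mathcal{M},-)$. The orthogonality $\mathbb{E}^{k}(\mathcal{M}^{\vee},\mathcal{M}^{\perp})=0$ for all $k\geq 1$ is immediate from Lemma~\ref{lem-wedge_perp}(1) with $n=0$, which yields $\mathcal{M}^{\perp}=(\mathcal{M}^{\vee})^{\perp}$. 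The bound $\mathrm{pd}\mathcal{M}^{\vee}=\mathrm{pd}\mathcal{M}<\infty$ comes from Lemma~\ref{lem-pdim_vee}, since $\operatorname{\mathsf{proj}}\mathcal{C}$ is presilting and closed under cocones.

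For the remaining axioms (3) and (4), namely the existence of approximating $\mathfrak{s}$-conflations, I plan to reduce to \cite[Proposition~5.15]{AT22}, the same result invoked in the proof of Theorem~\ref{thm-AR2}. By that proposition, it is enough to show that $\mathcal{M}^{\vee}$ is a \emph{contravariantly finite resolving} subcategory of $\mathcal{C}$ of finite projective dimension, since then $(\mathcal{M}^{\vee},(\mathcal{M}^{\vee})^{\perp})=(\mathcal{M}^{\vee},\mathcal{M}^{\perp})$ will automatically be the desired hereditary cotorsion pair. Being resolving is routine: $\operatorname{\mathsf{proj}}\mathcal{C}\subseteq\mathcal{M}^{\vee}$ by Proposition~\ref{prop-siltpfin}(2), and closure of $\mathcal{M}^{\vee}$ under extensions, cocones and direct summands is part of Lemma~\ref{lem-psilt_wedge}.

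The main obstacle is then to show that $\mathcal{M}^{\vee}$ is contravariantly finite in $\mathcal{C}$, using only the hypothesized contravariant finiteness of $\mathcal{M}$. My plan is iterative: given $M\in\mathcal{C}$, combine a right $\mathcal{M}$-approximation $\alpha\colon N\to M$ with an $\mathfrak{s}$-deflation $\pi\colon P\to M$ from a projective $P\in\operatorname{\mathsf{proj}}\mathcal{C}\subseteq\mathcal{M}^{\vee}$ to obtain an $\mathfrak{s}$-deflation $[\alpha,\pi]\colon N\oplus P\to M$ whose source lies in $\mathcal{M}^{\vee}$. Taking cocones, iterating this construction $n+1$ times with $n=\mathrm{pd}\mathcal{M}$, and folding the resulting tower of $\mathfrak{s}$-conflations via repeated application of (ET4) should produce a single $\mathfrak{s}$-conflation $Y\to X\to M\dashrightarrow$ with $X\in\mathcal{M}^{\vee}$ (as an iterated extension of objects of $\mathcal{M}^{\vee}$, closed under extensions). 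The hardest step, which I expect to be the principal obstacle, is verifying by careful long exact sequence bookkeeping that the final cocone $Y$ lies in $\mathcal{M}^{\perp}$: right $\mathcal{M}$-approximations force $\mathbb{E}^{1}(\mathcal{M},-)$ to vanish at each level, the bound $\mathrm{pd}\mathcal{M}\leq n$ kills extensions of degree exceeding $n$, and one must carefully manage the potentially nonzero contributions $\mathbb{E}^{k}(\mathcal{M},P_{i})$ of the auxiliary projectives used in each step.
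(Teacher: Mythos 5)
Your reduction to \cite[Proposition 5.15]{AT22} is a legitimate strategy in principle, and the structural parts (direct summand closure, $\mathbb{E}^{k}$-orthogonality via Lemma~\ref{lem-wedge_perp}(1), $\mathrm{pd}\mathcal{M}^{\vee}<\infty$ via Lemma~\ref{lem-pdim_vee}, and $\mathcal{M}^{\vee}$ being resolving) are all correctly handled. The paper does not take this route; it verifies axioms (3) and (4) of Definition~\ref{def-hcotors} directly, which in passing yields contravariant finiteness of $\mathcal{M}^{\vee}$ as a byproduct. So your reduction is not a shortcut; it merely relocates the difficulty.

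The problem is that your plan for the relocated difficulty -- proving $\mathcal{M}^{\vee}$ is contravariantly finite in $\mathcal{C}$ -- has two serious gaps. First, the ``folding'' step fails: given a tower of $\mathfrak{s}$-conflations $K_{i}\rightarrow X_{i}\rightarrow K_{i-1}\dashrightarrow$ ($K_{-1}=M$), (ET4) does not let you collapse this into a single $\mathfrak{s}$-conflation $K_{n}\rightarrow Y\rightarrow M\dashrightarrow$ with $Y$ an iterated extension of the $X_{i}$. The Yoneda composite of the tower lives in $\mathbb{E}^{n+1}(M,K_{n})$, not in $\mathbb{E}(M,K_{n})$; the composite $X_{1}\rightarrow K_{0}\rightarrow X_{0}$ is a deflation followed by an inflation, which does not compose into either. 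Second, even setting aside folding, the cocones $K_{i}$ need not land in $\mathcal{M}^{\perp}$ after $\mathrm{pd}\mathcal{M}$ steps. To make $[\alpha\ \pi]\colon N_{i}\oplus P_{i}\rightarrow K_{i-1}$ a deflation you must include the projective $P_{i}$, and $\mathbb{E}^{k}(\mathcal{M},P_{i})$ is in general nonzero (already for a tilting module $T$ over a hereditary algebra, $\mathrm{Ext}^{1}_{\Lambda}(T,\Lambda)\neq 0$). The long exact sequence then gives no dimension shift $\mathbb{E}^{k}(\mathcal{M},K_{i})\cong\mathbb{E}^{k+1}(\mathcal{M},K_{i-1})$, so the bound $\mathrm{pd}\mathcal{M}\leq n$ cannot be brought to bear.

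The deepest issue is that you never use the hypothesis that $\mathcal{C}$ has enough injective objects, yet it is indispensable. Starting a projective-side iteration from an arbitrary $M\in\mathcal{C}$ produces cocones $K_{i}$ that stay outside $\mathcal{P}^{\infty}$ whenever $M\notin\mathcal{P}^{\infty}$, and nothing in the data ($\mathcal{M}$ silting in $\mathcal{P}^{\infty}$, $\mathrm{pd}\mathcal{M}<\infty$) controls such objects from the projective side alone. The paper gets around this by working from the injective side: using enough injectives and $\mathrm{pd}\mathcal{M}=d<\infty$, it shows every $M\in\mathcal{C}$ lies in $(\mathcal{M}^{\perp})^{\vee}_{d}$ (injectives lie in $\mathcal{M}^{\perp}$, and after $d$ cosyzygies the $\mathbb{E}^{j}(\mathcal{M},-)$ terms vanish). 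It then proves the local statement $\mathcal{M}^{\perp}=\operatorname{\mathsf{cone}}(\mathcal{M}^{\perp},\mathcal{M})$ -- this is where contravariant finiteness of $\mathcal{M}$ and weak idempotent completeness enter, to upgrade a right $\mathcal{M}$-approximation of $X\in\mathcal{M}^{\perp}$ to an $\mathfrak{s}$-deflation -- and finally interleaves the two via the inclusion chain $(\mathcal{M}^{\perp})^{\vee}_{n}\subseteq\operatorname{\mathsf{cocone}}(\mathcal{M}^{\perp},\mathcal{M}^{\vee}_{n-1})\subseteq\operatorname{\mathsf{cone}}(\mathcal{M}^{\perp},\mathcal{M}^{\vee}_{n})$ proved by induction using (ET4) and its dual. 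You would need to import precisely this injective-side machinery for your plan to close.
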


\begin{proof}
By Lemma \ref{lem-pdim_vee}, $\mathrm{pd}\mathcal{M}^{\vee}=\mathrm{pd}\mathcal{M}<\infty$. 
We show $(\mathcal{M}^{\vee},\mathcal{M}^{\perp})\in \operatorname{\mathsf{hcotors}}\mathcal{C}$.
It follows from Lemma \ref{lem-psilt_wedge}(1) that $\mathcal{M}^{\vee}$ and $\mathcal{M}^{\perp}$ are closed under direct summands.
By Lemma \ref{lem-wedge_perp}(1), we obtain $\mathbb{E}^{i}(\mathcal{M}^{\vee}, \mathcal{M}^{\perp})=0$ for each $i\geq 1$.
To show $\mathcal{C}=\operatorname{\mathsf{cocone}}(\mathcal{M}^{\perp},\mathcal{M}^{\vee})=\operatorname{\mathsf{cone}}(\mathcal{M}^{\perp},\mathcal{M}^{\vee})$, we claim that (i) $\mathcal{C}=(\mathcal{M}^{\perp})^{\vee}_{d}$, where $d:=\mathrm{pd}\mathcal{M}$, and (ii) $\mathcal{M}^{\perp}=\operatorname{\mathsf{cone}}(\mathcal{M}^{\perp},\mathcal{M})$. 
First, we show (i). 
Let $X\in \mathcal{C}$.
Since $\mathcal{C}$ has enough injective objects, we have an $\mathfrak{s}$-conflation $X\rightarrow I^{0}\rightarrow C^{1}\dashrightarrow$ with $I^{0}\in\operatorname{\mathsf{inj}}\mathcal{C}$. 
Inductively, there exists an $\mathfrak{s}$-conflation $C^{i}\rightarrow I^{i}\rightarrow C^{i+1}\dashrightarrow$ such that $I^{i}\in\operatorname{\mathsf{inj}}\mathcal{C}$. 
Applying $\mathcal{C}(\mathcal{M},-)$ to the $\mathfrak{s}$-conflations above induces isomorphisms
\begin{align}
\mathbb{E}^{j}(\mathcal{M},C^{d})\cong \mathbb{E}^{j+1}(\mathcal{M},C^{d-1})\cong \cdots \cong \mathbb{E}^{j+d}(\mathcal{M}, X)=0 \notag
\end{align}
for each $j\geq 1$, where the last equality follows from Lemma \ref{lem-516}. 
By $I^{i}, C^{d}\in\mathcal{M}^{\perp}$, we have $X\in(\mathcal{M}^{\perp})^{\vee}_{d}$.
Next, we show (ii).
Since $\mathcal{M}^{\perp}$ is closed under cones, we obtain
\begin{align}
\operatorname{\mathsf{cone}}(\mathcal{M}^{\perp},\mathcal{M})\subseteq \operatorname{\mathsf{cone}}(\mathcal{M}^{\perp},\mathcal{M}^{\perp})\subseteq \mathcal{M}^{\perp}.\notag
\end{align} 
We claim the converse inclusion.
Let $X\in\mathcal{M}^{\perp}$.
Since $\mathcal{M}$ is contravariantly finite, there exists a right $\mathcal{M}$-approximation $f: M\rightarrow X$.
We show that $f$ is an $\mathfrak{s}$-deflation.
Since $\mathcal{C}$ has enough projective objects, we obtain $\mathcal{C}=\operatorname{\mathsf{cone}}(\mathcal{C},\operatorname{\mathsf{proj}}\mathcal{C})$, and hence
\begin{align}
\mathcal{M}^{\perp}
&\subseteq \operatorname{\mathsf{cone}}(\mathcal{C},\operatorname{\mathsf{proj}}\mathcal{C})&\notag\\
&\subseteq \operatorname{\mathsf{cone}}(\mathcal{C},\mathcal{M}^{\vee})&\textnormal{by Proposition \ref{prop-siltpfin}(2)}\notag\\
&=\operatorname{\mathsf{cone}}(\mathcal{C},\operatorname{\mathsf{cocone}}(\mathcal{M},\mathcal{M}^{\vee}))&\notag\\
&\subseteq\operatorname{\mathsf{cocone}}(\operatorname{\mathsf{cone}}(\mathcal{C},\mathcal{M}),\mathcal{M}^{\vee})&\textnormal{by (ET4)}.\notag
\end{align}
Thus there exists an $\mathfrak{s}$-conflation $X\rightarrow E\rightarrow C\dashrightarrow$ such that $E\in \operatorname{\mathsf{cone}}(\mathcal{C},\mathcal{M})$ and $C\in \mathcal{M}^{\vee}$. 
By $X\in \mathcal{M}^{\perp}=(\mathcal{M}^{\vee})^{\perp}$ and $C\in\mathcal{M}^{\vee}$, the $\mathfrak{s}$-conflation splits.
Thus we have an $\mathfrak{s}$-deflation $f': M'\to X$ with $M'\in \mathcal{M}$. 
Since $f:M\rightarrow X$ is a right $\mathcal{M}$-approximation, there exists $g:M'\rightarrow X$ such that $fg=f'$.
By our assumption that $\mathcal{C}$ is weakly idempotent complete, $f$ is an $\mathfrak{s}$-deflation.
Thus we have an $\mathfrak{s}$-conflation $X'\rightarrow M\xrightarrow{f}X\dashrightarrow$.
By $M,X\in \mathcal{M}^{\perp}$, we obtain $X'\in\mathcal{M}^{\perp_{>1}}$.
Applying $\mathcal{C}(\mathcal{M},-)$ to the $\mathfrak{s}$-conflation above induces an exact sequence
\begin{align}
\mathcal{C}(\mathcal{M},M)\xrightarrow{\mathcal{C}(\mathcal{M},f)} \mathcal{C}(\mathcal{M},X)\rightarrow \mathbb{E}(\mathcal{M},X')\rightarrow \mathbb{E}(\mathcal{M},M)=0.\notag
\end{align}
Since $f$ is a right $\mathcal{M}$-approximation, we have $\mathbb{E}(\mathcal{M},X')=0$.
Hence $X'\in \mathcal{M}^{\perp}$.
To complete the proof, we show $(\mathcal{M}^{\perp})^{\vee}_{n}\subseteq\operatorname{\mathsf{cocone}}(\mathcal{M}^{\perp},\mathcal{M}^{\vee}_{n-1})\subseteq\operatorname{\mathsf{cone}}(\mathcal{M}^{\perp},\mathcal{M}^{\vee}_{n})$ by induction on $n$. 
Assume $n=0$. By (ii), we have $\mathcal{M}^{\perp}=\operatorname{\mathsf{cocone}}(\mathcal{M}^{\perp},\{ 0\})\subseteq \operatorname{\mathsf{cone}}(\mathcal{M}^{\perp},\mathcal{M})$.
Assume $n\geq 1$. Then we have
\begin{align}
(\mathcal{M}^{\perp})^{\vee}_{n}
&=\operatorname{\mathsf{cocone}}(\mathcal{M}^{\perp},(\mathcal{M}^{\perp})^{\vee}_{n-1})\notag\\
&\subseteq\operatorname{\mathsf{cocone}}(\mathcal{M}^{\perp},\operatorname{\mathsf{cone}}(\mathcal{M}^{\perp}, \mathcal{M}^{\vee}_{n-1}))&\textnormal{by the induction hypothesis}\notag\\
&\subseteq \operatorname{\mathsf{cocone}}(\mathcal{M}^{\perp}\ast\mathcal{M}^{\perp},\mathcal{M}^{\vee}_{n-1})&\textnormal{by \cite[Proposition 3.15]{NP19}}\notag\\
&=\operatorname{\mathsf{cocone}}(\mathcal{M}^{\perp},\mathcal{M}^{\vee}_{n-1})\notag\\
&=\operatorname{\mathsf{cocone}}(\operatorname{\mathsf{cone}}(\mathcal{M}^{\perp}, \mathcal{M}),\mathcal{M}^{\vee}_{n-1})&\textnormal{by (ii)}\notag\\
&\subseteq\operatorname{\mathsf{cone}}(\mathcal{M}^{\perp},\operatorname{\mathsf{cocone}}(\mathcal{M},\mathcal{M}^{\vee}_{n-1}))&\textnormal{by (ET4)$^{\mathrm{op}}$}\notag\\
&=\operatorname{\mathsf{cone}}(\mathcal{M}^{\perp},\mathcal{M}^{\vee}_{n}).\notag
\end{align}
By (i), we have the assertion.
\end{proof}
 
Now, we are ready to prove Theorem \ref{thm-AR}. 
 
\begin{proof}[Proof of Theorem \ref{thm-AR}]
By Propositions \ref{prop-Phi} and \ref{prop-Psi}, it is enough to show $\Psi\Phi=1$ and $\Phi\Psi=1$. 
Let $\mathcal{M}\in\operatorname{\mathsf{silt}}\mathcal{P}^{\infty}$ with $\mathrm{pd}\mathcal{M}<\infty$.
By Lemma \ref{lem-wedge_perp}(3), we obtain $\Psi\Phi(\mathcal{M})=\mathcal{M}^{\vee}\cap\mathcal{M}^{\perp}=\mathcal{M}$.
Hence $\Psi\Phi=1$.
To show $\Phi\Psi=1$, we claim $\mathcal{X}=(\mathcal{X}\cap\mathcal{Y})^{\vee}$ for each $(\mathcal{X},\mathcal{Y})\in\operatorname{\mathsf{hcotors}}\mathcal{C}$ with $\mathrm{pd}\mathcal{X}<\infty$. 
By Proposition \ref{prop-Phi}, $\mathcal{X}\cap\mathcal{Y}$ is a silting subcategory of $\mathcal{P}^{\infty}$.
Thus it follows from Lemmas \ref{lem-psilt_wedge}(5) and \ref{lem_cotors} that 
\begin{align}
\mathcal{X}\subseteq \mathcal{P}^{\infty}\cap{}^{\perp}\mathcal{Y}\subseteq \operatorname{\mathsf{thick}}(\mathcal{X}\cap\mathcal{Y})\cap {}^{\perp}(\mathcal{X}\cap\mathcal{Y})=(\mathcal{X}\cap\mathcal{Y})^{\vee}\subseteq\mathcal{X}^{\vee}=\mathcal{X}.\notag
\end{align}
This completes the proof.
\end{proof}

As an application, we obtain a silting version of \cite[Theorem 5.1]{CP}. 

\begin{corollary}\label{cor-CP}
Let $\mathcal{C}$ be an extriangulated category which has enough projective objects and let $\mathcal{M}$ be a silting subcategory of $\mathcal{P}^{\infty}$.  
Then the following statements are equivalent. 
\begin{enumerate}[\upshape(1)]
\item $\mathcal{C}=\mathcal{P}^{\infty}$. 
\item $\operatorname{\mathsf{thick}}\mathcal{M}=\mathcal{C}$. 
\item The pairs $(\mathcal{M}^{\vee},\mathcal{M}^{\perp})$ and $({}^{\perp}\mathcal{M},\mathcal{M}^{\wedge})$ are hereditary cotorsion paris in $\mathcal{C}$, and $(\mathcal{M}^{\vee},\mathcal{M}^{\perp})$ and $({}^{\perp}\mathcal{M},\mathcal{M}^{\wedge})$ coincide. 
\end{enumerate}
In addition, assume that $\mathcal{C}$ is a weakly idempotent complete category which has enough injective objects, $\mathrm{pd}\mathcal{M}<\infty$ and $\mathcal{M}$ is a contravariantly finite subcategory of $\mathcal{C}$. 
Then the following condition is also equivalent. 
\begin{enumerate}[\upshape(4)]
\item $\mathcal{M}^{\perp}=\mathcal{M}^{\wedge}$. 
\end{enumerate}
\end{corollary}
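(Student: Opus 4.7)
My plan is to establish the cycle $(1)\Leftrightarrow(2)\Leftrightarrow(3)$ first, and then fold in $(4)$ under the additional hypotheses. The equivalence $(1)\Leftrightarrow(2)$ is essentially a tautology: since $\mathcal{M}$ is silting in $\mathcal{P}^{\infty}$, we have $\operatorname{\mathsf{thick}}\mathcal{M}=\mathcal{P}^{\infty}$ by definition, so $(2)$ is just the equality $\mathcal{P}^{\infty}=\mathcal{C}$.

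For $(2)\Rightarrow(3)$, once $\operatorname{\mathsf{thick}}\mathcal{M}=\mathcal{C}$ is assumed, $\mathcal{M}$ becomes a silting subcategory of $\mathcal{C}$ itself. I would then invoke Proposition \ref{thm-AT22} to produce the bounded hereditary cotorsion pair $(\mathcal{M}^{\vee},\mathcal{M}^{\wedge})$ in $\mathcal{C}$, and use Lemma \ref{lem-psilt_wedge}(4) and (5) applied in $\mathcal{C}$ to identify $\mathcal{M}^{\wedge}=\operatorname{\mathsf{thick}}\mathcal{M}\cap\mathcal{M}^{\perp}=\mathcal{M}^{\perp}$ and dually $\mathcal{M}^{\vee}={}^{\perp}\mathcal{M}$. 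This shows that both $(\mathcal{M}^{\vee},\mathcal{M}^{\perp})$ and $({}^{\perp}\mathcal{M},\mathcal{M}^{\wedge})$ coincide with the known hereditary cotorsion pair $(\mathcal{M}^{\vee},\mathcal{M}^{\wedge})$. For $(3)\Rightarrow(2)$, the cotorsion pair axiom gives $\mathcal{C}=\operatorname{\mathsf{cone}}(\mathcal{M}^{\perp},\mathcal{M}^{\vee})$, and the coincidence $\mathcal{M}^{\perp}=\mathcal{M}^{\wedge}$ rewrites this as $\mathcal{C}=\operatorname{\mathsf{cone}}(\mathcal{M}^{\wedge},\mathcal{M}^{\vee})$; since $\mathcal{M}^{\wedge}$ and $\mathcal{M}^{\vee}$ both lie in $\operatorname{\mathsf{thick}}\mathcal{M}$ and thick subcategories are closed under cones and extensions, it follows that $\mathcal{C}\subseteq\operatorname{\mathsf{thick}}\mathcal{M}$.

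For the additional equivalence with $(4)$: the implication $(2)\Rightarrow(4)$ is again just Lemma \ref{lem-psilt_wedge}(4) in $\mathcal{C}$, which gives $\mathcal{M}^{\wedge}=\operatorname{\mathsf{thick}}\mathcal{M}\cap\mathcal{M}^{\perp}=\mathcal{M}^{\perp}$. For $(4)\Rightarrow(1)$ I would use the extra hypotheses (weakly idempotent complete with enough injective and projective objects, $\mathrm{pd}\mathcal{M}<\infty$, $\mathcal{M}$ contravariantly finite) to apply Proposition \ref{prop-Psi}, which produces the hereditary cotorsion pair $(\mathcal{M}^{\vee},\mathcal{M}^{\perp})$ in $\mathcal{C}$. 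Then, using $(4)$, $\mathcal{C}=\operatorname{\mathsf{cone}}(\mathcal{M}^{\perp},\mathcal{M}^{\vee})=\operatorname{\mathsf{cone}}(\mathcal{M}^{\wedge},\mathcal{M}^{\vee})$, and since $\mathcal{M}^{\wedge},\mathcal{M}^{\vee}\subseteq\operatorname{\mathsf{thick}}\mathcal{M}=\mathcal{P}^{\infty}$ and $\mathcal{P}^{\infty}$ is thick (Proposition \ref{prop-propertypfin}), we get $\mathcal{C}\subseteq\mathcal{P}^{\infty}$, i.e.\ $(1)$.

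Because Propositions \ref{thm-AT22}, \ref{prop-propertypfin}, \ref{prop-Psi} and Lemma \ref{lem-psilt_wedge} already do the heavy lifting, there is no really serious obstacle; the proof is mostly bookkeeping. The one delicate point I would be careful about is that the identification $\mathcal{M}^{\wedge}=\mathcal{M}^{\perp}$ used in the $(2)\Rightarrow(3)$ and $(2)\Rightarrow(4)$ steps requires $\operatorname{\mathsf{thick}}\mathcal{M}=\mathcal{C}$ (so that $\mathcal{M}$ is silting in $\mathcal{C}$, not just in $\mathcal{P}^{\infty}$); and in the $(4)\Rightarrow(1)$ direction the key nontrivial input is Proposition \ref{prop-Psi}, which is precisely why the extra hypotheses on $\mathcal{C}$ and $\mathcal{M}$ are needed to include $(4)$ in the equivalence.
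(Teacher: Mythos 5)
Your proof is correct and follows essentially the same route as the paper: the cycle $(1)\Leftrightarrow(2)\Rightarrow(3)\Rightarrow(1)$ via Proposition~\ref{thm-AT22}, Proposition~\ref{prop-propertypfin}, and Lemma~\ref{lem-psilt_wedge}(4)--(5), and then $(4)\Rightarrow(1)$ via Proposition~\ref{prop-Psi}. The one place the paper is more explicit is in $(2)\Rightarrow(3)$: to conclude that $\mathcal{M}$ is presilting (hence silting) in $\mathcal{C}$, the paper cites Lemma~\ref{lem-thickE} to transfer the vanishing of $\mathbb{E}^i_{\mathcal{P}^{\infty}}$ to $\mathbb{E}^i$ (these can genuinely differ for a general extension-closed subcategory), whereas you implicitly rely on $(2)\Rightarrow(1)$ giving $\mathcal{C}=\mathcal{P}^{\infty}$ so that the two extriangulated structures, and hence the two families of extension groups, literally coincide --- both justifications are valid.
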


\begin{proof}
(1)$\Leftrightarrow$(2): This is clear. 

(2)$\Rightarrow$(3): By Lemma \ref{lem-thickE}, $\mathcal{M}$ is a silting subcategory of $\mathcal{C}$. 
It follows from Proposition \ref{thm-AT22} that $(\mathcal{M}^{\vee},\mathcal{M}^{\wedge})$ is a hereditary cotorsion pair in $\mathcal{C}$. 
Moreover, by Lemma \ref{lem-psilt_wedge}(4) and (5), we have ${}^{\perp}\mathcal{M}=\mathcal{M}^{\vee}$ and $\mathcal{M}^{\perp}=\mathcal{M}^{\wedge}$. 

(3)$\Rightarrow$(1): Since $(\mathcal{M}^{\vee},\mathcal{M}^{\wedge})$ is a hereditary cotorsion pair in $\mathcal{C}$, we have
\begin{align}
\mathcal{C}=\operatorname{\mathsf{cone}}(\mathcal{M}^{\wedge},\mathcal{M}^{\vee})\subseteq \operatorname{\mathsf{cone}}(\mathcal{P}^{\infty},\mathcal{P}^{\infty})\subseteq \mathcal{P}^{\infty}, \notag
\end{align}
and hence the assertion holds. 

(3)$\Rightarrow$(4): This is clear. 

In the following, we assume that $\mathcal{C}$ is a weakly idempotent complete category which has  enough injective objects, $\mathrm{pd}\mathcal{M}<\infty$ and $\mathcal{M}$ is a contravariantly finite subcategory of $\mathcal{C}$. 

(4)$\Rightarrow$(1): Since $\mathrm{pd}\mathcal{M}<\infty$ and $\mathcal{M}$ is a contravariantly finite subcategory of $\mathcal{C}$, it follows from Proposition \ref{prop-Psi}  that    
$(\mathcal{M}^{\vee},\mathcal{M}^{\perp})$ is a hereditary cotorsion pair in $\mathcal{C}$. 
Thus we obtain 
\begin{align}
\mathcal{C}=\operatorname{\mathsf{cone}}(\mathcal{M}^{\perp},\mathcal{M}^{\vee})=\operatorname{\mathsf{cone}}(\mathcal{M}^{\wedge},\mathcal{M}^{\vee})\subseteq \operatorname{\mathsf{cone}}(\mathcal{P}^{\infty},\mathcal{P}^{\infty})\subseteq \mathcal{P}^{\infty}.\notag
\end{align}
Hence the proof is complete. 
\end{proof}

By regarding $\operatorname{\mathsf{mod}}\Lambda$ as an extriangulated category, we have the following result.

\begin{corollary}[{\cite[Theorem 5.1]{CP}}] 
Let $\Lambda$ be a finite dimensional algebra and let $T$ be a tilting $\Lambda$-module. 
Then the following statements are equivalent. 
\begin{enumerate}[\upshape(1)]
\item $\Lambda$ has finite global dimension. 
\item $\operatorname{\mathsf{thick}}T=\operatorname{\mathsf{mod}}\Lambda$. 
\item The pairs $((\operatorname{\mathsf{add}}T)^{\vee},(\operatorname{\mathsf{add}}T)^{\perp})$ and $({}^{\perp}(\operatorname{\mathsf{add}}T),(\operatorname{\mathsf{add}}T)^{\wedge})$ are hereditary cotorsion pairs, and $((\operatorname{\mathsf{add}}T)^{\vee},(\operatorname{\mathsf{add}}T)^{\perp})$ and $({}^{\perp}(\operatorname{\mathsf{add}}T),(\operatorname{\mathsf{add}}T)^{\wedge})$ coincide.
\item $(\operatorname{\mathsf{add}}T)^{\perp}=(\operatorname{\mathsf{add}}T)^{\wedge}$.
\end{enumerate} 
\end{corollary}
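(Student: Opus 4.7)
The plan is to deduce the final corollary directly from Corollary \ref{cor-CP} applied to the extriangulated category $\mathcal{C}=\operatorname{\mathsf{mod}}\Lambda$ and the silting subcategory $\mathcal{M}:=\operatorname{\mathsf{add}}T$. First, I would verify the hypotheses of Corollary \ref{cor-CP}. By Example \ref{example-etcat}(1), $\operatorname{\mathsf{mod}}\Lambda$ is an extriangulated category with $\mathbb{E}^{k}(C,A)=\mathrm{Ext}^{k}_{\Lambda}(C,A)$. It is Krull--Schmidt (hence weakly idempotent complete), and as a module category over a finite dimensional algebra it has enough projective objects and enough injective objects.

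Next, I would translate the tilting hypothesis into the silting language of the previous sections. By Corollary \ref{ex-siltilt}, the tilting module $T$ gives rise to a silting subcategory $\mathcal{M}=\operatorname{\mathsf{add}}T$ of $\mathcal{P}^{\infty}=(\operatorname{\mathsf{proj}}(\operatorname{\mathsf{mod}}\Lambda))^{\wedge}$, and by definition of a tilting module $\mathrm{pd}\mathcal{M}=\mathrm{pd}T<\infty$. Since $T$ has only finitely many non-isomorphic indecomposable summands, $\operatorname{\mathsf{add}}T$ is a functorially finite subcategory of $\operatorname{\mathsf{mod}}\Lambda$; in particular, it is contravariantly finite. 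Thus all standing assumptions needed for the ``additional'' part of Corollary \ref{cor-CP} are in force.

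It remains to match condition (1) of the corollary being proved with condition (1) of Corollary \ref{cor-CP}. Since $\mathcal{P}^{\infty}$ consists exactly of $\Lambda$-modules of finite projective dimension, the equality $\operatorname{\mathsf{mod}}\Lambda=\mathcal{P}^{\infty}$ holds if and only if every finitely generated $\Lambda$-module has finite projective dimension. Because $\Lambda$ is finite dimensional, this is in turn equivalent to $\mathrm{gl.dim}\,\Lambda<\infty$ (the global dimension is bounded by the maximum of the projective dimensions of the finitely many simple modules). With this identification, conditions (1)--(4) of the present corollary are literally conditions (1)--(4) of Corollary \ref{cor-CP} for the data $(\mathcal{C},\mathcal{M})=(\operatorname{\mathsf{mod}}\Lambda,\operatorname{\mathsf{add}}T)$, so their equivalence follows immediately.

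There is no serious obstacle here; the proof is essentially a bookkeeping argument that routes the classical statement through the extriangulated framework. The only point that deserves explicit comment is the equivalence $\mathrm{gl.dim}\,\Lambda<\infty\Leftrightarrow \operatorname{\mathsf{mod}}\Lambda=\mathcal{P}^{\infty}$, which I would state as a single short sentence invoking the standard fact on global dimension over finite dimensional algebras, together with a reminder that $\operatorname{\mathsf{add}}T$ is functorially finite so that the ``contravariantly finite'' hypothesis of Corollary \ref{cor-CP} is automatic.
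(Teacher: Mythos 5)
Your proposal is correct and follows essentially the same route as the paper: apply Corollary \ref{cor-CP} to $\mathcal{C}=\operatorname{\mathsf{mod}}\Lambda$ and $\mathcal{M}=\operatorname{\mathsf{add}}T$, using Corollary \ref{ex-siltilt} to identify $T$ as a silting object of $\mathcal{P}^{\infty}$, Hom-finiteness to get contravariant finiteness of $\operatorname{\mathsf{add}}T$, and the translation $\mathrm{gl.dim}\,\Lambda<\infty\Leftrightarrow\operatorname{\mathsf{mod}}\Lambda=\mathcal{P}^{\infty}$. The paper's proof is just a terser version of what you wrote.
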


\begin{proof}
The global dimension of $\Lambda$ is finite if and only if $\operatorname{\mathsf{mod}}\Lambda=\mathcal{P}^{\infty}$. 
By Corollary \ref{ex-siltilt}, $T$ is a silting object in $\mathcal{P}^{\infty}$.
Since $\operatorname{\mathsf{mod}}\Lambda$ is Hom-finite, $\operatorname{\mathsf{add}}T$ is contravariantly finite.
Thus the assertion follows from Corollary \ref{cor-CP}. 
\end{proof}

We finish this subsection with recovering Auslander--Reiten's result. 
For objects $C_{1}, C_{2}, \ldots, C_{n}$ in $\mathcal{C}$, let $\mathcal{F}(C_{1},C_{2},\ldots, C_{n})$ denote the subcategory consisting of $M\in\mathcal{C}$ which admits a sequence of $\mathfrak{s}$-inflations
\begin{align}
 0=M_{0}\xrightarrow{f_{0}}M_{1}\xrightarrow{f_{1}}M_{2}\rightarrow \cdots \xrightarrow{f_{l-1}}M_{l}=M\notag
\end{align}
such that $\mathrm{Cone}(f_{i})\in\operatorname{\mathsf{add}}C_{j_{i}}$ for each $i\in[0,l-1]$. 
Note that $\mathcal{F}(C_{1},C_{2},\ldots, C_{n})$ is the smallest subcategory of $\mathcal{C}$ which is closed under extensions and contains $C_{1},C_{2},\ldots, C_{n}$.
The following lemma is an extriangulated version of \cite[Proposition 3.8, Corollary 3.10]{AR91}. 

\begin{lemma}\label{lem-finpdim}
Let $\mathcal{C}$ be a Krull--Schmidt extriangulated category which has enough projective objects. 
Assume $\mathcal{C}=\mathcal{F}(C_{1},C_{2},\ldots,C_{n})$ for objects $C_{1}, C_{2}, \ldots, C_{n}$ in $\mathcal{C}$.
Let $\mathcal{X}$ be a contravariantly finite resolving subcategory of $\mathcal{C}$.
Then the following statements hold. 
\begin{enumerate}[\upshape(1)]
\item For each $M\in\mathcal{C}$, there exists an $\mathfrak{s}$-conflation $K\rightarrow Y \rightarrow M\dashrightarrow $ such that $K\in\mathcal{X}^{\perp}$ and $Y\in\mathcal{F}(X_{1}, X_{2}, \ldots, X_{n})$,  where $X_{i}\rightarrow C_{i}$ is a minimal right $\mathcal{X}$-approximation. 
\item $\mathcal{X}=\operatorname{\mathsf{add}}(\mathcal{F}(X_{1}, X_{2}, \ldots, X_{n}))$.
\item $\mathcal{X}\subseteq\mathcal{P}^{\infty}$ if and only if $\mathrm{pd}\mathcal{X}<\infty$. 
\end{enumerate}
\end{lemma}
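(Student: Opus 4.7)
My plan is to prove (1), (2), and (3) in order, with (1) being the crux: (2) then follows by a short splitting argument, and (3) follows from (2) together with properties of $(\operatorname{\mathsf{proj}}\mathcal{C})^{\wedge}_{d}$.

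For (1), I would induct on the length $l$ of a filtration of $M\in\mathcal{C}=\mathcal{F}(C_{1},\ldots,C_{n})$; the case $l=0$ is trivial. For the inductive step, the top $\mathfrak{s}$-conflation of the filtration is $M_{l-1}\to M\to C\dashrightarrow$ with $C\in\operatorname{\mathsf{add}}C_{j}$. The induction hypothesis gives an approximation $K_{1}\to Y_{1}\to M_{l-1}\dashrightarrow$ with $K_{1}\in\mathcal{X}^{\perp}$ and $Y_{1}\in\mathcal{F}(X_{1},\ldots,X_{n})$, and separately I would establish such an approximation $K_{2}\to Y_{2}\to C\dashrightarrow$ for each $C\in\operatorname{\mathsf{add}}C_{j}$. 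To combine them I run the following horseshoe argument. First, pulling back $Y_{2}\to C$ along $M\to C$ via \cite[Proposition 3.15]{NP19} gives $\mathfrak{s}$-conflations $M_{l-1}\to E\to Y_{2}\dashrightarrow$ and $K_{2}\to E\to M\dashrightarrow$. Second, applying $\mathcal{C}(Y_{2},-)$ to the approximation of $M_{l-1}$ and using $\mathbb{E}^{k}(Y_{2},K_{1})=0$ for $k\ge 1$ (since $Y_{2}\in\mathcal{X}$ and $K_{1}\in\mathcal{X}^{\perp}$) yields an isomorphism $\mathbb{E}(Y_{2},Y_{1})\cong\mathbb{E}(Y_{2},M_{l-1})$; the extension class of $M_{l-1}\to E\to Y_{2}\dashrightarrow$ lifts to an $\mathfrak{s}$-conflation $Y_{1}\to F\to Y_{2}\dashrightarrow$ equipped with a morphism $F\to E$ fitting into an $\mathfrak{s}$-conflation $K_{1}\to F\to E\dashrightarrow$. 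Finally, applying (ET4)$^{\mathrm{op}}$ to the composition of $\mathfrak{s}$-deflations $F\to E\to M$ produces the desired $\mathfrak{s}$-conflation $K\to F\to M\dashrightarrow$ whose cocone $K$ fits in $K_{1}\to K\to K_{2}\dashrightarrow$, hence $K\in\mathcal{X}^{\perp}$ (closed under extensions by the long exact sequence of $\mathbb{E}^{\bullet}$), and $F\in\mathcal{F}(X_{1},\ldots,X_{n})$.

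The main obstacle is the base case: constructing the approximation $K_{j}\to Y_{j}\to C_{j}\dashrightarrow$ for each generator $C_{j}$. The minimal right $\mathcal{X}$-approximation $X_{j}\to C_{j}$ is an $\mathfrak{s}$-deflation (as $\mathcal{X}$ is resolving and $\mathcal{C}$ has enough projectives), with cocone $L_{j}$ in $\mathcal{C}$ that need not lie in $\mathcal{X}^{\perp}$. I would iterate right $\mathcal{X}$-approximations of the successive cocones and splice the resulting $\mathfrak{s}$-conflations via the horseshoe construction above to build longer approximations $K_{j}^{(m)}\to Y_{j}^{(m)}\to C_{j}\dashrightarrow$ with $Y_{j}^{(m)}\in\mathcal{F}(X_{1},\ldots,X_{n})$. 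The technical hard point is proving this iteration terminates in a cocone lying in $\mathcal{X}^{\perp}$; here I would exploit the finiteness of $\mathcal{C}=\mathcal{F}(C_{1},\ldots,C_{n})$ together with the Krull--Schmidt structure, in the spirit of the arguments in \cite{AR91}.

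For (2), the inclusion $\operatorname{\mathsf{add}}(\mathcal{F}(X_{1},\ldots,X_{n}))\subseteq\mathcal{X}$ is immediate because $X_{i}\in\mathcal{X}$ and $\mathcal{X}$ is closed under extensions and direct summands. Conversely, given $M\in\mathcal{X}$, part (1) yields $K\to Y\to M\dashrightarrow$ with $\mathbb{E}(M,K)=0$, so $\mathrm{id}_{M}$ lifts to a section $M\to Y$ through $\mathcal{C}(M,Y)\to\mathcal{C}(M,M)$, exhibiting $M$ as a direct summand of $Y\in\mathcal{F}(X_{1},\ldots,X_{n})$. For (3), the forward direction is immediate; conversely, if $\mathcal{X}\subseteq\mathcal{P}^{\infty}$, then each $X_{i}$ has finite projective dimension, and setting $d:=\max_{i}\mathrm{pd}\,X_{i}$, Lemma \ref{lem-psilt_wedge}(1) applied to $\mathcal{M}=\operatorname{\mathsf{proj}}\mathcal{C}$ tells us that $(\operatorname{\mathsf{proj}}\mathcal{C})^{\wedge}_{d}$ is closed under extensions and direct summands, so $\operatorname{\mathsf{add}}(\mathcal{F}(X_{1},\ldots,X_{n}))\subseteq(\operatorname{\mathsf{proj}}\mathcal{C})^{\wedge}_{d}$, and (2) gives $\mathrm{pd}\,\mathcal{X}\le d<\infty$.
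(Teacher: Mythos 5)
The inductive step of your proof of (1) is essentially the paper's argument (pullback via \cite[Proposition 3.15]{NP19}, lift the extension class through $\alpha^*\colon \mathbb{E}(Y',Y_{l-1})\xrightarrow{\sim}\mathbb{E}(Y',M_{l-1})$ using $\mathbb{E}^{\geq 1}(Y',K_{l-1})=0$, then splice with (ET4)$^{\mathrm{op}}$), and your proofs of (2) and (3) likewise match the paper. But the base case of (1) has a genuine gap. You assert that the cocone $L_j$ of the minimal right $\mathcal{X}$-approximation $X_j\to C_j$ ``need not lie in $\mathcal{X}^{\perp}$'' and propose an iteration of successive approximations whose termination you admit you cannot establish. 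This premise is wrong, and the iteration is both unnecessary and not visibly well-founded. Because $\mathcal{X}$ is a contravariantly finite resolving subcategory and $\mathcal{C}$ has enough projectives, the cocone of a right $\mathcal{X}$-approximation automatically lands in $\mathcal{X}^{\perp}$. This is the extriangulated version of Wakamatsu's lemma combined with the dimension-shift that resolving subcategories afford: if $K\to X\to C\dashrightarrow$ with $X\to C$ a right $\mathcal{X}$-approximation, then $\mathbb{E}(\mathcal{X},K)=0$ by the approximation property, and $\mathbb{E}^{k}(\mathcal{X},K)\cong\mathbb{E}(\Omega^{k-1}\mathcal{X},K)=0$ for $k\ge 2$ since $\mathcal{X}$ is closed under cocones of projective deflations. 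The paper packages this as a citation to \cite[Lemma 5.14]{AT22}, which produces an $\mathfrak{s}$-conflation $K'_i\to X'_i\to C_i\dashrightarrow$ with $K'_i\in\mathcal{X}^{\perp}$ and $X'_i\in\mathcal{X}$ in one stroke; the paper then passes to the right-minimal version of $X'_i\to C_i$ (its cocone is a direct summand of $K'_i$, hence still in $\mathcal{X}^{\perp}$). Once you have this, the base case is a single application of that lemma plus Krull--Schmidt, and no iteration or finiteness-of-$\mathcal{F}(C_1,\ldots,C_n)$ argument is needed.

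One minor slip in (3): the ``if and only if'' there is between $\mathcal{X}\subseteq\mathcal{P}^{\infty}$ and $\mathrm{pd}\,\mathcal{X}<\infty$, and the nontrivial implication is $\mathcal{X}\subseteq\mathcal{P}^{\infty}\Rightarrow\mathrm{pd}\,\mathcal{X}<\infty$ (uniform bound), which is the one you actually prove; your labelling of which direction is ``forward'' is inverted, but the substance of your argument for (3) is correct and matches the paper.
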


\begin{proof}
Let $\mathcal{Y}:=\mathcal{F}(X_{1},X_{2}, \ldots, X_{n})$. 

(1) (i) We show that there exists an $\mathfrak{s}$-conflation $K_{i}\rightarrow X_{i} \xrightarrow{\alpha_{i}} C_{i}\dashrightarrow $ such that $K_{i}\in\mathcal{X}^{\perp}$ and $\alpha_{i}:X_{i}\rightarrow C_{i}$ is a minimal right $\mathcal{X}$-approximation for each $i\in[1,n]$. 
By \cite[Lemma 5.14]{AT22}, we have an $\mathfrak{s}$-conflation $K'_{i}\rightarrow X'_{i}\xrightarrow{\beta_{i}} C_{i}\dashrightarrow$ with $K'_{i}\in\mathcal{X}^{\perp}$ and $X'_{i}\in\mathcal{X}$. 
It follows from $K'_{i}\in\mathcal{X}^{\perp}$ that $\beta_{i}$ is a right $\mathcal{X}$-approximation. 
Let $\alpha_{i}$ be a right minimal morphism of $\beta_{i}$. 
Then $\alpha_{i}$ is an $\mathfrak{s}$-deflation and $K_{i}:=\mathrm{Cocone}(\alpha_{i})$ is a direct summand of $K'_{i}\in\mathcal{X}^{\perp}$. 
Thus we obtain a desired $\mathfrak{s}$-conflation. 

(ii)  Let $M\in\mathcal{C}=\mathcal{F}(C_{1},C_{2},\ldots, C_{n})$. 
Then there exists a sequence of $\mathfrak{s}$-inflations
\begin{align}
 0=M_{0}\xrightarrow{f_{0}}M_{1}\xrightarrow{f_{1}}M_{2}\rightarrow \cdots \xrightarrow{f_{l-1}}M_{l}=M\notag
\end{align}
such that $\mathrm{Cone}(f_{i})\in\operatorname{\mathsf{add}}C_{j_{i}}$ for each $i\in[0,l-1]$. 
By induction on $l$, we show that there exists an $\mathfrak{s}$-conflation $K\rightarrow Y \rightarrow M\dashrightarrow $ such that $K\in\mathcal{X}^{\perp}$ and $Y\in\mathcal{Y}$.
If $l=1$, then $M\in\operatorname{\mathsf{add}}C_{j_{0}}$. 
Hence the assertion follows from (i). 
Assume $l\geq 2$. 
Then we have an $\mathfrak{s}$-conflation $M_{l-1}\xrightarrow{f_{l-1}}M\rightarrow \mathrm{Cone}(f_{l-1})\dashrightarrow$ with $\mathrm{Cone}(f_{l-1})\in\operatorname{\mathsf{add}}C_{j_{l-1}}$. 
It follows from (i) that there exists an $\mathfrak{s}$-conflation $K'\rightarrow Y'\rightarrow \mathrm{Cone}(f_{l-1})\dashrightarrow$ such that $K'\in\mathcal{X}^{\perp}$ and $Y'\in\operatorname{\mathsf{add}}X_{j_{l-1}}$. 
By \cite[Proposition 3.15]{NP19}, we have a commutative diagram
\begin{align}
\xymatrix{
&K'\ar@{=}[r]\ar[d]&K'\ar[d]&\\
M_{l-1}\ar[r]\ar@{=}[d]&E\ar[r]\ar[d]&Y'\ar@{-->}[r]^{\delta}\ar[d]&\\
M_{l-1}\ar[r]&M\ar[r]\ar@{-->}[d]&\mathrm{Cone}(f_{l-1})\ar@{-->}[r]\ar@{-->}[d]&\\
&&&.
}\notag
\end{align}
The induction hypothesis implies that there exists an $\mathfrak{s}$-conflations $K_{l-1}\rightarrow Y_{l-1}\xrightarrow{\alpha} M_{l-1}\dashrightarrow$ such that $K_{l-1}\in\mathcal{X}^{\perp}$ and $Y_{l-1}\in\mathcal{Y}$. 
Applying $\mathcal{C}(Y',-)$ to the $\mathfrak{s}$-conflation gives an exact sequence 
\begin{align}
\mathbb{E}(Y',K_{l-1})\rightarrow \mathbb{E}(Y',Y_{l-1})\xrightarrow{\mathbb{E}(Y',\alpha)} \mathbb{E}(Y', M_{l-1})\rightarrow \mathbb{E}^{2}(Y',K_{l-1}).\notag
\end{align}
By $Y'\in\mathcal{X}$ and $K_{l-1}\in\mathcal{X}^{\perp}$, we have $\mathbb{E}(Y',K_{l-1})=0=\mathbb{E}^{2}(Y',K_{l-1})$.
Thus there exists $\eta\in\mathbb{E}(Y',Y_{l-1})$ such that $\delta=\mathbb{E}(Y',\alpha)(\eta)=\alpha^{\ast}\eta$. 
Hence we obtain a commutative diagram
\begin{align}
\xymatrix{
K_{l-1}\ar@{=}[r]\ar[d]&K_{l-1}\ar[d]&&\\
Y_{l-1}\ar[r]\ar[d]_{\alpha}&Y\ar[r]\ar[d]&Y'\ar@{-->}[r]^{\eta}\ar@{=}[d]&\\
M_{l-1}\ar[r]\ar@{-->}[d]&E\ar[r]\ar@{-->}[d]&Y'\ar@{-->}[r]^{\delta}&\\
&&&.
}\notag
\end{align}
Since $\mathcal{Y}$ is closed under extensions, we have $Y\in\mathcal{Y}$. 
By (ET4)$^{\mathrm{op}}$, we have a commutative diagram
\begin{align}
\xymatrix{
K_{l-1}\ar@{=}[r]\ar[d]&K_{l-1}\ar[d]&&\\
K\ar[r]\ar[d]&Y\ar[r]\ar[d]&M\ar@{-->}[r]\ar@{=}[d]&\\
K'\ar[r]\ar@{-->}[d]&E\ar[r]\ar@{-->}[d]&M\ar@{-->}[r]&\\
&&&.
}\notag
\end{align}
Since $\mathcal{X}^{\perp}$ is closed under extensions, we have $K\in\mathcal{X}^{\perp}$, and hence  $K\rightarrow Y \rightarrow M\dashrightarrow$ is a desired $\mathfrak{s}$-conflation. 

(2) Let $X\in \mathcal{X}$. By (1), there exists an $\mathfrak{s}$-conflation $K\rightarrow Y\rightarrow X\dashrightarrow$ such that $K\in\mathcal{X}^{\perp}$ and $Y\in \mathcal{Y}$. 
By $X\in \mathcal{X}$, the $\mathfrak{s}$-conflation splits.
Hence $X\in \operatorname{\mathsf{add}}\mathcal{Y}$.
Since the converse inclusion is clear, we have the assertion.

(3) Since the ``if'' part is clear, we show the ``only if'' part.
By $X_{i}\in\mathcal{X}\subseteq \mathcal{P}^{\infty}$, there exists $d\geq 0$ such that, for each $i\in[1,n]$, $\operatorname{\mathsf{add}}X_{i}\subseteq(\operatorname{\mathsf{proj}}\mathcal{C})^{\wedge}_{d}$ holds. By Lemma \ref{lem-psilt_wedge}(1), the subcategory $(\operatorname{\mathsf{proj}}\mathcal{C})^{\wedge}_{d}$ is closed under extensions and direct summands.
Therefore it follows from (2) that $\mathcal{X}=\operatorname{\mathsf{add}}(\mathcal{F}(X_{1},X_{2},\ldots, X_{n}))\subseteq (\operatorname{\mathsf{proj}}\mathcal{C})^{\wedge}_{d}$.
Namely, $\mathrm{pd}\mathcal{X}\leq d$.
\end{proof}

Now, we are ready to recover Auslander--Reiten's result. 

\begin{corollary}[{\cite[Theorem 5.5]{AR91}}]\label{cor_AR}
Let $\Lambda$ be a finite dimensional algebra. 
Then $T \mapsto (\operatorname{\mathsf{add}}T)^{\vee}$ gives a bijection between the set of isomorphism classes of basic tilting modules and the set of contravariantly finite resolving subcategories consisting of modules with finite projective dimension.
\end{corollary}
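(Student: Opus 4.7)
The plan is to obtain this as a direct application of Theorem~\ref{thm-AR2} applied to the extriangulated category $\mathcal{C}=\operatorname{\mathsf{mod}}\Lambda$, after identifying both sides of the bijection appropriately. First I would check the hypotheses of Theorem~\ref{thm-AR2}. Since $\operatorname{\mathsf{mod}}\Lambda$ is abelian and Krull--Schmidt, it is weakly idempotent complete and has enough projective and enough injective objects. It remains to show that every silting subcategory $\mathcal{M}$ of $\mathcal{P}^{\infty}$ with $\mathrm{pd}\mathcal{M}<\infty$ is contravariantly finite in $\operatorname{\mathsf{mod}}\Lambda$. Because $\Lambda\in\mathcal{P}^{\infty}$ is a tilting module, Corollary~\ref{ex-siltilt} gives a silting object of $\mathcal{P}^{\infty}$, and then (by the discussion after the definition of silting subcategories) each silting subcategory of $\mathcal{P}^{\infty}$ contains an additive generator, hence is of the form $\operatorname{\mathsf{add}}T$ for some basic silting object $T\in\mathcal{P}^{\infty}$. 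Since $\operatorname{\mathsf{mod}}\Lambda$ is Hom-finite, $\operatorname{\mathsf{add}}T$ is automatically functorially finite, so in particular contravariantly finite in $\operatorname{\mathsf{mod}}\Lambda$.

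Next I would identify the two sides. By Corollary~\ref{ex-siltilt}, silting objects in $\mathcal{P}^{\infty}$ are precisely tilting $\Lambda$-modules, and every tilting module has finite projective dimension by definition. Thus the assignment $T\mapsto \operatorname{\mathsf{add}}T$ yields a bijection between isomorphism classes of basic tilting $\Lambda$-modules and $\{\mathcal{M}\in\operatorname{\mathsf{silt}}\mathcal{P}^{\infty}\mid \mathrm{pd}\mathcal{M}<\infty\}$. On the other side, I would use Lemma~\ref{lem-finpdim}(3): since every finite-dimensional $\Lambda$-module admits a composition series, we have $\operatorname{\mathsf{mod}}\Lambda=\mathcal{F}(S_{1},\ldots,S_{n})$ for the simple modules $S_{1},\ldots,S_{n}$, so the hypothesis of Lemma~\ref{lem-finpdim} is met. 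Therefore a contravariantly finite resolving subcategory $\mathcal{X}$ of $\operatorname{\mathsf{mod}}\Lambda$ satisfies $\mathcal{X}\subseteq\mathcal{P}^{\infty}$ (i.e.\ consists of modules with finite projective dimension) if and only if $\mathrm{pd}\mathcal{X}<\infty$. This matches exactly the right-hand side of the bijection in Theorem~\ref{thm-AR2}.

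Finally, applying Theorem~\ref{thm-AR2} with $\mathcal{C}=\operatorname{\mathsf{mod}}\Lambda$ yields mutually inverse bijections $\varphi(\mathcal{M})=\mathcal{M}^{\vee}$ and $\psi(\mathcal{X})=\mathcal{X}\cap\mathcal{X}^{\perp}$, so via the identification $T\leftrightarrow\operatorname{\mathsf{add}}T$, the composite map sends a basic tilting module $T$ to $(\operatorname{\mathsf{add}}T)^{\vee}$, which is the desired assertion. There is no serious obstacle; the only point requiring care is the translation between the conditions ``$\mathcal{X}$ consists of modules of finite projective dimension'' and ``$\mathrm{pd}\mathcal{X}<\infty$'', which is handled cleanly by Lemma~\ref{lem-finpdim}(3) using that $\operatorname{\mathsf{mod}}\Lambda$ is generated by finitely many simple modules under extensions.
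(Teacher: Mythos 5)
Your proposal is correct and follows essentially the same route as the paper's proof: use the existence of a silting object (hence additive generators for all silting subcategories) together with Hom-finiteness to verify the contravariant finiteness hypothesis of Theorem~\ref{thm-AR2}, apply Lemma~\ref{lem-finpdim}(3) to translate between the two formulations of "finite projective dimension," and invoke Corollary~\ref{ex-siltilt} to identify silting objects of $\mathcal{P}^{\infty}$ with tilting modules. The only cosmetic difference is that you reach the base silting object via Corollary~\ref{ex-siltilt} rather than citing Proposition~\ref{prop-siltpfin}(1) directly, which is an equivalent detour.
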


\begin{proof}
By Proposition \ref{prop-siltpfin}(1), $\Lambda$ is a silting object in $\mathcal{P}^{\infty}$.
Thus it follows from \cite[Proposition 5.4]{AT22} that each silting subcategory of $\mathcal{P}^{\infty}$ has an additive generator.
Thus we obtain $\operatorname{\mathsf{silt}}\mathcal{P}^{\infty}=\{ \mathcal{M}\in \operatorname{\mathsf{silt}}\mathcal{P}^{\infty}\mid \mathrm{pd}\mathcal{M}<\infty\}$.
Since $\operatorname{\mathsf{mod}}\Lambda$ is Hom-finite, all silting subcategories of $\mathcal{P}^{\infty}$ are contravariantly finite.
By Theorem \ref{thm-AR2} and Lemma \ref{lem-finpdim}(3), we have mutually inverse bijections
\begin{align}
\xymatrix{
\operatorname{\mathsf{silt}}\mathcal{P}^{\infty}\ar@<0.5ex>[d]^-{\varphi} \\
\{\mathcal{X}:\textnormal{contravariantly finite resolving subcategory of }\mathcal{C}\mid \mathcal{X} \subseteq\mathcal{P}^{\infty}\}\ar@<0.5ex>[u]^{\psi}. 
}\notag\end{align}
Thus the assertion follows from Corollary \ref{ex-siltilt}.
\end{proof}

\section{Examples of silting objects in an extriangulated category}

In this section, we give an example of silting objects and silting mutation. 
Let $\mathcal{C}$ be an extriangulated category.
Assume that $R$ is a field and $\mathcal{C}$ is $\mathbb{E}$-finite, that is, $\mathbb{E}(X,Y)$ is finite dimensional for all $X,Y\in\mathcal{C}$.
Recall the notion of a standardizable set in $\mathcal{C}$.

\begin{definition}
Let $\Theta:=(\Theta(1), \Theta(2), \ldots, \Theta(n))$ be an ordered set of objects in $\mathcal{C}$.
We call $\Theta$ a \emph{standardizable set} in $\mathcal{C}$ if it satisfies the following conditions.
\begin{itemize}
\item[(S1)] For each $i\in [1,n]$, $\Theta(i)$ is indecomposable.
\item[(S2)] If $i>j$, then $\mathcal{C}(\Theta(i), \Theta(j))=0$ holds.
\item[(S3)] If $i\geq j$, then $\mathbb{E}(\Theta(i), \Theta(j))=0$ holds.
\end{itemize}
\end{definition}

Let $\mathcal{F}(\Theta):=\mathcal{F}(\Theta(1),\Theta(2),\ldots,\Theta(n))$, $\mathcal{F}(\Theta(\geq i)):=\mathcal{F}(\Theta(i),\Theta(i+1),\ldots,\Theta(n))$ and $\mathcal{F}(\Theta(\leq i)):=\mathcal{F}(\Theta(1),\Theta(2),\ldots,\Theta(i))$. 
Since $\mathcal{F}(\Theta)$ is closed under extensions, it becomes an extriangulated category.   
Note that $\mathcal{F}(\Theta)=\operatorname{\mathsf{add}}\Theta(n)\ast\operatorname{\mathsf{add}}\Theta(n-1)\ast\cdots\ast\operatorname{\mathsf{add}}\Theta(1)$ (e.g., \cite[Proposition 3.9(1)]{AT}).

Following Dlab--Ringel's standardization method (\cite{DR}), we construct a projective generator and an injective cogenerator of $\mathcal{F}(\Theta)$.
An object $X \in \mathcal{F}(\Theta)$ is called a \emph{projective generator} if it is projective in $\mathcal{F}(\Theta)$ and $\mathcal{F}(\Theta)\subseteq \operatorname{\mathsf{cone}}(\mathcal{F}(\Theta),\operatorname{\mathsf{add}}X)$ holds.
Dually, we define an \emph{injective cogenerator} of $\mathcal{F}(\Theta)$.

\begin{proposition}[{\cite[Proposition 3.10(1)]{AT}}]\label{prop-std}
Let $\Theta:=(\Theta(1), \Theta(2), \ldots,\Theta(n))$ be a standardizable set in $\mathcal{C}$. 
Then the following statements hold.
\begin{enumerate}[\upshape(1)]
\item For each $i\in[1,n]$, there exists an $\mathfrak{s}$-conflation 
\begin{align}
K(i) \rightarrow P(i)\rightarrow \Theta(i) \dashrightarrow \notag 
\end{align}
such that $K(i)\in\mathcal{F}(\Theta(\geq i+1))$ and $P(i)$ is a projective object in $\mathcal{F}(\Theta)$. 
In particular, $P_{\Theta}:=\oplus_{i=1}^{n}P(i)$ is a projective generator of $\mathcal{F}(\Theta)$. 
\item For each $i\in[1,n]$, there exists an $\mathfrak{s}$-conflation 
\begin{align}
\Theta(i) \rightarrow I(i)\rightarrow C(i) \dashrightarrow \notag 
\end{align}
such that $C(i)\in\mathcal{F}(\Theta(\leq i-1))$ and $I(i)$ is an injective object in $\mathcal{F}(\Theta)$. 
In particular, $I_{\Theta}:=\oplus_{i=1}^{n}I(i)$ is an injective cogenerator of $\mathcal{F}(\Theta)$. 
\end{enumerate}
\end{proposition}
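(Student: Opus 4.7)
The plan is to prove part (1) by explicitly constructing each $P(i)$ through iterated universal extensions (the dual of the universal coextension construction recalled just before Theorem 3.15), and then to deduce that $P_{\Theta} := \bigoplus_{i=1}^{n} P(i)$ is a projective generator. Part (2) will follow by dualizing, replacing $\mathfrak{s}$-inflations with $\mathfrak{s}$-deflations, universal extensions with the universal coextensions given in the paper, and reversing the order of processing.

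For (1), I fix $i$ and set $M_i := \Theta(i)$. For $k = i+1, i+2, \ldots, n$ in this order I put $d_k := \dim_R \mathbb{E}(M_{k-1}, \Theta(k))$, which is finite by $\mathbb{E}$-finiteness. The diagonal $\Theta(k) \to \Theta(k)^{\oplus d_k}$ is a split monomorphism, so it sits in a trivial $\mathfrak{s}$-conflation and is an $\mathfrak{s}$-inflation. Dualizing the universal coextension construction, I obtain an $\mathfrak{s}$-conflation $\Theta(k)^{\oplus d_k} \to M_k \to M_{k-1} \dashrightarrow$ whose connecting map $\mathcal{C}(\Theta(k)^{\oplus d_k}, \Theta(k)) \to \mathbb{E}(M_{k-1}, \Theta(k))$ is surjective. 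Setting $P(i) := M_n$, the composition of the $\mathfrak{s}$-deflations $M_n \to M_{n-1} \to \cdots \to M_i$ is an $\mathfrak{s}$-deflation $P(i) \to \Theta(i)$ by (ET4); its kernel $K(i)$ carries a filtration with factors $\Theta(i+1)^{\oplus d_{i+1}}, \ldots, \Theta(n)^{\oplus d_n}$, so $K(i) \in \mathcal{F}(\Theta(\geq i+1))$, yielding the desired $\mathfrak{s}$-conflation.

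Next I verify that $P(i)$ is projective in $\mathcal{F}(\Theta)$ via the claim that $\mathbb{E}(M_k, \Theta(j)) = 0$ for every $j \leq k$, proved by induction on $k$ starting from $k = i$, where it is exactly (S3). For the inductive step I apply $\mathcal{C}(-, \Theta(j))$ to the conflation above. When $j = k$, surjectivity of the connecting map combined with (S3) forces $\mathbb{E}(M_k, \Theta(k)) \hookrightarrow \mathbb{E}(\Theta(k), \Theta(k))^{\oplus d_k} = 0$. When $j < k$, the long exact sequence sandwiches $\mathbb{E}(M_k, \Theta(j))$ between $\mathbb{E}(M_{k-1}, \Theta(j)) = 0$ (inductive hypothesis) and $\mathbb{E}(\Theta(k), \Theta(j))^{\oplus d_k} = 0$ by (S3) applied to $k > j$. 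Taking $k = n$ yields $\mathbb{E}(P(i), \Theta(j)) = 0$ for all $j$; a second induction on the length of a $\Theta$-filtration using Proposition 2.3 upgrades this to $\mathbb{E}(P(i), X) = 0$ for all $X \in \mathcal{F}(\Theta)$, which is projectivity inside the extension-closed subcategory $\mathcal{F}(\Theta)$. To finish (1), I would produce, for any $M \in \mathcal{F}(\Theta)$, an $\mathfrak{s}$-conflation $N \to X \to M \dashrightarrow$ with $X \in \operatorname{\mathsf{add}} P_{\Theta}$ and $N \in \mathcal{F}(\Theta)$ by induction on $\Theta$-filtration length. For $M = \Theta(i)$ the constructed conflation works. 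For a step $A \to M \to B \dashrightarrow$ with $A, B$ of shorter length, pick inductive deflations $X_A \to A$ and $X_B \to B$ from $\operatorname{\mathsf{add}} P_{\Theta}$; projectivity of $X_B$ in $\mathcal{F}(\Theta)$ gives a lift $X_B \to M$, and a horseshoe-type assembly via (ET4) produces an $\mathfrak{s}$-deflation $X_A \oplus X_B \to M$ whose kernel extends $N_B$ by $N_A$, hence lies in $\mathcal{F}(\Theta)$.

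The main obstacle is maintaining the $\mathbb{E}$-vanishing throughout the iterated universal extensions: each freshly glued layer $\Theta(k)^{\oplus d_k}$ could a priori revive nonzero $\mathbb{E}^1$ into previously treated standard objects. Processing $k$ in \emph{increasing} order resolves this because (S3) provides $\mathbb{E}(\Theta(k), \Theta(j)) = 0$ for every $j < k$, so new layers are invisible to earlier $\Theta(j)$'s. The construction in the opposite direction, using genuine universal coextensions with $k$ decreasing from $i-1$ to $1$, produces the injective cogenerator $I_{\Theta} = \bigoplus_{i=1}^n I(i)$ of part (2) by the dual verification.
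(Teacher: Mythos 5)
The paper does not supply a proof of this proposition: it is quoted verbatim from \cite[Proposition 3.10(1)]{AT}, so there is nothing in the present source to compare your argument against. Judged on its own merits, your reconstruction is correct and follows the standard Dlab--Ringel standardization scheme transplanted to extriangulated categories: build $P(i)$ by gluing layers $\Theta(k)^{\oplus d_k}$ onto $\Theta(i)$ via iterated universal extensions for $k$ running \emph{upward} from $i+1$ to $n$, then prove $\mathbb{E}(M_k,\Theta(j))=0$ for $j\le k$ by induction, using the surjectivity of the connecting map (to handle $j=k$) and (S3) (to kill the right-hand term), and pass from $\Theta(j)$ to all of $\mathcal{F}(\Theta)$ by a second filtration-length induction. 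Your observation about the order of processing is exactly the crux: going upward guarantees that the new layer $\Theta(k)$ is $\mathbb{E}$-orthogonal to every previously treated $\Theta(j)$ with $j<k$, and the reverse (downward) order plays the dual role in part (2). The horseshoe-style induction for the generator property is the right tool and works via (ET4)$^{\mathrm{op}}$ as you sketch; the only points worth writing out in a full version are (a) handling $M\in\operatorname{\mathsf{add}}\Theta(j)$ rather than just $\Theta(j)^{\oplus r}$ in the base case, which amounts to composing $P(j)^{\oplus r}\to\Theta(j)^{\oplus r}$ with a retraction and invoking (ET4)$^{\mathrm{op}}$, and (b) the degenerate case $d_k=0$, where one simply sets $M_k:=M_{k-1}$. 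Neither causes a problem, and the proof is sound.
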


Now, we give an example of a silting object of $\mathcal{F}(\Theta)$.

\begin{proposition}\label{prop-stdsilt}
If $\Theta:=(\Theta(1), \Theta(2), \ldots,\Theta(n))$ is a standardizable set in $\mathcal{C}$, then $P_{\Theta}$ and $I_{\Theta}$ are silting objects of $\mathcal{F}(\Theta)$. In particular, $\operatorname{\mathsf{silt}}\mathcal{F}(\Theta)$ has a greatest element $P_{\Theta}$ and a least element $I_{\Theta}$.
\end{proposition}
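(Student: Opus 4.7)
The plan is to verify the three silting conditions for $P_\Theta$ inside the extriangulated category $\mathcal{F}(\Theta)$, using that Proposition \ref{prop-std}(1) provides $P_\Theta$ as a projective generator; the assertion for $I_\Theta$ will then follow by dualizing every step with Proposition \ref{prop-std}(2).

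First I will establish $\mathbb{E}_{\mathcal{F}(\Theta)}^{k}(P_\Theta, P_\Theta) = 0$ for all $k \geq 1$. Since $P_\Theta$ is projective in $\mathcal{F}(\Theta)$, the case $k = 1$ is immediate. For higher $k$, I will exploit the fact that $P_\Theta$ is a projective generator: any object $M \in \mathcal{F}(\Theta)$ admits a syzygy $\mathfrak{s}$-conflation $M' \to P \to M \dashrightarrow$ with $P \in \operatorname{\mathsf{add}} P_\Theta$, and applying $\mathcal{F}(\Theta)(P_\Theta,-)$ in the long exact sequence of Proposition \ref{prop_longex} yields the dimension-shifting isomorphism $\mathbb{E}_{\mathcal{F}(\Theta)}^{k}(P_\Theta, M) \cong \mathbb{E}_{\mathcal{F}(\Theta)}^{k-1}(P_\Theta, M')$, and iterating gives the desired vanishing. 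Next I will show $\operatorname{\mathsf{thick}} P_\Theta = \mathcal{F}(\Theta)$ by downward induction on $i$ to prove $\Theta(i) \in (\operatorname{\mathsf{add}} P_\Theta)^{\wedge}$. The base case $i = n$ is clear because $K(n) \in \mathcal{F}(\Theta(\geq n+1)) = 0$ forces $\Theta(n) \cong P(n) \in \operatorname{\mathsf{add}} P_\Theta$; in the inductive step I feed the $\mathfrak{s}$-conflation $K(i) \to P(i) \to \Theta(i) \dashrightarrow$ through the observation that $\mathcal{F}(\Theta(\geq i+1)) \subseteq (\operatorname{\mathsf{add}} P_\Theta)^{\wedge}$ by induction combined with extension-closure (Lemma \ref{lem-psilt_wedge}(1)). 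Combining this with the inclusion $(\operatorname{\mathsf{add}} P_\Theta)^{\wedge} \subseteq \operatorname{\mathsf{thick}}(\operatorname{\mathsf{add}} P_\Theta)$ from Lemma \ref{lem-psilt_wedge}(4) and the obvious reverse inclusion yields the thick generation.

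For the extremality of $P_\Theta$ in $(\operatorname{\mathsf{silt}} \mathcal{F}(\Theta), \geq)$, I will invoke Lemma \ref{lem-order_wedge}(1): any silting subcategory $\mathcal{N}$ of $\mathcal{F}(\Theta)$ satisfies $\operatorname{\mathsf{add}} P_\Theta \geq \mathcal{N}$ if and only if $\mathcal{N} \subseteq (\operatorname{\mathsf{add}} P_\Theta)^{\wedge}$, and the equality $(\operatorname{\mathsf{add}} P_\Theta)^{\wedge} = \mathcal{F}(\Theta)$ just established makes this automatic. Dually, Lemma \ref{lem-order_wedge}(2) together with $(\operatorname{\mathsf{add}} I_\Theta)^{\vee} = \mathcal{F}(\Theta)$ gives $I_\Theta$ as the least element. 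The subtlest point in the plan is that the extension groups to be computed live in $\mathcal{F}(\Theta)$, not in the ambient $\mathcal{C}$, where higher extensions generally disagree (as the paper warns after Example \ref{example-etcat}); this obstacle is overcome cleanly because Proposition \ref{prop-std} delivers enough projectives and enough injectives internal to $\mathcal{F}(\Theta)$, which is exactly what the dimension-shifting argument needs.
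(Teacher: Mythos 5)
Your overall strategy for thick generation is exactly the paper's: a downward induction on $i$ feeding the conflations $K(i)\to P(i)\to\Theta(i)\dashrightarrow$ of Proposition \ref{prop-std}(1) into closure under extensions and cones, and the duality step for $I_\Theta$ via Proposition \ref{prop-std}(2) matches as well. Whether one tracks membership in $(\operatorname{\mathsf{add}}P_\Theta)^{\wedge}$ (as you do) or in $\operatorname{\mathsf{thick}}_{\mathcal{F}(\Theta)}P_\Theta$ (as the paper does) is cosmetic: since $\operatorname{\mathsf{add}}P_\Theta$ is closed under cocones, Lemma \ref{lem-psilt_wedge}(4) identifies the two.

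However, your argument for the presilting condition has a genuine gap. You write that a syzygy conflation $M'\to P\to M\dashrightarrow$ with $P\in\operatorname{\mathsf{add}}P_\Theta$ yields $\mathbb{E}_{\mathcal{F}(\Theta)}^{k}(P_\Theta,M)\cong\mathbb{E}_{\mathcal{F}(\Theta)}^{k-1}(P_\Theta,M')$; this runs in the wrong direction. Applying $\mathcal{F}(\Theta)(P_\Theta,-)$ to that conflation shifts the codomain \emph{up}, giving $\mathbb{E}^{k}(P_\Theta,M)\cong\mathbb{E}^{k+1}(P_\Theta,M')$ (once one already knows $\mathbb{E}^{j}(P_\Theta,P)=0$ for $j\geq 1$ --- which is itself what you are trying to establish), so the iteration never reaches degree $1$. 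The downward shift you state would instead come from an injective coresolution $M\to I\to M''\dashrightarrow$, using Proposition \ref{prop-std}(2), not from a projective presentation. In fact the entire dimension-shifting excursion is unnecessary: since $\mathcal{F}(\Theta)$ has enough projectives and enough injectives (Proposition \ref{prop-std}), the GNP positive extension bifunctor of $\mathcal{F}(\Theta)$ agrees with the one computed via syzygies/cosyzygies (see the remark after Example \ref{example-etcat} citing \cite[Corollary 3.21]{GNP}), and for a projective object all higher $\mathbb{E}^{k}_{\mathcal{F}(\Theta)}$ vanish automatically. That one observation --- which is precisely what the paper's ``$P_\Theta$ is projective in $\mathcal{F}(\Theta)$, and hence it is presilting'' presupposes --- replaces your entire first step. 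Once you make that fix, the rest of your proof (thick generation by induction, and the extremality claims via Lemma \ref{lem-order_wedge}(1),(2) together with $(\operatorname{\mathsf{add}}P_\Theta)^{\wedge}=\mathcal{F}(\Theta)=(\operatorname{\mathsf{add}}I_\Theta)^{\vee}$) is correct.
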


\begin{proof}
By Proposition \ref{prop-std}(1), $P_{\Theta}$ is a projective object in $\mathcal{F}(\Theta)$, and hence it is presilting in $\mathcal{F}(\Theta)$. 
Let $\operatorname{\mathsf{thick}}_{\mathcal{F}(\Theta)}P_{\Theta}$ denote the smallest thick subcategory of $\mathcal{F}(\Theta)$ containing $P_{\Theta}$. 
Note that $\operatorname{\mathsf{thick}}_{\mathcal{F}(\Theta)}P_{\Theta}$ is a subcategory of $\mathcal{C}$ which is closed under extensions. 
We show that $\Theta(i)\in\operatorname{\mathsf{thick}}_{\mathcal{F}(\Theta)}P_{\Theta}$ by induction on $i$.  
If $i=n$, then $\Theta(n)\cong P(n)\in\operatorname{\mathsf{thick}}_{\mathcal{F}(\Theta)}P_{\Theta}$. 
Assume $i \leq n-1$. 
By Proposition \ref{prop-std}(1), there exists an $\mathfrak{s}$-conflation $K(i)\rightarrow P(i) \rightarrow \Theta(i)\dashrightarrow$ such that $K(i)\in\mathcal{F}(\Theta(\geq i+1))$ and $P(i)$ is a projective object in $\mathcal{F}(\Theta)$. 
The induction hypothesis implies that $\Theta(j)\in\operatorname{\mathsf{thick}}_{\mathcal{F}(\Theta)}P_{\Theta}$ for each $j\geq i+1$. 
Since $\operatorname{\mathsf{thick}}_{\mathcal{F}(\Theta)}P_{\Theta}$ is closed under extensions, we have $K(i)\in\operatorname{\mathsf{thick}}_{\mathcal{F}(\Theta)}P_{\Theta}$. 
Hence the assertion follows from the fact that $\operatorname{\mathsf{thick}}_{\mathcal{F}(\Theta)}P_{\Theta}$ is closed under cones. 
Since $\mathcal{F}(\Theta)$ is the smallest subcategory which is closed under extensions and contains $\Theta(1), \ldots, \Theta(n)$, we obtain $\mathcal{F}(\Theta)\subseteq\operatorname{\mathsf{thick}}_{\mathcal{F}(\Theta)}P_{\Theta}$.
Hence $P_{\Theta}$ is a silting object in $\mathcal{F}(\Theta)$. 
Dually, $I_{\Theta}$ is a silting object in $\mathcal{F}(\Theta)$. 
\end{proof}

If $\mathcal{F}(\Theta)$ is finite-type (i.e., there are only finitely many non-isomorphism indecomposable objects in $\mathcal{F}(\Theta)$), then all silting objects in $\mathcal{F}(\Theta)$ are reachable by iterated mutation. 

\begin{corollary}
If $\mathcal{F}(\Theta)$ is finite-type, then each silting object in $\mathcal{F}(\Theta)$ is obtained from $P_{\Theta}$ by iterated irreducible left mutation.
\end{corollary}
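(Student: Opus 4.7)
The plan is to deduce this from Corollary \ref{cor-finsilt} applied to the pair $(\operatorname{\mathsf{add}}P_\Theta, \operatorname{\mathsf{add}}M)$, where $M$ is an arbitrary silting object of $\mathcal{F}(\Theta)$. First, by Proposition \ref{prop-stdsilt}, $P_\Theta$ is the greatest element of $\operatorname{\mathsf{silt}}\mathcal{F}(\Theta)$, so $\operatorname{\mathsf{add}}P_\Theta\geq\operatorname{\mathsf{add}}M$ in the partial order on silting subcategories. If equality holds there is nothing to prove, so we may assume $\operatorname{\mathsf{add}}P_\Theta>\operatorname{\mathsf{add}}M$.

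Next I would verify that the finiteness hypothesis of Corollary \ref{cor-finsilt} holds. Since $\mathcal{F}(\Theta)$ is of finite type and Krull--Schmidt, there are only finitely many isomorphism classes of basic objects in $\mathcal{F}(\Theta)$; because silting subcategories of $\mathcal{F}(\Theta)$ correspond bijectively to (isomorphism classes of) basic silting objects via $M\mapsto \operatorname{\mathsf{add}}M$ (as recalled in the discussion after the definition of silting subcategories in Section 3), the set $\operatorname{\mathsf{silt}}\mathcal{F}(\Theta)$ is finite. In particular, there are only finitely many silting subcategories $\mathcal{L}$ with $\operatorname{\mathsf{add}}P_\Theta\geq\mathcal{L}\geq\operatorname{\mathsf{add}}M$.

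Finally I would check the standing assumptions of Corollary \ref{cor-finsilt}: $\mathcal{F}(\Theta)$ is an extriangulated category (being closed under extensions in $\mathcal{C}$, by Example \ref{example-etcat}(3)), it is Hom-finite Krull--Schmidt, and since $P_\Theta$ is a silting object of $\mathcal{F}(\Theta)$ by Proposition \ref{prop-stdsilt}, the condition (F) is automatically satisfied, as noted in the text just after its statement. Corollary \ref{cor-finsilt} therefore produces a sequence of irreducible left mutations from $\operatorname{\mathsf{add}}P_\Theta$ to $\operatorname{\mathsf{add}}M$, which translates directly into the claim about $M$ and $P_\Theta$.

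The bulk of the work has already been done in Corollary \ref{cor-finsilt}, so the only conceptual task here is recognizing that finite type plus the Krull--Schmidt property forces $\operatorname{\mathsf{silt}}\mathcal{F}(\Theta)$ to be finite; I do not anticipate a genuine obstacle beyond checking that the ambient hypotheses of Section 4 (Hom-finite Krull--Schmidt, condition (F)) transfer to $\mathcal{F}(\Theta)$, which follows from the existence of $P_\Theta$.
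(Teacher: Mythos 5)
Your proof is correct and follows essentially the same route as the paper: invoke Proposition \ref{prop-stdsilt} to see that $P_\Theta$ is a greatest element, note that finite type forces $\operatorname{\mathsf{silt}}\mathcal{F}(\Theta)$ to be finite, and conclude via Corollary \ref{cor-finsilt}. The only difference is that you pause to verify the standing hypotheses of Subsection~4.2 (Hom-finite Krull--Schmidt, condition (F)) transfer to $\mathcal{F}(\Theta)$, which the paper leaves implicit.
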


\begin{proof}
By Proposition \ref{prop-stdsilt}, we obtain $\operatorname{\mathsf{silt}}\mathcal{F}(\Theta)=\{ M\in\operatorname{\mathsf{silt}}\mathcal{F}(\Theta)\mid P_{\Theta}\geq M\geq I_{\Theta}\}$.
Since $\mathcal{F}(\Theta)$ is finite-type, the set $\operatorname{\mathsf{silt}}\mathcal{F}(\Theta)$ is finite.
Thus the assertion follows from Corollary \ref{cor-finsilt}.
\end{proof}

Using standardizable sets, we give a concrete example of silting objects and silting mutation.  

\begin{example}
Assume that $R$ is an algebraically closed field and $\Lambda:=R(1\to 2\to 3\to4)$ be the path algebra.
Consider the bounded derived category $\mathcal{D}$ of $\operatorname{\mathsf{mod}}\Lambda$. 
Then the Auslander--Reiten quiver of $\mathcal{D}$ is as follows.
\begin{align}
\xymatrix @R=5mm @C=5mm{
&&&&{\begin{smallmatrix}1\\2\\3\\4\end{smallmatrix}}\ar[rd]\ar@{.}[l]&&
{\textnormal{\small{$\Sigma$}}\begin{smallmatrix}4\end{smallmatrix}}\ar[rd]\ar@{.}[ll]&&
{\textnormal{\small{$\Sigma$}}\begin{smallmatrix}3\end{smallmatrix}}\ar[rd]\ar@{.}[ll]&&{\textnormal{\small{$\Sigma$}}\begin{smallmatrix}2\end{smallmatrix}}\ar@{.}[ll]&\ar@{.}[l]\\
&&&{\begin{smallmatrix}2\\3\\4\end{smallmatrix}}\ar[ru]\ar[rd]\ar@{.}[l]&&
{\begin{smallmatrix}1\\2\\3\end{smallmatrix}}\ar[ru]\ar[rd]\ar@{.}[ll]&&
{\textnormal{\small{$\Sigma$}}\begin{smallmatrix}3\\4\end{smallmatrix}}\ar[ru]\ar[rd]\ar@{.}[ll]&&{\textnormal{\small{$\Sigma$}}\begin{smallmatrix}2\\3\end{smallmatrix}}\ar[ru]\ar[rd]\ar@{.}[ll]&\ar@{.}[l]\\
&&{\begin{smallmatrix}3\\4\end{smallmatrix}}\ar[ru]\ar[rd]\ar@{.}[l]&&
{\begin{smallmatrix}2\\3\end{smallmatrix}}\ar[ru]\ar[rd]\ar@{.}[ll]&&
{\begin{smallmatrix}1\\2\end{smallmatrix}}\ar[ru]\ar[rd]\ar@{.}[ll]&&{\textnormal{\small{$\Sigma$}}\begin{smallmatrix}2\\3\\4\end{smallmatrix}}\ar[ru]\ar[rd]\ar@{.}[ll]&&{\textnormal{\small{$\Sigma$}}\begin{smallmatrix}1\\2\\3\end{smallmatrix}}\ar@{.}[ll]&\ar@{.}[l]\\
&{\begin{smallmatrix}4\end{smallmatrix}}\ar[ru]\ar@{.}[l]&&\textnormal{$\begin{smallmatrix}3\end{smallmatrix}$}\ar[ru]\ar@{.}[ll]&&
{\begin{smallmatrix}2\end{smallmatrix}}\ar[ru]\ar@{.}[ll]&&\textnormal{$\begin{smallmatrix}1\end{smallmatrix}$}\ar[ru]\ar@{.}[ll]&&\textnormal{$\textnormal{\small{$\Sigma$}}\begin{smallmatrix}1\\2 \\3 \\4 \end{smallmatrix}$}\ar[ru]\ar@{.}[ll]&\ar@{.}[l]
}\notag
\end{align}
Let $\Theta(1):=\textnormal{\small{$\Sigma$}}\begin{smallmatrix}1\\2\\3\end{smallmatrix}$, $\Theta(2):=\begin{smallmatrix}1\end{smallmatrix}$, $\Theta(3):=\begin{smallmatrix}2\\ 3\end{smallmatrix}$, $\Theta(4):=\begin{smallmatrix}4\end{smallmatrix}$. 
Note that the extriangulated category $\mathcal{F}(\Theta)$ is neither a triangulated category nor an exact category. 
Since $(\Theta(1),\Theta(2),\Theta(3),\Theta(4))$ is a standardizable set, $P_{\Theta}=P(1)\oplus P(2)\oplus P(3)\oplus P(4)$ is a silting object in $\mathcal{F}(\Theta)$, where $P(1)=0$, $P(2)=\begin{smallmatrix}1\\2\\3\\4\end{smallmatrix}$, $P(3)=\begin{smallmatrix}2\\3\\4\end{smallmatrix}$, $P(4)=\Theta(4)$.
By left mutation, we have a finite connected component 
\begin{align}
\xymatrix @R=10mm @C=10mm{
&P(2)\oplus P(3)\oplus P(4)\ar[ld]\ar[rd]\\
P(2)\oplus \Theta(2)\oplus P(4)\ar[dd]&&P(2)\oplus P(3)\oplus \Theta(3)\ar[ld]\ar[d]\\
&P(2)\oplus \Sigma^{-1}\Theta(1)\oplus \Theta(3)\ar[ld]\ar[d]&P(2)\oplus P(3)\oplus \Sigma\Theta(3)\ar[dd]\\
P(2)\oplus \Sigma^{-1}\Theta(1)\oplus \Theta(2)\ar[rrd]&P(2)\oplus \Theta(1)\oplus \Theta(3)\ar[dd]\\
&&P(2)\oplus \Theta(2)\oplus \Sigma \Theta(3)\ar[ld]\\
&P(2)\oplus \Theta(1)\oplus \Sigma \Theta(3)}\notag
\end{align}
By Corollary \ref{cor-mutend}, this coincides with $Q(\operatorname{\mathsf{silt}}\mathcal{F}(\Theta))$.
\end{example}

\subsection*{Acknowledgements}
The first author is supported by JSPS KAKENHI Grant Number JP20K14291.
The second author is supported by JSPS KAKENHI Grant Number JP19K14513.
The second author would like to thank Takuma Aihara for the comments about Remark \ref{remark-basic1845}(1).

\end{document}